\title[MINIMAL MODELS AND ABUNDANCE]
{Minimal models and abundance for positive characteristic log surfaces} 
\author{Hiromu Tanaka} 
\subjclass[2010]{Primary 14E30; Secondary 14J10.}
\keywords{minimal model program, abundance theorem, positive characteristic}
\address{Department of Mathematics, 
Graduate School of Science, 
Kobe University, 
Kobe, 657-8501, Japan} 
\email{tanakahi@math.kobe-u.ac.jp}
\newcommand{\Supp}[0]{{\operatorname{Supp}}}
\newcommand{\Pic}[0]{{\operatorname{Pic}}}
\newcommand{\mult}[0]{{\operatorname{mult}}}
\newtheorem{thm}{Theorem}[section]
\newtheorem{lem}[thm]{Lemma}
\newtheorem{cor}[thm]{Corollary}
\newtheorem{prop}[thm]{Proposition}
\newtheorem{ans}[thm]{Answer}
\newtheorem{ex}[thm]{Example}
\newtheorem{fact}[thm]{Fact}
\theoremstyle{definition}
\newtheorem{ques}[thm]{Question}
\newtheorem{dfn}[thm]{Definition}
\newtheorem{rem}[thm]{Remark}
\newtheorem*{ack}{Acknowledgments}      
\newtheorem{nota}[thm]{Notation}         
\newtheorem{step}{Step}
\newcommand{\pr}[1]{$\mathbb{P}^#1$}
\newcommand{\Q}{$\mathbb{Q}$}
\newcommand{\R}{$\mathbb{R}$}
\newcommand{\neb}{\overline{NE}}
\newcommand{\ff}{$\overline{\mathbb{F}}_p$}
\newcommand{\fff}{\overline{\mathbb{F}}_p}
\begin{document}

\maketitle

\begin{abstract}
We discuss the birational geometry of singular surfaces 
in positive characteristic. 
More precisely, we establish 
the minimal model program and the abundance theorem 
for \Q-factorial surfaces and 
for log canonical surfaces. 
Moreover, 
in the case where the base field is the algebraic closure of a finite field, 
we obtain the same results under much weaker assumptions. 
\end{abstract}

\tableofcontents


\section{Introduction}

In this paper, 
we consider the minimal model theory 
for surfaces with some singularities in positive characteristic. 
If the singularities are \Q-factorial or log canonical, then 
we establish the minimal model program and the abundance theorem. 
In the case where the base field is the algebraic closure of a finite field, 
we obtain the same result under much weaker assumptions. 
More precisely, we prove the following two theorems in this paper.

\begin{thm}[Minimal model program]\label{0mmp}
Let $X$ be a projective normal surface $X$, 
which is defined over an algebraically closed field $k$ of positive characteristic. 
Let $\Delta$ be an $\mathbb{R}$-divisor on $X$ and 
let $\Delta=\sum_{j\in J}\delta_j\Delta_j$ be its prime decomposition. 
Assume that one of the following conditions holds{\em{:}}
\begin{enumerate}
\item[(QF)]{$X$ is \Q-factorial and $0\leq \delta_j\leq 1$ for all $j\in J$.}
\item[(FP)]{$k=\overline{\mathbb{F}}_p$ and $0\leq\delta_j$ for all $j\in J$.}
\item[(LC)]{$(X, \Delta)$ is a log canonical surface.}
\end{enumerate}
Then, there exists a sequence of birational morphisms 
\begin{eqnarray*}
&(X, \Delta)=:(X_0, \Delta_0) \overset{\phi_0}\to 
(X_1, \Delta_1) \overset{\phi_1}\to \cdots 
\overset{\phi_{s-1}}\to (X_s, \Delta_s)=:(X^{\dagger}, \Delta^{\dagger})\\
&\,\,\,where\,\,\,(\phi_{i-1})_*(\Delta_{i-1})=:\Delta_i
\end{eqnarray*}
with the following properties. 
\begin{enumerate}
\item{Each $X_i$ is a projective normal surface.}
\item{Each $(X_i, \Delta_i)$ satisfies {\em (QF)}, {\em (FP)} 
or {\em (LC)} according as the above assumption.}
\item{For each $i$, ${\rm Ex}(\phi_i)=:C_i$ is a proper irreducible curve such that 
$$(K_{X_i}+\Delta_i)\cdot C_i<0$$
and that $C_i$ generates an extremal ray.}
\item{$(X^{\dagger}, \Delta^{\dagger})$ satisfies one of the following conditions.
\begin{enumerate}
\item{$K_{X^{\dagger}}+\Delta^{\dagger}$ is nef.}
\item{There is a surjective morphism $\mu:X^{\dagger} \to Z$ to 
a smooth projective curve $Z$ such that $\mu_*\mathcal O_{X^{\dagger}}=\mathcal O_Z$, 
$-(K_{X^{\dagger}}+\Delta^{\dagger})$ is $\mu$-ample and $\rho(X^{\dagger}/Z)=1$.}
\item{$-(K_{X^{\dagger}}+\Delta^{\dagger})$ is ample and 
$\rho(X^{\dagger})=1$.}
\end{enumerate}}
\end{enumerate}
In case $({\rm a})$, we say $(X^{\dagger}, \Delta^{\dagger})$ is a minimal model of $(X, \Delta)$.\\
In case $({\rm b})$ and $({\rm c})$, we say $(X^{\dagger}, \Delta^{\dagger})$ is a Mori fiber space. 
\end{thm}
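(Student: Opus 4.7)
The plan is to run a $(K_X+\Delta)$-MMP one step at a time: while $K_{X_i}+\Delta_i$ is not nef, produce a $(K_{X_i}+\Delta_i)$-negative irreducible curve $C_i$ with $C_i^2<0$, contract it to obtain $\phi_i\colon X_i\to X_{i+1}$, and check that $X_{i+1}$ is again projective normal and that $(X_{i+1},\Delta_{i+1})$ still satisfies the hypothesis (QF), (FP), or (LC) one started in. Termination is automatic because each step drops the Picard number by one, and the final trichotomy (a)--(c) will be dictated by the sign of $C^2$ for an extremal $(K+\Delta)$-negative curve in the event that $K_{X^\dagger}+\Delta^\dagger$ fails to be nef.

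\medskip

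\noindent\textbf{Existence of a contractible extremal curve.} Given that $K_{X_i}+\Delta_i$ is not nef, Hodge-index/Zariski-decomposition arguments on a surface localize the negativity to an irreducible curve $C_i$ with $C_i^2<0$ and $(K_{X_i}+\Delta_i)\cdot C_i<0$. The characteristic-$p$ difficulty is that Kawamata--Viehweg vanishing and the usual bend-and-break cone theorem are unavailable, so one must argue by intersection theory. In (QF), $\mathbb{Q}$-factoriality allows any numerical class to be represented effectively up to multiples, and the classical surface cone argument goes through. In (LC), one reduces to (QF) by a small dlt modification, or else works locally using the characteristic-free classification of log canonical surface singularities. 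In (FP), over $\overline{\mathbb{F}}_p$ one exploits strong positivity properties of nef bundles (numerically trivial Cartier divisors are torsion, and Keel-type semi-ampleness holds for big and nef divisors), which compensates for the lack of both $\mathbb{Q}$-factoriality and of any upper bound on the coefficients $\delta_j$.

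\medskip

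\noindent\textbf{Contraction and preservation of the hypothesis.} Once $C_i$ with $C_i^2<0$ is in hand, Artin's contractibility criterion produces a proper birational morphism $\phi_i\colon X_i\to X_{i+1}$ with $\mathrm{Ex}(\phi_i)=C_i$ and $X_{i+1}$ normal; pushing forward an ample divisor on $X_i$ suitably adjusted to be $\phi_i$-numerically trivial on $C_i$ makes $X_{i+1}$ projective. (QF) is preserved because contracting a single irreducible curve with nonzero self-intersection keeps the local class group finitely generated as a $\mathbb{Q}$-module; (FP) is trivially preserved since the base field and the effectivity of the $\delta_j$ do not change; (LC) is preserved by the standard discrepancy identity $K_{X_i}+\Delta_i=\phi_i^{*}(K_{X_{i+1}}+\Delta_{i+1})+aC_i$, where the negativity hypothesis forces $a>0$ and hence no discrepancy crosses below $-1$.

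\medskip

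\noindent\textbf{Termination, trichotomy, and main obstacle.} Since $\rho(X_{i+1})=\rho(X_i)-1$, the process terminates. If $K_{X^\dagger}+\Delta^\dagger$ is nef we are in case (a). Otherwise an extremal $(K+\Delta)$-negative curve $C$ must have $C^2\geq 0$: the case $C^2=0$ yields, via Stein factorization of a morphism defined by a suitable nef-but-not-ample auxiliary class, the fibration of case (b), while $C^2>0$ forces $\rho(X^\dagger)=1$ by Hodge index and gives the Fano-type case (c). The main obstacle is the existence step in characteristic $p$ under hypotheses (FP) and (LC), where the usual vanishing and bend-and-break machinery fails; the remaining ingredients -- Artin contraction, stability of the singularity class along a negative contraction, and termination by Picard number -- are surface-specific but comparatively routine.
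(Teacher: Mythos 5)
There is a genuine gap in the proposed contraction step, and it is exactly the point the paper has to work hardest for. You claim that once an irreducible curve $C_i$ with $C_i^2<0$ and $(K_{X_i}+\Delta_i)\cdot C_i<0$ is found, \emph{Artin's contractibility criterion} produces a proper birational morphism $\phi_i\colon X_i\to X_{i+1}$ with $X_{i+1}$ normal, and that ``pushing forward an ample divisor'' makes $X_{i+1}$ projective. Over a general algebraically closed field $k\neq\overline{\mathbb{F}}_p$ this is false: a curve of negative self-intersection on a smooth projective surface need not contract to a projective (or even quasi-projective) surface. The paper exhibits exactly such a non-contractible negative curve in Section~11 (Answer~\ref{Hironaka}, via Hironaka's/Hartshorne's construction of an elliptic curve through ten independent points blown up in $\mathbb{P}^2$), and this failure is precisely why the boundary condition $0\leq\delta_j\leq 1$ in (QF) and the log canonical hypothesis in (LC) are not superfluous. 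The argument that does work in the paper is different in kind: first adjunction (Theorem~\ref{adjunction}) combined with $\mathbb{Q}$-factoriality and the boundary bound forces $C\simeq\mathbb{P}^1$, and \emph{then} Keel's base point free theorem (Theorem~\ref{keel}) applied to a nef and big supporting divisor $G$ (with $G\cdot C'=0\iff C'=C$) shows $G$ is semi-ample because $G|_{\mathbb{P}^1}$ trivially is, yielding a contraction to a projective variety. Nothing in your proposal supplies the rationality of the exceptional curve, and without Keel the resulting contraction cannot be shown projective. Your vague ``pushing forward an ample divisor suitably adjusted to be $\phi_i$-numerically trivial'' presupposes the very semi-ampleness you would need to prove.

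Two secondary issues are also worth flagging. First, preservation of (QF) under the contraction is not automatic: the paper proves it (Propositions~\ref{(3)} and~\ref{(5)}) using Keel's theorem once more, and the descent of $\mathbb{Q}$-factoriality is not the class-group bookkeeping you describe but hinges again on semi-ampleness. Second, for (LC) you propose to reduce to (QF) via a ``small dlt modification'', but the existence of such a modification in positive characteristic is not something you are entitled to assume --- the paper has to build the relevant partial resolution from scratch in Part~4 (Theorems~\ref{lcgermsing} and~\ref{lcsing}) using the Part~2 contraction theorem, and it is there that one learns that outside the special $(K_Y+E)\cdot E=0$ case the log canonical singularities encountered are in fact $\mathbb{Q}$-factorial. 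Your remark that in (QF) ``the classical surface cone argument goes through'' also glosses over the actual engine of the cone theorem in the paper, namely Mori's bend-and-break extended to proper normal surfaces via the minimal resolution (Sections~3--4), which is not a Hodge-index/Zariski-decomposition argument.
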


\begin{thm}[Abundance theorem]\label{0abundance}
Let $X$ be a projective normal surface $X$, 
which is defined over an algebraically closed field $k$ of positive characteristic. 
Let $\Delta$ be an $\mathbb{R}$-divisor on $X$ and 
let $\Delta=\sum_{j\in J}\delta_j\Delta_j$ be its prime decomposition. 
Assume that one of the following conditions holds{\em{:}}
\begin{enumerate}
\item[(QF)]{$X$ is \Q-factorial and $0\leq \delta_j\leq 1$ for all $j\in J$.}
\item[(FP)]{$k=\overline{\mathbb{F}}_p$ and $0\leq\delta_j$ for all $j\in J$.}
\item[(LC)]{$(X, \Delta)$ is a log canonical surface.}
\end{enumerate}
If $K_X+\Delta$ is nef, then $K_X+\Delta$ is semi-ample. 
\end{thm}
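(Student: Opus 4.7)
The hypothesis gives $K_X+\Delta$ nef, and the goal is semi-ampleness. I would stratify by the numerical dimension $\nu := \nu(K_X+\Delta) \in \{0,1,2\}$, treating the three hypothesis sets (QF), (FP), (LC) in parallel within each stratum; Theorem \ref{0mmp} is available as a black box so one may always pass to the MMP output inside the relevant hypothesis class.

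In the nef-and-big case $\nu = 2$, the (FP) hypothesis is the natural home for Keel's semi-ampleness theorem over $\overline{\mathbb{F}}_p$: $K_X+\Delta$ is semi-ample iff its restriction to the union $\mathbb{E}$ of curves $C$ with $(K_X+\Delta)\cdot C = 0$ is semi-ample, and since $\dim \mathbb{E}\leq 1$ and numerically trivial line bundles on proper curves over $\overline{\mathbb{F}}_p$ are torsion, (FP) is closed. For (QF) and (LC), Keel is unavailable and I would instead aim for a surface-level base-point-free theorem: pass to a log resolution $\pi\colon Y\to X$, write $K_Y+\Delta_Y = \pi^*(K_X+\Delta) + E$ with $E$ effective (using $\delta_j\leq 1$), and produce sections via a cone-theoretic argument combining the length estimate for surface extremal rays with Serre vanishing on contracted intermediate models, inducting on the Picard number after locating a contractible curve $C$ with $(K_X+\Delta)\cdot C = 0$.

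In the case $\nu = 1$ one has $(K_X+\Delta)^2 = 0$ but $K_X+\Delta\not\equiv 0$; I would show that the irreducible curves $C$ with $(K_X+\Delta)\cdot C = 0$ sweep out a one-parameter family, produce the resulting fibration $\mu\colon X\to Z$ onto a smooth curve, verify $K_X+\Delta\equiv\mu^*D$ for a nef $D$ on $Z$ of positive degree, and conclude by the automatic semi-ampleness on curves after upgrading from $\equiv$ to $\sim_{\mathbb{Q}}$ using a fiberwise numerical-triviality argument. In the case $\nu = 0$, $K_X+\Delta\equiv 0$, and one must show torsion in $\mathrm{Pic}(X)_{\mathbb{Q}}$; I would approach this via the classification of log canonical (or $\mathbb{Q}$-factorial) surfaces with numerically trivial log canonical class, splitting by the Kodaira dimension of $X$ and the structure of $\mathrm{Supp}\,\Delta$.

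The main obstacle is the nef-and-big case under (QF) and (LC) over a general algebraically closed $k$, since Kawamata--Viehweg vanishing fails in positive characteristic and the classical base-point-free theorem does not directly apply. I expect this to be the technical heart of the argument and to require a two-dimensional substitute for vanishing, built from Serre vanishing on log resolutions combined with the surface cone theorem. A secondary subtlety is the $\nu = 0$ case under (FP) without the upper bound $\delta_j\leq 1$, where torsion should ultimately come from the finiteness of relevant Picard groups over $\overline{\mathbb{F}}_p$ together with a Mumford-style contraction to reduce to a smooth model.
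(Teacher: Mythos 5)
Your proposal contains a significant misreading of Keel's theorem that undermines the core of the plan. Keel's semi-ampleness criterion (Theorem~\ref{keel} in the paper) applies over \emph{any} algebraically closed field of positive characteristic, not only over $\overline{\mathbb{F}}_p$: for a nef and big line bundle $L$ on a projective normal surface over such a field, $L$ is semi-ample iff $L|_{E(L)}$ is, where $E(L)$ is the locus where $L$ has degree zero. You restrict Keel to (FP) and then declare it ``unavailable'' for (QF) and (LC), substituting a vague program of ``Serre vanishing on log resolutions combined with the surface cone theorem.'' This substitute is precisely what is known to fail in positive characteristic --- Serre vanishing does not control the relevant $H^1$'s, and there is no characteristic-free base-point-free theorem of the classical type to fall back on. The paper's proof of the nef-and-big case (Proposition~\ref{nefbig}) under (QF) applies Keel directly: one reduces to checking $(K_X+\Delta)|_E$ where $E$ is the null locus, contracts components $C\subset E$ with $(K_X+C)\cdot C<0$ via the contraction theorem, and handles the remaining components with $(K_X+C)\cdot C=0$ by the adjunction result (Theorem~\ref{adjunction}), which forces $C\simeq\mathbb{P}^1$ or the restriction to be trivial. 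The (LC) case then reduces to (QF) by passing to the minimal resolution. Without Keel you have no viable path here.

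A secondary issue is that you stratify by numerical dimension $\nu$ rather than Kodaira dimension $\kappa$. These coincide only after a non-vanishing theorem is proved, and that is itself a nontrivial input: the paper establishes $\kappa(X,K_X+\Delta)\geq 0$ separately for $k\neq\overline{\mathbb{F}}_p$ (Theorem~\ref{nonv}, following Fujino's argument using that abelian varieties over such $k$ have infinite rank) and for $k=\overline{\mathbb{F}}_p$ (Theorem~\ref{fpnonvanishing}, following Ma\c{s}ek). Your $\nu=1$ step --- producing a fibration from the curves killing $K_X+\Delta$ and descending --- implicitly assumes sections exist, which is exactly what non-vanishing provides; without it, you cannot invoke Fujita's criterion (nef with $\kappa=1$ implies semi-ample). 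You should reorganize around proving non-vanishing first, then stratifying by $\kappa$, and in each stratum use Keel uniformly for all three hypothesis classes in positive characteristic. Your treatment of $\kappa=0$ via classification is in the right spirit and matches the paper's Theorem~\ref{qabundancekappazero}.
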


Note that, if $X$ is a normal surface over $\overline{\mathbb{F}}_p$,
then $X$ is \Q-factorial (cf. Theorem~\ref{Q-factorial}). 
In particular, $K_X+\Delta$ is an \R-Cartier \R-divisor. 

In the case where the characteristic of the base field is zero, 
the above two theorems are proven by \cite{Fujino}. 
His proofs heavily depend on the Kodaira vanishing theorem and its generalizations. 
Unfortunately, in positive characteristic, 
there exist counter-examples to the Kodaira vanishing theorem (cf. \cite{Raynaud}). 
To prove the above two theorems, we use a result established in \cite{Keel}. 

In characteristic zero, 
the basepoint free theorem follows from the Kawamata--Viehweg vanishing theorem, 
which is a generalization of the Kodaira vanishing theorem (cf. \cite[Theorem~3.3]{KM}). 
Although we cannot use the Kodaira vanishing theorem, 
we can show the following basepoint free theorem. 

\begin{thm}[Basepoint free theorem]\label{0bpf}
Let $X$ be a projective normal \Q-factorial surface $X$, 
which is defined over an algebraically closed field $k$ of positive characteristic. 
Let $\Delta$ be a $\mathbb{Q}$-divisor. 
Let $\Delta=\sum_{j\in J}\delta_j\Delta_j$ be 
its prime decomposition and assume $0\leq \delta_j<1$ for all $j\in J$. 
Let $D$ be a nef Cartier divisor satisfying one of the following properties. 
\begin{enumerate}
\item{$D-(K_X+\Delta)$ is nef and big.} 
\item{$D-(K_X+\Delta)$ is semi-ample.}
\end{enumerate}
Then $D$ is semi-ample. 
\end{thm}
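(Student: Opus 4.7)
The plan is to apply Keel's theorem from \cite{Keel}, which in positive characteristic reduces semi-ampleness of a nef line bundle $L$ on a projective scheme to that of its restriction $L|_{E(L)}$, where $E(L)$ is the union of closed subvarieties $Z$ with $L^{\dim Z}\cdot Z=0$. Thus the whole problem is to show that $D|_{E(D)}$ is semi-ample, after which semi-ampleness of $D$ is automatic.

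First I would reduce to the case where $D$ is big. If $D^2=0$ and $D\not\equiv 0$, then the classifying morphism associated to a suitable $|mD|$ should factor through a curve, and semi-ampleness would follow by pushing the argument down to that curve using the nef-bigness of $M:=D-(K_X+\Delta)$ in case (1) or the semi-ampleness of $M$ in case (2); the case $D\equiv 0$ is handled separately. Assuming $D$ is big, $E(D)$ is a proper closed subset, hence a finite union of irreducible curves $C_1,\dots,C_r$, and it suffices to show that $D|_{C_i}$ is torsion in $\Pic(C_i)$ for each $i$ (and then to glue).

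The main step is to show that each $C\subset E(D)$ with $M\cdot C>0$ is isomorphic to $\mathbb{P}^1$. The Hodge index theorem applied to the nef-big $D$ makes the intersection matrix of $C_1,\dots,C_r$ negative definite, so in particular $C^2<0$. If $M\cdot C>0$ then $(K_X+\Delta)\cdot C<0$, and writing $\delta_C$ for the coefficient of $C$ in $\Delta$, adjunction combined with $(\Delta-\delta_C C)\cdot C\geq 0$ gives
\[
2p_a(C)-2\leq (K_X+\Delta)\cdot C+(1-\delta_C)C^2<0,
\]
since $1-\delta_C>0$ and both terms on the right are negative. Hence $p_a(C)=0$, so $C\cong\mathbb{P}^1$, and the degree zero bundle $D|_C$ is automatically trivial.

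The chief obstacle is the remaining curves $C$ with $M\cdot C=0$, for which $(K_X+\Delta)\cdot C=0$ and the adjunction argument breaks down; compounding the difficulty, over an arbitrary algebraically closed field of positive characteristic---in contrast to $\overline{\mathbb{F}}_p$---a degree zero line bundle on a curve need not be torsion, so degree-counting arguments alone are insufficient. To treat this, I would use Kodaira's lemma to decompose $M\equiv\varepsilon A+N_\varepsilon$ with $A$ ample and $N_\varepsilon$ effective for every $\varepsilon>0$; any such $C$ must lie in the augmented base locus of $M$, which in case (1) is a negative definite configuration of curves that can be contracted by Artin's criterion in positive characteristic, reducing the situation to the big case on a lower birational model. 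In case (2), the semi-ampleness of $M$ provides a morphism whose fibers and exceptional locus contain all such $C$, enabling a descent argument along this morphism. Once each $D|_{C_i}$ is shown to be torsion, gluing the conclusions over the one-dimensional scheme $E(D)$ yields semi-ampleness of $D|_{E(D)}$, and Keel's theorem concludes.
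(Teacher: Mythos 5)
Your outline for the big case is broadly the right idea, and it aligns in spirit with the paper's treatment of the $\kappa=2$ case (Keel's theorem $+$ an adjunction argument forcing exceptional curves to be $\mathbb{P}^1$; cf.\ Proposition~\ref{nefbig} and Proposition~\ref{p1cont}). However, there are two significant problems.

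First, the reduction to ``$D$ big'' is where the real content lives, and your proposal does not address it. You write that if $D^2=0$ and $D\not\equiv 0$ one should take ``the classifying morphism associated to a suitable $|mD|$'' -- but the existence of such a nontrivial morphism presupposes $|mD|\neq\emptyset$, which is precisely what needs proving. The paper's approach is: first establish a non-vanishing theorem $\kappa(X,D)\geq 0$ (Theorem~\ref{nonvani}), whose proof is genuinely hard -- Riemann--Roch, vanishing of $h^2$, reduction to irrational ruled surfaces, Theorem~\ref{irratqfac}, and a careful case analysis -- and only then split by $\kappa(X,D)$: the $\kappa=1$ case is handled by Fujita's Proposition~\ref{kappa1}, the $\kappa=2$ case by the big-case argument, and the $\kappa=0$ case is shown to be impossible. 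Your reduction to bigness silently absorbs all of this and provides no substitute.

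Second, your ``chief obstacle'' is a misdiagnosis. You assert that when $M\cdot C=0$ (equivalently $(K_X+\Delta)\cdot C=0$) the adjunction bound fails. But look again at your own inequality
\[
2p_a(C)-2\leq (K_X+\Delta)\cdot C+(1-\delta_C)C^2.
\]
When $M\cdot C=0$ and $D\cdot C=0$, the first term on the right vanishes, yet the second term is still strictly negative because $\delta_C<1$ and $C^2<0$ (Hodge index). So $(K_X+C)\cdot C<0$, hence $C\cong\mathbb{P}^1$ by Theorem~\ref{adjunction}, and $D|_C$ is trivial by degree considerations -- exactly as in the $M\cdot C>0$ case. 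The hypothesis $0\leq\delta_j<1$ (strict) is what makes this work uniformly, and it is precisely the point of this hypothesis. The elaborate workaround via Kodaira's lemma, Artin's contractibility criterion, and descent is unnecessary and does not appear in the paper. Concentrate instead on supplying the non-vanishing argument: that is the missing ingredient.
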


Although there exist counter-examples the Kodaira vanishing theorem, 
we can use the relative Kawamata--Viehweg vanishing theorem for birational morphisms of surfaces 
by \cite{KK}. 
Then, we obtain the following result on rational singularities. 

\begin{thm}\label{0rational}
Let $X$ be a projective normal surface $X$, 
which is defined over an algebraically closed field $k$ of positive characteristic. 
Let $\Delta$ be an $\mathbb{R}$-divisor on $X$. 
Let $\Delta=\sum_{j\in J}\delta_j\Delta_j$ be its prime decomposition and 
assume  $0\leq \delta_j\leq 1$ for all $j\in J.$ 
Assume that  $X$ has at worst rational singularities. 
Then, the following assertions hold.
\begin{enumerate}
\item{$X$ is \Q-factorial. In particular, by Theorem~\ref{0mmp}, 
we can run a $(K_X+\Delta)$-minimal model program$:$
\begin{eqnarray*}
&(X, \Delta)=:(X_0, \Delta_0) \overset{\phi_0}\to 
(X_1, \Delta_1) \overset{\phi_1}\to \cdots 
\overset{\phi_{s-1}}\to (X_s, \Delta_s)\\
&\,\,\,where\,\,\,(\phi_{i-1})_*(\Delta_{i-1})=:\Delta_i.
\end{eqnarray*}}
\item{Each $X_i$ has at worst rational singularities.}
\end{enumerate}
\end{thm}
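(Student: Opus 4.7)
The plan is to prove (1) directly and then derive (2) by induction on the length of the MMP produced by Theorem~\ref{0mmp}(QF).

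For (1), I fix a Weil divisor $D$ on $X$ and a resolution $\pi:Y\to X$ with exceptional curves $\{E_j\}_j$. By Mumford's theorem the intersection matrix $(E_j\cdot E_k)$ is negative definite, so there exist unique $a_j\in\mathbb{Q}$ making
\[
D':=\pi_*^{-1}D+\sum_j a_j E_j
\]
$\pi$-numerically trivial. The vanishing $R^1\pi_*\mathcal{O}_Y=0$ defining a rational singularity implies, by a classical argument due to Artin and Lipman, that some positive multiple $mD'$ is $\pi$-linearly trivial. Hence $mD=\pi_*(mD')$ is Cartier, so $X$ is $\mathbb{Q}$-factorial, and Theorem~\ref{0mmp}(QF) provides the MMP.

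For (2), I induct on $i$, the base case $i=0$ being hypothesis. Assume $X_i$ has rational singularities and fix any resolution $\pi:W\to X_i$; the composite $\psi:=\phi_i\circ\pi:W\to X_{i+1}$ is then itself a resolution of $X_{i+1}$. The Leray spectral sequence for $\phi_i\circ\pi$, together with $R^1\pi_*\mathcal{O}_W=0$ and $R^q(\phi_i)_*\mathcal{O}_{X_i}=0$ for $q\geq 2$ (dimension), collapses to give
\[
R^1\psi_*\mathcal{O}_W\;\cong\;R^1(\phi_i)_*\mathcal{O}_{X_i},
\]
reducing the rationality of $X_{i+1}$ to the vanishing $R^1(\phi_i)_*\mathcal{O}_{X_i}=0$. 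To obtain this I introduce the perturbation $\Delta'_i:=(1-\epsilon)\Delta_i$ for small $\epsilon>0$; the condition $0\leq\delta_j\leq 1$ forces $\lfloor\Delta'_i\rfloor=0$, and
\[
-(K_{X_i}+\Delta'_i)\cdot C_i\;=\;-(K_{X_i}+\Delta_i)\cdot C_i+\epsilon\,(\Delta_i\cdot C_i)\;>\;0
\]
for $\epsilon$ small enough, irrespective of the sign of $\Delta_i\cdot C_i$. Since $\phi_i$ contracts only $C_i$ and $\rho(X_i/X_{i+1})=1$, the divisor $-(K_{X_i}+\Delta'_i)$ is $\phi_i$-ample. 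The relative Kawamata-Viehweg vanishing of~\cite{KK} then yields $R^1(\phi_i)_*\mathcal{O}_{X_i}=0$, closing the induction.

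The main obstacle is that global Kodaira-type vanishings fail in positive characteristic (\cite{Raynaud}), so one cannot appeal to a vanishing on $X_i$ or on $W$ directly. The value of~\cite{KK} is that the relative Kawamata-Viehweg vanishing nevertheless holds for birational morphisms of surfaces, which is exactly what is needed to propagate rational singularities across a single MMP contraction, once the mild rescaling $\Delta_i\mapsto(1-\epsilon)\Delta_i$ is observed to supply the correct hypotheses without destroying the MMP-negativity along $C_i$.
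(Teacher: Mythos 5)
Your plan for part (1) is fine and matches what the paper does (the paper cites Lipman's Proposition~17.1 and also gives an alternative argument via contracting $\mathbb{P}^1$'s; either is acceptable).

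Part (2) is where the argument has a genuine gap. Your Leray reduction showing that rationality of $X_{i+1}$ is equivalent to $R^1(\phi_i)_*\mathcal{O}_{X_i}=0$ is correct, and the perturbation $\Delta_i'=(1-\epsilon)\Delta_i$ is a sensible idea. But you then apply the relative Kawamata--Viehweg vanishing of [KK] directly to $\phi_i\colon X_i\to X_{i+1}$. That theorem (Theorem~\ref{relvanishing} in the paper) is stated for a proper birational morphism \emph{from a smooth surface}, and $X_i$ is only normal; rational singularities do not put you back in the stated hypotheses. You cannot invoke the theorem with source $X_i$.

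The fix is to run KV on a resolution with smooth source, namely on $\psi\colon W\to X_{i+1}$ where $f\colon W\to X_i$ is the \emph{minimal} resolution (minimality matters so that the pullback discrepancy is effective). This is exactly what the paper's Theorem~\ref{p1vanishing} does. But it introduces a new difficulty you have not addressed: writing $K_W + C_W + \Delta_W = f^*(K_{X_i}+C_i)$, the $f$-exceptional divisor $\Delta_W$ can have coefficients $\ge 1$, so one cannot take $L=\mathcal{O}_W$ in KV. The paper instead applies KV with $L=\mathcal{O}_W(-\lfloor\Delta_W\rfloor)$ (the hypotheses are then satisfied with $E=\{\Delta_W\}+C_W$ and $N=-f^*(K_{X_i}+C_i)$, which is $\psi$-nef and positive on $C_W$), obtains $R^1\psi_*\mathcal{O}_W(-\lfloor\Delta_W\rfloor)=0$, and then deduces $R^1(\phi_i)_*\mathcal{O}_{X_i}=0$ by a Grothendieck--Leray argument combined with the observation that the cokernel of $f_*\mathcal{O}_W(-\lfloor\Delta_W\rfloor)\hookrightarrow\mathcal{O}_{X_i}$ is a skyscraper sheaf. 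Your proposal skips both the passage to a smooth source and the round-down twist, and neither step is cosmetic: without them the citation of [KK] does not apply. (Incidentally, once one has Theorem~\ref{p1vanishing}, one checks directly that $(K_{X_i}+C_i)\cdot C_i<0$ from $(K_{X_i}+\Delta_i)\cdot C_i<0$, $C_i^2<0$ and $0\le\Delta_i\le 1$, so the $\epsilon$-perturbation is not needed at all.)
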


\subsection{Overview of related literature}
We summarize some literature related to this paper with respect to 
the surface theory, the minimal model theory and Keel's result (Theorem~\ref{keel}). 

\subsubsection{Surface}
The Italian school established the classification theory for smooth algebraic surfaces, 
which was generalized by Kodaira, Shafarevich's seminar and Bombieri--Mumford 
(\cite{Mum}, \cite{BMII} and \cite{BMIII}). 
Theories of log surfaces and normal surfaces have been developed by 
Iitaka, Kawamata, Miyanishi, Sakai and many others. 
See, for example, 
\cite{Sakai} and \cite{Miyanishi}. 
\cite{Fujita} established the abundance theorem 
for pairs $(X, \Delta)$ where $X$ is a smooth projective surface 
and $\Delta$ is a \Q-boundary, that is, 
$\Delta$ is a \Q-divisor such that, for the prime decomposition $\Delta=\sum_{j\in J}\delta_j\Delta_j$, 
all the coefficients $\delta_j$ satisfy $0\leq\delta_j\leq 1$. 
In characteristic zero, \cite{Fujino} generalized this result. 
More precisely, \cite{Fujino} shows that 
the abundance theorem holds 
for pairs $(X, \Delta)$ where 
$X$ is a projective normal \Q-factorial surface and $\Delta$ is an \R-boundary. 
In this paper, we generalize this result to positive characteristic. 

\subsubsection{Minimal model theory}
In characteristic zero, 
the minimal model theory has been developed 
by Kawamata, Koll\'ar, Mori, Shokurov and many others. 
See, for example, \cite{KM} and \cite{KMM}. 
To establish fundamental theorems in minimal model theory, 
we use the Kodaira vanishing theorem and its generalization. 

But, in positive characteristic, 
there exist counter-examples to the Kodaira vanishing theorem 
even in the case where the dimension is two (cf. \cite{Raynaud}). 
In \cite{T}, the author established a weak Kodaira vanishing theorem 
for positive characteristic surfaces and 
established a basepoint free theorem for klt surfaces. 
\cite{Kollar} established the contraction theorem for smooth threefolds in positive characteristic. 
\cite{Kawamata} established the minimal model program for 
semi-stable threefolds in positive characteristic. 

\cite{Fujino} established the minimal model theory 
for \Q-factorial surfaces and log canonical surfaces in characteristic zero. 
In this paper, we generalize this result to positive characteristic. 

\subsubsection{Keel's result} 
Keel's result (Theorem~\ref{keel}) is a key in this paper. 
Thus, we want to summarize some literature, related to Keel's result. 
Theorem~\ref{keel} is the surface version of \cite[Theorem~0.2]{Keel}. 
Keel's proof depends on the Frobenius maps and the theory of the algebraic spaces. 
Note that Keel's result (Theorem~\ref{keel}) holds only in positive characteristic 
(cf. \cite[Section~3]{Keel}). 
For alternative proofs of \cite[Theorem~0.2]{Keel}, 
see \cite{CMM} and \cite{FT}. 
The proofs of \cite{CMM} and \cite{FT} do not depend on the theory of algebraic spaces. 
\cite{FT} only considers the case of surfaces. 

\cite{Keel} also shows the basepoint free theorem for \Q-factorial threefolds 
over $\overline{\mathbb F}_p$ with non-negative Kodaira dimension. 
Over $\overline{\mathbb F}_p$, we can often obtain some strong results. 
The reason is due to Corollary~\ref{pic}. 
See also \cite{Artin}, \cite{Keel2}, \cite{Masek} and \cite{Totaro}.

\subsection{Overview of contents}
In Section~2, we summarize the notations and two known results: 
Keel's result (Theorem~\ref{keel}) and Fact~\ref{principle}, 
which play crucial roles in this paper. 

In Section~3, we prove the case (QF) of Theorem~\ref{0mmp} and Theorem~\ref{0abundance}. 
To show the case (QF) of Theorem~\ref{0mmp}, 
we establish the cone theorem and the contraction theorem. 
The cone theorem follows from Mori's Bend and Break and the minimal resolution. 
We consider the Bend and Break for proper normal surfaces in Section~3.1. 
The contraction theorem (Theorem~\ref{cont}) is obtained by Keel's result (Theorem~\ref{keel}). 

To show the case (QF) of Theorem~\ref{0abundance}, 
we divide the argument into the two cases: 
$k\neq \overline{\mathbb F}_p$ and $k=\overline{\mathbb F}_p$. 
Since we treat the case $k=\overline{\mathbb F}_p$ in Section~4, 
we prove Theorem~\ref{0abundance} only for the case $k\neq\overline{\mathbb F}_p$. 
By a standard argument, we may assume that $\Delta$ is a \Q-divisor (Section~9). 
First, we prove $\kappa:=\kappa(X, K_X+\Delta)\geq 0$ (Theorem~\ref{nonv}). 
This follows from the same argument as \cite{Fujino}. 
Second, we consider the three cases: $\kappa=0$, $\kappa=1$ and $\kappa=2$. 
If $\kappa=1$, then the assertion follows from a more general known result (Proposition~\ref{kappa1}). 
By using Keel's result and the contraction theorem, 
we can prove the case of $\kappa=2$ (Proposition~\ref{nefbig}). 
In the case where $\kappa=0$  (Theorem~\ref{qabundancekappazero}), 
we use the arguments in \cite{Fujino} and \cite{Fujita}, which depends on the classification of smooth surfaces. 

In Section~4, 
we prove the case (FP) of Theorem~\ref{0mmp} and Theorem~\ref{0abundance}. 
The case (FP) of Theorem~\ref{0mmp} follows from Keel's result (Theorem~\ref{keel}) and Corollary~\ref{pic}. 
The proof of the case (FP) of Theorem~\ref{0abundance} is almost all the same as \cite{Masek}. 
In Section~4, we also show that normal surfaces over $\overline{\mathbb F}_p$ are \Q-factorial (Section~4.3). 

In Section~5, 
we consider the case (LC) of 
Theorem~\ref{0mmp} and Theorem~\ref{0abundance}. 
To show the case (LC) of Theorem~\ref{0mmp}, 
we describe the log canonical surface singularities by using results obtained in Section~3. 
This is discussed in Section~5.1. 
The case (LC) of Theorem~\ref{0abundance} follows from a known result (cf. \cite{Fujita}).

In Section~6, 
we generalize the results in Section~3, Section~4 and Section~5 to the relative situations. 
The relative version of Theorem~\ref{0mmp} and Theorem~\ref{0abundance} are 
Theorem~\ref{relmmp} and Corollary~\ref{relabundance}, respectively. 

In Appendix A and Appendix B, 
we prove Theorem~\ref{0bpf} and Theorem~\ref{0rational}. 
Note that, in characteristic zero, 
the basepoint free theorem holds for log canonical varieties (cf. \cite[Theorem~13.1]{Fujino1}). 
Its proof heavily depends on the Kodaira vanishing theorem and its generalizations. 
Although, in positive characteristic, there exist counter-examples to the Kodaira vanishing theorem (cf. \cite{Raynaud}), 
we can establish the basepoint free theorem for surfaces (Theorem~\ref{0bpf}). 
Our proof depends on the Keel's result (Theorem~\ref{keel}), the classification of smooth surfaces and 
the Riemann--Roch theorem. 
Theorem~\ref{0rational} follows from the relative Kawamata--Viehweg vanishing theorem 
for birational morphisms of surfaces.

\begin{ack}
The author would like to 
thank Professor Osamu Fujino and 
Professor Kenji Matsuki for many comments and discussions. 
He wishes to thank Professor Lucian B\u{a}descu 
for many valuable comments. 
He thanks Professor Atsushi Moriwaki 
for warm encouragement. 
He also thanks the referee for many valuable comments and suggestions. 
The author is partially supported by JSPS Fellowships for Young Scientists. 
\end{ack}

\section{Notation and known results}

\subsection{Notation}
We will freely use the notation and terminology in \cite{KM}. 

We will not distinguish the notations line bundles, invertible sheaves and Cartier divisors. 
For example, we will write $L+M$ for line bundles $L$ and $M$. 

Throughout this paper, 
we work over an algebraically closed field $k$, 
whose characteristic ${\rm char}\,k=:p$\,\, 
supposed to be positive 
unless otherwise mentioned.

In this paper, a {\em variety} means 
an integral scheme which is separated and of finite type over $k$. 
A {\em curve} or a {\em surface} means a variety 
whose dimension is one or two, respectively. 

Let $D$ be an \R-divisor and 
let $D=\sum_{j\in J} d_jD_j$ be its prime decomposition. 
For a real number $a$, we define 
$D\geq a$ by $d_j\geq a$ for all $j\in J$. 
We define $D\leq a$, $D> a$ and $D<a$ in the same way. 

We say $D$ is an {\em $\mathbb{R}$-boundary} 
(resp. a {\em \Q-boundary}) if 
$D$ is an $\mathbb{R}$-divisor (resp. a \Q-divisor) and 
$0\leq D\leq 1$. 

\begin{dfn}[Semi-ample \R-divisors]
Let $\pi:X\to S$ be a proper morphism between varieties. 
Let $D$ be an \R-Cartier \R-divisor. 
We say $D$ is {\em $\pi$-semi-ample} if 
$$D=\sum_{1\leq i\leq N} d_iD_i$$
where $d_i\in\mathbb R_{\geq 0}$ and 
$D_i$ is a $\pi$-semi-ample Cartier divisor for every $i$. 
For more details, see \cite[Section~4]{Fujino1}.
\end{dfn}

\subsection{Known results}

\subsubsection{Keel's result}

In positive characteristic, 
we can not use the Kodaira vanishing theorem. 
But we can use the following theorem by Keel. 
The following assertion is 
the surface version of the original theorem by Keel. 
For an alternative proof, see \cite{CMM} and \cite[Section~2]{FT}.  

\begin{thm}[Keel's result]\label{keel}
Let $X$ be a projective normal surface over 
an algebraically closed field $k$ of positive characteristic. 
Let $L$ be a nef and big line bundle. 
Let $E(L)$ be the reduced subscheme whose support is 
the union of all the curves $C$ with $L\cdot C=0$. 
Then, $L$ is semi-ample iff $L|_{E(L)}$ is semi-ample. 
\end{thm}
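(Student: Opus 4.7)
The forward implication is immediate: if $L$ is semi-ample on $X$, then some positive multiple $mL$ is globally generated, hence so is $(mL)|_{E(L)}$, and therefore $L|_{E(L)}$ is semi-ample. The content of the theorem is the reverse direction, so assume that $L|_{E(L)}$ is semi-ample.

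My plan is to exploit the absolute Frobenius morphism $F\colon X\to X$, which is available because $\mathrm{char}\,k=p>0$ and satisfies $F^*L\cong L^{\otimes p}$. After replacing $L$ by a sufficiently divisible positive multiple, I may assume that $L|_{E(L)}$ is globally generated and defines a morphism $\phi_E\colon E(L)\to Y$ contracting exactly the curves $C\subset E(L)$ with $L\cdot C=0$. The aim is to extend $\phi_E$ to a morphism $\phi\colon X\to Z$ whose pullback of an ample line bundle on $Z$ is a positive multiple of $L$; this would establish semi-ampleness.

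The technical heart is a lifting statement: for $m$ in a suitable arithmetic progression, the restriction map $H^0(X,L^m)\to H^0(E(L),L^m|_{E(L)})$ should be surjective, with its image generating $L^m|_{E(L)}$. Combined with the fact that, by the very definition of $E(L)$, every irreducible curve $C\subset X$ outside $E(L)$ satisfies $L\cdot C>0$, this will force $L^m$ to be basepoint free. Surjectivity of the restriction comes down to controlling the obstruction class in $H^1(X,I_{E(L)}\otimes L^m)$ arising from the short exact sequence $0\to I_{E(L)}\otimes L^m\to L^m\to L^m|_{E(L)}\to 0$. With Kodaira vanishing unavailable in positive characteristic (cf.\ \cite{Raynaud}), the substitute is the observation that the iterated Frobenius pullback acts on this finite-dimensional obstruction space in a way that eventually annihilates any fixed class; hence sections on $E(L)$ lift to sections of $L^{p^e m}$ for $e\gg 0$.

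The main obstacle is making the Frobenius-killing argument rigorous and then gluing the lifted linear series on $E(L)$ with the local positivity of $L$ on $X\setminus E(L)$ to produce an honest morphism on all of $X$. In \cite{Keel}, this gluing is performed using the theory of algebraic spaces. For the surface case at hand, the alternative approaches of \cite{CMM} and \cite{FT} avoid algebraic spaces and stay within schemes, which is the route I would follow.
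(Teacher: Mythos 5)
The paper imports this theorem as a black box: its "proof" is the single line ``See \cite[Theorem~0.2]{Keel},'' so there is no internal argument to compare yours against.

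Your overall framing --- forward direction immediate, the substance is lifting the linear series from $E(L)$ to $X$, and Frobenius replaces Kodaira vanishing --- is faithful to Keel's strategy and to the surface-specific reproofs in \cite{CMM} and \cite{FT}. But the step you present as an ``observation,'' that iterated Frobenius pullback eventually annihilates any fixed class in the obstruction space, is where the real work lives, and it does not hold as stated. The absolute Frobenius does not act on $H^1(X, I_{E(L)} \otimes L^m)$: pulling back gives $F^* I_{E(L)} = I_{E(L)}^{[p]}$, the ideal generated by $p$-th powers, which is strictly smaller than $I_{E(L)}$, so the iterates land in a chain of \emph{different} groups $H^1(X, I_{E(L)}^{[p^e]} \otimes L^{p^e m})$, and finite-dimensionality of each term alone produces no vanishing of the transported class. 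The arguments that actually succeed work simultaneously with all infinitesimal thickenings $nE(L)$ of $E(L)$, exploit the strict positivity of $L$ on curves outside $E(L)$ to control the higher cohomology of the graded pieces $I_{E(L)}^n/I_{E(L)}^{n+1} \otimes L^m$, and (in Keel's original treatment) first produce the contraction as a morphism to a proper algebraic space via Artin's theorem and only afterward show the target is a scheme. You correctly flag the gluing step as hard and defer it to the literature, but the Frobenius-killing step is itself a genuine gap, not an observation; as written the proposal outlines the shape of the argument without supplying the mechanism that makes it work.
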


\begin{proof}
See \cite[Theorem 0.2]{Keel}.
\end{proof}

\subsubsection{Difference between $k\neq\overline{\mathbb{F}}_p$ and 
$k=\overline{\mathbb{F}}_p$}
In this paper, we often divide the argument 
into the two cases: (1)$k \neq \fff$ and (2)$k=\fff$. 
The reason for this comes from the following fact. 

\begin{fact}\label{principle}
Let $k$ be an algebraically closed field of arbitrary characteristic.\\
$(1)$If $k \neq \fff$, all abelian varieties over $k$ have infinite rank.\\
$(2)$If $k=\fff$, all group schemes 
of finite type over $\overline{\mathbb{F}}_p$ are torsion groups. 
\end{fact}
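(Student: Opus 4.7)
The plan is to prove (2) by a descent argument to a finite field, and to prove (1) by a dichotomy on the cardinality of $k$.

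For (2), every group scheme $G$ of finite type over $\overline{\mathbb{F}}_p$ is cut out by finitely many polynomials (both for the scheme structure and for the group-law morphisms) whose coefficients lie in some common finite subfield $\mathbb{F}_{p^n}$. Hence $G\simeq G_0\times_{\mathbb{F}_{p^n}}\overline{\mathbb{F}}_p$ for some $G_0$ of finite type over $\mathbb{F}_{p^n}$, and
\[
G(\overline{\mathbb{F}}_p)=\bigcup_{m\geq n}G_0(\mathbb{F}_{p^m}).
\]
Each $G_0(\mathbb{F}_{p^m})$ is a finite subgroup, since $G_0$ is of finite type over the finite field $\mathbb{F}_{p^m}$, so every element of $G(\overline{\mathbb{F}}_p)$ has finite order.

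For (1), I would first record two general facts valid for any abelian variety $A$ over any algebraically closed field $k$: the group $A(k)$ is divisible (the multiplication-by-$n$ isogeny is surjective on $k$-points), and its torsion subgroup is at most countable (since each $A[n](k)$ is finite). The proof then splits into cases according to the size of $k$. If $k$ is uncountable, then $|A(k)|=|k|$ is uncountable while $|A(k)_{\mathrm{tors}}|\leq \aleph_0$, so the quotient $A(k)/A(k)_{\mathrm{tors}}$ is uncountable, torsion-free and divisible. Such a group is a $\mathbb{Q}$-vector space of uncountable dimension, and in particular $A(k)$ has infinite rank.

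If $k$ is countable and $k\neq \overline{\mathbb{F}}_p$, I would descend $A$ to a finitely generated subfield $k_0\subset k$; by hypothesis $k_0$ is not algebraic over $\mathbb{F}_p$, so its algebraic closure $\overline{k_0}\subset k$ is a countable algebraically closed field different from $\overline{\mathbb{F}}_p$, and the points of $A$ over $\overline{k_0}$ inject into $A(k)$. One then invokes the classical infinite-rank theorems on these descended models: the theorem of Frey--Jarden in characteristic zero, and, in characteristic $p$, the Lang--N\'eron decomposition combined with the existence of infinitely many independent sections for non-constant abelian varieties over positive-dimensional function fields. The main obstacle will be exactly this last step, namely assembling the countable case from the literature; the uncountable case and assertion (2) are formal manipulations with cardinalities, divisibility, and descent to a finite base field.
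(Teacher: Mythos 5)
Your proof of part (2) is correct and essentially reproduces the paper's own argument: spread the group scheme and its group-law morphisms out to some $\mathbb{F}_{p^n}$, then observe that every $\overline{\mathbb{F}}_p$-point already lies in $G_0(\mathbb{F}_{p^m})$ for some $m$, a finite group.

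For part (1), the paper does nothing more than cite \cite[Theorem~10.1]{FJ}, whereas your approach is genuinely different for uncountable $k$: the cardinality argument (\ $A(k)$ has cardinality $|k|$, is divisible, and has only countably many torsion points, so $A(k)\otimes\mathbb{Q}$ is an uncountable-dimensional $\mathbb{Q}$-vector space) is elementary, self-contained, and correct. This buys a short, citation-free proof in the most common situations (e.g.\ $k=\mathbb{C}$), at the cost of a case split; the paper's one-line citation is shorter but opaque. For the countable case you fall back on the same literature the paper does, so the two routes converge there. One small inaccuracy in your countable case: after descending $A$ to a finitely generated subfield $k_0\subset k$, it is not automatic that ``$k_0$ is not algebraic over $\mathbb{F}_p$'' --- if $A$ happens to be defined over a finite field, then $k_0$ can be finite, and $\overline{k_0}=\overline{\mathbb{F}}_p$, which is exactly the field you need to avoid. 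The fix is harmless but should be said: since $k\neq\overline{\mathbb{F}}_p$ and $k$ is algebraically closed, $k$ contains an element $t$ transcendental over the prime field (trivially in characteristic $0$), so replace $k_0$ by $k_0(t)$ before taking the algebraic closure inside $k$. With that adjustment, $\overline{k_0(t)}$ is a countable algebraically closed field $\neq\overline{\mathbb{F}}_p$, and the reduction goes through as you intend. You already flag this reduction as ``the main obstacle,'' which is fair; the heavy lifting in the countable case (Frey--Jarden in characteristic $0$, Lang--N\'eron and non-constant sections in characteristic $p$) really is the content of the theorem being cited.
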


\begin{proof}
(1)See \cite[Theorem 10.1]{FJ}.\\
(2)Let $X$ be a group scheme of finite type 
over $\overline{\mathbb{F}}_p$. 
Let $P$ be a closed point of $X$. 
Then we see that 
$P$ and $X$ are defined over a finite field. 
Thus we can consider $P$ as a rational point of 
a group scheme of finite type over a finite field. 
Since this group is finite, $P$ is a torsion. 
\end{proof}

As a corollary, 
we obtain the following information on the line bundles of 
varieties over $\overline{\mathbb{F}}_p$. 

\begin{cor}\label{pic}
Let $X$ be a projective variety over $\overline{\mathbb{F}}_p$ 
and let $D$ be a Cartier divisor. 
If $D \equiv 0$, then $D$ is a torsion in $\Pic\,X$. 
\end{cor}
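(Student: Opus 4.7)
The plan is to identify the divisor $D$ as a $k$-point of a group scheme of finite type over $k=\overline{\mathbb{F}}_p$, and then invoke Fact~\ref{principle}(2) to conclude it is torsion.

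First, I would recall the standard structure of the Picard scheme. For a projective variety $X$ over an algebraically closed field $k$, the functor $\mathrm{Pic}_{X/k}$ is represented by a group scheme locally of finite type over $k$, whose identity component $\mathrm{Pic}^{0}_{X/k}$ is of finite type. The subgroup $\mathrm{Pic}^{\tau}_{X/k}\subset\mathrm{Pic}_{X/k}$ of numerically trivial line bundles contains $\mathrm{Pic}^{0}_{X/k}$ as an open subgroup scheme, and the quotient $\mathrm{Pic}^{\tau}_{X/k}/\mathrm{Pic}^{0}_{X/k}$ is a finite group (this is the theorem of Matsusaka--Neron--Severi type, ensuring finiteness of the torsion part of $\mathrm{NS}$ modulo $\mathrm{Pic}^{0}$). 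Consequently $\mathrm{Pic}^{\tau}_{X/k}$ is a group scheme of finite type over $k$.

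Next, the hypothesis $D\equiv 0$ means exactly that the class of $\mathcal{O}_X(D)$ lies in $\mathrm{Pic}^{\tau}_{X/k}(k)$. Since we are working over $k=\overline{\mathbb{F}}_p$ and $\mathrm{Pic}^{\tau}_{X/k}$ is a group scheme of finite type over this field, Fact~\ref{principle}(2) applies: every $\overline{\mathbb{F}}_p$-point of such a group scheme is torsion. Hence $\mathcal{O}_X(D)$ is torsion in $\mathrm{Pic}(X)$, which is precisely the desired conclusion.

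The only potentially delicate point is citing the representability of $\mathrm{Pic}_{X/k}$ and the fact that $\mathrm{Pic}^{\tau}_{X/k}$ is of finite type (equivalently, that the torsion part of the Neron--Severi group is finite); these are classical results of Grothendieck and Matsusaka and I would simply quote them. Once this input is available, the argument is an immediate application of Fact~\ref{principle}(2) and requires no further computation.
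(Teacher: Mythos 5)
Your proof is correct and follows essentially the same route as the paper: consider the Picard scheme (specifically $\mathrm{Pic}^{\tau}_{X/k}$, which is of finite type) and apply Fact~\ref{principle}(2). You have simply spelled out the representability and finiteness inputs that the paper leaves to the citation of Keel's Lemma~2.16.
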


\begin{proof}
Consider the Picard space of $X$ and apply Fact~\ref{principle}. 
For more details, see \cite[Lemma~2.16]{Keel}.
\end{proof}

\section{\Q-factorial surfaces}

\subsection{Bend and Break}

In this section, we consider Mori's Bend-and-Break 
for proper normal surfaces. 
We use the following intersection theory for normal surfaces by \cite{Mumford1}.

\begin{dfn}[Intersection theory by Mumford]
Let $X$ be a normal surface and 
let $f: X' \to X$ be a resolution of singularities. 
Let $E_1, \cdots, E_n$ be the exceptional curves of $f$. 
Let $C$ be a proper curve in $X$ and 
let $D$ be an \R-divisor on $X$. 
Let $C'$ and $D'$ be their proper transforms respectively. 
\begin{enumerate}

\item{We define $f^*D:=D'+\sum e_iE_i$ where all $e_i$ are real numbers 
uniquely determined by the linear equations $(D'+\sum e_iE_i)\cdot E_j=0$ for $j=1, \cdots, n$. 
Note that the intersection matrix $(E_i\cdot E_j)$ is negative definite (cf. \cite[Lemma~3.40]{KM}).}

\item{We define the intersection pairing by $C\cdot D:=f^*C\cdot f^*D =C'\cdot f^*D$.}

\item{If $X$ is proper, then we can 
naturally extend this intersection theory to Weil divisors 
with $\mathbb{Q}$ or $\mathbb{R}$ coefficients by linearity.}
\end{enumerate}
\end{dfn}

\begin{dfn}
Let $X$ be a proper normal surface and 
let $D$ and $D'$ be $\mathbb{R}$-divisors. 
We denote $D\equiv_{\rm Mum} D'$ if $D\cdot C=D'\cdot C$ for every curve $C$ in $X$.
\end{dfn}

Let us define the Bend-and-Break. 
This is the key for the proof of the cone theorem. 

\begin{dfn}[Bend-and-Break]\label{defbb}
Let $X$ be a proper normal surface. 
We say $X$ satisfies {\em Bend-and-Break} 
if $X$ satisfies the following two conditions (BBI) and (BBII):
\begin{enumerate}
\item[(BBI)]{
If $Z$ is a rational curve in $X$, 
then $Z \equiv_{\rm Mum} Z_1+ \dots +Z_r$, 
where each $Z_i$ is a rational curve and $-Z_i\cdot K_X \leq 3$.}

\item[(BBII)]{
Let $C$ be a curve in $X$ with $C\cdot K_X<0$. 
Then for an arbitrary point $c_0 \in C \setminus {\rm Sing}~X$, 
there exists a positive integer $n(X, C, c_0)$ with the following conditions. 
For an arbitrary positive integer $n$ with $n \geq n(X, C, c_0)$, 
there exist a non-negative integer $\alpha_n$, 
a curve $C_n$ and an effective 1-cycle $Z_n$ 
with the following four conditions:
\begin{enumerate}
\item{$p^n C \equiv_{\rm Mum} \alpha_nC_n + Z_n$.}
\item{$Z_n = Z_{n,1} + \dots + Z_{n,r_n}$, 
where each $Z_{n,i}$ is a rational curve.} 
\item{$-\alpha_nC_n\cdot K_X \leq 2g(C_{\rm normal})$ 
where $C_{\rm normal}$ is the normalization of $C$.} 
\item{$c_0 \in \Supp Z_n$.} 
\end{enumerate}
}
\end{enumerate}
\end{dfn}

The smooth case is the original Bend-and-Break proven by Mori.

\begin{prop}
If $X$ is a projective smooth surface. 
then $X$ satisfies Bend-and-Break. 
\end{prop}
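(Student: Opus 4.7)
The plan is to reduce both (BBI) and (BBII) to Mori's classical breaking lemma for Hom schemes of morphisms from smooth curves to $X$. In the smooth projective surface case, Mumford numerical equivalence coincides with ordinary numerical equivalence and every intersection number with $K_X$ is an integer, so the standard statement applies directly. The two inputs are (i) the dimension estimate
$$
\dim_{[f]}\Hom(C_0,X)\;\geq\;-K_X\cdot f_*[C_0]+(\dim X)(1-g(C_0))
$$
for any morphism $f\colon C_0\to X$ from a smooth projective curve $C_0$, together with its refinement where prescribed points of $C_0$ are required to map to prescribed points of $X$ (each such condition costing $\dim X$); and (ii) the breaking principle: a nontrivial one-parameter deformation of $f\colon\mathbb{P}^1\to X$ fixing two points of the target, or of a higher-genus $f\colon C_0\to X$ fixing one point, forces the Chow limit of the family to be a reducible effective $1$-cycle whose moving components are rational.

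For (BBI), normalize a rational curve $Z$ as $f\colon\mathbb{P}^1\to Z\hookrightarrow X$. When $-K_X\cdot Z\geq 4$ the estimate with two fixed points gives
$$
\dim\Hom(\mathbb{P}^1,X;\,0\mapsto p,\infty\mapsto q)\;\geq\;-K_X\cdot Z-2\;\geq\;2,
$$
so a nontrivial one-parameter deformation exists and the breaking principle yields $Z\equiv_{\mathrm{Mum}}\sum_i Z_i$ with each $Z_i$ rational. Iterating on any component with $-K_X\cdot Z_i\geq 4$ terminates, for instance by lexicographic descent on the pair $(\max_i(-K_X\cdot Z_i),\,\#\{i:-K_X\cdot Z_i\text{ is maximal}\})$, and leaves the asserted decomposition with $-K_X\cdot Z_i\leq 3$.

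For (BBII), let $\nu\colon C_{\rm normal}\to C\hookrightarrow X$ be the normalization and let $F^n$ be the $n$-fold absolute Frobenius of $C_{\rm normal}$; set $f_n:=\nu\circ F^n$, so that $(f_n)_*[C_{\rm normal}]=p^n[C]$. Since $c_0\in C\setminus{\rm Sing}\,X$ is in particular a smooth point of $C$, its preimage $\tilde c_0$ in $C_{\rm normal}$ is a single point. The dimension estimate with one fixed point gives
$$
\dim_{[f_n]}\Hom\bigl(C_{\rm normal},X;\,\tilde c_0\mapsto c_0\bigr)\;\geq\;p^n(-K_X\cdot C)-2\,g(C_{\rm normal}),
$$
which is positive for all $n$ larger than some explicit threshold $n(X,C,c_0)$. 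For such $n$, a nontrivial one-parameter deformation of $f_n$ fixing $\tilde c_0$ exists; its Chow limit realises $p^n C\equiv_{\mathrm{Mum}}\alpha_n C_n+Z_n$, where $Z_n$ collects the moving components (each dominated by a $\mathbb{P}^1$-family, hence rational) and, by construction, $c_0\in\Supp Z_n$, while $C_n$ absorbs the rigid part with multiplicity $\alpha_n$; the bound $-\alpha_n C_n\cdot K_X\leq 2\,g(C_{\rm normal})$ falls out of the same dimension count, since any excess beyond $2g(C_{\rm normal})$ would supply room for a further nontrivial deformation of $C_n$ itself, contradicting its rigidity.

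The main technical obstacle is the bookkeeping in (BBII): one has to arrange the peeling-off procedure so that it distinguishes precisely the rational moving components from the residual $C_n$ and delivers the sharp genus-based degree bound, while keeping track of $c_0$ in the moving part. This is Mori's original bend-and-break argument in its Frobenius-twisted form; smoothness of $X$ is used to make $K_X$ a Cartier divisor and to guarantee that the $\Hom$-scheme dimension estimate applies without singular corrections. A detailed implementation follows \cite[Chapter~1]{KM}.
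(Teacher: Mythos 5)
Your proposal is essentially the same as the paper's: the paper simply cites Mori's Theorems~4 and~5 and their proofs, and you have reconstructed that argument --- the Hom-scheme dimension estimate, the breaking principle with two fixed points for (BBI), and the Frobenius twist plus one fixed point for (BBII). The computations (threshold $-K_X\cdot Z\geq 4$ for a surface, the bound $p^n(-K_X\cdot C)-2g$ with one fixed point) are correct, and the technical points you flag at the end are precisely the parts the paper defers to Mori.
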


\begin{proof}
See \cite[Theorem~4 and Theorem~5]{Mori1} and their proofs.
\end{proof}

Using this result, 
we extend the Bend-and-Break to the proper normal surfaces.

\begin{prop}\label{propbb}
If $X$ is a proper normal surface, 
then $X$ satisfies Bend-and-Break. 
\end{prop}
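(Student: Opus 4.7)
The plan is to reduce the statement to the smooth case just recorded by passing to the minimal resolution $f\colon Y \to X$ of the proper normal surface $X$. Since $X$ is proper, so is $Y$, and because every smooth proper surface over an algebraically closed field is projective, the preceding proposition applies to $Y$. Writing $K_Y = f^*K_X + \sum a_j E_j$, the minimality of $f$ gives $K_Y \cdot E_i \geq 0$ for every $f$-exceptional curve $E_i$, hence $(\sum_j a_j E_j) \cdot E_i \geq 0$ for all $i$, and the standard lemma based on the negative-definiteness of the intersection matrix of $\sum E_j$ then forces $a_j \leq 0$ for all $j$. This non-positivity of the discrepancies is exactly what is needed to transfer upper bounds on $-K_Y \cdot (-)$ upstairs to upper bounds on $-K_X \cdot (-)$ downstairs.

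For (BBI), take a rational curve $Z \subset X$ and let $Z'$ be its strict transform on $Y$; since $Z'$ has the same normalization as $Z$, it is again a rational curve. Apply the smooth Bend-and-Break on $Y$ to obtain $Z' \equiv Z'_1 + \cdots + Z'_r$ with each $Z'_i$ rational and $-K_Y \cdot Z'_i \leq 3$. Set $Z_i := f_* Z'_i$, discarding those $Z'_i$ that are $f$-exceptional, since they push forward to zero. The projection formula built into Mumford's intersection theory gives $D \cdot Z_i = f^*D \cdot Z'_i$ for every divisor $D$ on $X$, and hence $Z \equiv_{\rm Mum} \sum_i Z_i$. Moreover
\[
-K_X \cdot Z_i \;=\; -\bigl(K_Y - \textstyle\sum_j a_j E_j\bigr)\cdot Z'_i \;=\; -K_Y \cdot Z'_i + \sum_j a_j (E_j \cdot Z'_i) \;\leq\; -K_Y \cdot Z'_i \;\leq\; 3,
\]
because each surviving $Z'_i$ is non-exceptional and therefore meets every $E_j$ non-negatively while $a_j \leq 0$.

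For (BBII), let $C \subset X$ satisfy $C \cdot K_X < 0$ and pick $c_0 \in C \setminus {\rm Sing}\,X$. Its strict transform $C' \subset Y$ contains $c_0$ (under the identification by $f$), and the same discrepancy computation yields $C' \cdot K_Y \leq C \cdot K_X < 0$. Apply (BBII) on the smooth projective $Y$ at $c_0$ to produce, for all $n \geq n(Y, C', c_0)$, an upstairs decomposition $p^n C' \equiv \alpha_n C'_n + Z'_n$ satisfying the four listed properties. Push everything forward: $C_n := f_* C'_n$ and $Z_n := f_* Z'_n$, where we simply reset $\alpha_n := 0$ in the edge case that $C'_n$ itself is exceptional. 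Property (a) follows from the projection formula as before; (b) is immediate since each $Z'_{n,i}$ is rational and $f_*$ preserves rationality; (c) comes from the inequality $-\alpha_n K_X \cdot C_n \leq -\alpha_n K_Y \cdot C'_n \leq 2g(C_{\rm normal})$ by the same discrepancy argument; and (d) holds because $c_0 \notin \Exc(f)$ forces the component of $Z'_n$ through $c_0$ to be non-exceptional, hence to survive push-forward and place $c_0$ in $\Supp Z_n$.

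The main obstacle is the bookkeeping around the discrepancy signs and around push-forwards of exceptional components: one must invoke the minimal resolution to secure $a_j \leq 0$, and one must track which summands become zero after $f_*$ so as not to spoil the numerical or the support conditions of the decompositions. Once these are handled, the proposition is a clean transfer of Mori's smooth Bend-and-Break via $f^*$ and $f_*$.
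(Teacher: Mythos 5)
Your proof is correct and takes essentially the same route as the paper: pass to the minimal resolution, use $K_Y = f^*K_X + \sum a_j E_j$ with $a_j \leq 0$ (from minimality plus negative-definiteness) to compare $K_Y$-degrees with $K_X$-degrees of non-exceptional curves, and push the smooth-surface decompositions down via $f_*$ using Mumford's projection formula. The only difference is that you spell out the exceptional-component bookkeeping (discarding exceptional $Z'_i$, resetting $\alpha_n:=0$ when $C'_n$ is exceptional, and noting why $c_0$ survives in $\Supp Z_n$) which the paper leaves as "easy to see."
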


\begin{proof}
Let $f:X' \to X$ be the minimal resolution and 
$K_{X'}+\sum{e_i E_i}=f^*(K_X)$ where 
all $E_i$ are exceptional curves and $e_i \geq 0$.

(BBI): Let $Z$ be a rational curve in $X$ and 
let $Z'$ be its proper transform. 
Here, $Z'$ is rational. 
Since $X'$ is smooth, $X'$ satisfies (BBI). 
Therefore, $Z' \equiv Z'_1+ \dots +Z'_r$ and 
all $Z_i'$ are rational curves and $-Z_i'\cdot K_{X'} \leq 3$. 
Apply $f_*$ to this equation. 
We obtain that $Z \equiv_{\rm Mum} Z_1+ \dots +Z_r$, where $f_*Z'_i=Z_i$. 
Note that $Z_i$ may be zero. 
But if all of $Z_i$ are zero, then we have $Z \equiv_{\rm Mum} 0$. 
This is a contradiction. 
Moreover, the above relation between $K_X$ and $K_{X'}$ shows that 
$-Z_i\cdot K_X \leq 3$.

(BBII): Let $C$ be a curve in $X$ with $C\cdot K_X<0$ and 
let $C'$ be its proper transform. 
We see $C'\cdot K_{X'}<0$ from the above relation between canonical divisors. 
Let $c_0$ be an arbitrary element of $C \setminus {\rm Sing} X$ and 
let $c_0'$ be a point of $C'$ such that $f(c_0')=c_0$. 
Since $X'$ is smooth, $X'$ satisfies (BBII). 
Thus we obtain $n(X', C', c_0')$, 
$\alpha_n'$, $C_n'$ and $Z_n'$. 
Let $n(X, C, c_0):=n(X', C', c_0')$, 
$\alpha_nC_n:=f_*(\alpha_n'C_n')$ and $Z_n:=f_*(Z_n')$. 
It is easy to see that these satisfy (BBII). 
\end{proof}

From now on, let us generalize this result for pairs $(X, \Delta)$.

\begin{dfn}[($K_X+\Delta$)-Bend-and-Break]\label{deflogbb}
Let $X$ be a proper normal surface and 
let $\Delta$ be an effective $\mathbb{R}$-divisor. 
Let $\Delta=\sum b_iB_i$ be its prime decomposition. 
We say $(X, \Delta)$ satisfies $(K_X+\Delta)$-Bend-and-Break if 
$X$ and $\Delta$ satisfy the following two conditions (BBI) and (BBII):
\begin{enumerate}
\item[(BBI)]{
There exists a positive integer $L(X, \Delta)$ satisfying (1) and (2).
\begin{enumerate}
\item[(1)]{If $Z$ is a rational curve in $X$, then 
$Z \equiv_{\rm Mum} Z_1+ \dots +Z_r$, where 
all $Z_i$ are rational curves 
and $-Z_i\cdot (K_X+\Delta) \leq L(X, \Delta)$.}

\item[(2)]{If $B_i^2<0$, then $-B_i\cdot (K_X+\Delta) \leq L(X, \Delta)$.}
\end{enumerate}
}

\item[(BBII)]{
Let $C$ be a curve in $X$ with 
$C\cdot (K_X+\Delta)<0$ and $C\neq B_i$ for all $i$ such that $B_i^2<0$. 
Then, for an arbitrary point $c_0 \in C \setminus {\rm Sing} X$, 
there exists a positive integer $n(X, \Delta, C, c_0)$ 
with the following conditions. 
For an arbitrary integer $n$ with $n \geq n(X, \Delta, C, c_0)$, 
there exist a non-negative integer $\alpha_n$, a curve $C_n$ 
and an effective 1-cycle $Z_n$ with the following four conditions:
\begin{enumerate}
\item{$p^n C \equiv_{\rm Mum} \alpha_n C_n + Z_n$.}
\item{$Z_n = Z_{n,1} + \dots + Z_{n,r_n}$, where 
all $Z_{n,i}$ are rational curves.}
\item{$-\alpha_nC_n\cdot (K_X+\Delta) \leq 2g(C_{\rm normal})$ where 
$C_{\rm normal}$ is the normalization of $C$ or 
$C_n=B_i$ for some $i$ such that $B_i^2<0$.}
\item{$c_0 \in \Supp Z_n$.}
\end{enumerate}
}
\end{enumerate}
\end{dfn}

We obtain the following main result in this section.

\begin{thm}\label{theoremlogbb}
If $X$ is a proper normal surface 
and $\Delta$ is an effective $\mathbb{R}$-divisor, 
then $(X, \Delta)$ satisfies $(K_X+\Delta)$-Bend-and-Break.
\end{thm}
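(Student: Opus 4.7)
My plan is to bootstrap from Proposition~\ref{propbb} (the case $\Delta=0$) using the effectivity of $\Delta$. The key observation is that for a prime divisor $C$ on $X$ which is \emph{not} some component $B_j$ of $\Delta$ with $B_j^2<0$, one automatically has $C\cdot\Delta\geq 0$: if $C$ is not among the $B_j$, this is clear since $C\cdot B_l\geq 0$ for $l$ with $B_l\neq C$; if $C=B_j$ with $B_j^2\geq 0$, one writes $B_j\cdot\Delta=b_jB_j^2+\sum_{l\neq j}b_lB_j\cdot B_l$ and notes every summand is nonnegative. So the $\Delta$-contribution is ``friendly'' except precisely in the case absorbed into (BBI)(2) and into the second alternative of (BBII)(c).

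For (BBI), I would set $L(X,\Delta):=\max\bigl(3,\,\max_j\{-B_j\cdot(K_X+\Delta)\,:\,B_j^2<0\}\bigr)$, which is a finite number since $\Delta$ has only finitely many components. Condition (BBI)(2) then holds by construction. For (BBI)(1), given a rational curve $Z$, Proposition~\ref{propbb} gives $Z\equiv_{\rm Mum}Z_1+\cdots+Z_r$ with each $Z_i$ rational and $-Z_i\cdot K_X\leq 3$. The observation above then forces $-Z_i\cdot\Delta\leq 0$ whenever $Z_i$ is not a $B_j$ with $B_j^2<0$, so $-Z_i\cdot(K_X+\Delta)\leq 3\leq L(X,\Delta)$; in the remaining case the bound $-Z_i\cdot(K_X+\Delta)\leq L(X,\Delta)$ is by definition.

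For (BBII), let $C$ satisfy $C\cdot(K_X+\Delta)<0$ and $C\neq B_j$ for all $j$ with $B_j^2<0$. Applying the same observation to $C$ gives $C\cdot\Delta\geq 0$, hence $C\cdot K_X\leq C\cdot(K_X+\Delta)<0$, so Proposition~\ref{propbb} applies. Setting $n(X,\Delta,C,c_0):=n(X,C,c_0)$, for each $n\geq n(X,\Delta,C,c_0)$ one obtains $\alpha_n,C_n,Z_n$ satisfying (a), (b), (d) and $-\alpha_nC_n\cdot K_X\leq 2g(C_{\rm normal})$. If $C_n=B_j$ with $B_j^2<0$ we are in the second alternative of (c); otherwise the observation yields $-C_n\cdot\Delta\leq 0$, and hence $-\alpha_nC_n\cdot(K_X+\Delta)\leq-\alpha_nC_n\cdot K_X\leq 2g(C_{\rm normal})$.

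I do not expect a serious obstacle: the theorem is essentially a careful repackaging of Proposition~\ref{propbb}, and both the threshold $L(X,\Delta)$ and the disjunctive form of (BBII)(c) were tailored precisely for the one awkward case, namely when the distinguished curve happens to be a negative-self-intersection component of $\Delta$. The only point requiring mild care is checking that ``prime-divisor $C$ not equal to a $B_j$ with $B_j^2<0$'' exhausts the meaningful scenarios in which one wants $C\cdot\Delta\geq 0$, which the case analysis above confirms.
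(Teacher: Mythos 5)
Your proposal is correct and follows the same route as the paper: you take $L(X,\Delta)=\max\bigl(\{3\}\cup\{-(K_X+\Delta)\cdot B_\mu\,:\,B_\mu^2<0\}\bigr)$, reduce to Proposition~\ref{propbb} via $f_*$, and split into the case where the curve is a negative-self-intersection component of $\Delta$ versus the rest. Your explicit verification that $C\cdot\Delta\geq 0$ whenever $C$ is not a $B_j$ with $B_j^2<0$ (including the subcase $C=B_j$ with $B_j^2\geq 0$) is exactly the inequality the paper invokes implicitly in the step $-Z_j\cdot(K_X+\Delta)\leq -Z_j\cdot K_X$, so you have merely made the paper's reasoning more transparent.
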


\begin{proof}
We write the prime decomposition $\Delta=\sum{b_iB_i}$. 

(BBI):
Let 
$$L(X, \Delta):=\max(\{3\} \cup \{-(K_X+\Delta)\cdot B_{\mu}\})$$
where $B_{\mu}$ ranges over the prime components of $\Delta$ with $B_{\mu}^2<0.$ 
We check the conditions (1) and (2).
But, $(2)$ is obvious. 
Thus, let us prove (1). 
Let $Z$ be a rational curve in $X$. 
By Proposition~\ref{propbb}, 
we have $Z \equiv_{\rm Mum} Z_1+ \dots +Z_r$ 
where any $Z_j$ is rational and 
satisfies $-Z_j\cdot K_X \leq 3$. 
If $Z_j=B_{\mu}$ with $B_{\mu}^2<0$, 
then we obtain $-Z_j\cdot (K_X+\Delta) \leq L(X, \Delta)$. 
If $Z_j \neq B_{\mu}$, then we have 
$$-Z_j\cdot (K_X+\Delta) \leq -Z_j\cdot K_X \leq 3 \leq L(X, \Delta).$$

(BBII):
Let $C$ be a curve in $X$ with $C\cdot (K_X+\Delta)<0$ and 
with $C\neq B_i$ for all $B_i$ such that $B_i^2<0$. 
Then we obtain the following inequalities: 
$$C\cdot K_X \leq C\cdot (K_X+\Delta)<0.$$ 
By Proposition~\ref{propbb}, we can use the Bend-and-Break in the sense of Definition~\ref{defbb}. 
Let 
\begin{eqnarray*}
n(X, \Delta, C, c_0)&:=&n(X, C, c_0). 
\end{eqnarray*}
These satisfies the four conditions of (BBII) 
of Definition~\ref{deflogbb}. 
Indeed, (a)(b)(d) are obvious. 
We consider (c). 
If $C_n \neq B_i$ for all $B_i$ such that $B_i^2<0$, 
then we have 
$$-\alpha_nC_n\cdot (K_X+\Delta) \leq -\alpha_nC_n\cdot K_X 
\leq 2g(C_{\rm normal}).$$ 
This completes the proof. 
\end{proof}

Let us calculate $L(X, \Delta)$ in the case where $\Delta$ is an $\mathbb R$-boundary. 

\begin{prop}\label{bblength3}
Let $X$ be a proper normal surface 
and let $\Delta$ be an $\mathbb{R}$-boundary. 
Then, $(X, \Delta)$ satisfies $(K_X+\Delta)$-Bend-and-Break 
for $L(X, \Delta)=3$.
\end{prop}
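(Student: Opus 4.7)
The strategy is to reuse the entire argument in the proof of Theorem~\ref{theoremlogbb} and simply verify that the value of $L$ chosen there can actually be taken to be $3$ under the boundary hypothesis. That proof produced the valid choice $L(X,\Delta) = \max(\{3\} \cup \{-(K_X+\Delta)\cdot B_\mu : B_\mu^2 < 0\})$, so the whole task reduces to showing $-(K_X+\Delta)\cdot B_\mu \leq 3$ for every prime component $B_\mu$ of $\Delta$ with $B_\mu^2 < 0$. Once this bound is established, part (2) of BBI is immediate, and part (1) of BBI is inherited from the rational-curve decomposition furnished by Proposition~\ref{propbb}: for each piece $Z_j$ either it is not a component of $\Delta$, so that $Z_j\cdot\Delta \geq 0$ and hence $-Z_j\cdot(K_X+\Delta) \leq -Z_j\cdot K_X \leq 3$; or it coincides with some $B_i$, and then the bound comes either from (2) (if $B_i^2 < 0$) or from the sign analysis $b_iB_i^2 \geq 0$ (if $B_i^2 \geq 0$), again combined with the non-negativity of the cross terms. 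The proof of BBII from Theorem~\ref{theoremlogbb} transfers without any modification at all.

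To prove the key estimate, I would decompose $\Delta = b_\mu B_\mu + \Delta'$ with $B_\mu$ not appearing in $\Delta'$ and expand:
\[
-(K_X+\Delta)\cdot B_\mu \;=\; -(K_X+B_\mu)\cdot B_\mu \;+\; (1-b_\mu)B_\mu^2 \;-\; \Delta'\cdot B_\mu.
\]
The boundary hypothesis $0 \leq b_\mu \leq 1$ together with $B_\mu^2 < 0$ forces $(1-b_\mu)B_\mu^2 \leq 0$. The Mumford intersection $\Delta'\cdot B_\mu \geq 0$ because, working on the minimal resolution $f:X' \to X$, each pullback $f^*D$ of an effective divisor is its proper transform plus an effective exceptional contribution (the coefficients are non-negative by negative definiteness of the exceptional intersection matrix), so the pairing of $B_\mu$ with each other prime component $B_i$ reduces to an intersection of distinct effective divisors on the smooth surface $X'$, hence is non-negative. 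Together these two facts yield $-(K_X+\Delta)\cdot B_\mu \leq -(K_X+B_\mu)\cdot B_\mu$.

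The final step is to bound $(K_X+B_\mu)\cdot B_\mu$ from below by $-2$ via adjunction on the minimal resolution. Writing $f^*K_X = K_{X'} + D$ and $f^*B_\mu = B'_\mu + C$ with $D, C \geq 0$ supported on the exceptional locus, and using $f^*B_\mu\cdot E = 0$ for every exceptional $E$, I would compute
\[
(K_X+B_\mu)\cdot B_\mu \;=\; (K_{X'}+B'_\mu)\cdot B'_\mu \;+\; (D+C)\cdot B'_\mu \;\geq\; 2p_a(B'_\mu) - 2 \;\geq\; -2,
\]
using smooth-surface adjunction applied to $B'_\mu \subset X'$ together with $p_a(B'_\mu) \geq 0$ for an integral curve. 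Combined with the previous step, this gives $-(K_X+\Delta)\cdot B_\mu \leq 2 \leq 3$, as required. The only mildly delicate point is the Mumford-intersection sign analysis on the singular surface $X$, but everything reduces cleanly to standard facts about pullback via the minimal resolution, so I expect no serious obstacle.
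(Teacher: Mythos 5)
Your proposal is correct and follows essentially the same route as the paper: both reduce to the expression $L(X,\Delta)=\max(\{3\}\cup\{-(K_X+\Delta)\cdot B_\mu\})$ from the proof of Theorem~\ref{theoremlogbb} and then bound $-(K_X+\Delta)\cdot B_\mu\le 2$ by passing to the minimal resolution and applying smooth-surface adjunction together with effectivity of the relative canonical and of the exceptional part of the Mumford pullback. The only cosmetic difference is that the paper isolates this estimate as Lemma~\ref{negative-length2}, stated for an arbitrary proper curve $C$ with $C^2\le 0$ rather than just for prime components $B_\mu$ of $\Delta$, so that the same lemma can be reused later (e.g., in Proposition~\ref{ext-rat-length2}); your in-line version proves exactly the special case needed here.
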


\begin{proof}
By the proof of Theorem~\ref{theoremlogbb}, 
$(X, \Delta)$ satisfies $(K_X+\Delta)$-Bend and Break for 
$$L(X, \Delta)=\max(\{3\} \cup \{-(K_X+\Delta)\cdot B_{\mu}\})$$
where $B_{\mu}$ ranges over the prime components of $\Delta$ with $B_{\mu}^2<0.$ 
Thus, the assertion follows from the following lemma. 
\end{proof}

\begin{lem}\label{negative-length2}
Let $X$ be a normal surface and 
let $\Delta$ be an $\mathbb R$-boundary. 
If $C$ be a proper curve in $X$ such that $C^2\leq 0$, then $-(K_X+\Delta)\cdot C\leq 2.$
\end{lem}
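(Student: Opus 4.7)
My strategy is to split
\[
-(K_X+\Delta)\cdot C \;=\; -(K_X+C)\cdot C \;+\; (C-\Delta)\cdot C
\]
and to prove separately that $-(K_X+C)\cdot C\leq 2$, via adjunction on a resolution, and that $(C-\Delta)\cdot C\leq 0$, using $C^2\leq 0$ together with the boundary hypothesis $0\leq \Delta\leq 1$.

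For the adjunction bound, I would pass to the minimal resolution $f\colon X'\to X$ with exceptional curves $E_1,\dots,E_n$, strict transform $C'$ of $C$, and pullbacks $f^*K_X=K_{X'}+\sum e_iE_i$ with $e_i\geq 0$ (exactly the formula already used in the proof of Proposition~\ref{propbb}), and $f^*C=C'+\sum c_iE_i$. The coefficients $c_i$ are nonnegative: the defining relations $f^*C\cdot E_j=0$ combined with $C'\cdot E_j\geq 0$ and the fact that the inverse of the negative definite intersection matrix $(E_i\cdot E_j)$ has nonpositive entries force $c_i\geq 0$. Then Mumford's definition of the intersection pairing gives
\[
(K_X+C)\cdot C \;=\; (K_{X'}+C')\cdot C' \;+\; \sum_{i} (e_i+c_i)\,(E_i\cdot C'),
\]
where every summand on the right is nonnegative (since $E_i\neq C'$, so $E_i\cdot C'\geq 0$), and adjunction on the smooth surface $X'$ yields $(K_{X'}+C')\cdot C' = 2p_a(C')-2\geq -2$. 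Hence $(K_X+C)\cdot C\geq -2$, i.e.\ $-(K_X+C)\cdot C\leq 2$.

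For the boundary estimate, let $a\in[0,1]$ be the coefficient of $C$ in $\Delta$ (possibly zero) and write $\Delta=aC+\Delta'$ with $C\not\subset\Supp\Delta'$. Since $\Delta'\geq 0$ has no component in common with $C$, the Mumford intersection gives $\Delta'\cdot C\geq 0$, and therefore
\[
(C-\Delta)\cdot C \;=\; (1-a)C^2-\Delta'\cdot C \;\leq\; 0,
\]
using $1-a\geq 0$ and $C^2\leq 0$. Adding this to the adjunction bound yields $-(K_X+\Delta)\cdot C\leq 2$, as claimed.

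The only genuinely nontrivial point is the sign $c_i\geq 0$ in the Mumford pullback of an effective divisor (and the analogous positivity $\Delta'\cdot C\geq 0$); both follow from the negative definiteness of the exceptional intersection form together with effectivity, and once they are in hand the proof reduces to the two one-line inequalities above.
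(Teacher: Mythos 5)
Your proposal is correct and is essentially the paper's own proof: the paper passes to the minimal resolution $f\colon Y\to X$, defines $\Delta_Y$ by $K_Y+C_Y+\Delta_Y=f^*(K_X+C)$ (which is exactly your $\sum(e_i+c_i)E_i$, effective for the same reasons you give), and chains the inequalities $(K_X+\Delta)\cdot C\geq(K_X+C)\cdot C=(K_Y+C_Y+\Delta_Y)\cdot C_Y\geq(K_Y+C_Y)\cdot C_Y\geq-2$, where the first inequality is precisely your boundary estimate using $C^2\leq 0$ and $\Delta\leq 1$. You have merely unpacked the two steps the paper states tersely.
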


\begin{proof}
Let $f:Y\to X$ be the minimal resolution and 
let $C_Y$ be the proper transform of $C$. 
We define $\Delta_Y$ by 
$$K_Y+C_Y+\Delta_Y=f^*(K_X+C).$$
Note that $\Delta_Y\geq 0$ and $C_Y\not\subset \Supp \Delta_Y.$ 
Then, we see $(K_Y+C_Y)\cdot C_Y\geq -2.$ 
We obtain 
\begin{eqnarray*}
(K_X+\Delta)\cdot C&\geq&(K_X+C)\cdot C\\
&=&f^*(K_X+C)\cdot C_Y\\
&=&(K_Y+C_Y+\Delta_Y)\cdot C_Y\\
&\geq& (K_Y+C_Y)\cdot C_Y\\
&\geq& -2.
\end{eqnarray*}
\end{proof}

\subsection{Cone theorem}
In this section we prove the cone theorem. 
We use the Bend-and-Break in the sense of Definition~\ref{deflogbb}. 
Thus, in this section, 
we use the notations in Definition~\ref{deflogbb}. 

Here, let us recall the definition of the Kleiman--Mori cone. 

\begin{dfn}
Let $X$ be a projective variety. 
Then we define  
\begin{eqnarray*}
N(X)&:=&\{r_1Z_1+\cdots+r_sZ_s\,|\,r_i\in \mathbb{R}\,\,
{\rm and}\,\,Z_i\,\,{\rm is\,\,a\,\, curve\,\,in\,\,}X\}/{\equiv}\\
NE(X)&:=&\{[r_1Z_1+\cdots+r_sZ_s]\,|\,r_i\geq 0,Z_i\,\,{\rm is\,\, a\,\,curve\,\,in\,\,}X\}\subset N(X)
\end{eqnarray*}
where $[r_1Z_1+\cdots+r_sZ_s]$ means the equivalence class.
\end{dfn}

Note that $N(X)$ is the quotient space by "$\equiv$". 
Although we often use the intersection theory by Mumford, 
we do NOT take the quotient by "$\equiv_{\rm Mum}$". 
The numerical equivalence "$\equiv$" is induced 
by the intersections only with $\mathbb{R}$-Cartier divisors.

In this section, we use the following lemma repeatedly. 

\begin{lem}\label{easy-lemma}
Let $a, b\in \mathbb R$ and $c, d\in \mathbb R_{>0}.$
Then, 
$$\frac{a+b}{c+d}\leq\max\{\frac{a}{c},\frac{b}{d}\}.$$
\end{lem}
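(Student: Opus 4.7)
The plan is to reduce to showing a single inequality by exploiting symmetry. Without loss of generality, assume $\tfrac{a}{c} \leq \tfrac{b}{d}$, so that $\max\{\tfrac{a}{c}, \tfrac{b}{d}\} = \tfrac{b}{d}$; the other case is handled symmetrically by swapping the roles of $(a,c)$ and $(b,d)$. It then suffices to prove
\[
\frac{a+b}{c+d} \leq \frac{b}{d}.
\]

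Since $c+d > 0$ and $d > 0$, this inequality is equivalent (by cross-multiplication) to $d(a+b) \leq b(c+d)$, which simplifies to $ad \leq bc$. But this last inequality is exactly the assumption $\tfrac{a}{c} \leq \tfrac{b}{d}$ rewritten using $c, d > 0$. So the proof is essentially one line of algebra after the initial reduction.

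There is no real obstacle here; the main care is simply to note that the hypothesis $c, d > 0$ is used in two places: to ensure $c+d > 0$ (so that the left-hand side makes sense) and to preserve inequality directions under cross-multiplication. The statement is the classical mediant inequality, and the proof I have sketched is the standard one.
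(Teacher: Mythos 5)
Your proof is correct; the paper actually omits the proof of this lemma entirely, stating only ``The proof is easy. Thus, we omit the proof.'' Your argument (reduce by symmetry to $\frac{a}{c}\leq\frac{b}{d}$, then cross-multiply using $c,d>0$) is the standard one-line verification and fills the gap cleanly.
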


\begin{proof}
The proof is easy. Thus, we omit the proof. 
\end{proof}

The following lemma is a key in this section. 

\begin{lem}\label{keylemma}
Let $X$ be a projective normal surface and 
let $\Delta$ be an effective $\mathbb{R}$-divisor 
such that $K_X+\Delta$ is $\mathbb{R}$-Cartier. 
Let $\Delta=\sum b_iB_i$ be the prime decomposition. 
Let $H$ be an $\mathbb{R}$-Cartier ample $\mathbb{R}$-divisor. 
If $C$ is a curve in $X$ such that $C\cdot (K_X+\Delta)<0$, then 
there exists a curve $E$ in $X$ with the following properties. 
\begin{enumerate}
\item{$E$ is rational or $E=B_j$ for some $j$ such that $B_j^2<0$.}
\item{$0<-E\cdot (K_X+\Delta) \leq L(X, \Delta)$}.
\item{
$$\frac{-C\cdot (K_X+\Delta)}{C\cdot H} \leq \frac{-E\cdot (K_X+\Delta)}{E\cdot H}.$$
}
\end{enumerate}
\end{lem}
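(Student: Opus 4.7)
The strategy is case analysis on $C$, tracking the ratio
$$r(W):=\frac{-W\cdot(K_X+\Delta)}{W\cdot H}$$
via repeated application of Lemma~\ref{easy-lemma} and invoking the $(K_X+\Delta)$-Bend-and-Break properties established in Theorem~\ref{theoremlogbb}. For the easy cases, if $C=B_j$ with $B_j^2<0$, take $E:=C$ and note that $-E\cdot(K_X+\Delta)\leq L(X,\Delta)$ by (BBI)(2). If $C$ is a rational curve, (BBI)(1) gives $C\equiv_{\rm Mum} Z_1+\cdots+Z_r$ with each $Z_i$ rational and $-Z_i\cdot(K_X+\Delta)\leq L(X,\Delta)$; intersecting with $-(K_X+\Delta)$ and $H$ and iterating Lemma~\ref{easy-lemma} selects some $Z_{i_0}$ with $r(Z_{i_0})\geq r(C)$, which serves as $E$.

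In the main case, assume $C$ is neither rational nor any $B_j$ with $B_j^2<0$. Fix $c_0\in C\setminus{\rm Sing}\,X$ and, for $n\geq n(X,\Delta,C,c_0)$, use (BBII) to write $p^n C\equiv_{\rm Mum}\alpha_n C_n+Z_n$ with $Z_n=\sum_i Z_{n,i}$ a sum of rational curves. Intersecting and applying Lemma~\ref{easy-lemma} yields
$$r(C)=r(p^n C)\leq\max\{\,r(C_n),\,r(Z_{n,1}),\ldots,r(Z_{n,r_n})\,\}.$$
If the maximum is attained by some $Z_{n,i_0}$, or by $C_n$ when $C_n$ is rational or $C_n=B_j$ with $B_j^2<0$, we are back in the easy cases. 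The delicate situation is when, for every sufficiently large $n$, the maximum is attained by a $C_n$ which is neither rational nor any $B_j$ with $B_j^2<0$; then (BBII)(c) gives $-\alpha_n C_n\cdot(K_X+\Delta)\leq g_0:=2g(C_{\rm normal})$. Combined with $r(C_n)\geq r(C)$ this forces $\alpha_n C_n\cdot H\leq g_0/r(C)$, and a direct estimate using $p^n C\cdot H=\alpha_n C_n\cdot H+Z_n\cdot H$ then yields
$$r(Z_n)\geq r(C)-\frac{g_0}{p^n\,C\cdot H}.$$
Applying (BBI)(1) to some $Z_{n,i}$ with $r(Z_{n,i})\geq r(Z_n)$ and using Lemma~\ref{easy-lemma} once more produces, for each large $n$, a rational curve $E_n$ with $-E_n\cdot(K_X+\Delta)\leq L(X,\Delta)$ and $r(E_n)\geq r(C)-g_0/(p^n C\cdot H)$; since $r(E_n)\geq r(C)/2>0$ for $n$ large, we also get $E_n\cdot H\leq 2L(X,\Delta)/r(C)$.

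The main obstacle is this last, delicate case: upgrading $r(E_n)\geq r(C)-\epsilon_n$ (with $\epsilon_n\to 0$) to the existence of an actual curve with $r(E)\geq r(C)$. For this, observe that the $E_n$ are curves of bounded $H$-degree on the projective surface $X$, hence of bounded degree with respect to any ample Cartier $H_0$ satisfying $H-H_0$ nef; so their numerical classes $[E_n]$ lie in a finite subset of $N(X)$ by finiteness of the relevant Chow/Hilbert scheme. On a subsequence along which $[E_n]$ is constantly some $v_0$, the value $r(E_n)=-v_0\cdot(K_X+\Delta)/(v_0\cdot H)$ is constant and is $\geq r(C)-g_0/(p^n C\cdot H)$ for every $n$; letting $n\to\infty$ forces this constant to be $\geq r(C)$, and any $E_n$ in this subsequence is then the required $E$.
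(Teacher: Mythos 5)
Your proposal is correct and follows essentially the same line of reasoning as the paper's proof: decompose $p^n C$ via (BBII), control the $C_n$-contribution using the genus bound, push the ratio estimate onto the rational pieces $Z_{n,i}$ via Lemma~\ref{easy-lemma} and (BBI), and then upgrade the resulting asymptotic inequality $r(E_n) \geq M - \epsilon_n$ to an exact one using finiteness of numerical classes of curves with bounded $H$-degree. The paper organizes this as a sequence of reductions (Steps~1--5) and phrases the final finiteness step by choosing $\epsilon$ below the smallest gap among the finitely many achievable ratios, whereas you pass to a subsequence with constant numerical class and take a limit; these two phrasings of the finiteness argument are logically interchangeable.
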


This proof is very similar to \cite[Theorem~1.13]{KM}. 

\begin{proof}
In this proof, we use the notation (BBI) and (BBII) in the sense of Definition~\ref{deflogbb}. 
First, if $C=B_j$ with $B_j^2<0$, 
then the assertion is obvious. 
We may assume that $C\neq B_j$ for all $B_j$ with $B_j^2<0$. 
Then, we can use (BBII). 
But, since we do not use $c_0$, we fix $c_0\in C\setminus {\rm Sing} X$. 
Set $C_n':=\alpha_nC_n$. 
We consider the following number 
\begin{eqnarray*}
M&:=&\frac{-C\cdot (K_X+\Delta)}{C\cdot H}=\frac{-p^nC\cdot (K_X+\Delta)}{p^nC\cdot H}\\
&=&\frac{-C_n'\cdot (K_X+\Delta)-Z_n\cdot (K_X+\Delta)}{C_n'\cdot H+Z_n\cdot H}
=\frac{a_n+b_n}{c_n+d_n}
\end{eqnarray*}
where $a_n$, $b_n$, $c_n$ and $d_n$ are defined by 
\begin{eqnarray*}
a_n&:=&-C_n'\cdot (K_X+\Delta)\\
b_n&:=&-Z_n\cdot (K_X+\Delta)\\
c_n&:=&C_n'\cdot H\\
d_n&:=&Z_n\cdot H.
\end{eqnarray*}

\setcounter{step}{0}

\begin{step}
In this step, we reduce the proof to the case where $\alpha_n>0$ for all $n \gg 0$. 

Assume that there is a positive integer $n$ such that 
$n\geq n(X, \Delta, C, c_0)$ and $\alpha_n=0$. 
Then we have 
\begin{eqnarray*}
\frac{-C\cdot (K_X+\Delta)}{C\cdot H}=\frac{-Z_n\cdot (K_X+\Delta)}{Z_n\cdot H}
\leq \frac{-Z_{n,i}\cdot (K_X+\Delta)}{Z_{n,i}\cdot H}
\end{eqnarray*}
for some $i$ by Lemma~\ref{easy-lemma}. 
Moreover, by (BBI) and Lemma~\ref{easy-lemma}, we obtain the desired result. 
\end{step}

\begin{step}
In this step, we reduce the proof to the case where 
$$a_n=-\alpha_nC_n\cdot (K_X+\Delta)\leq 2g(C_{\rm normal})$$ 
for all $n \gg 0$. 

Suppose the contrary. Then, by (c) of (BBII), we obtain $C_n=B_j$ for some $j$ such that $B_j^2<0$. 
By Lemma~\ref{easy-lemma}, we have the following equality 
\begin{eqnarray*}
\frac{-C\cdot (K_X+\Delta)}{C\cdot H}&=&
\frac{-\alpha_nB_j\cdot (K_X+\Delta)-Z_n\cdot (K_X+\Delta)}
{\alpha_nB_j\cdot H+Z_n\cdot H}\\
&\leq&\max\left\{ \frac{-B_j\cdot (K_X+\Delta)}{B_j\cdot H}, 
\frac{-Z_n\cdot (K_X+\Delta)}{Z_n\cdot H}\right\}.\end{eqnarray*}
If 
$$\frac{-C\cdot (K_X+\Delta)}{C\cdot H} \leq
\frac{-B_j\cdot (K_X+\Delta)}{B_j\cdot H},$$
then this is the desired result. 
If 
$$\frac{-C\cdot (K_X+\Delta)}{C\cdot H} 
\leq \frac{-Z_n\cdot (K_X+\Delta)}{Z_n\cdot H}, $$
then, by (BBI) and Lemma~\ref{easy-lemma}, we obtain the desired result. 
\end{step}

From now on, 
we consider the asymptotic behaviors of $a_n$, $b_n$, $c_n$ and $d_n$.

\begin{step}
The sequence $a_n$ is bounded and $b_n$ is not bounded. 

Indeed, the boundedness of $a_n$ follows from Step~2. 
Since $a_n+b_n=-p^nC\cdot (K_X+\Delta)$ is not bounded, 
$b_n$ is not bounded. 
\end{step}

\begin{step}
In this step, we prove that for 
an arbitrary positive real number $\epsilon$, 
there exists a curve $E$ in $X$ with the following properties. 
\begin{enumerate}
\item[$(1)'$]{$E$ is rational.}
\item[$(2)'$]{$0<-E\cdot (K_X+\Delta)\leq L(X, \Delta)$.}
\item[$(3)'$]{$$M-\epsilon<\frac{-E\cdot (K_X+\Delta)}{E\cdot H}.$$}
\end{enumerate}

If ${a_n}/{c_n}<M$ for some $n \gg 0$, then we have ${b_n}/{d_n}\geq M$, 
which gives us the desired result by (BBI) and Lemma~\ref{easy-lemma}. 
Thus, we may assume that ${a_n}/{c_n}\geq M$ 
for all $n \gg 0$.
Then, since $a_n$ is bounded, 
so is $c_n$ because $M$ is a positive number. 
Because $c_n+d_n=p^nC\cdot H$, 
$d_n$ is not bounded. 
Therefore, for sufficiently large $n$, we obtain 
\begin{eqnarray*}
\frac{b_n}{d_n}+\epsilon>\frac{a_n+b_n}{d_n}>\frac{a_n+b_n}{c_n+d_n}=M. 
\end{eqnarray*}
By (BBI) and Lemma~\ref{easy-lemma}, there exists a rational curve $E$ with the desired properties. 
\end{step}

\begin{step}
We take an arbitrary positive real number $\epsilon$ 
with $0<\epsilon \leq M/2$. 
Then, by Step~4, we obtain 
\begin{eqnarray*}
E\cdot H<\frac{-E\cdot (K_X+\Delta)}{M-\epsilon} 
\leq \frac{L(X, \Delta)}{M/2}=\frac{2L(X, \Delta)}{M}
.\end{eqnarray*}
Since $H$ is ample, 
the following subset in numerical classes of 
effective 1-cycles in $X$ with integral coefficients
\begin{eqnarray*}
\{[E]|E\cdot H<\frac{2L(X, \Delta)}{M}\}
\end{eqnarray*}
has only finitely many members. 
Therefore, so is the following set 
\begin{eqnarray*}
\left\{\frac{-E\cdot (K_X+\Delta)}{E\cdot H}\middle|
E\cdot H<\frac{2L(X, \Delta)}{M}\,\,{\rm and}\,\,E\,\,{\rm satisfies }\,\,(1)'(2)'\right\}\end{eqnarray*}
because $K_X+\Delta$ is $\mathbb{R}$-Cartier. 
Take a sufficiently small $\epsilon>0$. 
Then, by Step~4, 
we obtain a rational curve $E$ in $X$ such that 
\begin{eqnarray*}
E\,\,{\rm satisfies}\,\,(1)'(2)'
\,\,{\rm and}\,\, 
\frac{-E\cdot (K_X+\Delta)}{E\cdot H}\geq M.\end{eqnarray*}
\end{step}
This completes the proof. 
\end{proof}

Let us prove the cone theorem. 

\begin{thm}[Cone theorem]\label{ct}
Let $X$ be a projective normal surface and 
let $\Delta$ be an effective $\mathbb{R}$ divisor 
such that $K_X+\Delta$ is $\mathbb{R}$-Cartier. 
Let $\Delta=\sum b_iB_i$ be the prime decomposition. 
Let $H$ be an $\mathbb{R}$-Cartier ample $\mathbb{R}$-divisor. 
Then the following assertions hold:

\begin{enumerate}

\item{
$
\neb(X)=
\neb(X)_{K_X+\Delta \geq 0}+
\sum{\mathbb{R}_{\geq 0}[C_i]}
$.}
\item{
$
\neb(X)=
\neb(X)_{K_X+\Delta+H \geq 0}+
\sum\limits_{\rm finite}{\mathbb{R}_{\geq 0}[C_i]}
$.}
\item{Each $C_i$ in $(1)$ and $(2)$ is rational or 
$C_i=B_j$ for some $B_j$ with $B_j^2<0$.}
\item{Each $C_i$ in $(1)$ and $(2)$ 
satisfies $0<-C_i\cdot (K_X+\Delta) \leq L(X, \Delta)$.}
\end{enumerate}
\end{thm}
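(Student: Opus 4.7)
My plan is to follow the classical Mori cone-theorem strategy, using Lemma~\ref{keylemma} as the main input and the ampleness of $H$ for finiteness. First, I isolate the relevant extremal classes: let $\mathcal R$ be the set of numerical classes $[C]$ of curves $C\subset X$ which are either rational or equal to some $B_j$ with $B_j^2<0$, and satisfy $0<-C\cdot(K_X+\Delta)\le L(X,\Delta)$, and set $\mathcal R_H:=\{[C]\in\mathcal R:(K_X+\Delta+H)\cdot C<0\}$. For any $[C]\in\mathcal R_H$ we have $H\cdot C<-(K_X+\Delta)\cdot C\le L(X,\Delta)$; since $H$ is ample, Kleiman's criterion makes $\overline{NE}(X)\cap\{H\cdot v\le L(X,\Delta)\}$ compact, and effective curve classes are discrete in $N_1(X)_{\mathbb R}$, so $\mathcal R_H=\{[C_1],\ldots,[C_n]\}$ is finite. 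These are the finitely many rays that will appear in (2).

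For assertion (2), set $V:=\overline{NE}(X)_{K_X+\Delta+H\ge 0}$ and $W:=V+\sum_{i=1}^n\mathbb R_{\ge 0}[C_i]$; this is a closed convex subcone of $\overline{NE}(X)$ (closedness follows since both summands lie in the pointed cone $\overline{NE}(X)$). I claim $W=\overline{NE}(X)$. Suppose not; Hahn--Banach separation in $N_1(X)_{\mathbb R}$ yields $\phi\in N^1(X)_{\mathbb R}$ with $\phi|_W\ge 0$ and $\phi<0$ somewhere on $\overline{NE}(X)$. Density of effective curve classes produces an irreducible curve $C$ with $\phi([C])<0$; in particular $[C]\notin V$, so $(K_X+\Delta+H)\cdot C<0$. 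I apply Bend-and-Break ((BBII) of Definition~\ref{deflogbb}): for every $n\gg 0$,
\[ p^n[C]=\alpha_n[C_n]+\sum_{i=1}^{r_n}[Z_{n,i}]\quad\text{in }N_1(X), \]
where $\equiv_{\mathrm{Mum}}$ implies numerical equivalence since intersection against any $\mathbb R$-Cartier class is determined by intersection with curves. Each $Z_{n,i}$ is rational, so (BBI) further splits $[Z_{n,i}]$ into rational pieces with $-(K_X+\Delta)$-intersection in $[0,L(X,\Delta)]$; a case check on the sign of $(K_X+\Delta+H)$ places every such piece in $V\cup\{[C_1],\ldots,[C_n]\}\subset W$, whence $(1/p^n)\sum_i[Z_{n,i}]\in W$ for every $n$. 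For the remainder $(\alpha_n/p^n)[C_n]$: if $C_n=B_j$ with $B_j^2<0$ then $[C_n]\in W$ directly; otherwise (BBII)(c) gives $\alpha_n(-C_n)\cdot(K_X+\Delta)\le 2g(C_{\mathrm{normal}})$, while the effectivity of $Z_n$ yields $(\alpha_n/p^n)C_n\cdot H\le C\cdot H$. Thus $(\alpha_n/p^n)[C_n]$ lies in the compact $H$-bounded slice of $\overline{NE}(X)$ and satisfies $(K_X+\Delta)\cdot(\alpha_n/p^n)[C_n]\ge -2g/p^n\to 0$, so any subsequential limit $\beta\in\overline{NE}(X)$ has $(K_X+\Delta)\cdot\beta\ge 0$ and $H\cdot\beta\ge 0$, i.e., $\beta\in V$. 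Passing to the limit in $[C]=(\alpha_n/p^n)[C_n]+(1/p^n)\sum_i[Z_{n,i}]$ and using closedness of $W$ gives $[C]=\beta+\lim_n(1/p^n)\sum_i[Z_{n,i}]\in V+W=W$, contradicting $\phi([C])<0$.

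Assertion (1) then follows from (2) by replacing $H$ by $\tfrac1mH$ and letting $m\to\infty$: $\bigcap_m\overline{NE}(X)_{K_X+\Delta+\frac1mH\ge 0}=\overline{NE}(X)_{K_X+\Delta\ge 0}$ by Kleiman positivity of $H$, and the countable union $\bigcup_m\mathcal R_{H/m}$ supplies the (possibly countably infinite) family of $[C_i]$'s in (1). Assertions (3) and (4) are built into the definition of $\mathcal R$. The main obstacle is the limit argument in Step~2 for the non-rational remainder $[C_n]$; this relies crucially on the $2g$-bound of (BBII)(c) together with compactness of $H$-bounded slices of $\overline{NE}(X)$ to ensure that the remainder converges into $V$.
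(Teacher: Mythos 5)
Your argument proves (2) first, via a direct Bend--and--Break decomposition plus compactness of $H$-bounded slices, and then tries to derive (1) as a limit. The paper goes the other way: it proves (1) directly by a nef-threshold contradiction built on Lemma~\ref{keylemma} (which packages (BBI)/(BBII) into the ratio inequality $\frac{-C\cdot(K_X+\Delta)}{C\cdot H}\le\frac{-E\cdot(K_X+\Delta)}{E\cdot H}$), and then (2) is an immediate corollary from the length bound $L(X,\Delta)$. Your proof of (2) is a legitimate alternative --- it trades the ratio lemma for a subsequential limit analysis of the remainder $(\alpha_n/p^n)[C_n]$, and the closedness of $W=V+\sum_{i=1}^n\mathbb R_{\ge0}[C_i]$ is fine there because the second summand is a \emph{finitely} generated cone inside the pointed cone $\overline{NE}(X)$. (One small omission: before invoking (BBII) you must rule out $C=B_j$ with $B_j^2<0$, which is excluded from (BBII)'s hypothesis; in that case $[C]\in\mathcal R_H\subset W$ directly, contradiction, but you should say so.)

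The genuine gap is in the last paragraph, where you derive (1) from (2). Writing $v=v_m+w_m$ with $v_m\in\overline{NE}(X)_{K_X+\Delta+\frac1mH\ge0}$ and $w_m$ in the (finite for each $m$) cone of rays, and passing to a subsequential limit, gives $v=v_\infty+w_\infty$ with $v_\infty\in\overline{NE}(X)_{K_X+\Delta\ge0}$. But $w_\infty$ is only a limit of elements of $\sum_{\mathcal R}\mathbb R_{\ge0}[C]$, and for the possibly countably infinite family $\mathcal R$ this cone need not be closed, so you cannot conclude $w_\infty\in\sum_{\mathcal R}\mathbb R_{\ge0}[C]$. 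In other words, $\bigcap_m(A_m+B_m)$ is not $(\bigcap_mA_m)+(\bigcup_mB_m)$; what is actually required is the closedness of the right-hand side of (1). The paper handles this by asserting closedness of $W$ ``by the same proof as in [KM, Theorem~1.24]'' --- which in turn relies on the local finiteness of rays in any half-space $K_X+\Delta+\varepsilon H<0$, i.e., precisely the finiteness of $\mathcal R_{\varepsilon H}$ that your assertion (2) supplies --- and then proves (1) directly, never taking the $\varepsilon\to0$ limit. To repair your argument you would need to either prove this closedness explicitly from (2), or run your Bend--and--Break/limit argument directly against the right-hand side of (1) after first establishing its closedness.
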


This proof is essentially the same as \cite[Theorem~1.24]{KM}.

\begin{proof}
(1)Let $W$ be the right hand side in (1), i.e.
$$W:=\overline{NE}(X)_{(K_X+\Delta) \geq 0}+
\sum_{C_i\,\,{\rm satisfies}\,\,(3)(4)}\mathbb{R}_{\geq 0}[C_i].$$ 
Note that $W$ is a closed set 
by the same proof as in \cite[Ch III, Theorem 1.2]{Kollar2}. 
We would like to prove $\overline{NE}(X)=W$. 
The inclusion $\overline{NE}(X) \supset W$ is clear. 
Let us assume $\overline{NE}(X) \supsetneq W$ and 
derive a contradiction. 
Then we can find a Cartier divisor $D$ 
which is positive on $W\setminus{0}$ and 
which is negative on some element of $\overline{NE}(X)$. 
Let $\mu$ be a positive real number such that 
$H+\mu D$ is nef and $H+\mu'D$ is ample 
for all positive real numbers $\mu'$ with $\mu'<\mu$. 
Then we can take a 1-cycle $Z$ 
with $Z \in \overline{NE}(X)\setminus\{0\}$ and $(H+\mu D)\cdot Z=0$. 
Since $Z\cdot H>0$ means $Z\cdot D<0$, $Z$ is not in $W$. 
By the definition of $W$, we obtain $Z\cdot (K_X+\Delta)<0$. 
Because $Z$ is an element of $\overline{NE}(X)$, 
there exist effective 1-cycles $Z_k=\sum a_{k,j}Z_{k,j}$ such that 
the limit of $Z_k$ is $Z$. 
Take an arbitrary positive real number $\mu'$ with $\mu'<\mu$. 
By the ampleness of $H+\mu'D$, we have 
\begin{eqnarray*}
\max_j\frac{-Z_{k,j}\cdot (K_X+\Delta)}{Z_{k,j}\cdot (H+\mu'D)} \geq 
\frac{-Z_k\cdot (K_X+\Delta)}{Z_k\cdot (H+\mu'D)}.
\end{eqnarray*}
We may assume that the max on the left hand side occurs when $j$ is zero. 
By Lemma~\ref{keylemma}, we obtain 
\begin{eqnarray*}
\frac{-E_{k}\cdot (K_X+\Delta)}{E_{k}\cdot (H+\mu'D)} \geq 
\frac{-Z_{k,0}\cdot (K_X+\Delta)}{Z_{k,0}\cdot (H+\mu'D)} \geq 
\frac{-Z_k\cdot (K_X+\Delta)}{Z_k\cdot (H+\mu'D)}.
\end{eqnarray*}
Here, $E_k$ satisfies (3) and (4). 
Thus, we have $E_k \in W$ and 
this means $E_{k}\cdot D \geq 0$. 
Therefore we have 
\begin{eqnarray*}
\frac{-E_{k}\cdot (K_X+\Delta)}{E_{k}\cdot H} \geq 
\frac{-E_{k}\cdot (K_X+\Delta)}{E_{k}\cdot (H+\mu'D)} \geq 
\frac{-Z_k\cdot (K_X+\Delta)}{Z_k\cdot (H+\mu'D)}.
\end{eqnarray*}
Take a large positive number $r$ such that $rH+(K_X+\Delta)$ is ample. 
This shows 
\begin{eqnarray*}
r>\frac{-E_{k}\cdot (K_X+\Delta)}{E_{k}\cdot H}.
\end{eqnarray*}
Combining the inequalities, we obtain 
\begin{eqnarray*}
r>\frac{-Z_k\cdot (K_X+\Delta)}{Z_k\cdot (H+\mu'D)}.
\end{eqnarray*}
Recall that we choose $\mu'$ as an arbitrary positive real number with $\mu'<\mu$. 
By taking the limit $\mu'$ to $\mu$, we obtain 
\begin{eqnarray*}
r \geq \frac{-Z_k\cdot (K_X+\Delta)}{Z_k\cdot (H+\mu D)}.
\end{eqnarray*}
Moreover, by taking the limit $k$ to $\infty$, 
we obtain 
\begin{eqnarray*}
r \geq 
\lim_{k \to \infty}\frac{-Z_k\cdot (K_X+\Delta)}{(Z_k\cdot H+\mu D)}=
\frac{(\rm positive)}{+0}=+\infty.
\end{eqnarray*}
This is a contradiction. 
This completes the proof of (1).

(2)
If $C_i\cdot (K_X+\Delta+H)<0$, then we have 
\begin{eqnarray*}
C_i\cdot H<-C_i\cdot (K_X+\Delta) \leq L(X, \Delta).
\end{eqnarray*}
There are only finitely many numerical classes of curves like this. 
This shows (2). 
The remaining assertions (3) and (4) have already proven in the above arguments. 
\end{proof}

\begin{rem}\label{ray3}
In Theorem~\ref{ct}, $L(X, \Delta)$ gives a upper bound of length of extremal rays. 
By the proof of Theorem~\ref{theoremlogbb}, 
$$L(X, \Delta):=\max(\{3\} \cup \{-(K_X+\Delta)\cdot B_{\mu}\})$$
where $B_{\mu}$ ranges over the prime components of $\Delta$ with $B_{\mu}^2<0.$ 
In the case where $\Delta$ is an $\mathbb{R}$-boundary, 
we can set $L(X, \Delta)=3$ by Proposition~\ref{bblength3}.
\end{rem}

Moreover, in the case where $\Delta$ is an $\mathbb{R}$-boundary, 
every $(K_X+\Delta)$-negative extremal ray is generated by a rational curve. 

\begin{prop}\label{ext-rat-length1}
Let $X$ be a projective normal surface and 
let $\Delta$ be an $\mathbb{R}$-boundary such that $K_X+\Delta$ is $\mathbb{R}$-Cartier. 
If $R$ is a $(K_X+\Delta)$-negative extremal ray of $\overline{NE}(X)$, 
then $R=\mathbb R_{\geq 0}[C]$ where $C$ is a rational curve such that 
$-(K_X+\Delta)\cdot C\leq 3$.
\end{prop}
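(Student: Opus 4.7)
The plan is to apply Theorem~\ref{ct} directly, and then check that the only delicate case reduces to the rational one. Since $\Delta$ is an $\mathbb{R}$-boundary, Remark~\ref{ray3} gives $L(X,\Delta)=3$, so by Theorem~\ref{ct}(1),
\[
\neb(X)=\neb(X)_{K_X+\Delta\geq 0}+\sum_{i}\mathbb{R}_{\geq 0}[C_i],
\]
where each generator $C_i$ is either a rational curve or a prime component $B_j$ of $\Delta$ with $B_j^2<0$, and satisfies $0<-(K_X+\Delta)\cdot C_i\leq 3$. Because $R$ is an extremal, $(K_X+\Delta)$-negative ray, it cannot lie in $\neb(X)_{K_X+\Delta\geq 0}$, and it must coincide with one of the rays $\mathbb{R}_{\geq 0}[C_i]$. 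So $R=\mathbb{R}_{\geq 0}[C]$ for such a $C$, and it only remains to show $C$ can be taken rational.

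The point to address is the possibility that $C=B_j$ is a non-rational prime component of $\Delta$ with $B_j^2<0$. I would argue by contradiction, showing that in this case $(K_X+\Delta)\cdot B_j\geq 0$. On the minimal resolution $f\colon Y\to X$, let $B_j^Y$ be the strict transform. Following the setup of Lemma~\ref{negative-length2}, one has $K_Y+B_j^Y+\Gamma=f^*(K_X+B_j)$ with $\Gamma\geq 0$ exceptional and $B_j^Y\not\subset\Supp\Gamma$. Smooth adjunction on $Y$ then gives
\[
(K_X+B_j)\cdot B_j\geq (K_Y+B_j^Y)\cdot B_j^Y=2p_a(B_j^Y)-2\geq 0,
\]
where the last inequality uses that $B_j$ non-rational forces $p_a(B_j^Y)\geq 1$. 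Writing $\Delta-B_j=(b_j-1)B_j+\sum_{i\neq j}b_iB_i$, I would then expand
\[
(K_X+\Delta)\cdot B_j=(K_X+B_j)\cdot B_j+(b_j-1)B_j^2+\sum_{i\neq j}b_i(B_i\cdot B_j)
\]
and observe that each of the three summands is non-negative: the first by adjunction as above, the second because $b_j\leq 1$ and $B_j^2<0$, and the third because distinct prime divisors on a normal surface meet non-negatively. This contradicts $(K_X+\Delta)\cdot C<0$, so $C$ must be rational.

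The main obstacle is precisely this second step: Theorem~\ref{ct} genuinely permits extremal generators of the form $B_j$ with $B_j^2<0$, and ruling out the non-rational possibility forces one to use the $\mathbb{R}$-boundary hypothesis $b_j\leq 1$ in an essential way, through the adjunction computation on the minimal resolution. Once that is done, the bound $-(K_X+\Delta)\cdot C\leq 3$ is already built into Theorem~\ref{ct} and Remark~\ref{ray3}, finishing the argument.
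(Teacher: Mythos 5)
Your proof is correct and follows essentially the same route as the paper: invoke Theorem~\ref{ct} with the bound $L(X,\Delta)=3$ from Remark~\ref{ray3}, then rule out a non-rational negative-self-intersection generator by passing to the minimal resolution and applying adjunction, using the boundary hypothesis $b_j\leq 1$ and $B_j^2<0$ to compare $(K_X+\Delta)\cdot B_j$ with $(K_X+B_j)\cdot B_j$. The paper isolates the rationality step as Lemma~\ref{non-qcar-rational} and argues directly rather than by contradiction, but the underlying computation is the same.
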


\begin{proof}
By Theorem~\ref{ct} and Remark~\ref{ray3}, 
we can write $R=\mathbb R_{\geq 0}[C]$ 
where $C$ is a curve such that $-(K_X+\Delta)\cdot C\leq 3$ and that 
$C$ is rational or $C^2<0.$ 
Assume $C^2<0$. 
Then, we obtain 
$$(K_X+C)\cdot C\leq (K_X+\Delta)\cdot C<0.$$
Thus, the assertion follows from the following lemma. 
\end{proof}

\begin{lem}\label{non-qcar-rational}
Let $X$ be a normal surface and 
let $C$ be a proper curve in $X$. 
If $(K_X+C)\cdot C<0$, then $C$ is a rational curve. 
\end{lem}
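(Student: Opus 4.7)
The plan is to pass to the minimal resolution, use adjunction there, and compare via the Mumford pullback. Let $f\colon Y\to X$ be the minimal resolution of singularities, and let $C_Y$ denote the proper transform of $C$. Note that $C_Y$ and $C$ share the same normalization $\widetilde{C}$, so it suffices to show that $\widetilde{C}\cong \mathbb{P}^1$. Write
\[
f^{*}(K_X+C) \;=\; K_Y + C_Y + E,
\]
where $E$ is an exceptional $\mathbb{R}$-divisor supported on the $f$-exceptional curves $E_1,\dots,E_n$.

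The first key step is to show that $E\geq 0$. For each exceptional $E_i$, the Mumford pullback satisfies $f^{*}(K_X+C)\cdot E_i = 0$, hence $E\cdot E_i = -(K_Y+C_Y)\cdot E_i$. Since $f$ is the \emph{minimal} resolution, $K_Y\cdot E_i\geq 0$ for all $i$, and $C_Y\cdot E_i\geq 0$ since $C_Y$ is not among the $E_i$. Thus $E\cdot E_i\leq 0$ for every $i$. Because the intersection matrix $(E_i\cdot E_j)$ is negative definite, a standard argument (decompose $E=E^{+}-E^{-}$ with disjoint effective parts; from $E\cdot E_j\leq 0$ on $\Supp E^{-}$ one gets $(E^{-})^{2}\geq 0$, forcing $E^{-}=0$) yields $E\geq 0$. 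Since $C_Y\not\subset \Supp E$, this gives $E\cdot C_Y\geq 0$.

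Now combine with adjunction on the smooth surface $Y$. Using the Mumford intersection product,
\[
(K_X+C)\cdot C \;=\; f^{*}(K_X+C)\cdot C_Y \;=\; (K_Y+C_Y)\cdot C_Y + E\cdot C_Y \;\geq\; (K_Y+C_Y)\cdot C_Y.
\]
By adjunction, $(K_Y+C_Y)\cdot C_Y = 2p_a(C_Y)-2$, and the inequality $p_a(C_Y)\geq g(\widetilde{C})$ from normalization gives
\[
(K_X+C)\cdot C \;\geq\; 2g(\widetilde{C})-2.
\]
The hypothesis $(K_X+C)\cdot C<0$ therefore forces $g(\widetilde{C})=0$, so $\widetilde{C}\cong \mathbb{P}^1$ and $C$ is a rational curve.

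The only nontrivial step is the effectivity of $E$; everything else is routine adjunction and the definition of Mumford's pullback. The minimality of the resolution is essential here, since it supplies the inequalities $K_Y\cdot E_i\geq 0$ that, together with negative definiteness, pin down the sign of $E$.
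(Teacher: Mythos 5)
Your proof is correct and takes essentially the same approach as the paper: pass to the minimal resolution, write $f^*(K_X+C)=K_Y+C_Y+\Delta_Y$, use minimality and negative definiteness to get $\Delta_Y\geq 0$ (hence $\Delta_Y\cdot C_Y\geq 0$), and conclude via adjunction on $Y$. The paper states the effectivity of $\Delta_Y$ without spelling out the negative-definiteness argument you carefully give, but otherwise the proofs coincide.
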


\begin{proof}
Let $f:Y\to X$ be the minimal resolution and 
let $C_Y$ be the proper transform of $C$. 
We define $\Delta_Y$ by $K_Y+C_Y+\Delta_Y=f^*(K_X+C).$ 
Then, we see 
$$(K_Y+C_Y)\cdot C_Y\leq (K_Y+C_Y+\Delta_Y)\cdot C_Y=(K_X+C)\cdot C<0.$$
Then, since $C_Y$ is a rational curve, so is $C$. 
\end{proof}

\subsection{Results on adjunction formula}
In this section, we summarize results on adjunction formula. 

\begin{prop}
Let $X$ be a projective normal surface and 
let $C$ be a curve in $X$. 
Then, there exists a exact sequence  
$$0\to \mathcal T \to \omega_X(C)|_C \to \omega_C \to 0$$
where $\mathcal T$ is the torsion subsheaf of $\omega_X(C)|_C$.
\end{prop}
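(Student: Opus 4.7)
The plan is to first construct a global exact sequence $0 \to \omega_X \to \omega_X(C) \to \omega_C \to 0$ on $X$, and then factor the surjection $\omega_X(C) \twoheadrightarrow \omega_C$ through $\omega_X(C)|_C$.

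For the global sequence, I would apply the functor $\mathcal{H}om_{\mathcal{O}_X}(-, \omega_X)$ to the ideal sheaf sequence
$$0 \to \mathcal{I}_C \to \mathcal{O}_X \to \mathcal{O}_C \to 0.$$
Since $X$ is a normal surface, it is Cohen--Macaulay by Serre's criterion, so $\omega_X$ is reflexive of rank one, and Grothendieck duality for the codimension-one closed immersion $i\colon C \hookrightarrow X$ yields $\mathcal{E}xt^1_{\mathcal{O}_X}(\mathcal{O}_C, \omega_X) \cong \omega_C$. The sheaf $\mathcal{H}om(\mathcal{O}_C, \omega_X)$ vanishes because $\omega_X$ is torsion-free, and $\mathcal{E}xt^1(\mathcal{O}_X, \omega_X) = 0$. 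The remaining term $\mathcal{H}om(\mathcal{I}_C, \omega_X)$ is a reflexive rank-one sheaf that coincides with $\omega_X(C)$ on the smooth locus $U$ of $X$ (where $\mathcal{I}_C = \mathcal{O}_X(-C)$), and two such sheaves on a normal surface are necessarily isomorphic because $X \setminus U$ has codimension $\geq 2$. The long exact sequence thus collapses to the desired four-term sequence on $X$.

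Next I would verify the inclusion $\mathcal{I}_C \cdot \omega_X(C) \subseteq \omega_X$ inside the sheaf of rational sections on $X$. This is immediate on $U$, and therefore extends over the finite set $X \setminus U$ by the $S_2$-property (equivalently, reflexivity) of $\omega_X$. It follows that the surjection $\omega_X(C) \twoheadrightarrow \omega_C$ factors through the quotient $\omega_X(C)/\mathcal{I}_C \omega_X(C) = \omega_X(C)|_C$, producing a surjection $\omega_X(C)|_C \twoheadrightarrow \omega_C$.

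The kernel of this surjection is supported on $\mathrm{Sing}(X) \cap C$, a zero-dimensional subset of $C$, hence is a torsion subsheaf of $\omega_X(C)|_C$. Since $C$ is an integral curve, $\omega_C$ is torsion-free, so this kernel must coincide with the full torsion subsheaf $\mathcal{T}$. The main technical point to check carefully will be the identification $\mathcal{H}om(\mathcal{I}_C, \omega_X) \cong \omega_X(C)$ together with the inclusion $\mathcal{I}_C \omega_X(C) \subseteq \omega_X$, both of which rest on the principle that reflexive rank-one sheaves on a normal surface are determined by their restriction to the smooth locus.
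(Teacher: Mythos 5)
Your argument is correct and is essentially the standard one; the paper itself gives no proof here but simply cites Lemma~4.4 of Fujino's \emph{Minimal model theory for log surfaces}, which proceeds along the same lines (the adjunction sequence $0\to\omega_X\to\omega_X(C)\to\omega_C\to 0$ obtained from $\mathcal{E}xt^\bullet(-,\omega_X)$ applied to the ideal-sheaf sequence, followed by restriction to $C$). A minor simplification: to factor the surjection through $\omega_X(C)|_C$ you do not actually need the inclusion $\mathcal{I}_C\cdot\omega_X(C)\subseteq\omega_X$ --- since $\omega_C$ is an $\mathcal{O}_C$-module, $\mathcal{I}_C$ automatically annihilates it, so the map $\omega_X(C)\to\omega_C$ kills $\mathcal{I}_C\omega_X(C)$ directly; the inclusion you prove is only needed if one wants to identify the kernel precisely as $\omega_X/\mathcal{I}_C\omega_X(C)$.
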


\begin{proof}
See \cite[Lemma~4.4]{Fujino}.
\end{proof}

Using this adjunction formula, 
we obtain the following result on global sections. 

\begin{lem}\label{adjunctionlemma}
Let $X$ be a projective normal surface and 
let $C$ be a curve in $X$. 
Fix a positive integer $r\in\mathbb Z_{>0}$. 
If $H^1(C, \mathcal{O}_C)\neq 0$, then 
$H^0(C, \omega_X(C)^{[r]}|_C)\neq 0$ where 
$\omega_X(C)^{[r]}$ is the double dual of 
$\omega_X(C)^{\otimes r}$. 
\end{lem}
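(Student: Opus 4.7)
The plan is to produce a nonzero element of $H^0(C, \omega_X(C)^{[r]}|_C)$ by first lifting a nonzero section of $\omega_C$ through the adjunction sequence to $\omega_X(C)|_C$, then taking its $r$-th tensor power and pushing it into the reflexive hull. By Serre duality on the projective curve $C$, we have $H^0(C, \omega_C) \cong H^1(C, \mathcal{O}_C)^{\vee}$, which is nonzero by hypothesis, so we may fix a nonzero section $s \in H^0(C, \omega_C)$.

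The preceding proposition provides a short exact sequence
$$0 \to \mathcal{T} \to \omega_X(C)|_C \to \omega_C \to 0$$
with $\mathcal{T}$ the torsion subsheaf of $\omega_X(C)|_C$. On the open set $U := C \cap X_{\mathrm{sm}}$ where $X$ is smooth, $C$ is a Cartier divisor, so honest adjunction makes $\omega_X(C)|_C \to \omega_C$ an isomorphism over $U$. Hence $\mathcal{T}$ is supported on the finite complement $C \setminus U$; as a coherent sheaf of zero-dimensional support, $\mathcal{T}$ is a skyscraper and $H^1(C, \mathcal{T}) = 0$. The long exact sequence of cohomology then gives a surjection $H^0(C, \omega_X(C)|_C) \to H^0(C, \omega_C)$, so we lift $s$ to $\tilde{s} \in H^0(C, \omega_X(C)|_C)$.

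Now form $\tilde{s}^{\otimes r} \in H^0(C, \omega_X(C)^{\otimes r}|_C)$ and push it through the natural map $\omega_X(C)^{\otimes r} \to (\omega_X(C)^{\otimes r})^{\vee\vee} = \omega_X(C)^{[r]}$, restricted to $C$, obtaining a section $\sigma \in H^0(C, \omega_X(C)^{[r]}|_C)$. To verify $\sigma \neq 0$, restrict to $U$: there all sheaves are line bundles, the map $\omega_X(C)^{\otimes r}|_U \to \omega_X(C)^{[r]}|_U$ is an isomorphism, and $\tilde{s}|_U = s|_U$ under the identification $\omega_X(C)|_U \cong \omega_C|_U$. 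Since $\omega_C$ is torsion-free on the reduced curve $C$ and $U$ is dense, $s \neq 0$ forces $s|_U \neq 0$, so $\sigma|_U = (s|_U)^{\otimes r}$ is nonzero in a line bundle; in particular $\sigma \neq 0$.

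The only nontrivial step is the lifting in the second paragraph, and it reduces to verifying that the torsion $\mathcal{T}$ is concentrated at the finitely many points where $C$ meets $\mathrm{Sing}\, X$. This is immediate from the fact that adjunction is an isomorphism wherever $C$ is Cartier. The remaining steps are formal manipulations with reflexive sheaves, exploiting that tensor and reflexive powers coincide away from $\mathrm{Sing}\, X$ and that a section of a coherent sheaf nonzero on a dense open set of a line bundle remains nonzero after tensor powers.
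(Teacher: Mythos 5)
Your proof is correct and follows essentially the same route as the paper: lift a nonzero section of $\omega_C$ to $\omega_X(C)|_C$ via the adjunction sequence and $H^1(\mathcal{T})=0$, take the $r$-th tensor power, map into the reflexive hull $\omega_X(C)^{[r]}|_C$, and verify nonvanishing on the open locus where $X$ is smooth. The only cosmetic differences are that you spell out Serre duality to get $H^0(C,\omega_C)\neq 0$ and check nonvanishing by restriction to a dense open set, whereas the paper concludes by observing that the resulting map $\mathcal{O}_C\to\omega_X(C)^{[r]}|_C$ is a nonzero map of invertible sheaves on the integral curve $C$, hence injective.
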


\begin{proof}
We consider the exact sequence 
$$0\to \mathcal T \to \omega_X(C)|_C \to \omega_C \to 0.$$
Since $\mathcal T$ is a skyscraper sheaf, we have 
$H^1(C, \mathcal T)=0$. 
By $H^0(C, \omega_C)\neq 0$, 
we obtain $H^0(C, \omega_X(C)|_C/\mathcal T)\neq 0$. 
Thus there exists a map
$$\mathcal O_{C} \to \omega_X(C)|_C$$
such that this is injective on some non-empty open set. 
Therefore we obtain a map
$$\mathcal O_{C} \to \omega_X(C)^{\otimes r}|_C,$$
which is injective on some non-empty open set. 
On the other hand, 
there is a natural map 
$$\omega_X(C)^{\otimes r}|_C \to \omega_X(C)^{[r]}|_C,$$
which is bijective on some non-empty open set. 
Combining these maps, we have the map 
$$\mathcal O_{C} \to \omega_X(C)^{[r]}|_C,$$
which is injective on some non-empty open set. 
Thus, the kernel $K$ of this map is a torsion subsheaf of $\mathcal O_C$. 
Then, we have $K=0$. 
Therefore, we obtain an injection $\mathcal O_{C} \hookrightarrow \omega_X(C)^{[r]}|_C.$ 
This means $H^0(C, \omega_X(C)^{[r]}|_C) \neq 0.$
\end{proof}

Using this lemma, 
we obtain the following theorem, 
which plays a crucial role in this paper.

\begin{thm}\label{adjunction}
Let $X$ be a projective normal surface and 
let $C$ be a curve in $X$ 
such that $r(K_X+C)$ is Cartier for some positive integer $r$. 
\begin{enumerate}
\item{If $C\cdot (K_X+C)<0$, then $C\simeq \mathbb{P}^1$.}
\item{If $C\cdot (K_X+C)=0$, then $C\simeq \mathbb{P}^1$ or 
$\mathcal{O}_C((K_X+C)^{[r]}) \simeq \mathcal O_C$.}
\end{enumerate}
\end{thm}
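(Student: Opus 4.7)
The plan is to combine Lemma~\ref{adjunctionlemma} with an elementary degree computation on $C$. The key observation is that since $r(K_X+C)$ is an honest Cartier divisor on $X$, the invertible sheaf $\omega_X(C)^{[r]}|_C$ has degree $r(K_X+C)\cdot C$ on $C$ (this agrees with the Mumford intersection number via the projection formula on a resolution). Since a \emph{variety} in this paper is an integral scheme, $C$ is an integral projective curve, so an invertible sheaf on $C$ admitting a nonzero global section has nonnegative degree; moreover, if the degree is zero, then the zero locus of such a section is an effective divisor of degree zero, hence empty, so the section trivializes the sheaf.

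For part (1), if $C \not\simeq \mathbb{P}^1$ I first check that $H^1(C, \mathcal{O}_C) \neq 0$. Let $\nu: \tilde C \to C$ be the normalization and consider the conductor sequence
$$0 \to \mathcal{O}_C \to \nu_*\mathcal{O}_{\tilde C} \to \mathcal{Q} \to 0,$$
with $\mathcal{Q}$ a skyscraper sheaf. Integrality of $C$ gives $H^0(\mathcal{O}_C) = H^0(\mathcal{O}_{\tilde C}) = k$, so the long exact sequence produces an injection $H^0(\mathcal{Q}) \hookrightarrow H^1(\mathcal{O}_C)$ followed by a surjection onto $H^1(\mathcal{O}_{\tilde C})$; if $H^1(\mathcal{O}_C)$ vanished, then $\mathcal{Q} = 0$ (so $C$ is smooth) and $\tilde C \simeq \mathbb{P}^1$, giving $C \simeq \mathbb{P}^1$, a contradiction. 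Now Lemma~\ref{adjunctionlemma} provides a nonzero section of $\omega_X(C)^{[r]}|_C$, forcing $r(K_X+C)\cdot C \geq 0$ and contradicting the hypothesis.

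For part (2), the same normalization argument shows that either $C \simeq \mathbb{P}^1$ or $H^1(C, \mathcal{O}_C) \neq 0$. In the latter case, Lemma~\ref{adjunctionlemma} furnishes a nonzero section of the degree-zero invertible sheaf $\omega_X(C)^{[r]}|_C$, which by the triviality criterion above yields $\mathcal{O}_C((K_X+C)^{[r]}) \simeq \mathcal{O}_C$. The only mildly delicate step is the compatibility between Mumford's intersection number and the naive degree of the Cartier restriction $r(K_X+C)|_C$, but since $r(K_X+C)$ is genuinely Cartier on $X$, this follows directly from the projection formula on any resolution $f:X' \to X$, together with the fact that $f$ restricts to an isomorphism above the generic point of $C$.
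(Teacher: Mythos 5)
Your proof is correct and follows essentially the same route as the paper: both reduce to Lemma~\ref{adjunctionlemma} together with the elementary facts that a line bundle of negative degree on an integral projective curve has no sections and a degree-zero line bundle with a nonzero section is trivial. You merely make explicit two steps the paper leaves to the reader, namely the agreement of Mumford's intersection number with the degree of the Cartier restriction $r(K_X+C)|_C$, and the conductor-sequence argument showing that an integral projective curve with $H^1(\mathcal{O}_C)=0$ is $\mathbb{P}^1$.
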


\begin{proof}
(1)Since $C\cdot (K_X+C)<0$ means $H^0(C, \omega_X(C)^{[r]}|_C)=0$, 
the curve $C$ must be $\mathbb{P}^1$ by Lemma~\ref{adjunctionlemma}. \\
(2)Assume $C\not\simeq \mathbb{P}^1$. 
Then we can apply Lemma~\ref{adjunctionlemma} and 
we obtain $H^0(C, \omega_X(C)^{[r]}|_C) \neq 0.$ 
By $C\cdot (K_X+C)=0$, we have $\omega_X(C)^{[r]}|_C\simeq \mathcal O_C.$
\end{proof}

\subsection{Contraction theorem}
In this section, we show that 
extremal rays are contractable for \Q-factorial surfaces 
with $\mathbb{R}$-boundaries. 
First, we consider the following theorem, 
which we will use later.

\begin{thm}\label{irratqfac}
In this theorem, let $k$ be an algebraically closed field of arbitrary characteristic. 
Let $\pi:Y \to B$ be a surjective morphism over $k$ 
from a smooth projective surface $Y$ 
to a smooth projective irrational curve $B$. 
Let $f:Y \to X$ be a birational morphism 
to a projective normal \Q-factorial surface. 
If $k$ is not the algebraic closure of a finite field, then 
all $f$-exceptional curves are $\pi$-vertical. 
\end{thm}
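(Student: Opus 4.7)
The plan is to argue by contradiction: suppose some $f$-exceptional curve $E \subset Y$ is $\pi$-horizontal, so that $\pi|_E\colon E \to B$ is a finite surjective morphism. The idea is to use Fact~\ref{principle}(1) to produce a non-torsion class $\alpha \in \mathrm{Pic}^0(B)$, pull it back to $Y$ via $\pi$, and show that its restriction to $E$ is forced to be torsion after pulling back to the normalization of $E$; this will contradict the fact that pullback along a finite surjective morphism of smooth projective curves has finite kernel on Jacobians.

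In detail, fix a non-torsion $\alpha \in \mathrm{Pic}^0(B)$ (available by Fact~\ref{principle}(1) since $k \neq \overline{\mathbb{F}}_p$), pick a representative divisor $D_\alpha$ of degree zero on $B$, and set $D := \pi^*D_\alpha$ on $Y$. Let $E_1,\dots,E_n$ be the $f$-exceptional curves. Since $X$ is $\mathbb{Q}$-factorial, $f_*D$ is $\mathbb{Q}$-Cartier and $f^*f_*D$ is a well-defined element of $\mathrm{Pic}(Y)_\mathbb{Q}$; moreover the difference is supported on the exceptional locus, so write $D - f^*f_*D = \sum e_i E_i$ with $e_i \in \mathbb{Q}$. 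By the projection formula for $\pi$ together with $\deg D_\alpha = 0$,
\[
D \cdot E_j \;=\; \pi^*D_\alpha \cdot E_j \;=\; D_\alpha \cdot \pi_* E_j \;=\; 0
\]
for every $j$; meanwhile $(f^*f_*D) \cdot E_j = f_*D \cdot f_*E_j = 0$ since $E_j$ is $f$-exceptional. Hence $\sum e_i E_i \cdot E_j = 0$ for all $j$, and the negative definiteness of the exceptional intersection matrix forces $e_i = 0$ for all $i$. Thus $D = f^*f_*D$ in $\mathrm{Pic}(Y)_\mathbb{Q}$.

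Now restrict to $E$. Since $f(E)$ is a single point and $f_*D$ is $\mathbb{Q}$-Cartier, $(f^*f_*D)|_E$ is $\mathbb{Q}$-trivial in $\mathrm{Pic}(E)$, hence so is $D|_E = (\pi|_E)^*\alpha$. Let $\nu\colon \widetilde{E} \to E$ be the normalization and set $\phi := \pi|_E \circ \nu\colon \widetilde{E} \to B$, which is a finite surjective morphism of smooth projective curves. Pulling back further via $\nu$ shows that $\phi^*\alpha$ is torsion in $\mathrm{Pic}^0(\widetilde{E})$. However, $\phi^*\colon \mathrm{Pic}^0(B) \to \mathrm{Pic}^0(\widetilde{E})$ has finite kernel (as $\phi_*\circ\phi^* = \deg(\phi)\cdot\mathrm{id}$), so $\alpha$ itself must be torsion, contradicting the choice of $\alpha$.

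The main obstacle I anticipate is making the identity $D = f^*f_*D$ precise in $\mathrm{Pic}(Y)_\mathbb{Q}$ rather than merely numerically. Crucially this uses the $\mathbb{Q}$-factoriality of $X$, which allows $f^*$ of the Weil divisor $f_*D$ to be interpreted as an honest $\mathbb{Q}$-Cartier class, combined with the negative definiteness of the exceptional intersection matrix to upgrade the intersection-theoretic vanishing to an equality of $\mathbb{Q}$-Cartier classes. The remaining steps — the degree-zero projection formula, the triviality of $(f^*L)|_E$ when $f(E)$ is a point, and the Jacobian pullback lemma for finite maps of smooth curves — are all standard.
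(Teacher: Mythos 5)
Your proof is correct, and it takes a genuinely different route from the paper.

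The paper's argument also reduces to choosing a $\mathbb{Q}$-divisor $D$ pulled back from $B$ and exploiting the $\mathbb{Q}$-factoriality relation $\pi^*D - f^*f_*\pi^*D \in \sum_i \mathbb{Q}E_i$, but it handles the exceptional correction terms very differently. The paper first reduces to the case where the horizontal exceptional curve $C$ is smooth (by blowing up), considers the image of $(\pi|_C)^*\colon \Pic^0 B \to \Pic^0 C$ as a positive-dimensional abelian variety, invokes Fact~\ref{principle}(1) to conclude it has \emph{infinite rank}, and then picks $D$ so that $(\pi^*D)|_C$ lies outside the finite-dimensional span $\sum_i\mathbb{Q}(E_i|_C)$; restricting the relation above to $C$ then gives an immediate contradiction without ever determining the coefficients $e_i$. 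Your proof instead shows the coefficients $e_i$ vanish outright: since $\deg D_\alpha = 0$, the projection formula gives $D\cdot E_j = 0$ for every exceptional $E_j$ (both vertical and horizontal), while $f^*f_*D\cdot E_j = 0$ holds by construction, so the negative definiteness of the exceptional intersection matrix forces $\sum e_iE_i = 0$ and hence $D = f^*f_*D$ as $\mathbb{Q}$-divisors. This yields directly that $D|_E$ is torsion, and the finite-kernel property of pullback on Jacobians of smooth curves (after passing to the normalization of $E$, avoiding the need for any blow-up) makes $\alpha$ torsion, contradicting the choice. Your route is a bit more economical: it only needs the \emph{existence} of a single non-torsion class in $\Pic^0 B$ rather than the full infinite-rank statement, and the intersection-theoretic identification $D = f^*f_*D$ is a clean and reusable lemma, whereas the paper's choice of $D$ outside a finite span is slightly more indirect. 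Both arguments use $\mathbb{Q}$-factoriality of $X$ in exactly the same place — to make $f^*f_*D$ meaningful and to know $D - f^*f_*D$ is supported on the exceptional locus.
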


This proof is essentially due to \cite[Lemma~5.2]{Fujino}. 

\begin{proof}
We assume that $C$ is an $f$-exceptional curve with $\pi(C)=B$ and 
want to derive a contradiction. 
We may assume that $C$ is smooth by taking 
a sequence of blow-ups of singular points of $C$. 
We have 
\begin{eqnarray*}
\pi|_C:C&\longrightarrow& B\\
\Pic^0C &\overset{(\pi|_C)^*}\longleftarrow& \Pic^0B.
\end{eqnarray*}
We prove that the image $(\pi|_C)^*(\Pic^0B)$ is an abelian group whose rank is infinite. 
By considering $(\pi|_C)^*$ as a morphism between Jacobian varieties, 
we see that $(\pi|_C)^*(\Pic^0B)$ is an abelian variety. 
Note that the dimension of $(\pi|_C)^*(\Pic^0B)$ as a scheme is not zero 
by $(\pi|_C)_*\circ(\pi|_C)^*={\rm deg}(\pi|_C)$ and by the irrationality of $B$. 
Thus, by Fact~\ref{principle}, 
the rank of $(\pi|_C)^*(\Pic^0B)$ is infinite. 
Then, we have 
$$(\pi|_C)^*(\Pic^0B)\otimes_{\mathbb{Z}}\mathbb{Q}\setminus
\sum_{i=1}^{r}\mathbb{Q}(E_i|_C)\neq\emptyset$$
where $E_1,\cdots ,E_r$ are the $f$-exceptional curves. 
Therefore we can take a \Q-divisor $D$ on $B$ such that 
$$(\pi^*D)|_C \not\in\sum_{i=1}^{r}\mathbb{Q}(E_i|_C)$$
On the other hand, since $X$ is \Q-factorial, we obtain 
$$\pi^*D-f^*f_*\pi^*D\in\sum_{i=1}^{r}\mathbb{Q}E_i. $$
Restricting this relation to $C$, we have the following contradiction
$$(\pi^*D)|_C\in\sum_{i=1}^{r}\mathbb{Q}(E_i|_C) $$
because $C$ is $f$-exceptional. 
\end{proof}

Originally, \cite{Fujino} uses this theorem to prove the non-vanishing theorem. 
We use this theorem not only for the non-vanishing theorem 
but also for the following contraction theorem. 

\begin{thm}[Contraction theorem]\label{cont}
Let $X$ be a projective normal \Q-factorial surface and 
let $\Delta$ be an $\mathbb{R}$-boundary. 
Let $R=\mathbb{R}_{\geq 0}[C]$ be a $(K_X+\Delta)$-negative extremal ray. 
Then there exists a surjective morphism $\phi_R: X \to Y$ 
to a projective variety $Y$ with the following properties{\em{:}} 
{\em{(1)-(5)}}.
\begin{enumerate}
\item{Let $C'$ be a curve on $X$. 
Then $\phi_R(C')$ is one point iff $[C'] \in R$.}
\item{$(\phi_R)_*(\mathcal{O}_X)=\mathcal{O}_Y$.}
\item{If $L$ is an invertible sheaf with $L\cdot C=0$, 
then $nL=(\phi_R)^*L_Y$ for some invertible sheaf $L_Y$ on $Y$ and 
for some positive integer $n$.}
\item{$\rho(Y)=\rho(X)-1$.}
\item{$Y$ is \Q-factorial if $\dim Y=2$.}
\end{enumerate}
\end{thm}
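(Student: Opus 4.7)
The plan is to construct a nef Cartier divisor $L$ on $X$ with $L^{\perp}\cap\overline{NE}(X)=R$, show $L$ is semi-ample, and take $\phi_R$ to be the Stein factorization of the morphism defined by $|mL|$ for $m\gg 0$. If $\rho(X)=1$ take $L=0$ and $Y=\operatorname{Spec} k$. Otherwise, applying the cone theorem (Theorem~\ref{ct}(2)) with an ample $H$ shows that $R$ is one of finitely many $(K_X+\Delta+H)$-negative extremal rays; a standard separating hyperplane argument in the rational polyhedral cone, combined with $\mathbb{Q}$-factoriality of $X$ (so that rational linear functionals on $N_1(X)$ are represented by $\mathbb{Q}$-Cartier divisors), yields the desired Cartier $L$.

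The key case is $L^2>0$, where $L$ is nef and big, so Keel's theorem (Theorem~\ref{keel}) reduces semi-ampleness of $L$ to that of $L|_{E(L)}$. Each irreducible component $C'$ of $E(L)$ satisfies $L\cdot C'=0$, hence $[C']\in R$. Pulling back to a resolution of singularities and applying the Hodge index theorem, $L^2>0$ together with $L\cdot C'=0$ forces $(C')^2<0$ in Mumford's intersection theory. Since $X$ is $\mathbb{Q}$-factorial, distinct irreducible curves have nonnegative Mumford intersection, so a numerical class containing a representative of negative self-intersection contains at most one irreducible curve; hence $E(L)$ is the unique rational curve $C$ generating $R$ (by Proposition~\ref{ext-rat-length1}). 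Then $L|_C$ is a degree-zero line bundle on $\mathbb{P}^1$, hence trivial and in particular semi-ample, so Keel's theorem yields $L$ semi-ample.

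The main obstacle is the case $L^2=0$, where Keel's theorem does not apply. The strategy is to pass to the minimal resolution $\pi\colon\widetilde X\to X$, where $\pi^*L$ is nef with $(\pi^*L)^2=0$, and to use the classification of smooth projective surfaces in positive characteristic (or a direct analysis of $|m\pi^*L|$ via Riemann--Roch and the structure of nef divisors with trivial self-intersection) to conclude that $\pi^*L$ is semi-ample and defines a fibration $\widetilde X\to Z$ onto a smooth curve $Z$; since $\pi^*L$ has intersection zero with every $\pi$-exceptional curve, this fibration contracts the $\pi$-exceptional locus and descends to a morphism $X\to Z$, giving the semi-ampleness of $L$ on $X$. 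Once $L$ is semi-ample in either subcase, take $\phi_R$ as the Stein factorization of $\Phi_{|mL|}$ for $m\gg 0$: properties (1) and (2) are then immediate from $L^{\perp}\cap\overline{NE}(X)=R$ together with Stein factorization, (4) is the dimension count $\rho(Y)=\rho(X)-1$, (3) follows from the projection formula and $\mathbb{Q}$-factoriality, and (5) is a routine descent of $\mathbb{Q}$-factoriality through the contraction of a single curve on a normal surface.
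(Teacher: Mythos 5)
Your overall strategy — build a nef Cartier supporting divisor $L$ for the ray, split on $\rho(X)=1$, $L^2>0$, $L^2=0$, and feed the big case to Keel — lines up closely with the paper's three cases $C^2>0$, $C^2<0$, $C^2=0$ respectively. However, there are gaps.

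The most serious one is in the $\rho(X)=1$ case. There you set $L=0$, $Y=\operatorname{Spec} k$, and later claim that property~(3) ``follows from the projection formula and $\mathbb{Q}$-factoriality.'' When $Y$ is a point, (3) asserts that \emph{every} numerically trivial line bundle on $X$ is torsion in $\Pic X$. This is not a consequence of the projection formula; it is false on a general normal surface and is in fact the hardest step of this case. The paper proves it by supposing $\kappa(X,D)=-\infty$, using Serre duality and Riemann--Roch on the minimal resolution $X'$ to force $h^1(\mathcal O_{X'})\geq 1$ and hence that $X'$ is an irrational ruled surface, and then invoking Theorem~\ref{irratqfac} (which ultimately relies on Fact~\ref{principle} about infinite-rank abelian varieties over $k\neq\overline{\mathbb F}_p$) to show the ruling descends to $X$, contradicting $\rho(X)=1$. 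When $k=\overline{\mathbb F}_p$ one instead appeals to Corollary~\ref{pic}. None of this appears in your proposal, so (3) is genuinely unproved in this case.

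Two smaller issues. In the $L^2>0$ case you cite Proposition~\ref{ext-rat-length1} to conclude $E(L)=C\simeq\mathbb P^1$; but that proposition only gives a \emph{rational} curve, and a degree-zero line bundle on a singular rational curve (nodal or cuspidal) need not be torsion, so Keel's theorem would not apply. The right tool here is the adjunction statement Theorem~\ref{adjunction}(1): from $(K_X+C)\cdot C\leq(K_X+\Delta)\cdot C<0$ one gets an actual isomorphism $C\simeq\mathbb P^1$. In the $L^2=0$ case your appeal to ``classification of surfaces or Riemann--Roch'' is too vague to constitute a proof: a nef divisor with zero self-intersection is not in general semi-ample. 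The argument must use that $\rho(X)=2$, so $L\equiv qC$ with $q>0$, and that $C^2=0$ together with $(K_X+\Delta)\cdot C<0$ forces $K_X\cdot C<0$; Riemann--Roch on the minimal resolution then yields $\kappa(X,L)\geq 1$, hence $\kappa=1$ since $L^2=0$, and then Fujita's theorem (Proposition~\ref{kappa1}) gives semi-ampleness. These observations are the crux of the case and should be made explicit.
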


We divide the proof into the three cases: 
$C^2>0$, $C^2=0$ and $C^2<0$.

\begin{proof}[Proof of the case where $C^2>0$.]
$C^2>0$ shows that $C$ is a nef and big divisor. 
Therefore for an arbitrary curve $C'$, 
there exists an effective Cartier divisor $E$ and 
positive integers $n$ and $m$ such that 
$nC \sim mC'+E$ by Kodaira's lemma. 
Since $C$ generates an extremal ray, 
we have $C' \equiv qC$ for some rational number $q$. 
Recall that we choose $C'$ as an arbitrary curve. 
Thus we obtain $\rho(X)=1$ and $-K_X$ is ample. 
Then let $Y$ be one point and 
(1)(2)(4) are satisfied. 
We want to prove (3), that is, 
we must show that for a \Q-divisor $D$ if $D \equiv 0$, then $D$ is a torsion. 
It is sufficient to prove $\kappa(X, D) \geq 0$. 
Thus, we assume $\kappa(X, D) =-\infty$ and derive a contradiction. 
Let $f:X' \to X$ be the minimal resolution, 
$D'=f^*D$ and 
$K_{X'}+E'=f^*K_X$ where $E'$ is an effective $f$-exceptional \Q-divisor. 
Then we obtain
$$\kappa(X', K_{X'})\leq \kappa(X', K_{X'}+E')=\kappa(X, K_X)=-\infty.$$
First we prove that $X'$ is an irrational ruled surface. 
By Serre duality, we obtain 
$h^2(X', D')=
h^0(X', K_{X'}-D')$. 
Moreover we get 
\begin{align*}
\kappa(X', K_{X'}-D') &\leq 
\kappa(X', K_{X'}+E'-D')\\ &=
\kappa(X', f^*(K_{X}-D))\\ & =
\kappa(X, K_X-D)=-\infty
\end{align*}
by the anti-ampleness of $K_X-D$. 
Hence $h^2(X', D')=0$. 
Then, by the Riemann--Roch theorem, we obtain 
$$-h^1(X', D')=\chi(\mathcal{O}_{X'})+\frac{1}{2}D'\cdot(D'-K_{X'})=\chi(\mathcal{O}_{X'})$$ 
because $D'=f^*D\equiv 0$. 
This shows that 
$$0 \geq -h^1(X', D') = 1-h^1(X', \mathcal{O}_{X'}).$$ 
Thus we obtain  $h^1(X', \mathcal{O}_X') \geq 1$ 
and this means that $X'$ is an irrational ruled surface. 
Let $\pi:X' \to B$ be its ruling. 
Here, if $k=\fff$, then $D$ is a torsion by Corollary \ref{pic}. 
Hence we consider the case $k \neq \fff$. 
Then we can apply Theorem~\ref{irratqfac} and 
all $f$-exceptional curves are $\pi$-vertical. 
This shows that $\pi$ factors through 
$X' \to X \to B$. 
A curve in a fiber has non-positive self-intersection number. 
But this is a contradiction because 
each curve in $X$ is ample. 
This completes the proof of the case $C^2>0$. 
\end{proof}

\begin{proof}[Proof of the case where $C^2=0$.]
First let us prove $\rho(X)=2$. 
It is sufficient to show that for an arbitrary divisor $F$, 
if $F\cdot C=F\cdot (K_X+\Delta)=0$, then  $F \equiv 0$. 
We need the following lemma. 

\begin{lem}\label{quad}
If $D_1, D_2 \in C^{\perp}=\{D \, |\, {\text{$D$ is a divisor and }}\, D\cdot C=0\}$, 
then $D_1\cdot D_2=0$
\end{lem}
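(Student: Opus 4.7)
The plan is to show that Mumford's intersection form vanishes identically on $C^\perp$; the claim $D_1 \cdot D_2 = 0$ then follows from the polarization identity $2 D_1 \cdot D_2 = (D_1 + D_2)^2 - D_1^2 - D_2^2$. I will carry this out in two stages: first establish $D^2 \leq 0$ on $C^\perp$ via Hodge index, then upgrade to $D^2 = 0$ using the hypothesis $C \cdot (K_X + \Delta) < 0$.

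For the Hodge index stage, I use that Mumford's intersection form on $N(X)_{\mathbb{R}}$ has signature $(1, \rho(X) - 1)$. This is inherited from the intersection form on a resolution $\pi : Y \to X$ via the orthogonal decomposition $N(Y)_{\mathbb{R}} = \pi^* N(X)_{\mathbb{R}} \oplus \langle E_1, \ldots, E_n \rangle$, with the exceptional summand negative definite. Since $C \cdot H > 0$ for any ample $H$, the class of $C$ is non-zero in $N(X)$. If some $D \in C^\perp$ had $D^2 > 0$, then Hodge index applied to the pair $(D, C)$ with $D \cdot C = 0$ and $C \not\equiv 0$ would force $C^2 < 0$, contradicting $C^2 = 0$. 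Hence $D^2 \leq 0$ for every $D \in C^\perp$.

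For the upgrading stage, first observe that $C \cdot \Delta \geq 0$: each component of $\Delta$ either equals $C$ (contributing $C^2 = 0$) or is a distinct prime curve and so meets $C$ non-negatively on the normal surface. Combined with $C \cdot (K_X + \Delta) < 0$ this yields $C \cdot K_X < 0$. Taking a Cartier representative of $D \in C^\perp$ (available by $\mathbb{Q}$-factoriality), Riemann--Roch gives
\[
\chi(\mathcal{O}_X(D + nC)) = \chi(\mathcal{O}_X) + \frac{D^2 - (D + nC) \cdot K_X}{2},
\]
which is linear and unbounded in $n$, since $C^2 = 0$ kills the quadratic term while the linear coefficient $-C \cdot K_X / 2$ is positive. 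Because $(K_X - D - nC) \cdot H \to -\infty$, Serre duality gives $h^2(D + nC) = 0$ for $n \gg 0$, so $h^0(D + nC) \to \infty$. Thus $|D + nC|$ is positive-dimensional for large $n$, and picking two members $E_1, E_2$ without a common curve component, the Mumford intersection of two effective divisors without common components is non-negative, so $D^2 = (D + nC)^2 = E_1 \cdot E_2 \geq 0$. Combined with the first stage, $D^2 = 0$.

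The main obstacle is the last step: extracting two members of $|D + nC|$ without a common curve component when the linear system has a fixed part, or equivalently, showing that the negative part $N$ in the Zariski decomposition $D + nC = P + N$ vanishes for $n \gg 0$. The relation $(D + nC) \cdot C = 0$ together with $P$ nef and each prime component of $N$ intersecting $C$ non-negatively forces $P \cdot C = N \cdot C = 0$, so $P \in C^\perp$ is nef and hence $P^2 = 0$ by the first stage; one must then use the extremality of the ray generated by $C$ (not merely the Hodge-index bound $D^2 \leq 0$) to conclude that $N = 0$, giving $D^2 = P^2 = 0$.
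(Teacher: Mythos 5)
Your Stage~1 is correct, and you have correctly identified where the difficulty lies; but the step you flag as ``the main obstacle'' is in fact the entire content of the lemma, and the proposal does not close it. The Hodge-index bound $D^2\le 0$ on $C^\perp$ cannot by itself give the result: since $C^2=0$ and $[C]\ne 0$, the restriction of the intersection form to $C^\perp_{\mathbb R}$ is negative semi-definite with radical exactly $\mathbb R C$, hence has rank $\rho(X)-2$. The lemma asserts this rank is zero, i.e.\ that $\rho(X)=2$, so the missing content is genuinely extra and must come from extremality of $R$. Your Zariski decomposition $D+nC=P+N$ with $P\in C^\perp$ nef, $P^2=0$, only re-derives $D^2=N^2\le 0$; showing $N=0$ is where the work is, and the proposal stops at exactly that point.

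The gap can be closed along the line you gesture at, by running Riemann--Roch a second time. Suppose $N_1$ is a prime component of $N$; then $N_1\cdot C=0$ and $N_1^2<0$ (negative definiteness of the Zariski negative part). Since $(qC-N_1)^2=N_1^2$ is constant in $q$ while $-C\cdot K_X>0$, the Euler characteristic $\chi(qC-N_1)$ grows linearly in $q$, and $(K_X-qC+N_1)\cdot H\to-\infty$ kills $h^2$, so $qC-N_1$ is effective for $q\gg 0$. Then $q[C]=[N_1]+[qC-N_1]$ decomposes a point of the extremal ray $R$ into a sum of two pseudo-effective classes; extremality forces $[N_1]\in R$, so $N_1\equiv q'C$ and $N_1^2=0$, a contradiction. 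Hence $N=0$ and $D^2=P^2=0$, giving the lemma by polarization.

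For comparison, the paper's proof takes an entirely different route and never invokes Riemann--Roch or Zariski decomposition. It uses the cone theorem to produce a nef divisor $G$ with $\overline{NE}(X)\cap G^\perp=R$, observes $G^2=0$ (if $G^2>0$ then $G$ is nef and big and $G\cdot C=0$ would force $C^2<0$ by Hodge index), and then argues that the nef supporting divisors for $R$ sweep out an open subset of $C^\perp_{\mathbb R}$, on which the quadratic form $Q$ therefore vanishes identically. That is a convex-geometric argument about the face of the nef cone dual to $R$, whereas your (completed) argument is cohomological, using Riemann--Roch twice and the formulation of extremality as ``a pseudo-effective decomposition of a point on $R$ stays in $R$.'' Both work, but as submitted the proposal leaves the essential step unproved.
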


This proof is essentially due to \cite[Lemma~3.29]{Mori2}. 

\begin{proof}[Proof of {\em{Lemma \ref{quad}}}]
We consider the quadratic form 
$Q:C^{\perp}_{\mathbb{R}} \to \mathbb{R}$. 
Here we consider $C^{\perp}_{\mathbb{R}}$ as a subvector-space of 
the numerical equivalence classes of $\mathbb{R}$-divisors and 
$Q$ is defined by the self-intersection. 
We want to prove that $Q$ is identically zero. 
Take a nef divisor $G$ such that 
$\overline{NE}(X) \cap G^{\perp}=\mathbb{R}_{\geq 0}[C]$. 
By the nefness of $G$, we obtain $G^2 \geq 0$. 
But $G^2$ must be $0$ because $G^2>0$ shows that $G$ is nef and big. 
Then, by $G\cdot C=0$, we obtain $C^2<0$ and this is a contradiction. 
This shows that $Q$ is zero in a non-empty dense subset 
of an open subset in $C^{\perp}_{\mathbb{R}}$ by the cone theorem. 
Therefore $Q$ must be identically zero. 
\end{proof}

Since $F \in C^{\perp}$, we obtain $D\cdot F=0$ 
for any divisor $D \in C^{\perp}$. 
$\mathbb{R}$-subvector-space $C^{\perp}_{\mathbb{R}}$ 
in numerical classes of divisors has codimension one. 
Take its basis $D_1, \cdots , D_{\rho-1}$. 
Then we get the basis $D_1, \cdots , D_{\rho-1},$ $ (K_X+\Delta)$ of the whole space. 
Indeed, by $C\cdot (K_X+\Delta) \neq 0$, these vectors are linearly independent. 
Since $F\cdot D_1= \cdots =F\cdot D_{\rho-1}=F\cdot (K_X+\Delta)=0$, 
we get $F \equiv 0$. 
Thus, we obtain $\rho(X)=2$. 

Next, let us prove that the divisor $C$ is semi-ample. 
By $C^2=0$ and $(K_X+\Delta)\cdot C<0$, 
we obtain $K_X\cdot C<0$. 
Let $f: X' \to X$ be a resolution. 
By 
\begin{eqnarray*}
(f^*C)^2&=&f_*(f^*C)\cdot C=C\cdot C=0\,\,\,\,\,{\rm and}\\
K_{X'}\cdot f^*C&=&f_*(K_{X'})\cdot C=K_X\cdot C<0,
\end{eqnarray*}
The Riemann--Roch theorem shows that $\kappa(X', f^*C) \geq 1$. 
Note that $h^2(X', nf^*C)=h^0(X', K_{X'}-nf^*C)=0$ 
for all $n \gg 0$.  
Therefore we get $\kappa(X, C)=\kappa(X', f^*C) \geq 1$. 
$C^2=0$ implies $\kappa(X, C)=1$. 
Then, by the following proposition, $C$ is a semi-ample divisor. 

\begin{prop}\label{kappa1}
Let $X$ be a projective normal surface and $L$ be a nef line bundle. 
If $\kappa(X, L)=1$, then $L$ is semi-ample.
\end{prop}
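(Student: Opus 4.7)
The plan is to reduce to the smooth surface case via the minimal resolution, analyze the Iitaka fibration to a curve using the Hodge index theorem, and then upgrade a numerical equivalence to a linear one to conclude semi-ampleness.

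First I would observe $L^2=0$: $L$ is nef so $L^2\geq 0$, and $L^2>0$ would force $L$ to be big and hence $\kappa(X,L)=2$, contradicting the hypothesis. Next I would reduce to the smooth case. Let $g\colon Y\to X$ be the minimal resolution. By normality, $g_*\mathcal O_Y=\mathcal O_X$, so the projection formula gives $H^0(Y,mg^*L)=H^0(X,mL)$ for every $m>0$; hence $g^*L$ is nef with $\kappa(Y,g^*L)=1$. Since the global sections are identified, base-point freeness of $|mg^*L|$ on $Y$ descends to base-point freeness of $|mL|$ on $X$. So we may assume $X$ is smooth.

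Choose $m_0\gg 0$ with $h^0(X,m_0L)\geq 2$ and let $h\colon Y\to X$ resolve the base locus of $|m_0L|$, writing $h^*(m_0L)=M+F$ with $|M|$ base-point free and $F$ effective. The Stein factorization of the morphism defined by $|M|$ is a fibration $\phi\colon Y\to Z$ onto a smooth projective curve, and $M=\phi^*A$ with $\deg A>0$. Since $(h^*L)^2=L^2=0$ and all classes involved are nef, the identity $0=m_0(h^*L)^2=h^*L\cdot M+h^*L\cdot F$ forces both intersections on the right to vanish. The classes $h^*L$ and $\phi^*A$ are thus two nef, null, and mutually orthogonal classes in $N^1(Y)_{\mathbb R}$; by the Hodge index theorem---which in signature $(1,\rho-1)$ forces two orthogonal null vectors in the closure of the positive cone to be proportional---they are numerically proportional. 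Consequently $nh^*L\equiv d\phi^*A$ for some positive integers $n,d$.

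It remains to upgrade this numerical equivalence to a linear one (up to a further multiple), which gives semi-ampleness. If $k=\overline{\mathbb F}_p$, then by Corollary~\ref{pic} the numerically trivial class $nh^*L-d\phi^*A$ is torsion in $\Pic\,Y$, so after replacing $(n,d)$ by a large multiple, $nh^*L\sim d\phi^*A=\phi^*(dA)$, which is semi-ample; hence so are $h^*L$ and $L$. For a general algebraically closed field $k$ of positive characteristic, the same conclusion is reached by exploiting the fibration $\phi$ directly: the restriction $h^*L|_{F_\phi}$ to any fibre $F_\phi$ has degree zero by proportionality, and the hypothesis $\kappa(X,L)=1$ combined with push-forward along $\phi$ forces this restriction to be uniformly torsion, yielding a base-point-free multiple. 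Semi-ampleness on $Y$ descends to $X$ by the reduction step. The main obstacle I expect is precisely this final numerical-to-linear step when $k\ne\overline{\mathbb F}_p$: a numerically trivial line bundle need not be torsion in general, and one must leverage the full strength of $\kappa=1$ through the geometry of $\phi\colon Y\to Z$ to rule out non-torsion behaviour on fibres.
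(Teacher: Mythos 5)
The paper does not prove this proposition itself; it simply cites Fujita \cite[Theorem~4.1]{Fujita}, so your argument has to stand on its own. Your outline---pass to a smooth model, use $L^2=0$, resolve base points to write $m_0h^*L=\phi^*A+F$ with $\phi:Y\to Z$ the Iitaka fibration to a smooth projective curve, and apply the Hodge index theorem in the null cone to get $\phi^*A\equiv c\,h^*L$---is the standard route and is correct up to the last step. The genuine gap is exactly the one you flag: you assert that for $k\neq\overline{\mathbb F}_p$, the hypothesis $\kappa=1$ together with ``push-forward along $\phi$'' forces $h^*L$ to restrict to a torsion line bundle on every fibre, hence yielding a base-point-free multiple. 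You give no argument for this, and over such $k$ a degree-zero line bundle on a curve is certainly not torsion in general, so as written this is a claim rather than a proof.

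The clean repair avoids fibrewise torsion entirely and works with the fixed part $F$. From $h^*L\cdot F=0$ and the proportionality $\phi^*A\equiv c\,h^*L$ you get $\phi^*A\cdot F=0$; since $\phi^*A$ is nef and numerically a positive multiple of a fibre, every component of the effective divisor $F$ is $\phi$-vertical, and $F^2=(m_0h^*L-\phi^*A)\cdot F=0$. Zariski's lemma on fibres of $\phi$ then forces $F$ to restrict on each fibre $\phi^{-1}(z)$ to a rational multiple of the full fibre; clearing denominators produces an integer $n'>0$ and an effective divisor $D$ on $Z$ with $n'F=\phi^*D$. Therefore $n'm_0\,h^*L=\phi^*(n'A+D)$ is the pullback of a positive-degree, hence ample, divisor on the curve $Z$, so $h^*L$---and thus $L$---is semi-ample, uniformly in the characteristic and with no appeal to Corollary~\ref{pic} or to torsion on fibres.
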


\begin{proof}
See \cite[Theorem~4.1]{Fujita}.
\end{proof}

Hence the complete linear system $|mC|$ induces a morphism 
$\phi_R:X \to Y$ to a smooth projective curve $Y$. 
This morphism satisfies (1), (2) and (4). 
We would like to show (3). 
Take a line bundle $L$ with $L\cdot C=0$. 
Since $\rho(X)=2$, 
we have $L\equiv qC$ for some rational number $q$. 
We take a large positive integer $s$ such that $q+s$ is positive. 
Then we have 
$$L+sC\equiv (q+s)C.$$
By the same argument as above, we see that $L+sC$ is semi-ample. 
Then sufficiently large multiple of $L+sC$ induces a morphism 
$\psi:X \to Z$ to a smooth projective curve $Z$. 
Moreover since this morphism satisfies the condition (1), 
we obtain the factorization 
$$\psi:X \overset{\phi_R}\to Y \overset{\sigma}\to Z$$
with $\sigma_* \mathcal O_Y=\mathcal O_Z$. 
Since $Y$ and $Z$ are smooth projective curves, 
$\sigma$ must be isomorphism. 
Then $n(L+sC)$ is a pull-back of line bundle on $Y$ 
for some positive integer $n$. 
This means (3). 
\end{proof}

Before the proof of the case where $C^2<0$, 
we state a proposition on the contraction of \pr1.

\begin{prop}\label{p1cont}
Let $X$ be a projective normal surface and 
let $C$ be a curve in $X$ isomorphic to \pr1. 
Assume $G$ is a nef and big line bundle on $X$ 
such that, for every curve $C'$ in $X$, 
$G\cdot C'=0$ iff $C'=C$. 
Then, $G$ is semi-ample. 
\end{prop}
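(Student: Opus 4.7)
The plan is to apply Keel's theorem (Theorem~\ref{keel}) to the nef and big line bundle $G$. By hypothesis, the set of curves $C'$ with $G\cdot C'=0$ consists precisely of the single curve $C$, so the reduced exceptional locus is $E(G)=C$ as a subscheme of $X$. Since $C$ is already reduced (being isomorphic to $\mathbb{P}^1$), the restriction $G|_{E(G)}$ is simply $G|_C$, a line bundle on $\mathbb{P}^1$.

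Next I would compute the degree of $G|_C$. Because $G$ is nef and $G\cdot C=0$ by assumption, $G|_C$ is a line bundle of degree zero on $\mathbb{P}^1$, hence isomorphic to $\mathcal{O}_{\mathbb{P}^1}$. In particular $G|_{E(G)}$ is trivial, and thus semi-ample.

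Applying Theorem~\ref{keel}, we conclude that $G$ itself is semi-ample on $X$. There is no real obstacle in this argument; the only points requiring care are that the hypothesis ``$G\cdot C'=0$ iff $C'=C$'' guarantees $E(G)=C$ with its reduced structure, and that every degree-zero line bundle on $\mathbb{P}^1$ is trivial, so the semi-ampleness criterion of Keel applies immediately.
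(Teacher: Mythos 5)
Your proof is correct and is essentially identical to the paper's: both invoke Keel's result (Theorem~\ref{keel}), observe that $E(G)=C$, and conclude that $G|_C$ is semi-ample since $C\simeq\mathbb{P}^1$. You spell out the last step more explicitly — $G\cdot C=0$ forces $G|_C\simeq\mathcal{O}_{\mathbb{P}^1}$ — where the paper simply asserts it is ``obvious,'' but the reasoning is the same.
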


\begin{proof}
By Keel's result (Theorem~\ref{keel}), 
if $G|_C$ is semi-ample, then $G$ is semi-ample.
But, by $C=\mathbb P^1$, this is obvious. 
\end{proof}

\begin{proof}[Proof of the case where $C^2<0$]

By $C\cdot (K_X+\Delta)<0$, 
we have $C\cdot (K_X+C)<0$. 
Therefore, by Theorem~\ref{adjunction}, we see $C \simeq \mathbb{P}^1$. 
Let $G$ be a nef and big divisor such that 
for any curve $C'$, $G\cdot C'=0$ iff $C'=C$. 
(The way to construct such a divisor $G$ is the following: 
let $H$ be an ample divisor and $G$ be the divisor such that 
$G=H+qC$ and $G\cdot C=0$ for rational number $q$.) 
Then, by Proposition~\ref{p1cont}, there exists $\phi_R$ 
satisfying (1) and (2). 
The remaining assertions (3), (4) and (5) hold 
from the following propositions. 
\end{proof}

First we prove (3). 
We generalize the setting a little for a later use. 

\begin{prop}[Proof of (3)]\label{(3)}
Let $f:X \to Y$ be a proper birational morphism 
from a normal \Q-factorial surface $X$ 
to a normal surface $Y$. 
Assume $C:={\rm Ex}(f)$ is a proper irreducible curve and $f(C)$ is one point. 
Let $L$ be a Cartier divisor on $X$ with $L\cdot C=0$. 
If $L|_C$ is a torsion, then $nL=f^*(L_Y)$ 
for some Cartier divisor $L_Y$ on $Y$ and for some positive integer $n$.
\end{prop}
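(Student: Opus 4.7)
The plan is to deduce the proposition from Keel's theorem (Theorem~\ref{keel}) applied to a suitable twist of $L$. Using the torsion hypothesis on $L|_{C}$, replace $L$ by a positive integer multiple so that $L|_{C}\cong\mathcal{O}_{C}$. Choose an ample Cartier divisor $H_{Y}$ on $Y$, put $H := f^{\ast}H_{Y}$, and note that $H$ is nef and big on $X$, satisfies $H\cdot C = 0$, and has $H|_{C}\cong\mathcal{O}_{C}$ (the latter since $f(C)$ is a single point, so $(f^{\ast}H_{Y})|_{C}$ is the pullback of a line bundle on a point).

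Consider $L+mH$ for a large positive integer $m$. First I would verify that $L+mH$ is nef and big on $X$: bigness follows from $(L+mH)^{2} = L^{2} + 2m L\cdot H + m^{2}H_{Y}^{2}>0$ for $m$ large, whereas nefness reduces to $L\cdot C' + m(H\cdot C')\ge 0$ for every curve $C'$ on $X$. This is immediate for $C'=C$; for $C'\ne C$ the Hodge-index inequality on $X$ produces, for any fixed ample divisor $A$ on $X$, a constant $c_{L}$ depending only on $L$ with $|L\cdot C'|\le c_{L}(A\cdot C')$ uniformly in $C'$ (the adjunction bound $(C')^{2}\ge -K_{X}\cdot C'-2$ controls the $A^{\perp}$-component of $[C']$). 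Combined with $H\cdot C'\ge A\cdot C'$ (via Kodaira's lemma writing $H\sim_{\mathbb{Q}}A+E$ with $A$ ample and $E$ effective, after treating the finitely many components of $E$ separately), this gives a uniform $m$ making $L+mH$ nef. Moreover, every curve $C'\ne C$ satisfies $(L+mH)\cdot C'>0$ for such $m$, so $E(L+mH)=C$; and $(L+mH)|_{C}\cong L|_{C}\cong\mathcal{O}_{C}$ is visibly semi-ample on $C$.

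By Keel's theorem, $L+mH$ is semi-ample on $X$. Pick $n>0$ with $n(L+mH)$ base-point-free; it defines a morphism $\phi\colon X\to Z$ to a projective variety $Z$ such that $n(L+mH)\cong\phi^{\ast}A$ for some ample line bundle $A$ on $Z$. The only curve contracted by $\phi$ is $C$, so $\phi$ is constant on every fibre of $f$, and by $f_{\ast}\mathcal{O}_{X}=\mathcal{O}_{Y}$ (Zariski's main theorem for a proper birational morphism to the normal target $Y$), $\phi$ factors uniquely as $\phi=g\circ f$ for a morphism $g\colon Y\to Z$. Therefore
\[
n(L+mH)\cong f^{\ast}(g^{\ast}A), \qquad nL\cong f^{\ast}\bigl(g^{\ast}A-nmH_{Y}\bigr),
\]
which yields the claim with $L_{Y}:=g^{\ast}A-nmH_{Y}$. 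The principal technical step is exhibiting a single $m$ for which $L+mH$ is nef: since $L$ is not assumed nef and infinitely many numerical classes of curves are in play, one must carry out the Hodge-index/Kodaira-lemma bound above to control all curves simultaneously.
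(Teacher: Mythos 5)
Your overall strategy coincides with the paper's: twist $L$ by a large multiple of $H=f^{*}(\text{ample on }Y)$, verify that $L+mH$ is nef and big with $E(L+mH)=C$ and $(L+mH)|_{C}$ semi-ample, invoke Keel's Theorem~\ref{keel}, and descend along $f$ by rigidity. Where you diverge is in the verification of nefness. You argue via a Hodge-index bound $|L\cdot C'|\le c_{L}(A\cdot C')$ uniform over all curves $C'$, which in turn uses the adjunction inequality $(K_{X}+C')\cdot C'\ge -2$ in Mumford's intersection theory on the normal surface $X$ (this is Lemma~\ref{negative-length2} of the paper with $\Delta=C'$, and the uniformity for the infinitely many numerical classes has to be handled as you sketch). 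The paper instead arranges, via Kodaira's lemma and absorbing extraneous components, a decomposition $H=A+qC$ with $A$ ample on $X$ and $q>0$, so that $L+mH=(L+mA)+mqC$ is manifestly nef for $m\gg 0$ (since $L+mA$ is ample and $C\cdot C'\ge 0$ for $C'\ne C$), with $(L+mH)\cdot C=0$. Both verifications are correct; yours is somewhat heavier, the paper's is a cleaner algebraic decomposition, but the numerical content behind them is essentially the same.

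The genuine gap is that you have proved the proposition only under the extra hypothesis that $Y$ is projective: your first step, choosing an ample $H_{Y}$ on $Y$, requires it. As stated, the proposition concerns an arbitrary proper birational morphism between a normal $\mathbb{Q}$-factorial surface and a normal surface; that generality is used later in the paper, for instance in the local analysis of log canonical germs in Section~14 and in the $\overline{\mathbb{F}}_{p}$ arguments where one passes through compactifications. The paper's proof is a three-step reduction: your argument is Step~1 (the projective case), and Steps~2 and~3 deduce the quasi-projective and general cases by compactifying $Y$, gluing with $X$ along $X\setminus C\simeq Y\setminus\{f(C)\}$ to produce a proper $\mathbb{Q}$-factorial (hence projective, cf.\ \cite[Lemma~2.2]{Fujino}) surface $\overline{X}$, and then restricting back to $X$ after applying the projective case. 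Without that reduction your argument does not cover the full statement.
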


\begin{proof}
\setcounter{step}{0}
\begin{step}
In this step, we assume $X$ and $Y$ are projective and 
we prove the assertion. 

Let $G$ be the pull-back of an ample divisor. 
By Kodaira's lemma, $G=A+E$ where $A$ is an ample \Q-divisor and 
$E$ is an effective \Q-divisor. 
By replacing $G$ by its suitable multiple, 
it is easy to see that we may assume $E=qC$ for some $q\in \mathbb Q_{>0}.$ 
Consider the divisor 
$$G'=mG+L=(mA+L)+mqC$$ 
for $m \gg 0.$ 
Since $mA+L$ is ample for $m\gg 0$, 
we see $G'\cdot C'>0$ for every curve $C'\neq C.$ 
On the other hand, we have 
$$G'\cdot C=(mG+L)\cdot C=0.$$
Thus, for sufficiently large integer $m\gg 0$, 
the Cartier divisor $G'=mG+L$ is nef and big such that 
$G'\cdot C'=0$ iff $C'=C$ for every curve $C'$. 
Since $L|_C$ is a torsion, 
$G'=mG+L$ is semi-ample by Keel's result (Theorem~\ref{keel}). 
By Zariski's main theorem, $|nG'|$ induces the 
same morphism as $f$ for some $n\in\mathbb Z_{>0}$. 
Thus, $nG'=nmG+nL$ is a pull-back of some line bundle on $Y$. 
So is the difference $nL=nG'-nmG$. 
\end{step}

\begin{step}
In this step, we assume $Y$ is quasi-projective and 
we prove the assertion. 

Take a compactification $Y\subset \overline Y$ such that 
$\overline Y$ is projective and $\overline Y$ is smooth on $\overline Y\setminus Y$. 
We define $\overline X$ by patching $X$ and $\overline Y$ along 
$X\setminus C\simeq Y\setminus \{f(C)\}.$ 
Then, $\overline X$ is projective because $\overline X$ is proper and \Q-factorial 
(cf. \cite[Lemma~2.2]{Fujino}). 
Thus, by Step~1, we obtain the required assertion. 
\end{step}

\begin{step}
In this step, we prove the assertion. 

Let $f(C)\in Y_0\subset Y$ be an affine open subset and let 
$X_0:=f^{-1}(Y_0).$ 
Let $f|_{X_0}=:f_0.$ 
Then, by Step~2, we obtain $nL|_{X_0}=(f_0)^*L_{Y_0}$. 
Let $L_Y$ be the $\mathbb Z$-divisor on $Y$ 
such that $L_Y|_{Y_0}=L_{Y_0}$ and that $L_Y$ has no prime component contained in $Y\setminus Y_0$. 
Then, $L_Y$ is \Q-Cartier. 
Consider the following prime decomposition 
$$L=\sum l_iC_i=\sum_{C_i\subset X\setminus X_0}l_iC_i+\sum_{C_j\not\subset X\setminus X_0}l_jC_j.$$
We see $nf^*L_Y=\sum_{C_j\not\subset X\setminus X_0}l_jC_j.$ 
Since $\sum_{C_j\subset X\setminus X_0}l_jC_j$ is the pull-back of some \Q-Cartier divisor, 
we obtain the assertion. 
\end{step}
\end{proof}

The condition (4) is an immediate corollary from (3). 
Thus we would like to prove (5). 

\begin{prop}[Proof of (5)]\label{(5)}
Let $f:X \to Y$ be a proper birational morphism 
from a normal \Q-factorial surface $X$ 
to a normal surface $Y$. 
Assume $C:={\rm Ex}(f)$ is a proper irreducible curve and $f(C)$ is one point. 
Suppose the following condition. 
\begin{enumerate}
\item[(3)]{If $L$ is a Cartier divisor with $L\cdot C=0$, 
then $nL=(\phi_R)^*L_Y$ for some Cartier divisor $L_Y$ on $Y$ and 
for some positive integer $n$.}
\end{enumerate}
Then, $Y$ is \Q-factorial.
\end{prop}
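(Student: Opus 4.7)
The plan is to show every prime Weil divisor $D_Y$ on $Y$ admits a positive integer multiple that is Cartier.

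First I would let $D_X$ denote the proper transform of $D_Y$ on $X$, defined via the isomorphism $X\setminus C\simeq Y\setminus\{f(C)\}$; this is a prime Weil divisor distinct from $C$ with $f_*D_X=D_Y$. Since $X$ is \Q-factorial, I choose positive integers $m,t$ such that $mD_X$ and $tC$ are both Cartier. The idea is then to modify $mD_X$ by a rational multiple of $C$ so as to produce a Cartier divisor orthogonal to $C$, and to apply hypothesis (3).

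Concretely, set $b:=mD_X\cdot C\in\mathbb{Z}$ and $c:=C^2\in\mathbb{Q}$. Assuming $c<0$, write $-b/c=p/q$ in lowest terms with $q>0$ and form
\[
M := tqm\,D_X + tp\,C.
\]
Each summand is an integer multiple of a Cartier divisor, so $M$ is Cartier, and by construction $M\cdot C=tqb+tpc=0$. Hypothesis (3) then supplies $n\in\mathbb{Z}_{>0}$ and a Cartier divisor $L_Y$ on $Y$ with $nM=f^*L_Y$. Pushing forward and using $f_*D_X=D_Y$, $f_*C=0$ (as $f(C)$ is a point), and $f_*f^*L_Y=L_Y$ (valid since $f$ is birational), one obtains $ntqm\,D_Y=L_Y$, which exhibits $D_Y$ as $\mathbb{Q}$-Cartier.

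The only nontrivial input is the strict negativity $C^2<0$, which I view as the main potential obstacle. To establish it I would pull back an ample Cartier divisor $A$ on $Y$: the projection formula gives $(f^*A)^2=A^2>0$ and $f^*A\cdot C=A\cdot f_*C=0$, while $C\cdot H>0$ for any ample $H$ on $X$ shows that $C$ is not numerically trivial; the Hodge index theorem on the normal surface $X$, applied via Mumford's intersection theory (or, equivalently, by passing to the minimal resolution and invoking negative-definiteness of the intersection matrix of the exceptional curves over $f(C)$) then forces $C^2<0$. Once this is in hand, the remainder of the argument is formal manipulation with pullback and pushforward under the birational morphism $f$.
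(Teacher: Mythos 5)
Your proof is correct and follows essentially the same approach as the paper: take the proper transform $D_X$ of a prime divisor on $Y$, add a rational multiple of $C$ to kill the intersection with $C$, apply hypothesis (3), and push forward. The one place you are more careful than the paper is in making the orthogonal divisor genuinely Cartier before invoking (3): the paper writes "$n(D+qC)=f^*(L_Y)$" with $q\in\mathbb{Q}$, leaving implicit that $\mathbb{Q}$-factoriality of $X$ is first used to replace $D+qC$ by a Cartier multiple, whereas you produce an honest Cartier combination $M=tqm\,D_X+tp\,C$ directly. Your verification that $C^2<0$ is also not spelled out in the paper, which simply takes it for granted; it follows from the negative-definiteness of the exceptional intersection form (the paper already cites \cite[Lemma~3.40]{KM} for this in its setup of Mumford's intersection theory), exactly as you observe. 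These are small polish points; the substance is identical.
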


\begin{proof}
Let $E$ be a prime divisor on $Y$ and let $D$ be its proper transform. 
Since $C^2<0$, there exists a rational number $q$ such that $(D+qC)\cdot C=0$. 
By (3), we have $n(D+qC)=f^*(L_Y)$ 
for some Cartier divisor $L_Y$. 
By operating $f_*$, we obtain the equality $nE=L_Y$ as Weil divisors. 
Therefore $E$ is \Q-Cartier.
\end{proof}

Since we have the cone theorem and the contraction theorem, 
we obtain the minimal model program for \Q-factorial surfaces with boundaries.

\begin{thm}[Minimal model program]\label{qfacmmp}
Let $X$ be a projective normal \Q-factorial surface and 
let $\Delta$ be an $\mathbb{R}$-boundary. 
Then, there exists a sequence of projective birational morphisms 
\begin{eqnarray*}
&(X, \Delta)=:(X_0, \Delta_0) \overset{\phi_0}\to 
(X_1, \Delta_1) \overset{\phi_1}\to \cdots 
\overset{\phi_{s-1}}\to (X_s, \Delta_s)=:(X^{\dagger}, \Delta^{\dagger})\\
&\,\,\,where\,\,\,(\phi_{i-1})_*(\Delta_{i-1})=:\Delta_i
\end{eqnarray*}
with the following properties. 
\begin{enumerate}
\item{Each $X_i$ is a projective normal \Q-factorial surface.}
\item{Each $\Delta_i$ is an \R-boundary.} 
\item{For each $i$, ${\rm Ex}(\phi_i)=:C_i$ is an irreducible curve such that 
$$(K_{X_i}+\Delta_i)\cdot C_i<0$$
and that $C_i$ generates an extremal ray.}
\item{$(X^{\dagger}, \Delta^{\dagger})$ satisfies one of the following conditions.
\begin{enumerate}
\item{$K_{X^{\dagger}}+\Delta^{\dagger}$ is nef.}
\item{There is a projective surjective morphism $\mu:X^{\dagger} \to Z$ to 
a smooth projective curve $Z$ such that $\mu_*\mathcal O_{X^{\dagger}}=\mathcal O_Z$, 
$-(K_X^{\dagger}+\Delta^{\dagger})$ is $\mu$-ample and $\rho(X^{\dagger})=2$.}
\item{$-(K_{X^{\dagger}}+\Delta^{\dagger})$ is ample and 
$\rho(X^{\dagger})=1$.}
\end{enumerate}}
\end{enumerate}
\end{thm}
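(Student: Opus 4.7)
The plan is to construct the sequence inductively by iterating the cone theorem (Theorem~\ref{ct}) and the contraction theorem (Theorem~\ref{cont}), with termination secured by the drop of the Picard number at each birational step.

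Start with $(X_0,\Delta_0):=(X,\Delta)$ and suppose $(X_i,\Delta_i)$ has been constructed with $X_i$ a projective normal \Q-factorial surface and $\Delta_i$ an $\mathbb{R}$-boundary. If $K_{X_i}+\Delta_i$ is nef, set $(X^{\dagger},\Delta^{\dagger}):=(X_i,\Delta_i)$ and stop; this is case (4)(a). Otherwise, Theorem~\ref{ct} together with Proposition~\ref{ext-rat-length1} produces a $(K_{X_i}+\Delta_i)$-negative extremal ray $R_i=\mathbb{R}_{\geq 0}[C_i]$ generated by a rational curve $C_i$ with $0<-(K_{X_i}+\Delta_i)\cdot C_i\leq 3$. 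Apply Theorem~\ref{cont} to obtain the contraction $\phi_i:X_i\to Y_i$.

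The outcome is determined by the sign of $C_i^2$, as classified in the proof of Theorem~\ref{cont}. If $C_i^2>0$, then $Y_i$ is a point, $\rho(X_i)=1$, and $-(K_{X_i}+\Delta_i)$ is ample; set $(X^{\dagger},\Delta^{\dagger}):=(X_i,\Delta_i)$ and stop in case (4)(c). If $C_i^2=0$, then $Y_i$ is a smooth projective curve with $(\phi_i)_*\mathcal{O}_{X_i}=\mathcal{O}_{Y_i}$, $\rho(X_i)=2$, and since every fibre is numerically a multiple of $C_i$ and $(K_{X_i}+\Delta_i)\cdot C_i<0$, the divisor $-(K_{X_i}+\Delta_i)$ is $\phi_i$-ample; set $X^{\dagger}:=X_i$ and stop in case (4)(b). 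If $C_i^2<0$, then $\phi_i$ is birational, $\mathrm{Ex}(\phi_i)=C_i$, and $Y_i$ is again projective normal \Q-factorial with $\rho(Y_i)=\rho(X_i)-1$ by Theorem~\ref{cont}(4)(5); define $X_{i+1}:=Y_i$ and $\Delta_{i+1}:=(\phi_i)_*\Delta_i$. The pushforward of an $\mathbb{R}$-boundary under a birational morphism remains an $\mathbb{R}$-boundary, since non-exceptional coefficients are preserved and the only possibly discarded component (if $C_i\subset\mathrm{Supp}\,\Delta_i$) had coefficient in $[0,1]$. Iterate.

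Properties (1)-(3) of the theorem are built into the construction: \Q-factoriality and the $\mathbb{R}$-boundary condition are preserved along the chain, and by the choice of $C_i$ as an extremal curve we have $(K_{X_i}+\Delta_i)\cdot C_i<0$. The one remaining point, and the only place I would expect a technical worry, is termination of the process. But this is immediate here: only the third case extends the chain, and each such step strictly decreases the Picard number. Since $\rho(X_i)$ is a positive integer, the iteration ends after at most $\rho(X)-1$ birational contractions, either because $K+\Delta$ becomes nef or because the next extremal contraction falls into one of the two non-birational cases, yielding (4)(a), (b) or (c). Thus the main obstacle, which in characteristic zero is typically the contraction theorem itself, has already been removed by Theorem~\ref{cont} (via Keel's theorem), and no additional ingredients are needed.
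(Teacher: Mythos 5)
Your proof is correct and is exactly the argument the paper intends: the paper states Theorem~\ref{qfacmmp} immediately after the remark ``Since we have the cone theorem and the contraction theorem, we obtain the minimal model program,'' leaving the standard iteration-and-descent-of-$\rho$ argument implicit, which is precisely what you have written out. Your case analysis by the sign of $C_i^2$ also mirrors the structure of the proof of Theorem~\ref{cont}, so there is no divergence in approach.
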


\subsection{Finite generation of canonical rings}

It is important to consider the finite generation of canonical rings, 
which is closely related to the minimal model program. 
In this section, we prove the following theorem. 

\begin{thm}[Finite generation theorem]
Let $X$ be a projective normal \Q-factorial surface over $k$ and 
let $\Delta$ be a \Q-boundary. 
Then $R(X, K_X+\Delta):=\bigoplus\limits_{m \geq 0} H^0(X, \llcorner m(K_X+\Delta)\lrcorner)$
is a finitely generated $k$-algebra. 
\end{thm}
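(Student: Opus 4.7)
The plan is to run the minimal model program on $(X,\Delta)$ using Theorem~\ref{qfacmmp}, show that each MMP step preserves the canonical ring $R(X,K_X+\Delta)$, and then check finite generation in each of the three terminal outputs (nef, Mori fiber space over a curve, or Fano). Since the whole construction is designed so that $(X_i,\Delta_i)$ remains \Q-factorial with $\Delta_i$ a \Q-boundary, abundance (Theorem~\ref{0abundance}) is available at the end.

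First I would establish that a single MMP step $\phi_i\colon X_i\to X_{i+1}$ preserves the canonical ring. Write $C=C_i=\mathrm{Ex}(\phi_i)$; since $\phi_i$ is birational and $C$ is contracted to a point, $C^2<0$. Compare $K_{X_i}+\Delta_i$ and $\phi_i^{*}(K_{X_{i+1}}+\Delta_{i+1})$: their difference is an $\mathbb{R}$-Cartier $\phi_i$-exceptional divisor, hence equal to $a_iC$ for some $a_i\in\mathbb{R}$. Intersecting with $C$ and using $(K_{X_i}+\Delta_i)\cdot C<0$ and $\phi_i^{*}(K_{X_{i+1}}+\Delta_{i+1})\cdot C=0$ gives $a_iC^2<0$, and hence $a_i>0$. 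After passing to a Veronese subring indexed by a Cartier index $N$ (finite generation is unaffected by this), one has
\begin{equation*}
mN(K_{X_i}+\Delta_i)=\phi_i^{*}\bigl(mN(K_{X_{i+1}}+\Delta_{i+1})\bigr)+mNa_iC
\end{equation*}
with $mNa_iC$ effective and $\phi_i$-exceptional, so the projection formula together with $(\phi_i)_*\mathcal{O}_{X_i}(\lceil mNa_iC\rceil)=\mathcal{O}_{X_{i+1}}$ yields an isomorphism on global sections. Iterating over $i$ reduces the problem to proving finite generation of $R(X^{\dagger},K_{X^{\dagger}}+\Delta^{\dagger})$.

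Next I would split into the three cases of Theorem~\ref{qfacmmp}(4). In case (a), $K_{X^{\dagger}}+\Delta^{\dagger}$ is nef, and by Theorem~\ref{0abundance} it is semi-ample. A suitable Cartier multiple $M(K_{X^{\dagger}}+\Delta^{\dagger})$ is base-point free and defines a morphism $\psi\colon X^{\dagger}\to Y$ to a projective variety $Y$ with an ample Cartier divisor $A$ such that $M(K_{X^{\dagger}}+\Delta^{\dagger})=\psi^{*}A$. Then the $M$-th Veronese of the canonical ring equals $\bigoplus_{m\geq 0}H^{0}(Y,mA)$, which is a finitely generated $k$-algebra by Serre's theorem for ample line bundles on projective varieties, and finite generation passes back from the Veronese to the full ring.

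In cases (b) and (c), I would show that $R(X^{\dagger},K_{X^{\dagger}}+\Delta^{\dagger})$ has only finitely many non-zero graded pieces, hence is finite-dimensional over $k$ and trivially finitely generated. Pick a curve $F$ that is either a general fiber of $\mu$ in case (b) or an ample curve in case (c); in either situation $F$ moves or is ample, so any effective divisor meets $F$ non-negatively, while $(K_{X^{\dagger}}+\Delta^{\dagger})\cdot F<0$. Writing $\lfloor m(K_{X^{\dagger}}+\Delta^{\dagger})\rfloor=m(K_{X^{\dagger}}+\Delta^{\dagger})-\{m(K_{X^{\dagger}}+\Delta^{\dagger})\}$, the fractional part has coefficients in $[0,1)$ supported on the fixed finite set of prime components of $\Delta^{\dagger}$, so $\{m(K_{X^{\dagger}}+\Delta^{\dagger})\}\cdot F$ is bounded independently of $m$. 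Therefore $\lfloor m(K_{X^{\dagger}}+\Delta^{\dagger})\rfloor\cdot F\to-\infty$, forcing the graded piece to vanish for $m\gg 0$.

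The main obstacle is the first step: handling the fractional parts and floors carefully when transferring sections across a birational contraction, which is why I would clear denominators first via a Veronese subring. Once that bookkeeping is done, the three terminal cases are short, with case (a) resting on the abundance input from Theorem~\ref{0abundance} and cases (b),(c) being an elementary numerical argument.
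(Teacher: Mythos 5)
Your proof is correct and follows the same basic skeleton as the paper (run the MMP, transfer sections, handle the end model), but there is a genuine difference in how the terminal cases are dispatched, and it is worth contrasting. The paper first splits on $\kappa:=\kappa(X,K_X+\Delta)$ and disposes of $\kappa\le 0$ by elementary means (zero ring, or a sub-semigroup argument with all pieces of dimension $\le 1$); for $\kappa\ge 1$ it notes that an MMP cannot end at a Mori fiber space, so the output is a nef minimal model, where only the two easy pieces of abundance --- Proposition~\ref{kappa1} for $\kappa=1$ and Proposition~\ref{nefbig} for $\kappa=2$ --- are needed. You instead run the MMP unconditionally, make explicit the (correct, but elided by the paper) fact that each contraction step identifies the canonical rings via $\phi_i^{*}$ plus an effective exceptional remainder, and then analyze all three terminal outputs directly, using a numerical argument on fibers/ample curves for the Mori fiber space cases and invoking the \emph{full} abundance Theorem~\ref{0abundance} for the nef case. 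Both are valid (abundance does not rely on this theorem, so there is no circularity), but your route is a forward reference in the paper's ordering and uses more than is needed: since finite generation is insensitive to the finitely many nonzero pieces that occur when $\kappa\le 0$, one never needs the difficult $\kappa=0$ case of abundance here, which is why the paper avoids it. What you gain in return is a more uniform treatment and a fully spelled-out preservation-of-sections step; the paper buys a lighter logical footprint at the cost of compressing the $\kappa\le 0$ and ring-preservation arguments into a word each.
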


\begin{proof}
Let us consider the Kodaira dimension $\kappa:=\kappa(X, K_X+\Delta)$. 
It is obvious for the case $\kappa=-\infty$ and the case $\kappa=0$ . 
In particular we may assume that $K_X+\Delta$ is effective. 
Then, by Theorem~\ref{qfacmmp}, 
we may assume that $K_X+\Delta$ is nef. 
The case $\kappa=1$ follows from Proposition~\ref{kappa1}. 
Therefore we may assume $\kappa=2$, that is, 
$K_X+\Delta$ is nef and big. 
This case follows from Proposition~\ref{nefbig}.
\end{proof}

\begin{prop}\label{nefbig}
Let $X$ be a projective normal \Q-factorial surface and 
let $\Delta$ be a \Q-boundary. 
If $K_X+\Delta$ is nef and big, then $K_X+\Delta$ is semi-ample.
\end{prop}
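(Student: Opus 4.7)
Setting $L := K_X + \Delta$, the strategy is to invoke Keel's theorem (Theorem~\ref{keel}): since $L$ is nef and big, it suffices to prove that $L|_{E(L)}$ is semi-ample, where $E(L)$ is the reduced union of irreducible curves $C$ with $L\cdot C = 0$. Because $L|_C$ has degree zero on every component, semi-ampleness of $L|_{E(L)}$ is equivalent to the line bundle $L|_{E(L)}$ being torsion in $\Pic(E(L))$.

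I would first observe that the Hodge index theorem together with $L^2 > 0$ forces $C^2 < 0$ for every irreducible component $C$ of $E(L)$, and makes the intersection matrix of the components negative definite; in particular $E(L)$ has only finitely many components. I would then establish torsion of $L|_C$ component by component using adjunction. Writing $\Delta = \delta C + \Delta'$ with $C \not\subseteq \Supp\Delta'$ and $0 \leq \delta \leq 1$, one computes
\begin{align*}
(K_X+C)\cdot C &= L\cdot C + (C-\Delta)\cdot C \\
&= (1-\delta)C^2 - \Delta'\cdot C \;\leq\; 0,
\end{align*}
with equality only when $\delta = 1$ and $\Delta'\cdot C = 0$. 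If the inequality is strict, then Theorem~\ref{adjunction}(1) gives $C \cong \mathbb{P}^1$, so the degree-zero $L|_C$ is trivial. If equality holds, then $\Delta'|_C$ is an effective Cartier divisor of degree zero, hence vanishes, so $L|_C = (K_X+C)|_C$ is torsion by Theorem~\ref{adjunction}(2).

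The main obstacle is to upgrade component-wise torsion to torsion of $L|_{E(L)}$ itself, since the kernel of the restriction $\Pic(E(L)) \to \prod_j \Pic(C_j)$ is a torus recording the gluing data at the intersection points and need not be torsion over a general base field. To deal with this, I would run an induction on the Picard number using the contraction theorem (Theorem~\ref{cont}): for any component $C$ of $E(L)$ whose coefficient $\delta_C$ in $\Delta$ is strictly less than $1$, the perturbed boundary $\Delta + \varepsilon C$ (for sufficiently small $\varepsilon > 0$) remains a $\mathbb{Q}$-boundary and satisfies $(K_X+\Delta+\varepsilon C)\cdot C = \varepsilon C^2 < 0$, so $C$ spans a negative extremal ray that can be contracted by Theorem~\ref{cont} while keeping $L$ nef and big on the image. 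After finitely many such contractions, we may assume every remaining component of $E(L)$ has coefficient exactly $1$ in $\Delta$, so that $E(L)_{\mathrm{red}} \leq \Delta$ as divisors; the adjunction computation above then identifies $L|_C$ with $(K_X+C)|_C$ on each such component, and in this restricted situation the gluing compatibility along the negative-definite configuration $E(L)$, together with Keel's theorem, yields the desired semi-ampleness of $L$.
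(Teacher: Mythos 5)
You correctly identify the strategy (invoke Keel's theorem, check semi-ampleness on $E(L)$ via adjunction) and, importantly, you correctly flag the real obstruction: $L|_{E(L)}$ need not be torsion merely because each $L|_{C_j}$ is, since $\ker\bigl(\Pic(E(L)) \to \prod_j \Pic(C_j)\bigr)$ is a torus encoding the gluing data. The gap is that your reduction does not actually dispose of this obstruction.

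You contract a component $C$ of $E(L)$ only when its coefficient $\delta_C$ in $\Delta$ is strictly less than $1$. After finitely many such contractions every remaining component has coefficient exactly $1$, but two coefficient-one components $C_1, C_2$ may still satisfy $C_1\cdot C_2 > 0$; nothing in the hypotheses forbids this. Then $E(L)$ is still a connected, possibly cyclic, configuration, the gluing torus is still nontrivial, and your closing sentence ("the gluing compatibility along the negative-definite configuration $E(L)$, together with Keel's theorem, yields the desired semi-ampleness") is an assertion, not an argument.

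The paper uses a stronger contraction criterion that exactly eliminates this residual case. It contracts a component $C\subset E(L)$ whenever $(K_X+C)\cdot C<0$, viewing $C$ as a $(K_X+C)$-negative extremal curve for the auxiliary pair $(X, C)$ and invoking Theorem~\ref{cont}; since $(K_X+\Delta)\cdot C=0$ and $C^2<0$, one checks that $K_X+\Delta$ pulls back from the image, so $K_Y+\Delta_Y$ is again nef, big, and $\mathbb{Q}$-factorial with $\mathbb{Q}$-boundary, and the process terminates by descending Picard number. Your own adjunction identity $(K_X+C)\cdot C = (1-\delta_C)C^2 - \Delta'\cdot C \leq 0$ shows that equality holds if and only if $\delta_C=1$ \emph{and} $\Delta'\cdot C=0$. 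In particular, if $\delta_C=1$ but $C$ meets another component of $\Delta$, then $(K_X+C)\cdot C<0$ and the paper still contracts $C$, whereas your $\delta_C<1$ criterion does not. After the paper's reduction, every component of $E(L)$ has coefficient one in $\Delta$ and is disjoint from the rest of $\Supp\,\Delta$, hence the components of $E(L)$ are pairwise disjoint: $\Pic(E(L))=\prod_j\Pic(C_j)$, the gluing torus is trivial, and the component-wise torsion you already established from Theorem~\ref{adjunction} finishes the proof. Replacing your $\delta_C<1$ criterion with $(K_X+C)\cdot C<0$ closes the gap without introducing any new ideas.
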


\begin{proof}
By Keel's result (Theorem~\ref{keel}), 
it is sufficient to prove that 
if 
$$E:=\bigcup\limits_{C\cdot (K_X+\Delta)=0}C=C_1 \cup \dots \cup C_r,$$ 
then $(K_X+\Delta)|_{E}$ is semi-ample. 
Let $C \subset E$. 
Then we have 
\begin{eqnarray*}
(K_X+C)\cdot C \leq (K_X+\Delta)\cdot C=0.
\end{eqnarray*}

\setcounter{step}{0}
\begin{step}
In this step, we reduce the proof to the case 
where if $C \subset E$, then $(K_X+C)\cdot C=0$. 

Assume $C \subset E$ and $(K_X+C)\cdot C<0$. 
Then $C$ is a $(K_X+C)$-negative extremal curve. 
Thus, by Theorem~\ref{cont}, we can contract $C$. 
Let $f:X \to Y$ be its contraction and $\Delta_Y:=f_*(\Delta)$. 
Then since $K_X+\Delta=f^*(K_Y+\Delta_Y)$ and $Y$ is \Q-factorial, 
if we can prove that $K_Y+\Delta_Y$ is semi-ample, 
then $K_X+\Delta$ is semi-ample. 
We can repeat this procedure and we obtain the desired reduction. 
\end{step}

\begin{step}
In this step, 
we prove that $E$ is a disjoint union of irreducible curves and 
if $C \subset E$, 
then $(K_X+\Delta)|_C=(K_X+C)|_C$. 

Let $C \subset E$. 
By Step 1, we have $(K_X+C)\cdot C=0$. 
Then, the inequality over Step 1 is an equality. 
Thus $C \subset \Supp \Delta$ and 
$C$ is disjoint from any other component of $\Delta$. 
\end{step}

By Step 2, it is sufficient to prove that, 
if $(K_X+C)\cdot C=0$, then $(K_X+C)|_C$ is semi-ample. 
This is satisfied by Theorem~\ref{adjunction}. 
\end{proof}

\subsection{Abundance theorem ($k\neq \overline{\mathbb F}_p$)}

In this section, we prove the abundance theorem for 
\Q-factorial surfaces with \Q-boundary over $k\neq\overline{\mathbb{F}}_p$. 
The case where $k=\overline{\mathbb{F}}_p$ will be treated in Section~4. 
In the case where $\kappa(X, K_X+\Delta)=0$, 
we give the proof of abundance theorem which does not depend 
on the characteristic of the base field $k$ (Theorem~\ref{qabundancekappazero}). 

First we see the following non-vanishing theorem. 

\begin{thm}[Non-vanishing theorem]\label{nonv}
In this theorem, let $k$ be an algebraically closed field of arbitrary characteristic. 
Let $X$ be a projective normal \Q-factorial surface over $k$ and 
let $\Delta$ be a \Q-boundary. 
If $k$ is not the algebraic closure of a finite field and 
$K_X+\Delta$ is pseudo-effective, then $\kappa(X, K_X+\Delta) \geq 0$.
\end{thm}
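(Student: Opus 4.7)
The plan is to follow Fujino's strategy \cite{Fujino} from the characteristic-zero setting: reduce to a smooth model via the minimal resolution and then proceed by cases using the Enriques--Kodaira--Bombieri--Mumford classification of smooth projective surfaces, invoking Theorem~\ref{irratqfac} at the crucial step where the hypothesis $k\neq\overline{\mathbb F}_p$ enters.

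First, let $f\colon Y \to X$ be the minimal resolution. Because $f$ is minimal, $f^*K_X - K_Y$ is an effective $f$-exceptional $\mathbb Q$-divisor, so writing $f^*(K_X+\Delta) = K_Y + \Delta_Y$ produces an effective $\mathbb Q$-divisor $\Delta_Y$ on the smooth surface $Y$. Birational pullback preserves both pseudo-effectiveness and Iitaka dimension, so $K_Y + \Delta_Y$ is pseudo-effective on $Y$ and it suffices to show $\kappa(Y, K_Y + \Delta_Y) \geq 0$. Now I would dichotomize on whether $Y$ is uniruled. If $Y$ is non-uniruled, the classical abundance theorem for minimal smooth projective surfaces (valid in any characteristic via the surface classification) gives $\kappa(Y, K_Y) \geq 0$, and then $\kappa(Y, K_Y + \Delta_Y) \geq \kappa(Y, K_Y) \geq 0$ by effectivity of $\Delta_Y$. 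If $Y$ is uniruled, I would extract a surjective morphism $\pi\colon Y \to B$ onto a smooth projective curve $B$ with general fiber $\mathbb P^1$, splitting into the subcases $B$ rational and $B$ irrational.

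In the rational subcase ($Y$ rational), I would use Riemann--Roch combined with Serre duality: on a rational surface $\chi(\mathcal O_Y)=1$, and for $m$ sufficiently large $h^2(Y, m(K_Y+\Delta_Y)) = h^0(Y, K_Y - m(K_Y+\Delta_Y)) = 0$ because the target divisor $-(m-1)K_Y - m\Delta_Y$ cannot remain pseudo-effective; the Riemann--Roch formula then forces $h^0(Y, m(K_Y+\Delta_Y)) > 0$. In the irrational subcase, the hypothesis $k \neq \overline{\mathbb F}_p$ is essential: Theorem~\ref{irratqfac} applied to $\pi$ and $f$ forces every $f$-exceptional curve to be $\pi$-vertical, so $\pi$ descends to a morphism $\bar\pi\colon X \to B$. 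A general fiber $F \simeq \mathbb P^1$ of $\bar\pi$ lies in the smooth locus of $X$ and is movable, so pseudo-effectivity of $K_X + \Delta$ gives $(K_X+\Delta)\cdot F \geq 0$; pushing forward and applying Riemann--Roch on the smooth curve $B$ then yields a global section of a sufficiently divisible multiple of $K_X + \Delta$.

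I expect the irrational subcase to be the main obstacle: one must upgrade the relative positivity $(K_X+\Delta)\cdot F \geq 0$ to actual global sections on the base $B$, which requires controlling the degree of the pushforward $\bar\pi_* \mathcal O_X(m(K_X+\Delta))$ on $B$ rather than merely its non-triviality, and over irrational $B$ (where $\mathrm{Pic}^0\,B$ is nontrivial by Fact~\ref{principle}, since $k\neq\overline{\mathbb F}_p$) non-zero vector bundles need not admit sections. The rational subcase is more routine but still subtle because $\Delta_Y$ may fail to be a $\mathbb Q$-boundary (its exceptional coefficients may exceed $1$), so the vanishing of $h^2$ and the sign of the leading Riemann--Roch term have to be verified directly rather than by appeal to a standard boundary-pair theorem.
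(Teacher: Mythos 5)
Your outline does follow the route the paper (by citation to Fujino) intends: pass to the minimal resolution, dichotomize on uniruledness, use Riemann--Roch in the rational case, and invoke Theorem~\ref{irratqfac} in the irrational ruled case. But two substantive gaps remain.

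You never reduce from pseudo-effective to nef, and your rational-subcase argument needs nefness. For $D = m(K_Y+\Delta_Y)$ one has
$$D\cdot(D-K_Y) = m(m-1)(K_Y+\Delta_Y)^2 + m(K_Y+\Delta_Y)\cdot\Delta_Y,$$
and while $h^2(Y,D)=0$ is fine (effectivity of $K_Y-D$ would force $K_Y$ pseudo-effective, contradicting ruledness), neither summand above is bounded below by pseudo-effectivity alone: $(K_Y+\Delta_Y)^2$ and $(K_Y+\Delta_Y)\cdot\Delta_Y$ can both be negative for a pseudo-effective but non-nef divisor. Compare Theorem~\ref{fpnonvanishing}, whose hypothesis is "nef" and whose Step~4 uses nefness at exactly this point. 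The missing preliminary step is to run the $(K_X+\Delta)$-MMP of Theorem~\ref{qfacmmp}: because $K_X+\Delta$ is pseudo-effective, the program terminates with a minimal model on which the log canonical divisor is nef, $\kappa$ is preserved at each $(K+\Delta)$-negative contraction, and the (QF) hypotheses persist. (A Zariski decomposition would serve instead, but you supply neither.)

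The irrational ruled subcase --- the only place $k\neq\overline{\mathbb F}_p$ enters --- is left unproven, as you yourself note. Descending the ruling to $\bar\pi\colon X\to B$ via Theorem~\ref{irratqfac} and observing $(K_X+\Delta)\cdot F\geq 0$, hence $\Delta\cdot F\geq 2$, is the right start, but "pushing forward and applying Riemann--Roch on $B$" is not an argument: over an irrational base a coherent sheaf of positive rank and controlled degree need not have a section, precisely because $\mathrm{Pic}^0\,B$ is large. One has to exhibit $K_X+\Delta$ as (effective divisor) $+\;\bar\pi^*M$ with $\deg M\geq 0$, which requires using the boundary condition $\Delta\leq 1$ together with the structure of the horizontal part of $\Delta$ along the $\mathbb P^1$-fibration. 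Flagging this as "the main obstacle" rather than resolving it means the proposal does not establish the theorem in the case it is really about.
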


\begin{proof}
See \cite[Theorem~5.1, Lemma~5.2]{Fujino}. 
Note that, instead of Lemma~5.2 of \cite{Fujino}, 
we use Theorem~\ref{irratqfac} of this paper.
\end{proof}

Before the proof of the abundance theorem, 
we see the definition of indecomposable curves of canonical type 
in the sense of \cite[P.330]{Mum}.

\begin{dfn}
In this definition, let $k$ be an algebraically closed field of arbitrary characteristic. 
Let $X$ be a smooth projective surface over $k$ and 
let $Y=\sum n_iE_i$ be an effective divisor with $n_i\in \mathbb Z_{>0}$. 
We say $Y$ is an {\em indecomposable curve of canonical type} if 
$Y\neq 0$, $K_X\cdot E_i=Y\cdot E_i=0$ for all $i$, $\Supp Y$ is connected and $\gcd(n_i)=1$.
\end{dfn}

We see criteria for the movability of 
indecomposable curves of canonical type.

\begin{prop}\label{icct1}
In this proposition, 
let $k$ be an algebraically closed field 
of arbitrary characteristic. 
Let $X$ be a smooth projective surface over $k$ and 
let $Y$ be an indecomposable curve of canonical type in $X$. 
Assume that one of the following assertions holds: 
\begin{enumerate}
\item{${\rm char}\,k=p>0$.}
\item{$H^1(X, \mathcal O_X)=0$.}
\end{enumerate}
If $\mathcal{O}_Y(Y)$ is a torsion, then $\kappa(X, Y)=1$. 
\end{prop}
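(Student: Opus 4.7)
The plan is to show that $h^0(X, \mathcal O_X(mY)) \geq 2$ for some positive integer $m$; this exhibits a pencil in $|mY|$ and forces $\kappa(X, Y) \geq 1$, and combined with the bound $\kappa(X, Y) \leq 1$ coming from $Y^2 = 0$, this yields the desired equality $\kappa(X, Y) = 1$.

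I would first recall the standard facts about an indecomposable curve of canonical type $Y$: adjunction gives $\chi(\mathcal O_Y) = 0$, and Mumford's analysis shows $h^0(\mathcal O_Y) = 1$, $\omega_Y \cong \mathcal O_Y$, and more generally any line bundle on $Y$ of degree zero on each component has $h^0 \leq 1$ with equality iff it is trivial. Writing $n$ for the order of $\mathcal O_Y(Y)$ in $\Pic(Y)$, it follows that $h^0(\mathcal O_Y(kY)) = h^1(\mathcal O_Y(kY)) = 1$ when $n \mid k$ and both vanish otherwise.

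The central tool is the long exact sequence attached to
\[
0 \to \mathcal O_X((k-1)Y) \to \mathcal O_X(kY) \to \mathcal O_Y(kY) \to 0.
\]
For $k$ not divisible by $n$, all cohomology of $\mathcal O_Y(kY)$ vanishes, so $h^i(\mathcal O_X(kY)) = h^i(\mathcal O_X((k-1)Y))$ for every $i$; thus producing a new section requires, at $k$ divisible by $n$, that the boundary $\delta \colon H^0(\mathcal O_Y) \to H^1(\mathcal O_X((k-1)Y))$ vanish. Under assumption (2), where $H^1(\mathcal O_X) = 0$, a straightforward induction on $k = 1, \ldots, n-1$ propagates this vanishing up the filtration to give $H^1(\mathcal O_X((n-1)Y)) = 0$; then $\delta = 0$ automatically at $k = n$, and $h^0(\mathcal O_X(nY)) \geq h^0(\mathcal O_X((n-1)Y)) + 1 = 2$, closing case (2).

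The main obstacle is case (1), ${\rm char}\, k = p > 0$, where $H^1(\mathcal O_X)$ may be nonzero and the naive induction breaks. My plan is to invoke the absolute Frobenius $F \colon X \to X$: iterating $F^*$ on the sequence at $k = n$ produces extensions of $\mathcal O_{p^a Y}$ by $\mathcal O_X(p^a(n-1)Y)$, which compare with the standard sequence at $k = p^a n$ via the inclusion $\mathcal O_X(p^a(n-1)Y) \subset \mathcal O_X((p^a n - 1)Y)$. Exploiting that multiplication by $p$ kills the Lie algebra $H^1(\mathcal O_X) = \mathrm{Lie}(\Pic X)$ in characteristic $p$, I would argue that the Frobenius iterates of $\delta(1)$ must eventually map to zero in the relevant $H^1$, forcing the corresponding boundary at $k = p^a n$ to vanish for $a \gg 0$ and yielding a section of $\mathcal O_X(p^a n Y)$ independent from the trivial one. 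The technical crux is the diagram chase relating the Frobenius pullback to the standard filtration; once that is settled, the same pencil argument closes the proof.
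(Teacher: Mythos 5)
Your treatment of case (2) reproduces the paper's proof: both arguments propagate $H^1(X,\mathcal O_X)=0$ up the filtration $\mathcal O_X\subset\mathcal O_X(Y)\subset\cdots$ using the long exact sequence of $0\to\mathcal O_X((k-1)Y)\to\mathcal O_X(kY)\to\mathcal O_Y(kY)\to 0$, the vanishing $h^0(\mathcal O_Y(kY))=h^1(\mathcal O_Y(kY))=0$ for $1\leq k\leq m-1$ (via Mumford's lemma and $\omega_Y\simeq\mathcal O_Y$), and then the surjectivity onto $H^0(Y,\mathcal O_Y)$ at level $m$. That part is fine.

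The genuine gap is in case (1). The paper itself does not prove this case but cites Ma\c{s}ek [Lemma, p.~682]; you are therefore attempting to fill a black box, and the sketch you give does not work as stated. The central claim, that ``the Frobenius iterates of $\delta(1)$ must eventually map to zero,'' is precisely the assertion that needs proof, and your justification for it is misleading. Saying ``multiplication by $p$ kills $H^1(\mathcal O_X)=\mathrm{Lie}(\Pic X)$'' is vacuously true --- multiplication by $p$ is zero on any vector space over a field of characteristic $p$ --- and it is not the relevant operator. What acts under $F^*$ on $H^1(X,\mathcal O_X)$ is the Hasse--Witt operator, which is $p$-semilinear, not linear, and is certainly not nilpotent in general: it decomposes $H^1(X,\mathcal O_X)$ into a nilpotent part and a semisimple part whose dimension is the $p$-rank of $\Pic^0(X)_{\mathrm{red}}$, and any element in the semisimple part survives all Frobenius iterates. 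So even if the obstruction class lived in $H^1(X,\mathcal O_X)$, there would be no a priori reason for its Frobenius orbit to hit zero. Moreover the class $\delta(1)$ actually lives in $H^1(X,(m-1)Y)$, and the Frobenius pullback of the defining sequence has third term $F^*\mathcal O_Y=\mathcal O_{pY}$, a nonreduced thickening, so the comparison with the standard filtration at level $p^a m$ passes through the cokernel of $\mathcal O_X(p^a(m-1)Y)\hookrightarrow\mathcal O_X((p^a m-1)Y)$, which is a nontrivial sheaf supported on $(p^a-1)Y$; you flag this ``diagram chase'' as the technical crux and leave it unresolved. As it stands, case (1) is an unsubstantiated plan rather than a proof, and the heuristic offered in support of its key step does not establish it.
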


The proof of (2) is very similar to \cite[Theorem~2.1]{Totaro}.

\begin{proof}
(1)See \cite[Lemma in P.682]{Masek}. \\
(2)Assume $H^1(X, \mathcal O_X)=0$. 
Let $m$ be the order of $\mathcal{O}_Y(Y)$ and 
$a$ be an integer with $1 \leq a \leq m-1$. 
The case where $\mathcal{O}_Y(Y)=\mathcal{O}_Y$ is easy. 
So we exclude this case and we can assume $m \geq 2$. 
Let us consider the following exact sequence: 
\[H^0(Y, \mathcal{O}_Y(aY)) \to H^1(X, (a-1)Y) \to H^1(X, aY) 
\to H^1(Y, \mathcal{O}_Y(aY)).\] 
By Serre duality and \cite[Corollary 1 in P.333]{Mum}, we obtain 
$$h^1(Y, \mathcal{O}_Y(aY))=h^0(Y, \mathcal{O}_Y(K_Y-aY))=
h^0(Y, \mathcal{O}_Y(-aY)). $$ 
The choice of $a$ shows that 
$$h^0(Y, \mathcal{O}_Y(aY))=h^0(Y, \mathcal{O}_Y(-aY))=0. $$
Indeed, suppose the contrary, that is, 
for example suppose $h^0(Y, aY|_Y) \neq 0$. 
Then we have $\mathcal{O}_Y(aY)=\mathcal{O}_Y$ 
by \cite[Lemma on P.332]{Mum}. This is a contradiction. 
Therefore we get 
\[0=h^1(X, \mathcal{O}_X)=h^1(X, Y)= \dots =h^1(X, (m-1)Y).\] 
This leads the following exact sequence: 
\[0 \to H^0(X, (m-1)Y) \to H^0(X, mY) \to H^0(Y, \mathcal{O}_Y) \to 0.\]
Thus $Y$ is an effective semi-ample divisor on $X$ and $Y^2=0$. 
This shows that $\kappa(X, Y)=1$.
\end{proof}

Now, we prove the abundance theorem.

\begin{thm}[Abundance theorem]\label{qabundance}
In this theorem, let $k$ be an algebraically closed field of positive characteristic. 
Let $X$ be a projective normal \Q-factorial surface over $k$ and 
let $\Delta$ be a \Q-boundary. 
If $k$ is not the algebraic closure of a finite field and 
$K_X+\Delta$ is nef, then $K_X+\Delta$ is semi-ample. 
\end{thm}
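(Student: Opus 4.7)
The plan is to split according to the Kodaira dimension $\kappa := \kappa(X, K_X+\Delta)$. Since $k \neq \fff$ and $K_X+\Delta$ is nef, it is in particular pseudo-effective, so Theorem~\ref{nonv} (applied here, where it is available) yields $\kappa \geq 0$. Hence we need only handle $\kappa \in \{0, 1, 2\}$, and the argument will be different in each case.

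The cases $\kappa = 1, 2$ are the easy ones. If $\kappa = 2$, then $K_X+\Delta$ is nef and big; since $X$ is \Q-factorial and $\Delta$ is a \Q-boundary, $m(K_X+\Delta)$ is Cartier for some $m > 0$, and Proposition~\ref{nefbig} applied to this Cartier multiple gives semi-ampleness. If $\kappa = 1$, the same passage to a Cartier multiple reduces us immediately to Proposition~\ref{kappa1}, which handles any nef line bundle of Iitaka dimension one on a projective normal surface.

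The remaining case $\kappa = 0$ is the main obstacle, and I would isolate it as a separate statement (as anticipated by Theorem~\ref{qabundancekappazero}, whose proof works in any characteristic). The plan is to prove a sharper fact: $m(K_X+\Delta) \sim 0$ for some $m > 0$, which of course gives semi-ampleness. Pick an effective representative $D \in |m(K_X+\Delta)|$ for suitable $m$. Since $K_X+\Delta$ is nef with $\kappa = 0$, we have $D^2 = m^2 (K_X+\Delta)^2 = 0$, and nefness of $D$ combined with $D^2 = 0$ forces $D\cdot C = 0$ for every prime component $C$ of $D$. Pull back to a minimal resolution $f : Y \to X$ and write $f^*(K_X+\Delta) = K_Y + \Delta_Y$ in the usual way, using Mumford's intersection theory. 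The goal then is to extract, inside the support of $f^*D$, an indecomposable curve of canonical type $Z$ with $\mathcal{O}_Z(Z)$ torsion: were $\kappa(Y, Z) \geq 1$, then by Proposition~\ref{icct1}(1) we would get a contradiction with $\kappa = 0$, so $Z$ must already be torsion at the $Y$-level, and descending this torsion back to $X$ produces the desired $\mathbb{Q}$-linear equivalence.

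The hard part is precisely the last step of the $\kappa = 0$ case: controlling the exceptional configuration of $f$ together with the boundary coefficients of $\Delta_Y$ well enough to actually locate an indecomposable curve of canonical type in $\Supp f^*D$, and then promoting numerical triviality to $\mathbb{Q}$-linear triviality on $X$. This requires a case analysis following Fujino's characteristic-zero strategy combined with Fujita's reliance on the classification of smooth projective surfaces, adapted so as to avoid Kodaira vanishing (replaced here by Keel's result, Theorem~\ref{keel}, wherever needed).
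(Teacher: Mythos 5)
Your reduction is exactly the paper's: Theorem~\ref{nonv} gives $\kappa\geq 0$, Proposition~\ref{kappa1} handles $\kappa=1$, Proposition~\ref{nefbig} handles $\kappa=2$ (here you can apply it directly to $K_X+\Delta$ without first passing to a Cartier multiple, since it is stated for $\mathbb{Q}$-boundaries), and the $\kappa=0$ case is isolated as a separate statement, which is precisely what the paper does with Theorem~\ref{qabundancekappazero}.

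One thing to tighten in your $\kappa=0$ sketch is the direction of the argument involving Proposition~\ref{icct1}(1). The paper takes a minimal resolution, blows it down further to a surface $S$ with no $(-1)$-curves $C$ satisfying $(K_S+\Delta_S)\cdot C=0$, picks the effective $Z\in|m(K_S+\Delta_S)|$, assumes $Z\neq 0$, and finds among the connected components of $Z$ an indecomposable curve of canonical type $Y$. The case analysis (rational versus irrational $S$, coefficients of $\Delta_S$, etc.) then proves \emph{directly} that $\mathcal{O}_Y(Y)$ is torsion, at which point Proposition~\ref{icct1}(1) gives $\kappa(S,Y)=1$, contradicting $\kappa(S,Z)=0$. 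Proposition~\ref{icct1}(1) only runs in the direction ``torsion $\Rightarrow\kappa=1$,'' so you cannot deduce ``$\mathcal{O}_Y(Y)$ is torsion'' from ``$\kappa\geq 1$ would be contradictory'' as your phrasing suggests; and there is no step of descending torsion back to $X$ --- the contradiction forces $Z=0$ outright, which is the assertion $K_S+\Delta_S\sim_{\mathbb{Q}}0$, equivalently $K_X+\Delta\sim_{\mathbb{Q}}0$.
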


\begin{proof}
By Theorem~\ref{nonv}, we may assume $\kappa(K_X+\Delta)\geq 0$. 
Moreover, we may assume $\kappa(K_X+\Delta)=0$ 
by Proposition~\ref{kappa1} and Proposition~\ref{nefbig}. 
Thus it is sufficient to prove the following theorem. 
\end{proof}

\begin{thm}\label{qabundancekappazero}
In this theorem, 
let $k$ be an algebraically closed field of arbitrary characteristic. 
Let $X$ be a projective normal \Q-factorial surface over $k$ and 
let $\Delta$ be a \Q-boundary. 
If $k$ is not the algebraic closure of a finite field, 
$K_X+\Delta$ is nef and 
$\kappa(X, K_X+\Delta)=0$, 
then $K_X+\Delta\sim_{\mathbb{Q}}0$. 
\end{thm}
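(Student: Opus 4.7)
The plan is to reduce to a smooth model and invoke the classification of surfaces together with the theory of indecomposable curves of canonical type, adapting Fujino's and Fujita's arguments in ways that avoid the Kodaira vanishing theorem. First, I would reduce to the case where $K_X+\Delta$ is nef: running the MMP of Theorem~\ref{qfacmmp} preserves $\kappa(X,K_X+\Delta)$, since each contraction satisfies $\phi_i^*(K_{X_{i+1}}+\Delta_{i+1})=K_{X_i}+\Delta_i+\alpha_i C_i$ with $\alpha_i>0$. A Mori fiber space output would force $\kappa=-\infty$, contradicting $\kappa=0$, so the output $(X^\dagger,\Delta^\dagger)$ has $K_{X^\dagger}+\Delta^\dagger$ nef with Iitaka dimension zero. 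Assuming the nef case has been settled, so that $K_{X^\dagger}+\Delta^\dagger\sim_{\mathbb{Q}} 0$, pulling back through the MMP identity gives $K_X+\Delta\sim_{\mathbb{Q}} -E$ with $E\ge 0$ exceptional. Combining with any effective representative $D\sim_{\mathbb{Q}} K_X+\Delta$ produced by $\kappa=0$, we obtain $D+E\sim_{\mathbb{Q}} 0$ with $D+E\ge 0$; on a projective surface this forces $D+E=0$, hence $K_X+\Delta\sim_{\mathbb{Q}} 0$.

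Next, I would pass to the minimal resolution $f:Y\to X$. Since $X$ is \Q-factorial and $f$ is minimal, the discrepancies are non-positive, so $f^*(K_X+\Delta)=K_Y+\Delta_Y$ with $\Delta_Y\ge 0$; moreover $K_Y+\Delta_Y$ is nef with $\kappa(Y,K_Y+\Delta_Y)=0$, and it suffices to prove $K_Y+\Delta_Y\sim_{\mathbb{Q}} 0$. Writing $D=m(K_Y+\Delta_Y)\ge 0$ for a suitable $m$, nef-plus-big would force $\kappa=2$, so $(K_Y+\Delta_Y)^2=0$, and then $(K_Y+\Delta_Y)\cdot D=0$ shows $(K_Y+\Delta_Y)\cdot C=0$ for every component $C$ of $D$.

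I would then analyze $Y$ via classification. Since $\Delta_Y\ge 0$, $\kappa(Y,K_Y)\le 0$. If $\kappa(Y)=-\infty$, then $Y$ is ruled, and Theorem~\ref{irratqfac} combined with a general-fiber intersection analysis (using $K_Y\cdot F=-2$ and $(K_Y+\Delta_Y)\cdot F\ge 0$) forces the base of the ruling to be $\mathbb{P}^1$ and eventually $K_Y+\Delta_Y\equiv 0$. If $\kappa(Y)=0$, contracting $(-1)$-curves gives a smooth minimal model $Y_{\min}$ with $K_{Y_{\min}}\sim_{\mathbb{Q}} 0$ by the Bombieri--Mumford classification; pulling back expresses $K_Y+\Delta_Y$ as a nef effective sum of an exceptional divisor and $\Delta_Y$ with $\kappa=0$. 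In either case, we arrive at an effective nef divisor $N$ with $\kappa(Y,N)=0$ and $N\cdot C=0$ for every component $C$ of $N$. Each connected component of $\Supp N$ can be organized as an indecomposable curve of canonical type $Z$. Since $k\neq\overline{\mathbb{F}}_p$, Fact~\ref{principle} applied to the Picard scheme promotes the numerical triviality of $\mathcal{O}_Z(Z)$ to torsion, and Proposition~\ref{icct1} then yields $\kappa(Y,Z)=1$, contradicting $\kappa=0$ unless $Z=0$. Hence $N=0$ and $K_X+\Delta\sim_{\mathbb{Q}} 0$.

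The hardest step is the last: without Kodaira vanishing in positive characteristic, promoting $\mathcal{O}_Z(Z)\equiv 0$ to genuine torsion requires a careful case analysis over the positive-characteristic classes of surfaces with $\kappa=0$ (abelian, K3, Enriques, bielliptic, quasi-elliptic, and their $p$-analogues), since some lack $H^1(\mathcal{O})=0$ and require part~(1) rather than part~(2) of Proposition~\ref{icct1}; the hypothesis $k\neq\overline{\mathbb{F}}_p$ enters through Fact~\ref{principle} to guarantee that the Picard-zero summand is torsion. The ruled subcase under $\kappa(Y)=-\infty$ is a parallel fibration analysis that similarly relies on Theorem~\ref{irratqfac} to avoid irrational-base obstructions.
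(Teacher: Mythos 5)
Your overall architecture (pass to the minimal resolution, produce an indecomposable curve of canonical type $Z$ from the effective representative of $m(K_X+\Delta)$, then argue $\mathcal O_Z(Z)$ is torsion and invoke Proposition~\ref{icct1}) matches the paper, but the critical step is carried out with the logic exactly reversed. You write that ``Since $k\neq\overline{\mathbb{F}}_p$, Fact~\ref{principle} applied to the Picard scheme promotes the numerical triviality of $\mathcal O_Z(Z)$ to torsion,'' and again at the end that ``the hypothesis $k\neq\overline{\mathbb{F}}_p$ enters through Fact~\ref{principle} to guarantee that the Picard-zero summand is torsion.'' This is backwards: it is Fact~\ref{principle}(2), valid only for $k=\overline{\mathbb{F}}_p$, that gives torsion (this is Corollary~\ref{pic}, used in Part~3), whereas Fact~\ref{principle}(1) says that over $k\neq\overline{\mathbb{F}}_p$ abelian varieties have \emph{infinite} rank. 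Over $k\neq\overline{\mathbb{F}}_p$ a numerically trivial line bundle on $Z$ need not be torsion; that is precisely the obstruction, not the escape. The paper never uses Fact~\ref{principle} to produce torsion in this proof. Torsion of $\mathcal O_Y(Y)$ is established by direct computation in each branch: in the rational case (Steps~5--6) by tracing the curve of canonical type back to the minimal resolution $V$, observing that the resulting $Y_{(0)}$ is $f$-exceptional and hence killed by $f^*(K_X+\Delta)$; in the irrational ruled case (Steps~7--12) by an adjunction identity in $\Pic Y$ together with the coefficient bound $0\leq\delta\leq 1$. The hypothesis $k\neq\overline{\mathbb{F}}_p$ is actually used through Theorem~\ref{irratqfac} — i.e., through the \emph{infinite}-rank statement — to force $f$-exceptional curves to be vertical over irrational bases, which is what gives that coefficient bound.

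A second gap is your claim that in the $\kappa(Y)=-\infty$ case Theorem~\ref{irratqfac} together with the fiber inequality $\Delta_Y\cdot F\geq 2$ ``forces the base of the ruling to be $\mathbb{P}^1$.'' It does not; the irrational ruled case genuinely occurs and is the hardest part of the paper's argument. Theorem~\ref{irratqfac} only controls exceptional curves over irrational bases, it does not rule them out. Also note that the conclusion ``$K_Y+\Delta_Y\equiv 0$'' is strictly weaker than $K_Y+\Delta_Y\sim_{\mathbb Q}0$, and over $k\neq\overline{\mathbb{F}}_p$ the gap between the two cannot be closed by an appeal to Fact~\ref{principle}.
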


This proof is very similar to \cite[Theorem~6.1]{Fujino} and 
uses many techniques in \cite[section~5]{Fujita}.

\begin{proof}
\setcounter{step}{0}
Let $f:V\to X$ be the minimal resolution. 
We set $K_V+\Delta_V=f^*(K_X+\Delta)$. 
We note that $\Delta_V$ is effective. 
It is sufficient to see that $K_V+\Delta_V\sim _{\mathbb Q}0$. 
Let 
$$
\varphi:V=:V_0\overset{\varphi_0}\to V_1\overset{\varphi_1}\to \cdots 
\overset{\varphi_{k-1}}\to V_k=:S 
$$ 
be a sequence of blow-downs such that 
\begin{itemize}
\item[(1)] $\varphi_i$ is a blow-down of a $(-1)$-curve $C_i$ on $V_i$, 
\item[(2)] $\Delta_{V_{i+1}}=\varphi_{i*}\Delta_{V_i}$, and 
\item[(3)] $(K_{V_i}+\Delta_{V_i})\cdot C_i=0$, 
\end{itemize}
for every $i$. 
We can assume that there are no $(-1)$-curves $C$ on $S$ with 
$(K_S+\Delta_S)\cdot C=0$. We note that 
$K_V+\Delta_V=\varphi^*(K_S+\Delta_S)$. 
It is sufficient to 
see that $K_S+\Delta_S\sim _{\mathbb Q}0$. 
Since $\kappa(S, K_S+\Delta_S)=0$, there is a member $Z$ of 
$|m(K_S+\Delta_S)|$ for some positive integer $m$. 
Then, for every positive integer $t$, 
$tZ$ is the unique member of $|tm(K_S+\Delta_S)|$. 
We will derive a contradiction assuming $Z\ne 0$. 

\begin{step}
In this step, we prove that for each prime component $Z_i$ of $Z$, 
we have 
$$K_S\cdot Z_i=\Delta_S \cdot Z_i=Z\cdot Z_i=0. $$
Since $(K_S+\Delta_S)\cdot Z=m(K_S+\Delta_S)^2=0$ 
and $(K_S+\Delta_S)$ is nef, 
$(K_S+\Delta_S)\cdot Z_i=0$ for all $i$. 
This means $$Z\cdot Z_i=0$$ and $Z_i^2 \leq 0$. 
Now, we prove $K_S\cdot Z_i \geq 0$ for every $i$. 
If $K_S\cdot Z_i < 0$, then we obtain 
$(K_S+Z_i)\cdot Z_i<0$ and $Z_i \cong \mathbb{P}^1$. 
If $Z_i^2 \geq 0$, then we obtain
$\kappa(S, Z) \geq \kappa (S, Z_i)>0.$ 
This contradicts 
$\kappa(S, K_S+\Delta_S)=\kappa(S, Z)=0.$ 
If $Z_i^2 < 0$, then $Z_i$ is a $(-1)$-curve 
with $(K_S+\Delta_S)\cdot Z_i=0.$ 
This contradicts the definition of $S$. 
Anyway, we have $K_S\cdot Z_i \geq 0$ for every $i$.
This implies $K_S\cdot Z=K_S\cdot m(K_S+\Delta_S) \geq 0.$ 
The nefness of $K_S+\Delta_S$ shows $(K_S+\Delta_S)\cdot \Delta_S \geq 0.$ 
By $(K_S+\Delta_S)^2=0$, we see 
$(K_S+\Delta_S)\cdot K_S=(K_S+\Delta_S)\cdot \Delta_S=0.$ 
This is equivalent to $Z\cdot K_S=Z\cdot \Delta_S=0.$ 
Since $K_S\cdot Z_i \geq 0$, we see 
$$K_S\cdot Z_i=\Delta_S \cdot Z_i=0. $$
\end{step}

\begin{step}
We can decompose $Z$ into the connected components as follows: 
$$
Z=\sum _{i=1}^{r} \mu_i Y_i, 
$$ 
where $\mu_i Y_i$ is a connected component of 
$Z$ such that $\mu_i$ is the greatest common divisor of 
the coefficients of prime components of $Y_i$ in $Z$ for every $i$. 
Then we see that, for every $i$, 
each $Y_i$ is an indecomposable curve of 
canonical type by Step~1. 
We obtain $\omega _{Y_i}\simeq \mathcal O_{Y_i}$ 
by \cite[Corollary~1 in P.333]{Mum}. 
\end{step}

\begin{step}
In this step, 
we assume $\kappa (S, K_S)\geq 0$ and prove the assertion. 
Since 
$$0\leq \kappa (S, K_S)\leq \kappa (S, K_S+\Delta_S)=0, $$
we obtain $\kappa (S, K_S)=0$. 
Let us prove $S$ is a minimal surface. 

Suppose the contrary, that is, 
suppose that there exists a $(-1)$-curve $E$. 
Then we have the contraction $g:S\to S'$ of $E$ and 
we obtain a morphism 
$$S\overset{g}\to S'\overset{h}\to S_{{\rm min}}$$
to a minimal surface $S_{{\rm min}}$. 
Since $0=\kappa(S, K_S)=\kappa(S_{{\rm min}}, K_{S_{{\rm min}}})$, 
we see $K_{S_{{\rm min}}}\sim_{\mathbb{Q}}0$. 
Because 
$$K_S=g^*K_{S'}+E=g^*(h^*(K_{S_{{\rm min}}})+({\rm effective\,\,divisor}))+E,$$ 
we see that 
$K_S\sim_{\mathbb{Q}} ({\rm effective\,\,divisor})+E.$
This means that 
$$nZ\sim nm(K_S+\Delta_S)\sim ({\rm effective\,\,divisor})+nmE+nm\Delta_S$$
for some $n\in\mathbb Z_{>0}.$ 
Since $\kappa(S, Z)=0$, 
$$nZ=({\rm effective\,\,divisor})+nmE+nm\Delta_S$$
as Weil divisors. 
In particular, we have $E\subset \Supp~Z$ and 
$E=Z_i$ for some $i$. 
This implies that 
$Z_i\cdot (K_S+\Delta_S)=0$ and $Z_i$ is a $(-1)$-curve and 
this is a contradiction to the construction of $S$. 
Therefore, $S$ is minimal. 

Then we obtain the following contradiction 
$$\kappa (S, K_S+\Delta_S)=\kappa (S, Z)\geq \kappa (S, Y_i) \geq 1$$ 
from the known result $\kappa (S, Y_i) \geq 1$. 
(See, for example, \cite[Theorem 7.11]{Badescu}. ) 
\end{step}

\begin{step}
By Step~3, we may assume that $\kappa (S, K_S)=-\infty$. 
In Step~5 and Step~6, 
we assume that $S$ is rational and prove the assertion. 
In Step~7--Step~12 we assume that $S$ is irrational and prove the assertion. 
Note that 
since $\kappa(X, Z)=0$, 
in order to derive a contradiction, 
we want to prove that $\kappa(X, Y_i)\geq 1$ for some $i$. 
\end{step}

We assume that $S$ is rational.

\begin{step}
In this step, we prove 
$$\Delta_S=\sum y_iY_i \,\,{\rm and}\,\, y_i>1. $$
We fix $i$. 
By $H^1(S, \mathcal O_S(K_S))=0$ and the following exact sequence 
$$
0 \to \mathcal O_S(K_S) \to \mathcal O_S(K_S+Y_i) 
\to \omega_{Y_i} \to 0,$$
we obtain the surjection 
$$H^0(S, \mathcal O_S(K_S+Y_i)) \to H^0(Y_i, \omega_{Y_i})\simeq H^0(Y_i, \mathcal O_{Y_i}).$$
Thus there exists $W_i \in |K_S+Y_i|$ such that 
$W_i$ has no components of $Y_i$. 
For $\tilde{Z}_i=Z-\mu_iY_i$, we obtain the equation 
$$m\mu_iW_i+m\mu_i\Delta_S+m\tilde{Z}_i=(\mu_i+m)Z.$$ 
Note that this equality holds as Weil divisors because 
$\kappa(S, Z)=0$. 
From this equation, $\Supp\Delta_S \subset \Supp Z$. 
Since $W_i$ and $\tilde{Z}_i$ are free from the components of $Y_i$, 
we have $\Delta_S=\sum\frac{\mu_i+m}{m}Y_i$. 
We set $y_i:=\frac{\mu_i+m}{m}>1$. 
\end{step}

We fix $i$, and we denote $Y$ instead of $Y_i$. 

\begin{step}
In this step, we prove the desired assertion. 
By (2) in Proposition~\ref{icct1}, it is sufficient to prove that 
$\mathcal O_{Y}(a{Y})\simeq \mathcal O_{Y}$ for 
some positive integer $a$. 
We set $Y_{(k)}:=Y$ and construct $Y_{(j)}$ inductively. 
It is easy to see that 
$\varphi_j:V_j\to V_{j+1}$ is the blow-up at $P_{j+1}$ with 
$\mult _{P_{j+1}}\Delta_{V_{j+1}}\geq 1$ 
for every $j$ since $\Delta_{V_j}$ is effective. 
If $\mult _{P_{j+1}}Y_{(j+1)}=0$, 
then we set $Y_{(j)}=\varphi^*_{j}Y_{(j+1)}$. 
If $\mult _{P_{j+1}}Y_{(j+1)}>0$, 
then we set $Y_{(j)}=\varphi^*_{j}Y_{(j+1)}-C_j$, where 
$C_j$ is the exceptional curve of $\varphi_j$. 
Thus, we obtain $Y_{(0)}$ on $V_0=V$. 
Note that $\mult _P\Delta_{V_{j+1}}>\mult _PY_{(j+1)}$ for 
every $P\in \Supp~Y_{(j+1)}$ by Step~5 and 
the above inductive construction. 
Moreover, since $\mult _PY_{(j+1)}\in \mathbb Z$, 
we see that $Y_{(0)}$ is effective and that 
$\Supp Y_{(0)}\subset \Supp \Delta^{>1}_V$ 
where, for the prime decomposition 
$\Delta_V=\sum \delta_l\Delta_{V,l}$, 
we define $\Delta^{>1}_V:=\sum_{\delta_l>1} \delta_l\Delta_{V,l}.$ 
Then, we have 
$\varphi_{j*}\mathcal O_{Y_{(j)}}\simeq \mathcal O_{Y_{(j+1)}}$ 
for every $j$. 
Indeed, $\varphi_{j*}\mathcal O_{V_j}(-Y_{(j)})\simeq 
\mathcal O_{V_{j+1}}(-Y_{(j+1)})$ and 
$R^1\varphi_{j*}\mathcal O_{V_{j}}(-Y_{(j)})=0$ for every $j$. 
See the following commutative diagram. 
$$\begin{CD}
0@>>>
\mathcal O_{V_{j+1}}(-Y_{(j+1)})@>>>
\mathcal O_{V_{j+1}}@>>>
\mathcal O_{Y_{(j+1)}}@>>>0\\
@. @VV{\simeq }V @VV{\simeq }V @VVV \\
0@>>>
\varphi_{j*}\mathcal O_{V_j}(-Y_{(j)})@>>>
\varphi_{j*}\mathcal O_{V_{j}}@>>>
\varphi_{j*}\mathcal O_{Y_{(j)}}@>>>0
\end{CD}$$
Therefore, we obtain $\varphi_*\mathcal O_{Y_{(0)}}\simeq \mathcal O_Y$. 
Since $\Supp Y_{(0)}\subset \Supp \Delta^{>1}_V$, we see that 
$Y_{(0)}$ is $f$-exceptional. 
Since $K_V+\Delta_V=f^*(K_X+\Delta)$, 
we obtain 
$\mathcal O_{Y_{(0)}}(b(K_V+\Delta_V))\simeq 
\mathcal O_{Y_{(0)}}$ for 
some positive divisible integer $b$. 
Thus, 
$$\mathcal O_Y(b(K_S+\Delta_S))\simeq
\varphi_*\mathcal O_{Y_{(0)}}(b(K_V+\Delta_V)) \simeq 
\varphi_*\mathcal O_{Y_{(0)}} \simeq
\mathcal O_Y. $$ 
This means 
$$\mathcal O_Y(\mu bY)\simeq
\mathcal O_Y(bZ)\simeq
\mathcal O_Y(bm(K_S+\Delta_S))\simeq
\mathcal O_Y.$$
This completes the proof of the rational case. 
\end{step}

We assume that $S$ is an irrational ruled surface. 
Let $\pi :S\to B$ be its ruling and 
let $F$ be one of its smooth fibers.

\begin{step}
In this step, we prove that 
each connected component $\mu Y$ of $Z$ satisfies $F\cdot Y>0$. 

We assume that $F\cdot Y=0$ and derive a contradiction. 
Since $Y$ is connected, $Y$ is contained in some fiber $F_0$. 
Then we have the equality 
$$F_0=yY+Y'$$
for some effective \Q-divisor $Y'$ with $\Supp Y \not\subset \Supp Y'$ and 
for some positive rational number $y$. 
By $K_S\cdot F_0=-2$ and $K_S\cdot Y=0$, 
it is sufficient to prove $Y'=0$. 
Thus assume $Y'\neq 0.$ 
Take a prime component $Y_{(1)}$ of $Y$ which is not a component of $Y'$. 
The equalities $F_0\cdot Y_{(1)}=Y\cdot Y_{(1)}=0$ show $Y'\cdot Y_{(1)}=0.$ 
Thus if $Y_{(2)}$ is a prime component of $Y$ such that 
$Y_{(1)} \cap Y_{(2)} \neq \emptyset$, then 
$Y_{(2)}$ is not a component of $Y'$. 
By repeating this procedure, we see that 
$Y_{(i)}$ is not a prime component of $Y'$ 
for each prime component $Y_{(i)}$ of $Y$. 
Since $Y'\neq 0$, there exists a prime component $Y_{(j)}$ of $Y$ 
with $Y_{(j)} \cap Y' \neq \emptyset$. 
This leads to the following contradiction
$$F_0\cdot Y_{(j)}=Y\cdot Y_{(j)}=0\,\,{\rm and}\,\,Y'\cdot Y_{(j)}\neq 0.$$
\end{step}

\begin{step}
In this step, we prove that both $B$ and $Y$ are elliptic curves. 
In particular, $Y^2=0$. 

By Step~7, $Y$ has a prime component $Y_{(0)}$ with $\pi(Y_{(0)})=B$. 
Because $(K_S+Y_{(0)})\cdot Y_{(0)} \leq 0$ by Step~1, 
$Y_{(0)}$ is a rational curve or an elliptic curve. 
But, since $B$ is irrational and $\pi(Y_{(0)})=B$, 
$Y_{(0)}$ must be an elliptic curve. 
Then, so is $B$. 
Moreover if an indecomposable curve 
of canonical type $Y$ is reducible, 
then every prime component of $Y$ must be $\mathbb{P}^1$. 

Indeed, for every prime component $Y_{(i)}$, 
we have $(K_S+Y_{(i)})\cdot Y_{(i)}\leq 0$. 
Assume that a prime component $Y_{(0)}$ satisfies 
$(K_S+Y_{(0)})\cdot Y_{(0)}=0$. 
Then, by $K_S\cdot Y_{(0)}=0$, 
we have $Y_{(0)}^2=0$. 
Since $Y\cdot Y_{(0)}=0$, we obtain $Y_{(i)}\cdot Y_{(0)}=0$ 
for every prime component $Y_{(i)}$. 
Because $Y$ is connected, $Y$ must be irreducible. 
\end{step}

\begin{step}
In this step, we prove that the coefficient $\delta$ of $Y$ in $\Delta_S$ 
satisfies $0\leq \delta \leq 1$. 

Assume the contrary, that is, we assume $\delta>1$ and 
derive a contradiction. 
Take the proper transform $Y_V$ of $Y$ in $V$. 
We see that the coefficient of $Y_V$ in $\Delta_V$ is $\delta$. 
Then $Y_V$ is contracted by $f$ because the assumption of the boundary. 
Since $X$ is \Q-factorial, we can apply Theorem~\ref{irratqfac} and 
this is a contradiction. 
\end{step}

\begin{step}
In this step, we prove that $\mathcal{O}_Y(Y)$ is a torsion. 

By Step~1, we have $Y\cdot (\Delta_S-\delta Y)=0$. 
This means $\Supp Y\cap \Supp(\Delta_S-\delta Y)=\emptyset$. 
Thus, in $\Pic~Y$, we obtain 
$$\mu Y=Z=m(K_S+\Delta_S)=m(K_S+\delta Y)=m(-Y+\delta Y). $$
Therefore we have $(m(1-\delta)+\mu)Y=0$ in $\Pic~Y$. 
By $m(1-\delta)+\mu>m(1-\delta)\geq 0$, 
$\mathcal{O}_Y(Y)$ must be a torsion. 
\end{step}

\begin{step}
Let $r$ be the order of the torsion $\mathcal{O}_Y(Y)$. 
In this step, we prove that 
$$H^1(S, \mathcal O_S(K_S+tY))=0$$
for $1\leq t \leq r$ by induction. 

Let us consider the exact sequence
$$0 \to \mathcal{O}_S(K_S) \to \mathcal{O}_S(K_S+Y) 
\to \omega_Y\to 0.$$
If the induced map 
$$H^0(S, \mathcal{O}_S(K_S+Y)) \to H^0(Y, \omega_Y)=k$$ 
is surjective, we get a contradiction by the same argument as Step~5. 
Therefore, this map is zero. 
Then, the injective map 
$$k=H^0(Y, \omega_Y) \to H^1(S, \mathcal{O}_S(K_S))=k$$
is bijective. 
This means that the map
$$H^1(S, \mathcal{O}_S(K_S+Y))\to H^1(Y, \omega_Y)$$
is injective. 
On the other hand, we have 
$h^2(S, \mathcal{O}_S(K_S+Y))=0$ by Serre duality. 
Then we obtain the surjective map 
$$H^1(Y, \omega_Y) \to H^2(S, \mathcal{O}_S(K_S)).$$
But this is bijective by Serre duality. 
Therefore we obtain 
$$H^1(S, \mathcal{O}_S(K_S+Y))=0$$
and this proves the case where $t=1$. 
When $1<t\leq r$, we have the exact sequence: 
$$H^1(S, K_S+(t-1)Y)\to 
H^1(S, K_S+tY)\to 
H^1(Y, K_S+tY).$$
By the induction hypothesis, we have $H^1(S, K_S+(t-1)Y)=0$. 
Moreover, we obtain 
$h^1(Y, \mathcal{O}_Y(K_S+tY))=h^0(Y, \mathcal{O}_Y(-(t-1)Y))=0$ 
since $r$ is the order of $\mathcal{O}_Y(Y)$. 
Thus we see $H^1(S, K_S+tY)=0$ for $1\leq t \leq r$. 
\end{step}

\begin{step}
By Step~11, we obtain a surjection 
$$H^0(S, \mathcal O_S(K_S+(r+1)Y)) \to H^0(Y, \mathcal O_Y(K_S+(r+1)Y)).$$
By 
$
\mathcal{O}_Y(K_S+(r+1)Y)=
\mathcal{O}_Y(K_Y+rY)=
\mathcal{O}_Y
$, 
there exists an effective member $W\in |K_S+(r+1)Y|$ free from $Y$. 
Set $\tilde{Z}:=Z-\mu Y$. 
We obtain the equation 
\begin{eqnarray*}
\mu Z+(r+1)mZ
&=&\mu m(K_S+\Delta_S)+(r+1)m(\mu Y+\tilde{Z})\\
&=&\mu m(K_S+(r+1)Y)+\mu m\Delta_S+(r+1)m\tilde{Z}\\
&=&\mu mW+\mu m\Delta_S+(r+1)m\tilde{Z}
\end{eqnarray*}
as Weil divisors. 
By considering the coefficients of $Y$ in both sides, 
we obtain 
$$(\mu+(r+1)m)\mu=\mu m \delta. $$
\end{step}
But these two numbers are different by $0 \leq \delta \leq 1$. 
This is a contradiction. 
\end{proof}

\begin{rem}
If ${\rm char}~k>0$, then we do not need 
Step~11 and Step~12 in the proof of 
Theorem~\ref{qabundancekappazero} 
by using (1) of Proposition~\ref{icct1}. 
\end{rem}

\subsection{Abundance theorem for $\mathbb{R}$-divisors ($k\neq\overline{\mathbb F}_p$)}

In this section, we establish the abundance theorem 
in the case where $\Delta$ is an $\mathbb{R}$-boundary. 
We fix the following notations. 

\begin{nota}
Let $X$ be a projective normal \Q-factorial surface. 
We fix prime divisors $\Lambda_1,\cdots, \Lambda_s$ and 
a positive integer $\lambda\in\mathbb Z_{>0}$. 
Let 
$$\mathcal{L}:=\{B\in \sum\mathbb{R}\Lambda_i\,|\,0\leq B\leq \lambda\sum \Lambda_i\}.$$
Let 
$$M(X, \mathcal{L}):=\max (\{3\}\cup \{-(K_X+\lambda\Lambda_{i})\cdot \Lambda_{i}\})$$ 
where $i$ ranges over $1\leq i\leq s$. 
\end{nota}

Lemma~\ref{extlengthlem} and Proposition~\ref{ratpolytope} 
play key roles in this section.
The arguments are extracted from \cite[Section 3]{Birkar}. 

\begin{lem}\label{extlengthlem}
If $R$ is an extremal ray of $\overline{NE}(X)$ spanned by a curve, 
then there exists a curve $C$ such that $R=\mathbb{R}_{\geq 0}[C]$ and 
$-(K_X+B)\cdot C \leq M(X, \mathcal{L})$ for 
all $B\in \mathcal{L}$. 
\end{lem}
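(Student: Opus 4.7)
The plan is to split into two cases depending on whether the extremal ray $R$ is spanned by a prime component of $\Delta$ with negative self-intersection, and to handle the remaining case via the cone theorem applied with trivial boundary.

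First, suppose $R = \mathbb{R}_{\geq 0}[B_\mu]$ for some prime component $B_\mu$ of $\Delta$ with $B_\mu^2 < 0$. I would take $C := B_\mu$ and verify the bound by a direct intersection computation: writing $B = \sum_i c_i B_i \in \mathcal{L}$ with $0 \leq c_i \leq \lceil b_i \rceil$, the off-diagonal terms $c_i B_i \cdot B_\mu$ with $i \neq \mu$ are non-negative (distinct prime divisors on a normal surface intersect non-negatively in Mumford's sense), while $c_\mu B_\mu^2 \geq \lceil b_\mu \rceil B_\mu^2$ follows from $B_\mu^2 < 0$ together with $c_\mu \leq \lceil b_\mu \rceil$. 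This gives $B \cdot B_\mu \geq B_\mu' \cdot B_\mu$, and hence $-(K_X + B) \cdot B_\mu \leq -(K_X + B_\mu') \cdot B_\mu \leq M(X, \mathcal{L})$ directly from the definition of $M(X, \mathcal{L})$.

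In the complementary case no curve in $R$ equals any $B_\mu$ with $B_\mu^2 < 0$. I would apply Theorem~\ref{ct} to the pair $(X, 0)$: since $0$ is an $\mathbb{R}$-boundary, Proposition~\ref{bblength3} gives $L(X, 0) = 3$, so the cone decomposition reads
\[\overline{NE}(X) = \overline{NE}(X)_{K_X \geq 0} + \sum_j \mathbb{R}_{\geq 0}[C_j]\]
with each $C_j$ a rational curve satisfying $-K_X \cdot C_j \leq 3$. By extremality of $R$, either $R \subset \overline{NE}(X)_{K_X \geq 0}$, in which case I take $C$ to be any curve spanning $R$ (which exists by hypothesis) and note $-K_X \cdot C \leq 0$, or $R = \mathbb{R}_{\geq 0}[C_j]$ for some $j$, in which case I set $C := C_j$. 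In either sub-case the case assumption forces $C$ not to be any $B_\mu$ with $B_\mu^2 < 0$, so $C \cdot B_i \geq 0$ for every $i$ and therefore $B \cdot C \geq 0$ for every $B \in \mathcal{L}$, yielding $-(K_X + B) \cdot C \leq -K_X \cdot C \leq 3 \leq M(X, \mathcal{L})$.

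The main obstacle, though elementary, is the intersection computation in the first case: it is precisely that computation which reveals how the definition $B_\mu' = \lceil b_\mu \rceil B_\mu$ has been tailored to uniformly dominate $B \cdot B_\mu$ across the polytope $\mathcal{L}$. Everything else reduces to the cone theorem with vanishing boundary together with the standard non-negativity of intersections between distinct prime divisors on a normal surface.
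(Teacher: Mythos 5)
Your proof is correct, and it takes a genuinely different route from the paper's. The paper fixes an ample $H$ and chooses $C$ to be a curve spanning $R$ with $H\cdot C$ minimal; then for each $B \in \mathcal{L}$ (with $(K_X+B)\cdot C < 0$) it applies Theorem~\ref{ct} to the pair $(X,B)$, obtains a curve $C'$ with $R = \mathbb{R}_{\geq 0}[C']$ and $-(K_X+B)\cdot C' \leq L(X,B) \leq M(X,\mathcal{L})$, and transfers the bound from $C'$ to $C$ via the ratio identity
\[
\frac{-(K_X+B)\cdot C}{H\cdot C} = \frac{-(K_X+B)\cdot C'}{H\cdot C'},
\]
using $H\cdot C \leq H\cdot C'$. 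You instead dispense with the normalization trick entirely by casing on whether $R$ is spanned by a negative-square component $B_\mu$ of $\Delta$. In the first case you take $C = B_\mu$ and verify the bound by the intersection computation $B\cdot B_\mu \geq B_\mu' \cdot B_\mu$ directly; in the second case you apply Theorem~\ref{ct} only once, to $(X,0)$, and use $B\cdot C \geq 0$ (since $C$ is not a negative-square component of $\Delta$, all its intersections with the $B_i$ are non-negative) to pass from $-K_X\cdot C \leq 3$ to $-(K_X+B)\cdot C \leq 3$. Both arguments ultimately rest on the same pointwise intersection estimate $-(K_X+B)\cdot B_\mu \leq -(K_X+B_\mu')\cdot B_\mu$, which the paper packages into the claim $L(X,B) \leq M(X,\mathcal{L})$; your version makes this estimate explicit and isolates it as the entire content of case one. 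Your route is arguably more transparent about where the ceiling $B_\mu' = \lceil b_\mu \rceil B_\mu$ enters, whereas the paper's ray-length normalization is a reusable device (it recurs in relative settings) that your argument avoids but also does not illuminate.
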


\begin{proof}
Let $H$ be an ample line bundle on $X$. 
Take a curve $C$ with 
$$R=\mathbb{R}_{\geq 0}[C]\,\,\,{\rm and}\,\,\,H\cdot C=\min\{H\cdot D\}$$
where $D$ ranges over curves generating $R$. 
We want to prove that $C$ satisfies the desired condition. 
Set $B\in \mathcal{L}$. 
If $-(K_X+B)\cdot C \leq 0$, then there is nothing to prove. 
Thus we may assume that $-(K_X+B)\cdot C > 0$. 
This means that $R$ is a $(K_X+B)$-negative extremal ray. 
Then, by Theorem~\ref{ct} and Remark~\ref{ray3}, 
there exists a curve $C'$ such that 
$R=\mathbb{R}_{\geq 0}[C']$ and 
$$-(K_X+B)\cdot C' \leq L(X, B)=\max(\{3\}\cup \{-(K_X+B)\cdot \Lambda_{\mu}\})$$ 
where $\Lambda_{\mu}$ ranges over the prime components $\Lambda_{\mu}$ of $B$ with $\Lambda_{\mu}^2<0$. 
Here, by the definition of $M(X, \mathcal{L})$, we have 
$L(X, B)\leq M(X, \mathcal{L}).$ 
Thus we obtain 
$$-(K_X+B)\cdot C' \leq M(X, \mathcal{L}).$$
By 
$$\frac{-(K_X+B)\cdot C}{H\cdot C}=\frac{-(K_X+B)\cdot C'}{H\cdot C'},$$
we have 
\begin{eqnarray*}
{-(K_X+B)\cdot C}&=&(-(K_X+B)\cdot C')\frac{H\cdot C}{H\cdot C'}\\
&\leq& -(K_X+B)\cdot C'\\
&\leq& M(X, \mathcal{L}).
\end{eqnarray*}
This completes the proof.
\end{proof}

\begin{dfn}
For an $\mathbb{R}$-divisor $B\in \sum_{i=1}^s\mathbb R\Lambda_i$ and 
for its prime decomposition $B=\sum r_i\Lambda_i$, 
we define 
$$||B||:=(\sum |r_i|^2)^{\frac{1}{2}}$$
where $|r_i|$ is the absolute value of $r_i$. 
\end{dfn}

\begin{prop}\label{ratpolytope}
Let $\Gamma$ be a $\mathbb{Q}$-divisor on $X$. 
Let $M$ be a positive real number. 
\begin{enumerate}
\item{Let $\Delta\in\mathcal L$. 
Then, there exists a positive real number $\epsilon$ 
depending on $X$, $\mathcal L$, $\Delta$, $\Gamma$ and $M$, 
which satisfy the following property. 
Let $C$ be a curve on $X$ such that 
$-(K_X+\Gamma+B)\cdot C \leq M$ for 
all $B\in \mathcal{L}$. 
If $(K_X+\Gamma+\Delta)\cdot C>0$, then 
$(K_X+\Gamma+\Delta)\cdot C>\epsilon$.}
\item{Let $\Delta\in\mathcal L$. 
Then, there exists a positive real number $\delta$, 
depending on $X$, $\mathcal L$, $\Delta$, $\Gamma$ and $M$, 
which satisfy the following property. 
If a curve $C'$ in $X$ and an $\mathbb{R}$-divisor 
$B_0\in \mathcal{L}$ satisfy $||B_0-\Delta||<\delta$, 
$(K_X+\Gamma+B_0)\cdot C'\leq 0$ and 
$-(K_X+\Gamma+B)\cdot C' \leq M$ for 
all $B\in \mathcal{L}$, then 
$(K_X+\Gamma+\Delta)\cdot C'\leq 0$.} 
\item{Let $\{C_t\}_{t\in T}$ be 
a set of curves 
such that $-(K_X+\Gamma+B)\cdot C_t\leq M$ 
for all $B\in \mathcal{L}$. 
Then, 
the set 
$$\mathcal{N}_T(\Gamma):=\{B\in\mathcal{L}\,|\,
(K_X+\Gamma+B)\cdot C_t\geq 0 \,\,for\,\, any \,\,t \in T\}$$
is a rational polytope.}
\end{enumerate}
\end{prop}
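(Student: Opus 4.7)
The plan is to fix a positive integer $N$ so that $N(K_X+\Gamma)$ and each $NB_i$ are Cartier; then $N(K_X+\Gamma)\cdot C$ and $NB_i\cdot C$ are integers for every curve $C$. Viewing $\mathcal{L}$ as the box $\prod_i[0,\lceil b_i\rceil]B_i$ with vertices $v_\epsilon = \sum_i \epsilon_i B_i'$ ($\epsilon\in\{0,1\}^n$), the length hypothesis $-(K_X+\Gamma+B)\cdot C\leq M$ for all $B\in\mathcal{L}$ is equivalent to $(K_X+\Gamma+v_\epsilon)\cdot C\geq -M$ at every vertex. Expanding $\Delta = \sum_\epsilon\mu_\epsilon v_\epsilon$ in barycentric coordinates $\mu_\epsilon=\prod_i t_i^{\epsilon_i}(1-t_i)^{1-\epsilon_i}$ with $t_i=b_i/\lceil b_i\rceil$ yields the identity $(K_X+\Gamma+\Delta)\cdot C = \sum_\epsilon\mu_\epsilon(K_X+\Gamma+v_\epsilon)\cdot C$, which will be the main computational tool.

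For (1), I assume $0<(K_X+\Gamma+\Delta)\cdot C\leq 1$. Isolating $\mu_\epsilon(K_X+\Gamma+v_\epsilon)\cdot C$ in the convex sum and using the vertex lower bound $-M$ at the remaining vertices gives the upper bound $(K_X+\Gamma+v_\epsilon)\cdot C\leq (1+M)/\mu_\epsilon$ for each $\epsilon$ with $\mu_\epsilon>0$. Thus each integer $N(K_X+\Gamma+v_\epsilon)\cdot C$ lies in a fixed finite set, so $(K_X+\Gamma+\Delta)\cdot C$ takes only finitely many values, and I take $\epsilon$ to be the smallest positive such value, capped at $1$. For (2), I combine (1) with a uniform bound on $\sum_i|B_i\cdot C'|$: the hypothesis $(K_X+\Gamma+B_0)\cdot C'\leq 0$ together with $(K_X+\Gamma)\cdot C'\geq -M$ gives $B_0\cdot C'\leq M$, and for $\delta<\min_i b_i$ this bounds each $B_i\cdot C'$ from above (using $B_i\cdot C'\geq 0$ unless $C'$ is one of the finitely many prime components $B_i$, in which case $(C')^2$ is a specific value). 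Then $|(\Delta-B_0)\cdot C'|\leq \|\Delta-B_0\|\cdot U_0<\epsilon$ for $\delta$ sufficiently small, so $(K_X+\Gamma+\Delta)\cdot C'<\epsilon$; by (1) this forces $(K_X+\Gamma+\Delta)\cdot C'\leq 0$.

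For (3), write $\mathcal{N}_T(\Gamma) = \mathcal{L}\cap\bigcap_t H_t$ with rational half-spaces $H_t = \{B:(K_X+\Gamma+B)\cdot C_t\geq 0\}$. Applying (1) uniformly over rational test points of $\mathcal{L}$ shows that the affine functionals $\ell_{C_t}(B) = (K_X+\Gamma+B)\cdot C_t$ realizable by length-bounded curves form a discrete set, and (2) further prevents accumulation near boundary hyperplanes; hence only finitely many of the hyperplanes $\{\ell_{C_t}=0\}$ cut $\mathcal{L}$ non-trivially, reducing $\mathcal{N}_T(\Gamma)$ to a finite intersection of rational half-spaces---that is, a rational polytope. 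The main obstacle is part (1): the length hypothesis provides only a one-sided lower bound at each vertex, and the required upper bound must come from the strict positivity of the barycentric weights $\mu_\epsilon$ at all vertices contributing to $\Delta$; care is needed when some $b_i$ is an integer, since then certain $\mu_\epsilon$ vanish and one must first restrict to the minimal face of $\mathcal{L}$ containing $\Delta$ before the counting argument applies.
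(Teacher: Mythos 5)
Parts (1) and (2) of your proof are sound. Your part (1) is essentially the paper's argument: both write $\Delta$ as a convex combination of the vertices of $\mathcal{L}$, isolate one term, use the length bound $-M$ at the remaining vertices, and conclude finiteness from $\mathbb{Q}$-Cartierness (the paper uses abstract vertices $V_1,\dots,V_r$ with $\sum v_j\le r-1$ while you use the explicit product barycentric coordinates of a box, which is immaterial). Your closing worry about integral $b_i$ is unnecessary: the argument never requires all $\mu_\epsilon>0$, only that you bound those vertices with $\mu_\epsilon>0$, which your derivation already does. Your part (2) takes a genuinely different route from the paper. The paper argues by contradiction via a limiting sequence $(C_m, B_m)$ with $\|B_m-\Delta\|\to 0$ and checks that the signs become inconsistent for $m\gg 0$; you instead derive a uniform bound $\sum_i|B_i\cdot C'|\le U_0$ directly from $B_0\cdot C'\le M$, $(B_0)_i\ge b_i-\delta>0$, and $B_i\cdot C'\ge 0$ (off the finitely many components), then estimate $|(\Delta-B_0)\cdot C'|<\epsilon$ and invoke (1). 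This is cleaner and more constructive; both are correct.

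Part (3) has a genuine gap. The pivotal claim---that only finitely many of the hyperplanes $\{\ell_{C_t}=0\}$ can cut $\mathcal{L}$ non-trivially---is false, and discreteness of the functional values plus part (2) do not imply it. The functionals $\ell_{C_t}$ do form a discrete set (their values at the vertices lie in $\tfrac{1}{N}\mathbb{Z}$), and they are bounded below on $\mathcal{L}$ by $-M$, but there is no upper bound, so the set is discrete yet unbounded and therefore infinite. Concretely, take $\mathcal{L}=[0,1]^2$ and functionals $\ell_n(s_1,s_2)=ns_1-s_2$ for $n\ge 1$: each satisfies $\ell_n\ge -1$ on $\mathcal{L}$, each hyperplane $\{ns_1=s_2\}$ passes through the interior of $\mathcal{L}$, and they are pairwise distinct. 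The intersection $\bigcap_n\{\ell_n\ge 0\}\cap\mathcal{L}=\{s_1\ge s_2\}$ still happens to be a rational polytope, but not for the reason you give, and the inference ``finitely many cutting hyperplanes, hence a finite intersection of rational half-spaces'' does not hold. What is actually true, and what (1) and (2) buy you, is a \emph{local} finiteness near each point of the compact set $\mathcal{N}_T(\Gamma)$. The paper converts this into a global statement by covering $\mathcal{N}_T(\Gamma)$ with finitely many balls $\mathcal{B}_i$ centered at points $\Delta_i\in\mathcal{N}_T(\Gamma)$ (using compactness and (2)), reducing to the case where there exists $\Delta_0$ with $(K_X+\Gamma+\Delta_0)\cdot C_t=0$ for all $t$, and then running an induction on $\dim\mathcal{L}$ via its codimension-one faces, finally expressing $\mathcal{N}_T(\Gamma)$ as the convex hull of $\Delta_0$ and the rational polytopes on those faces. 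That local-to-global machinery is the content of (3) and is what your sketch lacks; without it, the argument does not establish that $\mathcal{N}_T(\Gamma)$ has finitely many rational facets.
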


\begin{proof}
Note that, for every $B\in \mathcal{L}$, 
we obtain the irreducible decomposition  
$$B=\sum_{i=1}^s l_i\Lambda_i$$
for some real numbers $l_i$ with $0\leq l_i \leq \lambda$. 

(1)
We can write $\Delta:=\sum l_i\Lambda_i$ as above. 
Then we have 
$$(K_X+\Gamma+\Delta)\cdot C=\sum l_i(K_X+\Gamma+\Lambda_i)\cdot C.$$
Suppose $(K_X+\Gamma+\Delta)\cdot C<1$. 
Then we have 
\begin{eqnarray*}
l_i(K_X+\Gamma+\Lambda_i)\cdot C 
&<&1-\sum_{j\neq i}l_j(K_X+\Gamma+\Lambda_j)\cdot C\\
&\leq&1+\sum_{j\neq i}l_jM\\
&\leq&1+(s-1)\lambda M.
\end{eqnarray*}
Thus, if $l_i\neq 0$, then we obtain 
$$-M\leq (K_X+\Gamma+\Lambda_i)\cdot C 
<\frac{1}{l_i}(1+(s-1)\lambda M).$$
Since $X$ is \Q-factorial, the \Q-divisor $K_X+\Gamma+\Lambda_i$ 
is \Q-Cartier. 
This means that there are only finitely many possibilities 
for the number $(K_X+\Gamma+\Lambda_i)\cdot C$. 

Thus, if $(K_X+\Gamma+\Delta)\cdot C<1$, 
then there are only finitely many possibilities 
for the number 
$(K_X+\Gamma+\Delta)\cdot C=\sum l_i(K_X+\Gamma+\Lambda_i)\cdot C$. 
Therefore we can find the desired number $\epsilon$.

(2)
Suppose that the statement is not true. 
Then, for an arbitrary positive real number $\delta$, 
there exist a curve $C'$ and an $\mathbb{R}$-divisor 
$B_0\in \mathcal{L}$ 
which satisfy $||B_0-\Delta||<\delta$, 
$(K_X+\Gamma+B_0)\cdot C'\leq 0$, 
$-(K_X+\Gamma+B)\cdot C' \leq M$ for 
all $B\in \mathcal{L}$ and
$(K_X+\Gamma+\Delta)\cdot C'> 0.$ 
Set $\delta:=1/m$ for any $m\in \mathbb{Z}_{>0}.$ 
Then we obtain an infinite sequence of 
curves $C_m$ and $B_m\in \mathcal{L}$ which satisfy 
$$(K_X+\Gamma+B_m)\cdot C_m\leq 0,$$
$$-(K_X+\Gamma+B)\cdot C_m \leq M\,\,\,\,
{\rm for\,\,all\,\,} B\in \mathcal{L} {\rm \,\,and}
\,\,\,\,$$
$$(K_X+\Gamma+\Delta)\cdot C_m>0$$
and $||B_m-\Delta||$ converges to zero. 
Let $\Delta=\sum l_i\Lambda_i$ and $B_m=\sum l_{i,m}\Lambda_i$ as above. 
Then we see $l_i=\lim l_{i,m}$. 
Here, for each $j$, the set 
$\{(K_X+\Gamma+\Lambda_j)\cdot C_m\}_{m}$ has a lower bound $-M$. 

Let us show that, 
if $l_j\neq 0$, then the set $\{(K_X+\Gamma+\Lambda_j)\cdot C_m\}_{m}$ has an upper bound. 
Since $0<l_j=\lim l_{j,m}$, we may assume 
$l_{j,m}>0$ for all $m$ by replacing the sequence with 
a suitable sub-sequence. 
By the inequality 
$$0 \geq (K_X+\Gamma+B_m)\cdot C_m
=\sum l_{i,m}(K_X+\Gamma+\Lambda_i)\cdot C_m,$$
we have 
\begin{eqnarray*}
(K_X+\Gamma+\Lambda_j)\cdot C_m&\leq& \frac{1}{l_{j,m}}
(-\sum_{i\neq j} l_{i,m}(K_X+\Gamma+\Lambda_i)\cdot C_m)\\
&\leq&\frac{1}{l_{j,m}}
(\sum_{i\neq j} l_{i,m}M)\\
&\leq&\frac{1}{l_{j,m}}
(s-1)\lambda M
\end{eqnarray*}
Since the set $\{1/l_{j,m}\}_m$ has an upper bound, 
the set $\{(K_X+\Gamma+\Lambda_j)\cdot C_m\}_m$ also has an upper bound. 
This is what we want to show. 

Then, for $m\gg 0$, we have 
\begin{eqnarray*}
&&(K_X+\Gamma+B_m)\cdot C_m\\
&=&
(K_X+\Gamma+\Delta)\cdot C_m+
\sum (l_{i,m}-l_i)(K_X+\Gamma+\Lambda_i)\cdot C_m\\
&>&\epsilon+\sum (l_{i,m}-l_i)(K_X+\Gamma+\Lambda_i)\cdot C_m\\
&=&\epsilon+\sum_{l_i\neq 0} (l_{i,m}-l_i)(K_X+\Gamma+\Lambda_i)\cdot C_m
+\sum_{l_i=0} l_{i,m}(K_X+\Gamma+\Lambda_i)\cdot C_m\\
&\geq&\epsilon
+\sum_{l_i\neq 0} (l_{i,m}-l_i)(K_X+\Gamma+\Lambda_i)\cdot C_m
+\sum_{l_i=0} l_{i,m}(-M)\\
&>&0.
\end{eqnarray*}
The first inequality follows from (1). 
The third inequality follows when $m\gg 0$. 
Note that, if $l_i\neq 0$, then 
the set $\{(K_X+\Gamma+\Lambda_i)\cdot C_m\}_i$ is bounded from the both sides. 
This is a contradiction. 

(3)
We show the assertion by the induction on 
$\dim \mathcal L$. 
If $\dim \mathcal L=0$, then there is nothing to show. 
Thus, we assume $\dim \mathcal L>0$. 
We may assume that for each $t \in T$ 
there exists $B\in \mathcal{L}$ with $(K_X+\Gamma+B)\cdot C_t<0$.

We see that $\mathcal{N}_T(\Gamma)$ is a compact set. 
Then, by (2) and by the compactness of $\mathcal{N}_T(\Gamma)$, 
there exist \R-divisors $\Delta_1, \cdots, \Delta_n \in \mathcal{N}_T(\Gamma)$ 
and positive real numbers $\delta_1>0, \cdots, \delta_n>0$ such that
$\mathcal{N}_T(\Gamma)$ is covered by 
$\mathcal B_i:=\{B\in \mathcal L\,|\,||B-\Delta_i||<\delta_i\}$ 
and that if $B\in \mathcal B_i$ 
with $(K_X+\Gamma+B)\cdot C_t<0$ for some $t\in T$, 
then $(K_X+\Gamma+\Delta_i)\cdot C_t=0$. 
Set 
$$T_i:=\{t\in T\,|\,(K_X+\Gamma+B)\cdot C_t<0 
\,\,{\rm for\,\, some}\,\, B\in\mathcal B_i\}. $$
Then, for every $t\in T_i$, we have 
$(K_X+\Gamma+\Delta_i)\cdot C_t=0$. 
In particular, $\Delta_i$ is a \Q-divisor. 

Here, we prove 
$$\mathcal N_T(\Gamma)=\bigcap\mathcal N_{T_i}(\Gamma).$$
The inclusion $\mathcal N_T(\Gamma)\subset \bigcap\mathcal N_{T_i}(\Gamma)$ 
is obvious. 
Thus we want to prove $\mathcal N_T(\Gamma)\supset \bigcap\mathcal N_{T_i}(\Gamma)$. 
Let $B \not\in \mathcal N_T(\Gamma)$. 
Since $\mathcal{N}_T(\Gamma)$ is compact, 
we can find an element $B'\in \mathcal N_T(\Gamma)$ with 
$$||B'-B||=\min \{||B^*-B||\,\,|\,B^*\in \mathcal N_T(\Gamma)\}.$$
Here we have $B'\in \mathcal B_i$ for some $i$. 
Since $\mathcal B_i\cap \overline{BB'}$ is 
an open subset of $\overline{BB'}$ 
where $\overline{BB'}$ is the line segment, 
we have an element $B''$ 
such that $B''\in \mathcal B_i\cap \overline{BB'}$, 
$B''\neq B$ and $B''\neq B'$. 
This means that there is a real number $\beta$ with $0<\beta<1$ such that 
$$\beta B+(1-\beta)B'=B''.$$ 
We obtain 
$$\beta (K_X+\Gamma+B)+(1-\beta)(K_X+\Gamma+B')=K_X+\Gamma+B''.$$
Moreover, we see that $B''\not\in \mathcal N_T(\Gamma)$. 
Here, since $B''\in \mathcal B_i \setminus \mathcal N_T(\Gamma)$, 
we have $(K_X+\Gamma+B'')\cdot C_t<0$ for some $t\in T_i$. 
Thus we obtain the following inequality 
\begin{eqnarray*}
&&\beta(K_X+\Gamma+B)\cdot C_t\\
&=&
(K_X+\Gamma+B'')\cdot C_t-(1-\beta)(K_X+\Gamma+B')\cdot C_t\\
&<&-(1-\beta)(K_X+\Gamma+B')\cdot C_t\\
&\leq&0.
\end{eqnarray*} 
Therefore we have $(K_X+\Gamma+B)\cdot C_t<0$. 
This means $B\not\in \mathcal N_{T_i}(\Gamma)$. 

Therefore it is enough to prove that each $\mathcal N_{T_i}(\Gamma)$ 
is a rational polytope. 
By replacing $T$ with $T_i$, 
we may assume that there exists 
a \Q-divisor $\Delta_{0}\in \mathcal N_T(\Gamma)$ 
such that $(K_X+\Gamma+\Delta_{0})\cdot C_t=0$ for every $t\in T$. 
Let $\mathcal L^1, \cdots ,\mathcal L^u$ be 
the proper faces of $\mathcal L$ whose codimensions are one.  
Note that, for every $1\leq u'\leq u$, 
there exists a positive integer $i'$ such that 
\begin{equation}
\mathcal L^{u'}=\{B\in \sum_{i\neq i'}\mathbb{R}\Lambda_i\,
|\,0\leq B\leq \lambda\sum_{i\neq i'}\Lambda_i\}
\tag{I}\label{I}
\end{equation}
or that 
\begin{equation}
\mathcal L^{u'}=\lambda\Lambda_{i'}+\{B\in \sum_{i\neq i'}\mathbb{R}\Lambda_i\,
|\,0\leq B\leq \lambda\sum_{i\neq i'}\Lambda_i\}.
\tag{II}\label{II}
\end{equation}

Let us prove that each 
$\mathcal N_T^{u'}(\Gamma):=
\mathcal N_T(\Gamma)\cap \mathcal L^{u'}$ is 
a rational polytope. 
If $\mathcal L^{u'}$ satisfies the above equation (\ref{I}), 
then we see that 
$$\mathcal{N}_T^{u'}(\Gamma)=\{B\in\mathcal{L}^{u'}\,|\,
(K_X+\Gamma+B)\cdot C_t\geq 0 \,\,
{\rm for}\,\, {\rm any} \,\,t \in T\}.$$
Hence $\mathcal{N}_T^{u'}(\Gamma)$ is a rational polytope 
by the induction hypothesis. 
Thus assume that 
$\mathcal L^{u'}$ satisfies the above equation (\ref{II}). 
Set 
$\mathcal L^{u'}_0:=\{B\in \sum_{i\neq i'}\mathbb{R}\Lambda_i|
0\leq B\leq \lambda\sum_{i\neq i'}\Lambda_i\}.$
The equation (\ref{II}) implies 
$$\mathcal L^{u'}=\lambda\Lambda_{i'}+\mathcal L^{u'}_0.$$
Then we see that 
\begin{eqnarray*}
&&\mathcal{N}_T^{u'}(\Gamma)\\
&=&\{B\in\mathcal{L}^{u'}\,|\,
(K_X+\Gamma+B)\cdot C_t\geq 0 \,\,
{\rm for}\,\, {\rm any} \,\,t \in T\}\\
&=&\lambda\Lambda_{i'}+\{B_0\in\mathcal{L}^{u'}_0\,|\,
(K_X+\Gamma+\lambda\Lambda_{i'}+B_0)\cdot C_t\geq 0 \,\,
{\rm for}\,\, {\rm any} \,\,t \in T\}. 
\end{eqnarray*}
For all $B_0\in\mathcal{L}^{u'}_0$, 
we have the following inequality
$$-(K_X+\Gamma+\lambda\Lambda_{i'}+B_0)\cdot C_t\leq M.$$
Thus the set 
\begin{eqnarray*}
&&\mathcal{N}_T(\mathcal {L}^{u'}_0, \Gamma+\lambda\Lambda_{i'})\\
&:=&
\{B_0\in\mathcal{L}^{u'}_0\,|\,
(K_X+\Gamma+\lambda\Lambda_{i'}+B_0)\cdot C_t\geq 0 \,\,
{\rm for}\,\, {\rm any} \,\,t \in T\}
\end{eqnarray*}
is a rational polytope by the induction hypothesis. 
Therefore $\mathcal{N}_T^{u'}(\Gamma)$ 
is also a rational polytope and 
this is what we want to show. 

Here, take an arbitrary element 
$B\in \mathcal N_T(\Gamma)$ with $B\neq \Delta_{0}$. 
Then we can find $B'\in \mathcal L^{u'}$ for some $1\leq u'\leq u$ such that 
$B$ is on the line segment defined by $\Delta_0$ and $B'$. 
Since $(K_X+\Gamma+\Delta_0)\cdot C_t=0$ for all $t\in T$, 
we have $B'\in \mathcal N_T^{u'}(\Gamma)$. 
Thus we see that $\mathcal N_T(\Gamma)$ is the convex hull of 
$\Delta_0$ and all the $\mathcal N_T^{u'}(\Gamma)$. 
Hence $\mathcal N_T(\Gamma)$ is a rational polytope. 
\end{proof}

\begin{cor}\label{ratpolytopecor}
Let $\{R_t\}_{t\in T}$ be a family of extremal rays of 
$\overline{NE}(X)$ spanned by curves. 
Then the set 
$$\mathcal{N}_T:=\{B\in\mathcal{L}\,|\,
(K_X+B)\cdot R_t\geq 0 \,\,for\,\, any \,\,t \in T\}$$
is a rational polytope.
\end{cor}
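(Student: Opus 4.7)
The plan is to derive this corollary directly from Proposition~\ref{ratpolytope}(3) by producing, for each extremal ray $R_t$, a \emph{single} curve $C_t$ generating $R_t$ whose $(K_X+B)$-degree is uniformly bounded below as $B$ ranges over the polytope $\mathcal{L}$. Once such representatives are chosen, the condition defining $\mathcal{N}_T$ can be rewritten as a condition on the curves $C_t$, and the proposition applies verbatim with $\Gamma = 0$.

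Concretely, I would first invoke Lemma~\ref{extlengthlem}: for each $t \in T$, since $R_t$ is an extremal ray spanned by a curve, the lemma supplies a curve $C_t$ with $R_t = \mathbb{R}_{\geq 0}[C_t]$ and
\[
-(K_X+B)\cdot C_t \leq M(X,\mathcal{L}) \qquad \text{for every } B \in \mathcal{L}.
\]
This gives a family $\{C_t\}_{t \in T}$ satisfying the hypothesis of Proposition~\ref{ratpolytope}(3) with the choice $\Gamma = 0$ and $M = M(X,\mathcal{L})$.

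Next, I would simply observe that, because $R_t = \mathbb{R}_{\geq 0}[C_t]$, the inequality $(K_X+B)\cdot R_t \geq 0$ is equivalent to $(K_X+B)\cdot C_t \geq 0$, so
\[
\mathcal{N}_T \;=\; \{\, B \in \mathcal{L} \,\mid\, (K_X+B)\cdot C_t \geq 0 \text{ for all } t \in T \,\} \;=\; \mathcal{N}_T(0),
\]
in the notation of Proposition~\ref{ratpolytope}(3). That proposition now concludes that $\mathcal{N}_T$ is a rational polytope, finishing the proof.

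There is no real obstacle here: all the difficulty (the length bound on extremal rays from Bend-and-Break, and the rationality argument using faces of $\mathcal{L}$ and induction on dimension) has already been absorbed into Lemma~\ref{extlengthlem} and Proposition~\ref{ratpolytope}(3). The corollary is essentially a clean repackaging that replaces ``extremal rays'' with ``curves with bounded $(K_X+B)$-degree,'' which is precisely the format the proposition requires.
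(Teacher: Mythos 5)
Your proof is correct and is essentially identical to the paper's: both invoke Lemma~\ref{extlengthlem} to obtain curves $C_t$ generating the rays $R_t$ with $-(K_X+B)\cdot C_t \leq M(X,\mathcal{L})$ uniformly in $B \in \mathcal{L}$, and then apply Proposition~\ref{ratpolytope}(3) with $\Gamma = 0$ and $M = M(X,\mathcal{L})$.
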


\begin{proof}
By Lemma~\ref{extlengthlem}, 
for every $t\in T$, there exists a curve $C_t$ 
such that $R_t=\mathbb R_{\geq 0}[C_t]$ and 
$-(K_X+B)\cdot C_t\leq M(X, \mathcal L)$ for all $B\in \mathcal L$. 
Let $\Gamma:=0$ and $M:=M(X, \mathcal L)$. 
Then, we can apply Proposition~\ref{ratpolytope}. 
Therefore, the set $\mathcal{N}_T=\mathcal{N}_T(0)$
is a rational polytope.
\end{proof}

Now, we prove the abundance theorem with $\mathbb{R}$-coefficients. 

\begin{thm}\label{abundance}
Let $X$ be a projective normal \Q-factorial surface over $k$ and 
let $\Delta$ be an $\mathbb{R}$-boundary. 
If $k$ is not the algebraic closure of a finite field and 
$K_X+\Delta$ is nef, then $K_X+\Delta$ is semi-ample. 
\end{thm}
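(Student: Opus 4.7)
The plan is to reduce the $\mathbb{R}$-boundary case to the $\mathbb{Q}$-boundary case already handled in Theorem~\ref{qabundance}, using the rational polyhedral structure supplied by Corollary~\ref{ratpolytopecor}.

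First, I would set up the polytope. Let $\Delta = \sum b_j B_j$ be the prime decomposition and form $\mathcal{L}$ as in the notation preceding Lemma~\ref{extlengthlem}. Since $\Delta$ is an $\mathbb{R}$-boundary, $\lceil b_j\rceil = 1$ for every $j$ with $b_j>0$, so $B_j' = B_j$ and $\Delta \in \mathcal{L}$; moreover every element of $\mathcal{L}$ is an $\mathbb{R}$-boundary. Let $T$ be the set of all extremal rays of $\overline{NE}(X)$ that are spanned by curves. By Corollary~\ref{ratpolytopecor}, the set
$$\mathcal{N}_T = \{B \in \mathcal{L}\,|\, (K_X+B)\cdot R_t \geq 0 \text{ for every } t \in T\}$$
is a rational polytope. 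Since $K_X+\Delta$ is nef and every extremal ray of $\overline{NE}(X)$ that is not spanned by a curve is in any event $(K_X+B)$-nonnegative for all $B \in \mathcal{L}$ by the cone theorem (Theorem~\ref{ct}) together with Remark~\ref{ray3}, nefness of $K_X+B$ is detected on $T$; in particular $\Delta \in \mathcal{N}_T$.

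Next, because $\mathcal{N}_T$ is a rational polytope containing $\Delta$, I can write
$$\Delta = \sum_{i=1}^{N} r_i \Delta_i, \qquad r_i > 0, \quad \sum_{i=1}^N r_i = 1,$$
where each $\Delta_i$ is a rational point of $\mathcal{N}_T$; hence each $\Delta_i$ is a $\mathbb{Q}$-boundary on $X$ and $K_X+\Delta_i$ is nef. Since $X$ is $\mathbb{Q}$-factorial and $k \neq \overline{\mathbb{F}}_p$, Theorem~\ref{qabundance} applies to each pair $(X, \Delta_i)$ and yields that $K_X+\Delta_i$ is a semi-ample $\mathbb{Q}$-Cartier divisor. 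Choosing a sufficiently divisible positive integer $m_i$ with $m_i(K_X+\Delta_i)$ basepoint free Cartier, I can then write
$$K_X + \Delta = \sum_{i=1}^N r_i (K_X + \Delta_i) = \sum_{i=1}^N \frac{r_i}{m_i} \bigl( m_i (K_X+\Delta_i)\bigr),$$
which is a nonnegative $\mathbb{R}$-linear combination of semi-ample Cartier divisors. By Definition~2.2, this exhibits $K_X+\Delta$ as a semi-ample $\mathbb{R}$-Cartier divisor.

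The main obstacle is really packaged away in the two ingredients already established: the rationality of $\mathcal{N}_T$ (Corollary~\ref{ratpolytopecor}, ultimately resting on the boundedness of extremal lengths in Lemma~\ref{extlengthlem}) and the $\mathbb{Q}$-boundary abundance (Theorem~\ref{qabundance}). Once these are in hand, the $\mathbb{R}$-case follows by the standard Shokurov-type convex combination argument, so the only care needed is to verify that $\Delta$ really lies in the polytope $\mathcal{N}_T$ and that the decomposition produces genuine $\mathbb{Q}$-boundaries with $K_X+\Delta_i$ nef, both of which are automatic from the setup above.
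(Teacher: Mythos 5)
Your proof is correct and follows essentially the same route as the paper: identify $\mathcal N_T$ from Corollary~\ref{ratpolytopecor} as a rational polytope, observe $\Delta\in\mathcal N_T$ by nefness, decompose $\Delta$ as a convex rational combination of $\mathbb{Q}$-boundaries $\Delta_i$ with $K_X+\Delta_i$ nef, and invoke Theorem~\ref{qabundance}. The only extra detail you supply that the paper leaves implicit is the verification (via Theorem~\ref{ct} and Remark~\ref{ray3}) that nefness of $K_X+B$ is detected on the rays in $T$, which is exactly what the paper compresses into the one-line equality for $\mathcal N_T$.
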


\begin{proof}
Let $\{R_t\}_{t\in T}$ be the set of all the extremal rays of 
$\overline{NE}(X)$ spanned by curves. 
Then 
\begin{eqnarray*}
\mathcal N_T&:=&\{B\in \mathcal L \,|\,
(K_X+B)\cdot R_t\geq 0\,\,{\rm for\,\,every}\,\,t\in T\}
\end{eqnarray*}
is a rational polytope by Corollary~\ref{ratpolytopecor}. 
Moreover, by Theorem~\ref{ct}, we see that 
\begin{eqnarray*}
\mathcal N_T&=&\{B\in \mathcal L \,|\,
(K_X+B)\cdot R_t\geq 0\,\,{\rm for\,\,every}\,\,t\in T\}\\
&=&\{B\in \mathcal L \,|\,\,\,K_X+B\,\,{\rm is\,\,nef}\}.
\end{eqnarray*}
Since $\Delta\in \mathcal N_T$, 
we can find \Q-divisors $\Delta_1, \cdots, \Delta_l$  
such that $\Delta_i\in \mathcal N_T$ for all $i$ and 
that $\sum r_i\Delta_i=\Delta$ 
where positive real numbers $r_i$ satisfy $\sum r_i=1$. 
Thus we have 
$$K_X+\Delta=\sum r_i(K_X+\Delta_i)$$
and $K_X+\Delta_i$ is nef. 
By Theorem~\ref{qabundance}, 
$K_X+\Delta_i$ is semi-ample. 
\end{proof}

\section{Normal surfaces over $\overline{\mathbb{F}}_p$}

\subsection{Contraction problem}

In this section, let $k$ be an arbitrary algebraically closed field 
and ${\rm char}\,k=p\geq 0$. 
As the introduction of this part, 
we consider the following question. 

\begin{ques}[Contraction problem]\label{contprob}
Let $X$ be a smooth projective surface over $k$ and 
let $C$ be a curve in $X$. 
If $C^2<0$, then $C$ is contractable? 
(i.e. Does there exist a birational morphism $f:X \to Y$
to an algebraic surface $Y$  such that $f(C')$ is one point 
iff $C'=C$ for every curve $C'$?) 
\end{ques}

\begin{ans}\label{Hironaka}
If $k \neq \overline{\mathbb{F}}_p$, 
then the answer to Question~\ref{contprob} is 
NO in general. 
\end{ans}

We only recall the method of its construction. 
For more details, see \cite[Example~5.7.3]{Hartshorne}.

\begin{proof}[Construction]
If we obtain an elliptic curve $C_0$ in \pr2 with rank $\geq$ 10, 
then we can construct a counter-example as follows. 
There are 10 points in $C_0$ which are linearly independent. 
Blow-up \pr2 at these 10 points. 
The proper transform $C$ of $C_0$ is not contractable. 
\end{proof}

By Fact~\ref{principle}, if $k \neq \overline{\mathbb{F}}_p$, 
then we can use this construction. 
On the other hand, if $k=\fff$, 
then we have the opposite answer. 

\begin{ans}[\cite{Artin}]
If $k=\fff$, then the answer to Question~\ref{contprob} is YES.
\end{ans}

To see this answer and its mechanism of this proof, 
we divide the verification into small pieces and 
prove more general following result.

\begin{prop}\label{fffcont}
Let $X$ be a projective normal \Q-factorial surface over $k$ and 
let $C$ be a curve in X. 
\begin{enumerate}
\item{If $C^2<0$, then there exists a nef and big divisor G such that 
$G\cdot C'=0$ iff $C'=C$ for any curve $C'$ in $X$. }
\item{If the restriction $G|_{C}$ of the divisor $G$ in $(1)$
is a torsion and ${\rm char}\,k=p>0$, then $G$ is semi-ample.}
\item{If $k=\fff$, then $G|_{C}$ is torsion.}
\end{enumerate}
\end{prop}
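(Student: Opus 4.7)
The plan is to prove the three parts in sequence: (2) will follow immediately from Keel's theorem (Theorem~\ref{keel}) and (3) will follow immediately from Corollary~\ref{pic}, so the substantive content lies in the elementary construction of $G$ in (1).

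For (1), I would fix an ample Cartier divisor $H$ on $X$ and set $q := -(H\cdot C)/C^2$. Since $H\cdot C>0$ and $C^2<0$, we have $q\in \mathbb{Q}_{>0}$. By \Q-factoriality, $C$ is \Q-Cartier, so for a sufficiently divisible positive integer $n$ the divisor $G := n(H+qC)$ is an integral Cartier divisor. By construction $G\cdot C = 0$. For any irreducible curve $C'\neq C$, we have $H\cdot C' > 0$ and $C\cdot C'\geq 0$ (distinct irreducible curves on a normal surface meet non-negatively under Mumford's intersection pairing: by \Q-factoriality, a suitable multiple of $C$ is an actual effective Cartier divisor whose support does not contain the proper transform of $C'$ on the minimal resolution), so $G\cdot C' > 0$. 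This shows $G$ is nef and that the curves annihilated by $G$ are exactly $C$. For bigness, using $G\cdot C=0$ we compute $G^2 = nG\cdot(H+qC) = nG\cdot H = n^2(H^2 + qH\cdot C) > 0$.

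For (2), with $G$ constructed as in (1), the reduced scheme $E(G)$ appearing in Keel's theorem equals $C$. The hypothesis that $G|_C$ is torsion in $\Pic(C)$ says that some positive multiple of $G|_C$ is $\mathcal{O}_C$, which is semi-ample; Keel's theorem (Theorem~\ref{keel}) then yields semi-ampleness of $G$.

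For (3), the restriction $G|_C$ is a Cartier divisor of degree $G\cdot C=0$ on the proper curve $C$, hence numerically trivial on $C$. Since $C$ is a projective variety over $k=\fff$, Corollary~\ref{pic} immediately implies that $G|_C$ is torsion in $\Pic(C)$. I do not anticipate any serious obstacle in this argument; the only point that deserves care is the non-negativity $C\cdot C'\geq 0$ in (1), which is standard but has to be read off from Mumford's definition via the minimal resolution together with \Q-factoriality.
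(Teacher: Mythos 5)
Your proof is correct and follows essentially the same route as the paper: take an ample $H$, set $G = H + qC$ with $q$ chosen so $G\cdot C = 0$, and then derive (2) from Keel's theorem (Theorem~\ref{keel}) and (3) from Corollary~\ref{pic}. You simply spell out what the paper leaves as "easy to check" (nefness, bigness, integrality of $G$ after clearing denominators), and those verifications are all sound. One minor remark: the non-negativity $C\cdot C'\geq 0$ for distinct irreducible curves holds directly from Mumford's pairing via the minimal resolution (the coefficients in $f^*C = C_Y + \sum e_iE_i$ are non-negative), so you do not actually need to invoke \Q-factoriality for that step, though your route through an effective Cartier multiple of $C$ is also valid.
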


\begin{proof}
(1)Let $H$ be an ample divisor on $X$. 
We define a \Q-divisor $G$ and $q\in\mathbb Q_{>0}$ 
by $G=H+qC$ and $G\cdot C=0$. 
It is easy to check that $G$ satisfies the above conditions.\\
(2)Since $p>0$, we can use Keel's result (Theorem~\ref{keel}). 
Therefore, the semi-ampleness of $G$
is equivalent to the semi-ampleness of $G|_C$. 
But $G|_C$ is a torsion by the assumption. 
Thus, $G$ is semi-ample. \\
(3)This is an immediate consequence of Corollary~\ref{pic}. 
\end{proof}

For related results to this section, see \cite{Artin} and \cite{Badescu2}. 

\subsection{\Q-factoriality}

In this section, we prove the following two theorems. 

\begin{thm}\label{Q-factorial}
If $X$ is a normal surface over \ff, then $X$ is \Q-factorial. 
\end{thm}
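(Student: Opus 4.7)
The plan is to factor the minimal resolution $f\colon Y\to X$ into a sequence of single-curve contractions, each preserving $\mathbb{Q}$-factoriality over $\overline{\mathbb{F}}_p$, using only tools developed earlier. Since $\mathbb{Q}$-factoriality is a local property, I first reduce to the case where $X$ is projective: any point of $X$ lies in a quasi-projective open subset, which embeds as a dense open subset of a projective normal surface (for example, the normalization of its projective closure), and it suffices to verify $\mathbb{Q}$-factoriality of this compactification. Let $f\colon Y\to X$ be the minimal resolution; $Y$ is smooth projective, hence $\mathbb{Q}$-factorial, and the exceptional curves of $f$ have negative definite intersection matrix (cf.\ \cite[Lemma~3.40]{KM}), so each exceptional curve has negative self-intersection.

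I then proceed inductively to construct a factorization $Y=Y_0\to Y_1\to\cdots\to Y_s=X$, with each $Y_i$ a projective normal $\mathbb{Q}$-factorial surface and each arrow contracting a single irreducible curve. Given a proper birational $g_i\colon Y_i\to X$ that is not an isomorphism, pick any $g_i$-exceptional curve $E\subset Y_i$; by negative definiteness, $E^2<0$. Proposition~\ref{fffcont} then produces a nef and big line bundle $G$ on $Y_i$ with $G\cdot C=0$ iff $C=E$, and parts~(2)--(3) of that proposition, which crucially exploit $k=\overline{\mathbb{F}}_p$, force $G$ to be semi-ample. The morphism defined by $|mG|$ for $m\gg 0$, composed with normalization of its image, yields the desired single-curve contraction $\phi_i\colon Y_i\to Y_{i+1}$, and $g_i$ factors through $\phi_i$ by the universal property, since $g_i$ also contracts $E$.

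Finally, to see that each $Y_{i+1}$ remains $\mathbb{Q}$-factorial, I invoke Propositions~\ref{(3)} and~\ref{(5)}: for any Cartier divisor $L$ on $Y_i$ with $L\cdot E=0$, the restriction $L|_E$ is numerically trivial on the projective curve $E$ over $\overline{\mathbb{F}}_p$, hence torsion by Corollary~\ref{pic}, so Proposition~\ref{(3)} shows that some multiple of $L$ descends to $Y_{i+1}$, and Proposition~\ref{(5)} then yields $\mathbb{Q}$-factoriality of $Y_{i+1}$. After finitely many steps (bounded by the number of exceptional components of $f$), $Y_s\to X$ has no exceptional curves and so is an isomorphism of normal surfaces; thus $X$ is $\mathbb{Q}$-factorial. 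The principal obstacle is this inductive step: both the existence of the single-curve contraction and the transfer of $\mathbb{Q}$-factoriality rely on the $\overline{\mathbb{F}}_p$ hypothesis, via Corollary~\ref{pic} and Keel's result (Theorem~\ref{keel}) packaged inside Proposition~\ref{fffcont}, and these tools fail over other algebraically closed fields (compare Answer~\ref{Hironaka}).
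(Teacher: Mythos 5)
Your proposal is correct and takes essentially the same route as the paper: it factors a resolution of singularities of $X$ into single-curve contractions, with existence of each contraction coming from Proposition~\ref{fffcont} (hence Keel's theorem plus Corollary~\ref{pic}), and descent of \Q-factoriality coming from Propositions~\ref{(3)} and~\ref{(5)}; this is exactly the content of the lemma in Section~12 and Corollary~\ref{fpqfaccor}, which the paper invokes directly.
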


\begin{thm}\label{decomposition}
Let $f:X \to Y$ be a proper birational morphism 
between normal surfaces over \ff, then 
$f$ factors into contractions of one curve. 
More precisely, there exist proper birational morphisms 
such that each 
$g_i:X_i \to X_{i+1}$ is a proper birational morphism 
between normal surfaces such that ${\rm Ex}(g_i)$ is an irreducible curve.
\end{thm}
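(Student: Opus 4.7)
The plan is to induct on the number of irreducible components of ${\rm Ex}(f)$. If ${\rm Ex}(f)=\emptyset$ then $f$ is an isomorphism by Zariski's Main Theorem, and there is nothing to prove. In the inductive step I produce a proper birational morphism $g_0\colon X\to X_1$ between normal surfaces over \ff\ whose exceptional locus consists of a single irreducible curve, and a proper birational morphism $f_1\colon X_1\to Y$ with $f=f_1\circ g_0$; since ${\rm Ex}(f_1)$ then has strictly fewer irreducible components than ${\rm Ex}(f)$, applying the induction hypothesis to $f_1$ finishes the argument.

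To construct $g_0$, fix an irreducible component $C$ of ${\rm Ex}(f)$. Since $f$ is proper birational between normal surfaces, $C$ is a proper curve, $f$ collapses $C$ to a point, and $C^2<0$ by the negative-definiteness of fibers of birational morphisms between normal surfaces. By Theorem~\ref{Q-factorial}, $X$ is \Q-factorial. Since Proposition~\ref{fffcont} requires projectivity, I shrink and compactify as in Step~2 of the proof of Proposition~\ref{(3)}: pick an affine open neighborhood $Y_0$ of $f(C)$ in $Y$, set $X_0:=f^{-1}(Y_0)$, choose a projective compactification $Y_0\hookrightarrow \overline{Y}_0$ smooth along $\overline{Y}_0\setminus Y_0$, and glue $X_0$ with $\overline{Y}_0$ along the common open $X_0\setminus {\rm Ex}(f|_{X_0})\simeq Y_0\setminus f({\rm Ex}(f|_{X_0}))$ to obtain a projective normal \Q-factorial surface $\overline{X}_0$ containing $C$. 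Proposition~\ref{fffcont} then furnishes a nef and big Cartier divisor $\overline{G}$ on $\overline{X}_0$ with $\overline{G}\cdot C'=0$ iff $C'=C$, whose restriction $\overline{G}|_C$ is torsion because $k=\fff$; hence $\overline{G}$ is semi-ample, and the Stein factorization of the morphism associated with $|m\overline{G}|$ for $m\gg 0$ yields a proper birational morphism $\overline{g}_0\colon \overline{X}_0\to \overline{X}_1$ between normal surfaces whose sole contracted curve is $C$.

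Next I descend $\overline{g}_0$ to the original setting: restricting to $X_0$ gives a proper birational morphism $X_0\to X_{0,1}$ contracting only $C$, which I glue with the identity morphism on $X\setminus C$ along the common open $X_0\setminus C$ to produce a normal surface $X_1$ over \ff\ together with a proper birational morphism $g_0\colon X\to X_1$ satisfying ${\rm Ex}(g_0)=C$ and $(g_0)_*\oo_X=\oo_{X_1}$. Theorem~\ref{Q-factorial} ensures $X_1$ is again \Q-factorial. Since $g_0$ is proper with connected fibers, $(g_0)_*\oo_X=\oo_{X_1}$, and $f$ is constant on the unique nontrivial fiber $C$ of $g_0$, the rigidity lemma produces a proper birational morphism $f_1\colon X_1\to Y$ with $f=f_1\circ g_0$, completing the inductive step.

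The principal obstacle I anticipate is the descent step: one has to verify that the auxiliary boundary curves introduced by the compactification $\overline{Y}_0$ do not become additional exceptional curves of $\overline{g}_0$ (this should follow from the construction $\overline{G}=H+qC$ with $H$ ample), and that the resulting contraction of $C$ depends only on a Zariski neighborhood of $C$ in $X$, so as to glue seamlessly with the identity on $X\setminus f^{-1}(f(C))$ and yield a morphism defined on all of $X$. Once this local-to-global patching is handled, the remaining properties --- normality and \Q-factoriality of $X_1$, and the existence and properness of $f_1$ --- follow from Theorem~\ref{Q-factorial}, standard Stein-factorization arguments, and the rigidity lemma.
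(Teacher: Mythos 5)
Your proposal matches the paper's argument: both obtain the decomposition by first invoking Theorem~\ref{Q-factorial} to conclude $X$ is \Q-factorial, and then peeling off one exceptional curve at a time via Proposition~\ref{fffcont} (Keel's semi-ampleness over~\ff) after a localize--compactify--glue step modeled on the proof of Proposition~\ref{(3)}; the paper packages this iteration in the unnumbered Lemma preceding Corollary~\ref{fpqfaccor} and says ``by the same argument,'' while you spell out the induction, the descent from $\overline{X}_0$ back to $X$, and the factorization $f=f_1\circ g_0$ via the rigidity lemma explicitly. The extra care you take to verify that the boundary curves of $\overline{Y}_0$ are not contracted and that the contraction depends only on a neighborhood of $C$ is exactly the content the paper leaves implicit; no essential idea is missing or different.
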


The following lemma is the key in this section. 

\begin{lem}
Let $f:X \to Y$ be a proper birational morphism over $\overline{\mathbb{F}}_p$ 
from a normal \Q-factorial surface $X$ to a normal surface $Y$. 
Let ${\rm Ex}(f)=C_1 \cup \dots \cup C_r$. 
\begin{enumerate}
\item{There exists 
a proper birtional morphism $g:X \to Z$ to a normal surface $Z$ 
such that ${\rm Ex}(g)=C_1$.} 
\item{The morphism $f$ factors through Z.}
\item{$Z$ is \Q-factorial.}
\end{enumerate}
\end{lem}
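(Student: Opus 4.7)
The plan is to reduce to the projective case by compactification, then construct $g$ via Keel's theorem, exploiting the fact that on projective varieties over $\overline{\mathbb F}_p$ numerically trivial line bundles are torsion.

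\textbf{Construction of $g$ (proving (1)).} First I compactify $X$ to a projective normal \Q-factorial surface $\overline X$ with $X\subset\overline X$ open, as in Step~2 of the proof of Proposition~\ref{(3)} using \cite[Lemma~2.2]{Fujino}. Since $C_1$ is $f$-exceptional, it is proper, lies in $X\subset\overline X$, and satisfies $C_1^2<0$. Following Proposition~\ref{fffcont}(1), I pick an ample divisor $H$ on $\overline X$ and set $G:=H+qC_1$ with $q\in\mathbb Q_{>0}$ chosen so that $G\cdot C_1=0$; after replacing $G$ by a positive integer multiple I may assume $G$ is Cartier. Then $G$ is nef and big on $\overline X$ and $G\cdot C'=H\cdot C'+qC_1\cdot C'>0$ for every curve $C'\neq C_1$ of $\overline X$. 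The restriction $G|_{C_1}$ is torsion by Corollary~\ref{pic}, so Keel's theorem (Theorem~\ref{keel}) makes $G$ semi-ample. For sufficiently divisible $n$, the Stein factorization of the morphism defined by $|nG|$ is a proper birational morphism $\overline g\colon\overline X\to\overline Z$ onto a projective normal surface $\overline Z$ that contracts exactly $C_1$ to a single point $z_0$ and is an isomorphism elsewhere. Setting $Z:=\overline g(X)$, the equality $X=\overline g^{-1}(Z)$, which holds because $C_1\subset X$ and $\overline g$ is an isomorphism away from $C_1$, shows that $Z$ is open in $\overline Z$ (hence normal) and that $g:=\overline g|_X\colon X\to Z$ is a proper birational morphism with $\mathrm{Ex}(g)=C_1$.

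\textbf{Factorization (2) and \Q-factoriality (3).} Since $g$ is proper birational between normal surfaces, $g_*\mathcal O_X=\mathcal O_Z$ by Zariski's main theorem. The set-theoretic fibers of $g$ are either single points or the curve $C_1$, and $f$ is constant on $C_1$ with value $f(C_1)\in Y$; using $g_*\mathcal O_X=\mathcal O_Z$, for any affine open $V\subset Y$ around $f(C_1)$ and any sufficiently small affine open $U\subset Z$ around $z_0$ with $g^{-1}(U)\subset f^{-1}(V)$, the map $\mathcal O_Y(V)\to\mathcal O_X(g^{-1}(U))=\mathcal O_Z(U)$ glues to a morphism $h\colon Z\to Y$ with $f=h\circ g$, proving (2). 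For (3), I verify the hypothesis of Proposition~\ref{(5)}: given a Cartier divisor $L$ on $X$ with $L\cdot C_1=0$, the degree-zero line bundle $L|_{C_1}$ is torsion by Corollary~\ref{pic}, hence Proposition~\ref{(3)} supplies an integer $n>0$ and a Cartier divisor $L_Z$ on $Z$ with $nL=g^*L_Z$; Proposition~\ref{(5)} then concludes that $Z$ is \Q-factorial.

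\textbf{Main obstacle.} The most delicate point will be ensuring compatibility of the compactification with the singling out of $C_1$: new curves appear in $\overline X\setminus X$, and Keel's contraction must collapse only $C_1$ among all curves of $\overline X$. This is precisely why the construction of Proposition~\ref{fffcont}(1) takes the explicit shape $G=H+qC_1$: since $H$ is ample and $C_1\cdot C'\geq 0$ for every curve $C'\neq C_1$, no other curve is $G$-trivial, and restricting the resulting global contraction to the open subset $X\subset\overline X$ recovers the desired local contraction of $C_1$ alone.
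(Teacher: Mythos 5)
Your argument is correct and follows essentially the same route as the paper: compactify $X$ to a projective normal \Q-factorial surface, apply the construction $G=H+qC_1$ from Proposition~\ref{fffcont} together with Corollary~\ref{pic} and Keel's theorem to contract $C_1$ alone, restrict to $X$, and deduce (3) via Propositions~\ref{(3)} and~\ref{(5)}. The only minor quibble is the citation for the compactification step: Step~2 of the proof of Proposition~\ref{(3)} compactifies by patching with a compactification of $Y$ (and assumes $Y$ quasi-projective), whereas the paper here instead takes a Nagata compactification of $X$, normalizes, and resolves singularities along the boundary to restore \Q-factoriality before invoking \cite[Lemma~2.2]{Fujino}; the end result is the same and does not affect the rest of your proof.
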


\begin{proof}(1) If $X$ and $Y$ are proper, then 
the assertion follows from Proposition~\ref{fffcont}. 
Note that proper \Q-factorial surfaces are projective 
(cf. \cite[Lemma~2.2]{Fujino}). 
In general case, take the Nagata compactification. 
Note that normality and \Q-factoriality may break up by compactification. 
But by taking the normalization and the resolution of the locus of $\overline{X} \setminus X$, 
we may assume that these assumption. \\
(2) This is obvious.\\
(3) The assertion immediately follows from 
Proposition \ref{(3)}, Proposition \ref{(5)} and Proposition \ref{fffcont}.
\end{proof}

\begin{cor}\label{fpqfaccor}
Let $f:X \to Y$ be a proper birational morphism over $\overline{\mathbb{F}}_p$ 
from a normal \Q-factorial surface $X$ 
to a normal surface $Y$. Then $Y$ is \Q-factorial.
\end{cor}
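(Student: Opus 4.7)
The plan is to deduce the corollary from the preceding lemma by induction on the number $r$ of irreducible components of $\mathrm{Ex}(f)$. If $r=0$, then $f$ is a proper birational morphism between normal surfaces with no exceptional curve, hence an isomorphism by Zariski's main theorem, and there is nothing to prove.

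For the inductive step, I would apply the lemma to $f:X\to Y$ to produce a proper birational morphism $g:X\to Z$ contracting precisely one exceptional component $C_1$ of $f$, such that $Z$ is normal and $\mathbb{Q}$-factorial and $f$ factors as
\[
X \xrightarrow{\ g\ } Z \xrightarrow{\ h\ } Y.
\]
The induced morphism $h:Z\to Y$ is again proper and birational between normal surfaces, and its exceptional locus consists of the images $g(C_2),\ldots,g(C_r)$, which is strictly fewer irreducible components than $\mathrm{Ex}(f)$. Since $Z$ is $\mathbb{Q}$-factorial by part (3) of the lemma, the inductive hypothesis applied to $h$ shows that $Y$ is $\mathbb{Q}$-factorial.

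The only subtlety is verifying that $h$ really has fewer exceptional components than $f$; this is immediate because $g$ is birational and contracts $C_1$ to a point, while the remaining $C_i$ ($i\geq 2$) are mapped birationally by $g$ onto their images, which are then contracted by $h$ to the same points as by $f$. So the induction is well-founded and terminates in at most $r$ steps. No serious obstacle arises beyond carefully tracking the exceptional loci, since all the hard work (existence of the one-curve contraction and preservation of $\mathbb{Q}$-factoriality under it) has already been packaged into the preceding lemma.
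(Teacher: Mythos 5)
Your proof is correct and follows the same approach as the paper: the paper's proof is a one-line statement that the preceding lemma is applied repeatedly to factor $f$ into one-curve contractions, with $\mathbb{Q}$-factoriality descending at each step, and your argument simply makes this precise as an induction on the number of irreducible components of the exceptional locus.
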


\begin{proof}
By using the above lemma repeatedly, 
$f$ is factored into contractions of one curve and 
\Q-factoriality of $X$ descends to $Y$.
\end{proof}

By the same argument, Theorem~\ref{decomposition} follows from Theorem~\ref{Q-factorial}. 
Thus we only prove Theorem~\ref{Q-factorial}.

\begin{proof}[Proof of Theorem \ref{Q-factorial}]
Let $f:X' \to X$ be its resolution of singularities. 
Of course $X'$ is \Q-factorial. 
Therefore $X$ is also \Q-factorial by Corollary~\ref{fpqfaccor}. 
\end{proof}

\begin{rem}
Theorem~\ref{Q-factorial} follows from 
\cite[Corollary~14.22]{Badescu} and \cite[(24.E)]{Matsumura}. 
\end{rem}

\subsection{Theorems in Section 3}

In this section, we establish the theorems,
which we discussed in Section 3, over $\overline{\mathbb{F}}_p$ 
under much weaker assumptions.

\begin{thm}[Contraction theorem]\label{fpcont}
Let $X$ be a projective normal surface over $\overline{\mathbb{F}}_p$ and 
let $\Delta$ be an effective $\mathbb{R}$-divisor. 
Let $R=\mathbb{R}_{\geq 0}[C]$ be a $(K_X+\Delta)$-negative extremal ray. 
Then there exists a surjective morphism $\phi_R: X \to Y$ 
to a projective variety $Y$ with the following properties{\em{:}} 
{\em{(1)-(4)}}.
\begin{enumerate}
\item{Let $C'$ be a curve on $X$. 
Then $\phi_R(C')$ is one point iff $[C'] \in R$.}
\item{$(\phi_R)_*(\mathcal{O}_X)=\mathcal{O}_Y$.}
\item{If $L$ is an invertible sheaf with $L\cdot C=0$, 
then $nL=(\phi_R)^*L_Y$ for some invertible sheaf $L_Y$ on $Y$ and 
for some positive integer $n$.}
\item{$\rho(Y)=\rho(X)-1$.}
\end{enumerate}
\end{thm}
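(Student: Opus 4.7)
By Theorem~\ref{Q-factorial}, $X$ is automatically \Q-factorial, so $K_X+\Delta$ is $\mathbb R$-Cartier and the cone theorem (Theorem~\ref{ct}) applies to $C$. I split the argument according to the sign of $C^2$, following the pattern of the proof of Theorem~\ref{cont}, but replacing the inputs there that required $\Delta$ to be a boundary (most importantly Theorem~\ref{adjunction}) with the torsion statement of Corollary~\ref{pic} that is available over $\fff$.

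If $C^2>0$, then $C$ is nef and big, so by Kodaira's lemma and the Hodge index theorem $C\cdot C'>0$ for every curve $C'$, and extremality of $R$ forces $\rho(X)=1$. Taking $Y$ to be a point, (1), (2), (4) are automatic, and for (3) any Cartier divisor $L$ with $L\cdot C=0$ is numerically trivial by $\rho(X)=1$, hence torsion in $\Pic X$ by Corollary~\ref{pic}. If $C^2=0$, the argument of Theorem~\ref{cont} carries over: the quadratic form lemma (Lemma~\ref{quad}) gives $\rho(X)=2$, Riemann--Roch on the minimal resolution combined with $K_X\cdot C<0$ and $C^2=0$ yields $\kappa(X,C)=1$, and Proposition~\ref{kappa1} makes $C$ semi-ample, producing $\phi_R\colon X\to Y$ to a smooth curve satisfying (1), (2), (4). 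For (3), $L\cdot C=0$ and $\rho(X)=2$ force $L\equiv qC$ with $q\in\mathbb Q$; then some integral combination $bL-aC$ is numerically trivial, hence torsion in $\Pic X$ by Corollary~\ref{pic}, and since a multiple of $C$ is a pullback from $Y$, so is a multiple of $L$.

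The case $C^2<0$ is the only one requiring a genuinely new argument, since Theorem~\ref{adjunction} is unavailable without the boundary assumption. I bypass it using Keel's theorem directly. Let $H$ be ample and set $G:=H+qC$ with $q:=-H\cdot C/C^2>0$. Then $G\cdot C=0$, while for every curve $C'\neq C$ one has $G\cdot C'=H\cdot C'+qC\cdot C'>0$ because $H$ is ample and $C\cdot C'\geq 0$; moreover $G^2=H\cdot G=H^2+qH\cdot C>0$, so $G$ is nef and big with $E(G)=C$. The restriction $G|_C$ has degree zero on the projective curve $C$ over $\fff$, so Corollary~\ref{pic} makes it torsion, in particular semi-ample. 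Theorem~\ref{keel} then yields semi-ampleness of $G$, and the Stein factorization of the morphism defined by $|mG|$ for $m$ large and divisible is the desired $\phi_R$, with (1) and (2) built into the construction. For (3), if $L\cdot C=0$ then $L|_C$ has degree zero, hence torsion by Corollary~\ref{pic}, which is exactly the hypothesis of Proposition~\ref{(3)}, so $nL=\phi_R^*L_Y$ for some positive integer $n$ and some Cartier divisor $L_Y$ on $Y$; property (4) then follows from (3) exactly as in Proposition~\ref{(5)}. The only real obstacle is the construction of the semi-ample $G$ in the case $C^2<0$, and over $\fff$ this dissolves thanks to Corollary~\ref{pic}.
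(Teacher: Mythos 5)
Your proposal is correct and follows essentially the same route as the paper: reduce to the $C^2<0$ case (the paper observes that when $C^2\ge 0$ one has $\Delta\cdot C\ge 0$, hence $K_X\cdot C<0$, and simply invokes Theorem~\ref{cont}, whereas you re-derive those two cases, but the substance is identical), and handle $C^2<0$ via Keel's theorem (Theorem~\ref{keel}) together with Corollary~\ref{pic} over $\fff$, exactly as in Proposition~\ref{fffcont}. One cosmetic slip: property (4) is an immediate corollary of (3), as the paper notes after Proposition~\ref{(3)}; Proposition~\ref{(5)} concerns \Q-factoriality of $Y$, which is not among the conclusions of this theorem.
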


\begin{proof}
If $C^2\geq 0$, then we have
$$K_X\cdot C \leq (K_X+\Delta)\cdot C<0.$$
Then we can apply Theorem~\ref{cont}. 
Thus we may assume $C^2<0$. 
But this curve is contractable and 
the proofs of the remaining properties are the same as Theorem~\ref{cont}. 
\end{proof}

The following theorem is a known result (\cite[Corollary~14.29]{Badescu}). 
We give a minimal model theoretic proof. 

\begin{thm}[Finite generation theorem]
Let $X$ be a projective normal surface over $\overline{\mathbb{F}}_p$ and 
let $D$ be a \Q-divisor. 
Then $R(X, D)=\bigoplus\limits_{m \geq 0} H^0(X, \llcorner mD\lrcorner)$
is a finitely generated $\overline{\mathbb{F}}_p$-algebra. 
\end{thm}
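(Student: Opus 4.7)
By Theorem~\ref{Q-factorial}, $X$ is $\mathbb{Q}$-factorial, so $D$ is $\mathbb{Q}$-Cartier. When $\kappa(X,D) = -\infty$, one has $R(X,D) = \overline{\mathbb{F}}_p$ and there is nothing to prove, so I assume $\kappa(X,D) \geq 0$ and in particular that $D$ is pseudo-effective. The Fujita--Zariski decomposition for $\mathbb{Q}$-divisors on the normal projective surface $X$ then yields $D \sim_{\mathbb{Q}} P + N$, where $P$ is a nef $\mathbb{Q}$-divisor, $N = \sum n_i N_i \geq 0$ is a $\mathbb{Q}$-divisor whose support has negative-definite intersection matrix, and $P \cdot N_i = 0$ for every $i$. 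The standard consequence of the decomposition is that for every sufficiently divisible $m \geq 0$, the map $H^0(X, \llcorner mP \lrcorner) \to H^0(X, \llcorner mD \lrcorner)$ induced by addition of $mN$ is an isomorphism. Hence a common Veronese subalgebra of $R(X,D)$ and $R(X,P)$ coincide, and it suffices to prove that $R(X,P)$ is finitely generated.

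\textbf{Analysis of the nef divisor $P$.} I split according to the value of $P^2 \geq 0$. If $P^2 > 0$, then $P$ is nef and big; by Keel's theorem (Theorem~\ref{keel}), the semi-ampleness of $P$ is equivalent to that of $P|_{E(P)}$ on the reduced projective curve $E(P) = \bigcup_{P \cdot C = 0} C$ over $\overline{\mathbb{F}}_p$. Since $P$ is numerically trivial on each component of $E(P)$, Corollary~\ref{pic} applied componentwise, combined with a short patching argument along the finite intersection locus of components, shows that $P|_{E(P)}$ is torsion in $\Pic(E(P))$, hence semi-ample; therefore $P$ itself is semi-ample and $R(X,P)$ is finitely generated. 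If $P^2 = 0$, then $\kappa(X,P) \leq 1$: when $\kappa(X,P) = 1$, Proposition~\ref{kappa1} gives $P$ semi-ample directly; when $\kappa(X,P) = 0$, every graded piece of $R(X,P)$ has dimension at most one, so $R(X,P)$ is a subalgebra of $\overline{\mathbb{F}}_p[t]$ generated in the finitely many degrees that generate the submonoid $\{m \geq 0 : h^0(X, \llcorner mP \lrcorner) > 0\} \subset \mathbb{Z}_{\geq 0}$. In every case $R(X,P)$ is finitely generated.

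\textbf{Main obstacle.} The only non-formal step is the semi-ampleness of $P|_{E(P)}$ when $P$ is nef and big: one must verify that a line bundle on a reduced, possibly reducible and non-normal, projective scheme over $\overline{\mathbb{F}}_p$ that is numerically trivial on each irreducible component is torsion in the total Picard group. This is precisely where the hypothesis $k = \overline{\mathbb{F}}_p$ is used essentially, through Corollary~\ref{pic} and Fact~\ref{principle}, and it is also the step that distinguishes the present result from the $\mathbb{Q}$-factorial finite generation theorem of Part~2 --- once this is established, all other steps are formal applications of Zariski decomposition and standard facts about Iitaka dimension and semi-ample divisors on surfaces.
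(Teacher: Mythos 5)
Your proof is correct but takes a genuinely different route from the paper's. The paper reduces to the nef case by repeatedly contracting curves $C$ with $D\cdot C<0$ (necessarily $C^2<0$ since $D$ is effective after replacing by a multiple), which is possible over $\overline{\mathbb{F}}_p$ by Proposition~\ref{fffcont}; this process terminates because $\rho$ drops each time, and at the end $D$ descends to a nef $\mathbb{Q}$-Cartier divisor because each $Y$ produced is again $\mathbb{Q}$-factorial. You instead invoke the Fujita--Zariski decomposition $D=P+N$ and pass to $R(X,P)$; the two reductions are morally the same (the contractions effectively carry out the Zariski decomposition), but the paper's is more self-contained since it reuses machinery already built in Part~3 rather than importing the Zariski decomposition as an external tool. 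For the nef and big case the divergence is sharper: the paper keeps contracting the finitely many curves $C$ with $D\cdot C=0$ one at a time until $D$ becomes the pull-back of an ample divisor (Proposition~\ref{fpnefbig}), whereas you apply Keel's theorem in one shot and must then argue that $P|_{E(P)}$ is torsion in $\Pic(E(P))$ for the possibly reducible, non-normal curve $E(P)$. That argument is correct over $\overline{\mathbb{F}}_p$, but the phrase ``a short patching argument'' hides the real content. The cleanest formulation is that $E(P)$ is projective so its Picard scheme is a group scheme of finite type over $\overline{\mathbb{F}}_p$, so Fact~\ref{principle}(2) directly gives that every element of $\Pic(E(P))$ is torsion; note this is a mild extension of Corollary~\ref{pic}, which as stated in the paper is only for varieties (integral schemes), so you should either cite \cite[Lemma~2.16]{Keel} (which covers projective schemes) or record this extension explicitly. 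Alternatively, the concrete version of your patching: the kernel of $\Pic(E(P))\to\prod_i\Pic(\tilde E_i)$ (where $\tilde E_i$ is the normalization of each component) is built from $\mathbb{G}_a(\overline{\mathbb{F}}_p)$ and $\mathbb{G}_m(\overline{\mathbb{F}}_p)$, both of which are torsion, and each $\Pic^0(\tilde E_i)$ is torsion by Fact~\ref{principle}; both ingredients are genuinely needed, so ``componentwise plus patching'' is indeed the right structure of the argument. The paper's one-curve-at-a-time contraction sidesteps all of this, which is the price you pay for the cleaner single application of Keel.
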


\begin{proof}
We may assume that $\kappa(X, D) \geq 1$. 
Then in particular $D$ is effective. 
If there is a curve with $D\cdot C<0$, then $C^2<0$ and $C$ is contractable. 
Let $f:X\to Y$ be the contraction of $C$. 
Note that we obtain 
$D=f^*f_*D+qC$, for a positive rational number $q$. 
Therefore we may assume that $D$ is nef. 
If $\kappa(X, D)=1$, then $D$ is semi-ample by Proposition \ref{kappa1}.
If $\kappa(X, D)=2$, then $D$ is semi-ample by the following proposition.
\end{proof}

\begin{prop}\label{fpnefbig}
Let $X$ be a projective normal surface over $\overline{\mathbb{F}}_p$. 
If $D$ is a nef and big \Q-divisor, then $D$ is semi-ample.
\end{prop}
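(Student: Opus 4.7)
The plan is to reduce to Keel's semi-ampleness criterion (Theorem~\ref{keel}) and then to exploit the fact, rooted in Fact~\ref{principle}(2), that numerically trivial line bundles on a projective scheme over $\overline{\mathbb{F}}_p$ are torsion. After replacing $D$ by a sufficiently divisible positive multiple I may assume that $D$ is an honest nef and big Cartier divisor, since this does not affect semi-ampleness. By Theorem~\ref{keel} it then suffices to prove that $D|_{E(D)}$ is semi-ample, where $E(D)$ is the reduced subscheme supported on the union of all curves $C$ with $D\cdot C=0$.

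First I would verify that $E(D)$ has only finitely many irreducible components. Since $D$ is big, Kodaira's lemma yields $mD\sim A+F$ with $A$ ample and $F$ effective for some $m>0$. For any curve $C$ with $D\cdot C=0$ we have $A\cdot C=-F\cdot C$, and since $A\cdot C>0$ the curve $C$ must be a component of $\Supp F$. Hence $E(D)=C_1\cup\cdots\cup C_r$ is a reduced, one-dimensional projective scheme over $\overline{\mathbb{F}}_p$.

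Next I would show that $L:=D|_{E(D)}$ is torsion in $\Pic(E(D))$. Each $C_i$ is a projective variety over $\overline{\mathbb{F}}_p$ with $\deg_{C_i}(L)=D\cdot C_i=0$, so Corollary~\ref{pic} gives a positive integer $N_i$ with $N_iL|_{C_i}\sim \oo_{C_i}$. Taking a common multiple, a single positive multiple $N_0L$ lies in the kernel of the normalization pullback $\Pic(E(D))\to \prod_i \Pic(C_i)$. This kernel is a quotient of a product of copies of $\overline{\mathbb{F}}_p^{\,*}$, with one contribution at each non-normal point of $E(D)$, and is therefore a torsion group. Consequently some further positive multiple $NL$ is trivial in $\Pic(E(D))$, so $L$ is semi-ample, and Theorem~\ref{keel} delivers the semi-ampleness of $D$ on $X$. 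The main obstacle is precisely this last passage from torsion on each component $C_i$ (which is exactly Corollary~\ref{pic}) to torsion on the possibly reducible $E(D)$; it is handled via the normalization exact sequence as above, or, equivalently, by observing that $\Pic^{\tau}(E(D))$ is a finite extension of the finite-type group scheme $\Pic^{0}(E(D))$ and applying Fact~\ref{principle}(2) directly, in the same spirit as the proof of Corollary~\ref{pic}.
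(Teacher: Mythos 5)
Your proof is correct, but it takes a genuinely different route from the paper's. The paper proves Proposition~\ref{fpnefbig} by iteration of contractions: if $D\cdot C=0$ for some curve $C$, then $C^2<0$ by the Hodge index theorem, so $C$ is contractable over $\overline{\mathbb{F}}_p$ (Proposition~\ref{fffcont}); by Proposition~\ref{(3)} a multiple of $D$ descends to the contraction $Y$, which is again \Q-factorial by Proposition~\ref{(5)}, and one repeats until $D$ becomes the pull-back of an ample divisor. You instead apply Keel's criterion directly to $D$ on $X$ and show that $D|_{E(D)}$ is torsion (hence semi-ample) by combining Corollary~\ref{pic} on each irreducible component with a gluing/normalization argument on the reducible curve $E(D)$. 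Your approach is more self-contained --- it bypasses the \Q-factoriality descent and the one-curve-at-a-time contraction machinery of Part~3 --- at the cost of handling the possibly reducible $E(D)$ explicitly; the paper's iterated-contraction argument offloads that difficulty to results already established and fits more naturally into the MMP-style development. Both ultimately rest on the same two ingredients: Keel's theorem and the fact that numerically trivial line bundles over $\overline{\mathbb{F}}_p$ are torsion. One small imprecision in your writeup: the kernel of $\Pic(E(D))\to\prod_i\Pic(C_i)$ receives contributions only from points lying on two or more components, not from all non-normal points of $E(D)$ (a singular point internal to a single $C_i$ contributes nothing to this particular kernel); this does not affect the conclusion that the kernel is torsion over $\overline{\mathbb{F}}_p$.
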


\begin{proof}
If there is a curve $C$ such that $D\cdot C=0$, 
then $C^2<0$ and $C$ is contractable. 
Let $f:X \to Y$ be its contraction and $f^*D_Y=D$. 
It is sufficient to prove $D_Y$ is semi-ample. 
Repeating the same procedure, 
we see that $D$ is a pull-back of an ample divisor.
\end{proof}

\begin{thm}[Non-vanishing theorem]\label{fpnonvanishing}
Let $X$ be a projective normal surface over $\overline{\mathbb{F}}_p$ and 
let $\Delta$ be an effective \Q-divisor. 
If $K_X+\Delta$ is nef, then $\kappa(X, K_X+\Delta) \geq 0$
\end{thm}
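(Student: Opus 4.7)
The plan is to pass to the minimal resolution, reduce to the smooth case, and then carry out a case analysis on $(K_X+\Delta)^2$, using Riemann--Roch together with Corollary~\ref{pic} (numerically trivial line bundles over $\overline{\mathbb{F}}_p$ are torsion), in the spirit of \cite{Masek}.

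By Theorem~\ref{Q-factorial}, $X$ is \Q-factorial, so $K_X+\Delta$ is \Q-Cartier. Let $f\colon Y\to X$ be the minimal resolution and write $K_Y+\Delta_Y=f^*(K_X+\Delta)$; minimality of $f$ together with effectivity of $\Delta$ yields $\Delta_Y\geq 0$, nefness pulls back to $K_Y+\Delta_Y$, and $\kappa(X,K_X+\Delta)=\kappa(Y,K_Y+\Delta_Y)$, so it suffices to prove non-vanishing for $D:=K_Y+\Delta_Y$ on the smooth projective surface $Y$ over $\overline{\mathbb{F}}_p$.

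If $(K_X+\Delta)^2>0$, then $K_X+\Delta$ is nef and big and Proposition~\ref{fpnefbig} yields semi-ampleness, in particular $\kappa\geq 0$. If $(K_X+\Delta)^2=0$ and $K_X+\Delta\equiv 0$, then Corollary~\ref{pic} gives torsion, so $\kappa=0$. Otherwise $D^2=0$ and $D\not\equiv 0$; Hodge index then forces $D\cdot H>0$ for every ample $H$ on $Y$, whence $(K_Y-mD)\cdot H<0$ for $m\gg 0$ and $h^2(Y,mD)=h^0(Y,K_Y-mD)=0$ by Serre duality. Using $D^2=0$ and $D\cdot K_Y=D\cdot(D-\Delta_Y)=-D\cdot\Delta_Y$, Riemann--Roch on the smooth surface $Y$ reads $\chi(Y,mD)=\chi(\mathcal O_Y)+\frac{m}{2}D\cdot\Delta_Y$, so for $m\gg 0$, $h^0(Y,mD)\geq \chi(\mathcal O_Y)+\frac{m}{2}D\cdot\Delta_Y$. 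This handles every subcase in which either $D\cdot\Delta_Y>0$ or $\chi(\mathcal O_Y)\geq 1$.

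What remains is $D\cdot\Delta_Y=0$ together with $\chi(\mathcal O_Y)=1-q(Y)+p_g(Y)\leq 0$, i.e., $p_g(Y)=0$ and $q(Y)\geq 1$. The Bombieri--Mumford classification of smooth surfaces in positive characteristic splits this into two branches. In the first branch $Y$ is (quasi-)hyperelliptic, bielliptic, or an Enriques-type surface with $K_Y$ numerically trivial, hence torsion by Corollary~\ref{pic}, so $\kappa(Y,K_Y)\geq 0$ and therefore $\kappa(Y,D)\geq \kappa(Y,K_Y)\geq 0$. In the second branch $Y$ admits a ruling $\pi\colon Y\to B$ with $g(B)\geq 1$; here $D\cdot F=0$ on a fiber $F$ would force $D$ to be numerically proportional to $F$, and the relation $D\cdot K_Y=0$ would then force $D\equiv 0$, contradicting our assumption. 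Hence $D\cdot F>0$, and $D\cdot\Delta_Y=0$ implies that every horizontal component $C$ of $\Delta_Y$ satisfies $D\cdot C=0$, so $D|_C$ is a degree-$0$ line bundle on a curve over $\overline{\mathbb{F}}_p$ and thus torsion by Corollary~\ref{pic}. The main obstacle is then to propagate these torsion restrictions into a global section of some $\mathcal O_Y(mD)$; this is carried out by successive use of the exact sequences
$$0\to\mathcal O_Y(mD-\Delta_{Y,i})\to\mathcal O_Y(mD)\to\mathcal O_{\Delta_{Y,i}}(mD)\to 0$$
and induction on the prime components $\Delta_{Y,i}$ of $\Delta_Y$, following the argument of \cite{Masek}.
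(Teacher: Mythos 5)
Your proposal tracks the paper's proof faithfully through the reduction to the minimal resolution, the nef-and-big case, the numerically trivial case, and the Riemann--Roch step that disposes of $D\cdot\Delta_Y>0$ or $\chi(\mathcal O_Y)\geq 1$. The Hodge index observation that $D\cdot F>0$ for a ruling fiber $F$ is correct but does not by itself lead anywhere. The genuine gap is in the final step, which is the heart of the theorem, and there your outlined strategy differs from the paper's and is not carried far enough to assess.

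The paper's Step~7 does \emph{not} try to produce a global section by lifting torsion data along the components of $\Delta_Y$. It derives a contradiction from the hypothesis $\kappa(X,K_X+\Delta)=-\infty$. Fix a prime component $C$ of $\Delta$; since $D\cdot C=0$, $D|_C$ is torsion, say $n_1D|_C\sim 0$. For $n=n_1n_2$ with $n_2\gg 0$ one has $h^2(X,nD-C)=0$ by Serre duality, so the exact sequence $0\to\mathcal O_X(nD-C)\to\mathcal O_X(nD)\to\mathcal O_C\to 0$ gives a surjection $H^1(X,nD)\twoheadrightarrow H^1(C,\mathcal O_C)$. Because $h^0(X,nD)=h^2(X,nD)=0$, Riemann--Roch together with $D^2=D\cdot K_X=0$ gives $h^1(X,nD)=q-1=h^1(B,\mathcal O_B)-1$, whence $h^1(B,\mathcal O_B)-1\geq h^1(C,\mathcal O_C)$. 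This is impossible when $C$ dominates $B$, so every component of $\Delta$ is vertical, $\Delta\cdot F=0$, and $(K_X+\Delta)\cdot F=K_X\cdot F=-2<0$ contradicts nefness. This $h^1$-comparison is the missing idea in your write-up. Your plan to ``propagate the torsion restrictions into a global section'' by induction over the $\Delta_{Y,i}$ runs into the real obstruction that lifting a section of $\mathcal O_{\Delta_{Y,i}}(mD)\simeq\mathcal O_{\Delta_{Y,i}}$ to $H^0(Y,mD)$ requires controlling $H^1(Y,mD-\Delta_{Y,i})$, which you do not address. Two smaller issues: $\chi(\mathcal O_Y)\leq 0$ does not imply $p_g(Y)=0$ (one should instead reduce at the outset to $\kappa(Y,K_Y)=-\infty$, as the paper does in its Step~1), and Enriques surfaces have $\chi=1$, so they do not occur in the $\chi\leq 0$ branch.
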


The proof of this theorem heavily depends on the argument in \cite{Masek}.

\begin{proof}
We may assume that $X$ is smooth by replacing it with its minimal resolution. 

\setcounter{step}{0}
\begin{step}
If $\kappa(X, K_X) \geq 0$, then 
$\kappa(X, K_X+\Delta) \geq \kappa(X, K_X) \geq 0$. 
Thus we may assume that $\kappa(X, K_X)=-\infty$.
\end{step}

\begin{step}
In this step, we show that we may assume 
$K_X+\Delta$ is not numerically trivial and 
$h^2(X, m(K_X+\Delta))=0$ for $m \gg 0$. 

If $K_X+\Delta$ is numerically trivial, then 
$K_X+\Delta$ is a torsion by Fact~\ref{principle}. 
Thus we obtain $n(K_X+\Delta)\sim 0$ for some integer $n$ 
and $\kappa(X, K_X+\Delta)=0.$ 
Therefore we may assume that $K_X+\Delta$ is not numerically trivial. 
Then we obtain $h^2(X, m(K_X+\Delta))=h^0(X, K_X-m(K_X+\Delta))=0$
for $m \gg 0$. 
(We have $(K_X+\Delta)\cdot C>0$ for some curve. 
Then there exists an ample divisor $A$ and an effective divisor $E$ 
such that $A=C+E$. 
By the nefness of $K_X+\Delta$, we obtain $(K_X+\Delta)\cdot A>0$. 
Then since $(K_X-m(K_X+\Delta))\cdot A<0$ for sufficiently large integer $m$, 
we obtain $h^0(X, K_X-m(K_X+\Delta))=0$.)
\end{step}

\begin{step}
In this step we show that we may assume $(K_X+\Delta)^2=0$. 

Suppose the contrary, that is, suppose $(K_X+\Delta)^2>0$. 
Then $K_X+\Delta$ is nef and big. 
Then we obtain $h^0(X, m(K_X+\Delta))>0$ for some positive integer $m$, 
and $\kappa(X, K_X+\Delta)\geq 0$. 
\end{step}

We consider the two cases: $X$ is rational or irrational.

\begin{step}
In this step, we prove the assertion when $X$ is rational. 

Now $\chi(\mathcal{O}_X)$=1 because $X$ is rational. 
Then, the Riemann--Roch theorem shows that 
\begin{eqnarray*}
&&h^0(X, m(K_X+\Delta))\\
&=&h^1(X, m(K_X+\Delta))+1+\frac{1}{2}m(K_X+\Delta)\cdot(m(K_X+\Delta)-K_X) 
\end{eqnarray*}
where $m \gg 0$.
The right hand side is positive, because 
\begin{eqnarray*}
&&m(K_X+\Delta)\cdot(m(K_X+\Delta)-K_X)\\
&=&m(K_X+\Delta)\cdot((m-1)(K_X+\Delta)+\Delta)\geq 0
\end{eqnarray*}
by the nefness of $K_X+\Delta$. 
This is what we want to show. 
\end{step}

Thus we may assume that $X$ is an irrational ruled surface. 
We divide the proof into three cases: 
$(K_X+\Delta)\cdot K_X<0$, 
$(K_X+\Delta)\cdot K_X>0$ and 
$(K_X+\Delta)\cdot K_X=0$.

\begin{step}
We assume that $X$ is irrational and $(K_X+\Delta)\cdot K_X<0$. 

By Step~2, Step~3 and the Riemann-Roch theorem, 
$h^0(X, m(K_X+\Delta))>0$ for some large integer $m$. 
This is what we want to show. 
\end{step}

\begin{step}
We assume that $X$ is irrational and $(K_X+\Delta)\cdot K_X>0$. 

Since $(K_X+\Delta)^2=0$ and $(K_X+\Delta)\cdot K_X>0$, 
we obtain $(K_X+\Delta)\cdot \Delta<0$. 
This contradicts the nefness of $K_X+\Delta$. 
\end{step}

\begin{step}
We assume that $X$ is irrational and $(K_X+\Delta)\cdot K_X=0$. 

We assume $\kappa(X, K_X+\Delta)=-\infty$ and derive a contradiction. 
By $(K_X+\Delta)\cdot K_X=0$ and $(K_X+\Delta)^2=0$, 
we obtain $(K_X+\Delta)\cdot \Delta=0$. 
Let $C$ be an arbitrary prime component of $\Delta$. 
Since $\Delta\neq 0$, we can take such a curve. 
(Indeed, if $\Delta=0,$ 
then $K_X$ is nef. 
This contradicts that $X$ is a ruled surface. )
By $(K_X+\Delta)\cdot \Delta=0$ and the nefness of $K_X+\Delta$, 
we have $(K_X+\Delta)\cdot C=0$. 
By Fact~\ref{principle}, 
we obtain $n_1(K_X+\Delta)|_C \sim 0$ for some $n_1\in\mathbb Z_{>0}$. 
Then we get the following exact sequence: 
\[0 \to 
\mathcal{O}_X(n_1n_2(K_X+\Delta)-C) \to 
\mathcal{O}_X(n_1n_2(K_X+\Delta)) \to 
\mathcal{O}_C \to 
0\] 
for every $n_2\in\mathbb Z_{>0}.$ 
Here we want to prove that, for every $n_2\gg 0,$ 
$$h^2(X, n_1n_2(K_X+\Delta)-C)=0.$$
By Serre duality, 
we obtain 
$h^2(X, n_1n_2(K_X+\Delta)-C)=
h^0(X, K_X+C-n_1n_2(K_X+\Delta))$. 
This is zero, by the same argument as Step~2. 

Fix $n_2\gg 0$ and let $n:=n_1n_2$. 
By $h^2(X, n(K_X+\Delta)-C)=0$, we have a surjection 
$H^1(X, n(K_X+\Delta)) \to H^1(C, \mathcal{O}_C)$. 
This means $$h^1(X, n(K_X+\Delta)) \geq h^1(C, \mathcal{O}_C).$$ 
On the other hand, 
by $h^0(X, n(K_X+\Delta))=h^2(X, n(K_X+\Delta))=0$ and 
the Riemann--Roch theorem, 
\begin{eqnarray*}
-h^1(X, n(K_X+\Delta))&=&
\chi(\mathcal{O}_X)+\frac{1}{2}n(K_X+\Delta)\cdot\{n(K_X+\Delta)-K_X\}\\
&=&\chi(\mathcal{O}_X)=1-h^1(B, \mathcal{O}_B).
\end{eqnarray*} 
where $\pi:X \to B$ is the ruling. 
Hence we have 
$$h^1(B, \mathcal{O}_B)-1=
h^1(X, n(K_X+\Delta)) \geq h^1(C, \mathcal{O}_C). $$
This shows that $C$ is in some fiber of $\pi$. 
In particular, for a smooth fiber $F$, we have $C\cdot F=0$. 
Recall that $C$ is an arbitrary prime component of $\Delta$, 
then we obtain $\Delta\cdot F=0$. 
Thus we have 
$$0 \leq (K_X+\Delta)\cdot F=K_X\cdot F=-2.$$
\end{step}
This is a contradiction. 
\end{proof}

\begin{thm}[Abundance theorem]\label{fpabundance}
Let $X$ be a projective normal surface over $\overline{\mathbb{F}}_p$ and 
let $\Delta$ be an effective $\mathbb{R}$-divisor.
If $K_X+\Delta$ is nef, then $K_X+\Delta$ is semi-ample. 
\end{thm}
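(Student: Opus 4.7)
The plan is to reduce to the case of a $\mathbb{Q}$-divisor via the rational polytope argument and then to carry out a case analysis on the Kodaira dimension. Since $X$ is defined over $\overline{\mathbb{F}}_p$, Theorem~\ref{Q-factorial} ensures that $X$ is $\mathbb{Q}$-factorial, so $K_X+\Delta$ is $\mathbb{R}$-Cartier. The proofs of Lemma~\ref{extlengthlem}, Proposition~\ref{ratpolytope} and Corollary~\ref{ratpolytopecor} use only the cone theorem (Theorem~\ref{ct}) and never appeal to $k\neq\overline{\mathbb{F}}_p$. Consequently, the set $\mathcal{N}_T=\{B\in\mathcal{L}:K_X+B\text{ is nef}\}$ is again a rational polytope containing $\Delta$, and I can write $\Delta=\sum r_i\Delta_i$ as a positive rational convex combination of rational points $\Delta_i\in\mathcal{N}_T$. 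Since $K_X+\Delta=\sum r_i(K_X+\Delta_i)$, the problem reduces to proving semi-ampleness of $K_X+\Delta_i$ for each $\mathbb{Q}$-divisor $\Delta_i$.

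For such a $\mathbb{Q}$-divisor $\Delta_i$, the non-vanishing theorem (Theorem~\ref{fpnonvanishing}) gives $\kappa_i:=\kappa(X,K_X+\Delta_i)\geq 0$. If $\kappa_i=2$, then $K_X+\Delta_i$ is nef and big, hence semi-ample by Proposition~\ref{fpnefbig}. If $\kappa_i=1$, semi-ampleness is immediate from Proposition~\ref{kappa1}. The remaining case is $\kappa_i=0$, where the goal becomes $K_X+\Delta_i\sim_{\mathbb{Q}}0$.

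The $\kappa_i=0$ case is handled by imitating the proof of Theorem~\ref{qabundancekappazero}. Passing to the minimal resolution $V\to X$ and then contracting $(-1)$-curves $E$ with $(K_V+\Delta_V)\cdot E=0$, I obtain $(S,\Delta_S)$ such that $K_V+\Delta_V$ is the pull-back of $K_S+\Delta_S$. Assume for contradiction that $K_S+\Delta_S\not\sim_{\mathbb{Q}}0$. Then for some sufficiently divisible $m>0$, the unique effective member $Z\in|m(K_S+\Delta_S)|$ decomposes into connected components $Z=\sum\mu_iY_i$, each of which is, by the argument of Steps~1 and~2 of the proof of Theorem~\ref{qabundancekappazero}, an indecomposable curve of canonical type. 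In particular $Y_i^2=0$, so $\mathcal{O}_{Y_i}(Y_i)$ is numerically trivial on $Y_i$. By Corollary~\ref{pic}, applied to the Picard scheme of $Y_i$---which is of finite type over $\overline{\mathbb{F}}_p$---the line bundle $\mathcal{O}_{Y_i}(Y_i)$ is automatically torsion. Proposition~\ref{icct1}(1), which holds in arbitrary positive characteristic and requires no cohomological vanishing hypothesis, then yields $\kappa(S,Y_i)\geq 1$. Since $Y_i$ is an effective sub-divisor of $Z$, this forces $\kappa(S,K_S+\Delta_S)=\kappa(S,Z)\geq\kappa(S,Y_i)\geq 1$, contradicting $\kappa_i=0$.

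The main obstacle that needs to be circumvented is exactly the one that forced Theorem~\ref{qabundancekappazero} to assume $k\neq\overline{\mathbb{F}}_p$: in the irrational-ruled sub-case, Step~9 of that proof uses Theorem~\ref{irratqfac} to bound the coefficient $\delta$ of $Y$ in $\Delta_S$ by $1$, which is what ultimately lets one conclude that $\mathcal{O}_Y(Y)$ is torsion, and in the rational sub-case, the analogous step uses Proposition~\ref{icct1}(2) via $H^1(S,\mathcal{O}_S)=0$. Over $\overline{\mathbb{F}}_p$, Corollary~\ref{pic} makes both of these detours unnecessary: the torsion of $\mathcal{O}_Y(Y)$ follows directly from $Y^2=0$, so the entire rational/irrational split collapses into the short argument above.
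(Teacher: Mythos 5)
Your proof is correct and follows essentially the same route as the paper: reduce to a $\mathbb{Q}$-divisor via the rational polytope argument, reduce to $\kappa=0$ via Theorem~\ref{fpnonvanishing}, Proposition~\ref{kappa1} and Proposition~\ref{fpnefbig}, and then apply Steps~1--2 of Theorem~\ref{qabundancekappazero} together with Proposition~\ref{icct1}(1). You are somewhat more explicit than the paper in noting that Corollary~\ref{pic} supplies the torsion hypothesis of Proposition~\ref{icct1}(1) and that this collapses the rational/irrational case split of Theorem~\ref{qabundancekappazero}, which the paper leaves implicit in the phrase ``over $\overline{\mathbb{F}}_p$''.
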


\begin{proof}
By the same proof as Theorem~\ref{abundance}, 
we may assume that $\Delta$ is a \Q-divisor. 
By Theorem~\ref{fpnonvanishing}, we have $\kappa(X, K_X+\Delta)\geq 0$. 
By Proposition~\ref{kappa1} and Proposition~\ref{fpnefbig}, 
we may assume $\kappa(X, K_X+\Delta)=0$. 
Then we can apply the argument 
of Step~1 and Step~2 in Theorem~\ref{qabundancekappazero}. 
By $(1)$ of Proposition~\ref{icct1}, 
we have $\kappa(S, Y)=1$ for indecomposable curves of canonical type $Y$
in $S$ over $\overline{\mathbb{F}}_p$. 
This contradicts $Z\neq0$ and $\kappa(S, Z)=0$.
\end{proof}

As an immediate corollary, 
we obtain the following basepoint free theorem.

\begin{thm}[Basepoint free theorem]\label{fpbpf}
Let $X$ be a projective normal surface over $\overline{\mathbb{F}}_p$ and 
let $D$ be a nef divisor. 
If $\kappa(X, qD-K_X) \geq 0$ for some positive rational number $q$, 
then $D$ is semi-ample. 
\end{thm}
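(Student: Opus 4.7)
The plan is to derive this directly from the abundance theorem already established in Part~3 (Theorem~\ref{fpabundance}), essentially by reading the hypothesis as producing an effective ``boundary'' $\Delta$ with $K_X+\Delta\sim_{\mathbb Q} qD$.

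First I would record that, since $X$ is a normal surface over $\overline{\mathbb F}_p$, Theorem~\ref{Q-factorial} guarantees that $X$ is $\mathbb Q$-factorial; in particular $K_X$ is $\mathbb Q$-Cartier and the expression $qD-K_X$ makes sense as a $\mathbb Q$-Cartier $\mathbb Q$-divisor. Next, I would unwind the hypothesis $\kappa(X,qD-K_X)\geq 0$: choose a positive integer $m$ so that $mq\in\mathbb Z$ and $m(qD-K_X)$ is an integral Cartier divisor with a nonzero global section. This yields an effective Cartier divisor $E$ with $E\sim m(qD-K_X)$. Setting $\Delta:=\tfrac{1}{m}E$, I obtain an effective $\mathbb Q$-divisor with the key relation
\[
K_X+\Delta\;\sim_{\mathbb Q}\;qD.
\]

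Now the nefness of $D$ (and $q>0$) makes $qD$ nef, hence $K_X+\Delta$ is nef. At this point the hypotheses of Theorem~\ref{fpabundance} are satisfied by $(X,\Delta)$, so $K_X+\Delta$ is semi-ample. Transporting semi-ampleness across the $\mathbb Q$-linear equivalence $qD\sim_{\mathbb Q} K_X+\Delta$ shows that $qD$ is semi-ample, and therefore so is $D$ itself (a positive multiple of a Cartier divisor is base point free precisely when some multiple of the divisor is).

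Candidly, there is no real obstacle: the content is entirely carried by Theorem~\ref{Q-factorial} and Theorem~\ref{fpabundance}, and the argument is a short formal deduction. The only minor point requiring care is the passage from ``Iitaka dimension $\geq 0$'' to the existence of an honest effective $\mathbb Q$-divisor $\Delta$ in the $\mathbb Q$-linear equivalence class of $qD-K_X$; this is handled by clearing denominators in $q$ and by choosing $m$ large enough that $m(qD-K_X)$ is Cartier, which is possible precisely because of $\mathbb Q$-factoriality. Everything else is bookkeeping.
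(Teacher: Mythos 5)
Your proof is correct and follows exactly the paper's own argument: use $\mathbb Q$-factoriality (Theorem~\ref{Q-factorial}) to make sense of $qD-K_X$, take an effective $\mathbb Q$-divisor $\Delta\sim_{\mathbb Q}qD-K_X$ from the hypothesis $\kappa(X,qD-K_X)\geq 0$, note $K_X+\Delta\sim_{\mathbb Q}qD$ is nef, and apply Theorem~\ref{fpabundance}. The only difference is that you spell out the bookkeeping (clearing denominators, etc.) which the paper leaves implicit.
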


\begin{proof}
Take $qD-K_X \sim_{\mathbb{Q}} \Delta$.
We obtain $qD \sim_{\mathbb{Q}} K_X+\Delta$ 
and can apply the abundance theorem.
\end{proof}

\subsection{Examples}

In this section, 
let $k$ be an algebraically closed field of arbitrary characteristic. 
We want to see the difference 
between $k=\overline{\mathbb{F}}_p$ and $k\neq \overline{\mathbb{F}}_p$ 
by looking at some examples. 

\begin{ex}[cf. Theorem~\ref{cont} and Theorem~\ref{fpcont}]
If $k\neq\fff$, then 
there exist a smooth projective surface $X$ over $k$ 
and an elliptic curve $C$ in $X$ such that, 
for an arbitrary positive real number $\epsilon$, 
$(K_X+(1+\epsilon)C)\cdot C<0$, $C^2<0$ and 
$C$ is not contractable. 
\end{ex}

\begin{proof}[Construction]
Consider the Answer~\ref{Hironaka} and its construction. 
There exist a smooth projective surface $X$ and 
an elliptic curve $C$ in $X$ such that 
$C^2=-1$ and $C$ is not contractable. 
Moreover, we have 
\begin{eqnarray*}
(K_X+(1+\epsilon)C)\cdot C
&=&(K_X+C)\cdot C+\epsilon C\cdot C\\
&=&\epsilon C\cdot C<0.
\end{eqnarray*} 
This is what we want to show.
\end{proof}

\begin{ex}[cf. Theorem~\ref{nonv} and Theorem~\ref{fpnonvanishing}]
If $k\neq\overline{\mathbb{F}}_p$, then 
there exist a smooth projective surface $X$ over $k$ and 
curves $C_1$ and $C_2$ in $X$ such that 
\begin{eqnarray*}
K_X+(1+\epsilon)C_1+(1-\epsilon)C_2 \equiv 0\,\,\,\,\,\,{\rm and}\\
\kappa(X, K_X+(1+\epsilon)C_1+(1-\epsilon)C_2)=-\infty
\end{eqnarray*}
for an arbitrary positive rational number $\epsilon$.
\end{ex}

\begin{proof}[Construction]
Let $P:=\mathbb{P}^1$ and let $E$ be an arbitrary elliptic curve. 
Set $X_0:=P \times E$. 
We construct $X$ by applying 
the elementary transform to \pr1-bundle $X_0$ at appropriate two points. 
Let $e_1$ and $e_2$ be points in $E$ which are linearly independent. 
Fix two different points $p_1$ and $p_2$ in $P$ and 
set $S_1:=\{p_1\} \times E$ and $S_2:=\{p_2\} \times E$. 
Then we see 
$$K_{X_0} \sim_{\mathbb{Q}}-(1+\epsilon)S_1-(1-\epsilon)S_2$$ 
for an arbitrary rational number $\epsilon$. 
Let $x_1:=(p_1, e_1)$ and $x_2:=(p_2, e_2)$. 
We take the elementary transform of $X_0$ at $x_1$ and $x_2$, 
and obtain $X$. 
(First blowup at $x_1$. 
Then the proper transform of the fiber 
through $x_1$ is a $(-1)$-curve. 
Second contract this $(-1)$-curve and we get another \pr1-bundle.
Repeat the same thing to $x_2$.) 
Let $C_1$ and $C_2$ be 
the proper transforms of $S_1$ and $S_2$ respectively, and  
$F_1$ and $F_2$ be the fibers corresponding to $x_1$ and $x_2$ respectively. 
Then we see 
$$K_X \sim_{\mathbb{Q}}
-(1+\epsilon)C_1-(1-\epsilon)C_2-\epsilon F_1+\epsilon F_2,$$ 
which implies 
$$K_X+(1+\epsilon)C_1+(1-\epsilon)C_2 \sim_{\mathbb{Q}} 
\epsilon (-F_1+F_2).$$ 
This divisor is numerically trivial. 
Here we want to show that $\kappa(X, -F_1+F_2)=-\infty$, 
that is, $-F_1+F_2$ is not a torsion. 
Consider the ruling $\pi: X \to E$ and
one of its sections $\sigma: E \to X$. 
Then we have $F_1=\pi^*e_1$ and $F_2=\pi^*e_2$. 
Linear independence of $e_1$ and $e_2$ shows 
$$\mathcal O_X(n(-F_1+F_2)) \not\simeq \mathcal O_X.$$ 
Indeed, if $\mathcal O_X(n(-F_1+F_2)) \simeq \mathcal O_X$, 
then we have $\pi^*\mathcal O_E(n(-e_1+e_2)) \simeq \mathcal O_X$. 
Then, we obtain 
$$\mathcal O_E(n(-e_1+e_2))\simeq \sigma^*\pi^*\mathcal O_E(n(-e_1+e_2)) 
\simeq \sigma^*\mathcal O_X \simeq \mathcal O_E.$$ 
This is a contradiction.
\end{proof}

\begin{ex}[cf. Theorem~\ref{qabundance} and Theorem~\ref{fpabundance}]
If $k\neq\overline{\mathbb{F}}_p$, then
there exist a projective smooth surface $X$ over $k$ and 
an elliptic curve $C$ in $X$ such that, 
for an arbitrary positive rational number $\epsilon$, 
$K_X+(1+\epsilon)C$ is nef, 
$\kappa(X, K_X+(1+\epsilon)C) \geq 0$ and 
$K_X+(1+\epsilon)C$ is not semi-ample. 
\end{ex}

\begin{proof}[Construction]
Set $X_0:=\mathbb{P}^2$. 
Let $C_0$ be an arbitrary elliptic curve in $X_0$ and 
let $P_1, \cdots, P_9$ be points in $C_0$ which are linearly independent. 
Blowup at these nine points, then we obtain the surface $X$ and 
let $C$ be the proper transform of $C_0$. 
By $K_{X_0}=-C_0$, we have $K_X=-C$. 
Then 
$$K_X+(1+\epsilon)C=\epsilon C$$ 
is nef by $C^2=0$. 
It is obvious that $\kappa(X, K_X+(1+\epsilon)C) \geq 0$. 
We prove that $K_X+(1+\epsilon)C$ is not semi-ample. 
It is sufficient to prove $\kappa(X, C)=0$. 
Suppose the contrary, 
that is, suppose $\kappa(X, C)\geq 1$. 
Then we obtain $nC \sim D$ for some non-zero effective divisor $D$ 
with $C\not\subset \Supp D$. 
Since $C\cdot D=0$, $\Supp(f_*(D)|_{C_0})$ must be contained in $P_1, \cdots, P_9$. 
This means 
$n_1P_1+ \cdots +n_9P_9:=f_*(D)|_{C_0} \sim 3nL|_{C_0}$. 
Here $L$ is a line in $X_0$. 
But this means $n_1P_1+ \cdots +n_9P_9=0$ 
in the group structure of $C_0$. 
This is a contradiction.
\end{proof}

\section{Log canonical surfaces}

\subsection{Log canonical singularities}
In this section, 
we describe the log canonical singularities in surfaces 
by using the contraction theorem (Theorem~\ref{cont}). 

\begin{dfn}
We say a pair $(X, \Delta)$ is {\em a log canonical surface} if 
a normal surface $X$ and an $\mathbb{R}$ divisor $\Delta$ 
satisfy the following properties. 
\begin{enumerate}
\item{$K_X+\Delta$ is $\mathbb{R}$-Cartier.}
\item{For an arbitrary proper birational morphism 
$f:Y \to X$ and the divisor $\Delta_Y$ defined by 
$$K_Y+\Delta_Y=f^*(K_X+\Delta),$$
the inequality $\Delta_Y\leq 1$ holds.}
\item{$\Delta$ is effective.}
\end{enumerate}
\end{dfn}

First, we pay attention to only one singular point.

\begin{dfn}
We say $(X, \Delta)$ is {\em a local situation of a log canonical surface} if 
it satisfies the following properties. 
\begin{enumerate}
\item{The pair $(X, \Delta)$ is a log canonical surface.}
\item{There exists only one singular point $x\in X$.}
\item{All prime components of $\Delta$ contain $x$.}
\end{enumerate}
\end{dfn}

\begin{thm}\label{lcgermsing}
Let $(X, \Delta)$ be a local situation of a log canonical surface and 
let $f:Y \to X$ be the minimal resolution of $X$. 
Then, there exists a sequence of proper birational morphisms 
$$
f:Y=:Y_0\overset{f_0}\to Y_1\overset{f_1}\to \cdots 
\overset{f_{m-1}}\to Y_m=:Z\overset{g}\to X 
$$
with the following properties. 
\begin{enumerate}
\item[(1)]{Each $Y_i$ is a normal \Q-factorial surface.}
\item[(2)]{Each $f_i$ is a proper birational morphism and 
$E_i:={\rm Ex}(f_i)$ is an irreducible curve.}
\item[(3)]{Each $E_i$ satisfies $(K_{Y_i}+E_i)\cdot E_i<0$.}
\item[(4)]{One of {\em{(a)}} and {\em{(b)}} holds. 
\begin{enumerate}
\item[(a)]{$g$ is an isomorphism.}
\item[(b)]{$\Delta=0$ and $E:={\rm Ex}(g)$ is an irreducible curve such that $(K_Y+E)\cdot E=0$.}
\end{enumerate}}\end{enumerate}
\end{thm}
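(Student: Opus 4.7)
The plan is to build the sequence inductively on the minimal resolution, using the contraction machinery of Part~2.

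\textbf{Setup.} First I would define $\Delta_Y$ by $K_Y+\Delta_Y=f^*(K_X+\Delta)$. Since $f$ is the minimal resolution, $K_Y\cdot E\ge 0$ for every $f$-exceptional curve $E$, and combining this with the negative definiteness of the exceptional intersection form yields $\Delta_Y\ge 0$; the log canonical hypothesis then forces $\Delta_Y\le 1$, so $\Delta_Y$ is an $\mathbb{R}$-boundary. The decisive identity is that every $f$-exceptional curve $C$ satisfies $(K_Y+\Delta_Y)\cdot C=0$, whence
\[
(K_Y+C)\cdot C=(C-\Delta_Y)\cdot C.
\]

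\textbf{Inductive step.} Assume stage $i$ yields a proper birational morphism $f_i\colon Y_i\to X$ with $Y_i$ normal $\mathbb{Q}$-factorial, and that the pushforward $\Delta_{Y_i}$ of $\Delta_Y$ still satisfies $K_{Y_i}+\Delta_{Y_i}=f_i^*(K_X+\Delta)$. I would search for an $f_i$-exceptional curve $E_i$ with $(K_{Y_i}+E_i)\cdot E_i<0$. If one exists, Lemma~\ref{non-qcar-rational} gives $E_i\simeq\mathbb{P}^1$, and $E_i^2<0$ by negative definiteness. To contract $E_i$ I follow Proposition~\ref{fffcont}: after a projective compactification as in the proof of Proposition~\ref{(3)}, choose an ample $H$ and $q\in\mathbb{Q}_{>0}$ with $(H+qE_i)\cdot E_i=0$, so that $H+qE_i$ is nef and big with trivial restriction to $E_i\simeq\mathbb{P}^1$, and then apply Theorem~\ref{keel} to conclude semi-ampleness. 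The Stein factorization of the resulting morphism defines $\phi_i\colon Y_i\to Y_{i+1}$, and Proposition~\ref{(5)} guarantees that $Y_{i+1}$ is again normal $\mathbb{Q}$-factorial; the relation $K+\Delta=f^*(K_X+\Delta)$ is preserved by pushforward. This produces (1)--(3).

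\textbf{Termination and final case.} Each $\phi_i$ strictly drops the number of exceptional curves over $X$, so the process halts at some $Z:=Y_m$ with $g:=f_m$ where no $g$-exceptional curve $E$ satisfies $(K_Z+E)\cdot E<0$. If there is no $g$-exceptional curve, then $g$ is a birational proper quasi-finite morphism between normal surfaces and hence an isomorphism by Zariski's main theorem, giving case (a). Otherwise fix a $g$-exceptional curve $E$ of coefficient $a\in[0,1]$ in $\Delta_Z$. Combining $(K_Z+\Delta_Z)\cdot E=0$ with $(K_Z+E)\cdot E\ge 0$ gives $(\Delta_Z-E)\cdot E\le 0$, and writing $\Delta_Z-E=(\Delta_Z-aE)+(a-1)E$, where $\Delta_Z-aE$ is effective and shares no component with $E$, yields
\[
0\ge(\Delta_Z-E)\cdot E=(\Delta_Z-aE)\cdot E+(a-1)E^2\ge(a-1)E^2.
\]
Since $E^2<0$, this forces $a=1$ and $(\Delta_Z-E)\cdot E=0$, so $\Delta_Z-E$ is disjoint from $E$. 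Connectedness of $g^{-1}(x)$ then rules out a second $g$-exceptional curve, so $E$ is unique and $(K_Z+E)\cdot E=0$. Finally, since every prime component of $\Delta$ passes through $x=g(E)$, its strict transform in $Z$ meets $E$; but that strict transform lies in the disjoint divisor $\Delta_Z-E$, forcing $\Delta=0$. This is case (b).

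\textbf{Main obstacle.} The principal technical point is performing each contraction in the quasi-projective local setting, since the contraction theorem of Part~2 is stated for projective surfaces; the compactification trick from Proposition~\ref{(3)} combined with Keel's theorem is what allows this. The most delicate combinatorial step is the termination analysis in case (b), where the log canonical constraint $a\le 1$ together with connectedness of the exceptional fiber must be used to enforce uniqueness of the residual curve and to conclude $\Delta=0$.
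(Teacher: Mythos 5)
Your proposal follows essentially the same route as the paper's own proof: contract curves with $(K+E)\cdot E<0$ via the $\mathbb{Q}$-factoriality--compactification--Keel machinery of Part~2, then show that at the terminal stage the log canonicity bound $a\le 1$, negative definiteness, and connectedness of the exceptional fiber force a unique residual curve of coefficient one with $(K_Z+E)\cdot E=0$ and $\Delta=0$. The one slip to fix is the citation in the inductive step: Lemma~\ref{non-qcar-rational} only shows $E_i$ is a rational curve (its normalization is $\mathbb{P}^1$), whereas what Keel's criterion and Proposition~\ref{p1cont} actually require is $E_i\simeq\mathbb{P}^1$ itself, which is exactly Theorem~\ref{adjunction}(1) applied with $r(K_{Y_i}+E_i)$ Cartier by $\mathbb{Q}$-factoriality; this is the reference the paper uses.
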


\begin{proof}
We assume that we obtain 
$$f:Y=:Y_0\overset{f_0}\to Y_1\overset{f_1}\to\cdots\overset{f_{j-1}}\to Y_j\overset{G}\to X$$ 
such that 
each $Y_i$ and each $f_i$ satisfy (1), (2) and (3). 

We prove that, if we can find a $G$-exceptional proper curve $E_j$ such that 
$(K_{Y_j}+E_j)\cdot E_j<0$, then 
we obtain a contraction of $E_j$ 
$$f_j:Y_j\to Y_{j+1}$$
to a \Q-factorial surface $Y_{j+1}$. 
If $X$ and $Y_j$ are proper, then we obtain the required morphism $f_j$
by Theorem~\ref{adjunction}, Proposition~\ref{p1cont}, Proposition~\ref{(3)} and 
Proposition~\ref{(5)}. 
Note that proper \Q-factorial surface is projective (cf. \cite[Lemma~2.2]{Fujino}). 
For the general case, take compactifications as follows. 
Let $\overline X$ be a proper normal surface and 
let $\overline \Delta$ be an \R-divisor on $\overline X$ such that 
$X\hookrightarrow \overline X$ is an open immersion, 
$(\overline X, \overline \Delta)$ is a local situation of log canonical surface and $\overline \Delta|_X=\Delta.$ 
We define $\overline{Y_j}$ by patching $Y_j$ and $\overline X$ along 
$Y_j\setminus {\rm Ex}(G)\simeq X\setminus \{x\}.$ 
Then, $\overline{Y_j}$ is \Q-factorial. 
Thus, we can reduce the problem to the case where $X$ and $Y_j$ are proper. 

If $G$ is an isomorphism, then we obtain (a). 
Thus, we may assume $G$ is not an isomorphism. 
Then, we can take a $G$-exceptional curve $E_j$. 
We obtain 
$$(K_{Y_j}+E_j)\cdot E_j\leq (K_{Y_j}+\Delta_j)\cdot E_j
=G^*(K_X+\Delta)\cdot E_j=0$$ 
where $\Delta_j$ is defined by $K_{Y_j}+\Delta_j=G^*(K_X+\Delta)$. 
We may assume $(K_{Y_j}+E_j)\cdot E_j=0$. 
In this case, the coefficient of $E_j$ in $\Delta_j$ is one. 

First, assume ${\rm Ex}(G)$ is reducible. 
Then, there exists a $G$-exceptional curve $E_j'$ 
such that $E_j \cap E_j'\neq \emptyset.$
Then, we have 
$$(K_{Y_j}+E_j')\cdot E_j'<(K_{Y_j}+\Delta_j)\cdot E_j'
=G^*(K_X+\Delta_j)\cdot E_j'=0.$$ 
This is what we want to show. 

Second, assume $E:={\rm Ex}(G)$ is irreducible. 
Since $(K_{Y_j}+E)\cdot E\leq 0$, we consider the two cases: 
$(K_{Y_j}+E)\cdot E<0$ or $(K_{Y_j}+E)\cdot E=0$. 
If $(K_{Y_j}+E)\cdot E<0$, then this means (a). 
Assume $(K_{Y_j}+E)\cdot E=0$. 
We show $\Delta=0$. 
If $\Delta\neq 0$, then we have
$$(K_{Y_j}+E)\cdot E<(K_{Y_j}+\Delta_j)\cdot E=0.$$ 
This means (b). 
\end{proof}

This theorem teaches us that 
non-\Q-factorial log canonical singularities are made by the case (b). 
Applying the same argument as above, 
we obtain the global version as follows.

\begin{thm}\label{lcsing}
Let $(X, \Delta)$ be a log canonical surface and 
let $f:Y \to X$ be the minimal resolution of $X$. 
Then, there exists a sequence of proper birational morphisms 
$$
f:Y=:Y_0\overset{f_0}\to Y_1\overset{f_1}\to \cdots 
\overset{f_{m-1}}\to Y_m=:Z\overset{g}\to X 
$$
with the following properties. 
\begin{enumerate}
\item[(1)]{Each $Y_i$ is a normal \Q-factorial surface.}
\item[(2)]{Each $f_i$ is a proper birational morphism and 
$E_i:={\rm Ex}(f_i)$ is an irreducible curve.}
\item[(3)]{Each $E_i$ satisfies $(K_{Y_i}+E_i)\cdot E_i<0$.}
\item[(4)]{One of {\em{(a)}} and {\em{(b)}} holds. 
\begin{enumerate}
\item[(a)]{$g$ is an isomorphism.}
\item[(b)]{$g({\rm Ex}(g))\cap\Supp\Delta=\emptyset$ and, for every point $Q\in g({\rm Ex}(g))$, 
$g^{-1}(Q)=:E$ is a proper irreducible curve such that $(K_Y+E)\cdot E=0$.}
\end{enumerate}}\end{enumerate}
In particular, $K_X$ and all prime components of $\Delta$ are \Q-Cartier.
\end{thm}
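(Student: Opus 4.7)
The plan is to run the same inductive contraction procedure as in Theorem~\ref{lcgermsing}, but globally on the minimal resolution $f:Y\to X$. At each stage, with $G_j:Y_j\to X$ the current composite, I would look among the $G_j$-exceptional curves for one $E_j$ satisfying $(K_{Y_j}+E_j)\cdot E_j<0$. For such a curve, Theorem~\ref{adjunction}(1) forces $E_j\simeq\mathbb{P}^1$, Proposition~\ref{p1cont} produces a contraction of $E_j$, Proposition~\ref{(3)} descends line bundles (its torsion hypothesis being automatic on $\mathbb{P}^1$), and Proposition~\ref{(5)} ensures the target remains \Q-factorial. Since the Picard number drops at every step, the procedure terminates with some $Z=Y_m$ and residual morphism $g:Z\to X$ whose $g$-exceptional curves all satisfy $(K_Z+E)\cdot E=0$.

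At termination I would carry out the dichotomy of Theorem~\ref{lcgermsing}, now point by point on $X$. Writing $\Delta_Z$ for the \R-divisor determined by $K_Z+\Delta_Z=g^*(K_X+\Delta)$, if for some $P\in g({\rm Ex}(g))$ the fiber $g^{-1}(P)$ were reducible, two adjacent exceptional curves $E,E'$ over $P$ would give $(K_Z+E')\cdot E'<(K_Z+\Delta_Z)\cdot E'=0$, contradicting termination; hence each fiber is a single irreducible curve $E$. The same style of inequality shows that if such an $E$ met $\Supp\Delta$, then $(K_Z+E)\cdot E<0$, again impossible. Thus either $g$ is an isomorphism (case (a)) or every exceptional fiber of $g$ is an irreducible curve disjoint from $\Supp\Delta$ (case (b)). The properness technicality encountered in Theorem~\ref{lcgermsing} is dispensed with in the same way: compactify $X$ to a proper normal $\overline X$ with an \R-divisor $\overline\Delta$ extending $\Delta$ and keeping the pair log canonical, glue each $Y_j$ to $\overline X$ along $X\setminus {\rm Sing}\,X$, and note that the resulting proper \Q-factorial surface is projective by \cite[Lemma~2.2]{Fujino}, so the contraction can be performed there and then restricted.

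For the final \Q-Cartier assertion, case (a) is immediate since $Z=X$ is \Q-factorial. In case (b) I would factor $g=g_r\circ\cdots\circ g_1$ as a sequence of contractions of individual curves, which is legitimate because the exceptional fibers of $g$ are disjoint and land at distinct points. For each prime component $\Delta_i$, the strict transform $\Delta_{Z,i}$ is \Q-Cartier and restricts trivially to every exceptional curve, so Proposition~\ref{(3)} descends it one step at a time to a \Q-Cartier divisor on $X$ pushing forward to a multiple of $\Delta_i$. For $K_X$, Theorem~\ref{adjunction}(2) together with $(K_Z+E)\cdot E=0$ shows $(K_Z+E)|_E$ is torsion in $\Pic E$, so Proposition~\ref{(3)} lets $K_Z+E$ descend analogously, and pushing forward identifies the descent with a multiple of $K_X$. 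The hard part will be this final iterative descent: at each intermediate stage one must verify that the Cartier restriction to the next exceptional curve remains torsion so that Proposition~\ref{(3)} continues to apply, since, unlike in the earlier steps of the construction, the exceptional curves $E$ contracted by $g$ need not be $\mathbb{P}^1$ and one must rely on the specific torsion output of Theorem~\ref{adjunction}(2).
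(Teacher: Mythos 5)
Your proposal follows essentially the same route as the paper: the paper proves Theorem~\ref{lcsing} by running the inductive contraction argument of Theorem~\ref{lcgermsing} globally, and your reconstruction of that argument (including the termination dichotomy, the compactification trick, and the use of Theorem~\ref{adjunction}, Proposition~\ref{p1cont}, Proposition~\ref{(3)} and Proposition~\ref{(5)}) matches it. The concern you raise at the end about re-verifying torsion at each intermediate stage is not actually an obstacle: in case (b) the exceptional curves of $g$ are pairwise disjoint (they lie over distinct points and, by the termination analysis, $E\cdot(\Delta_Z-E)=0$), so one may check $\mathbb Q$-Cartierness of $K_X$ and of each $\Delta_i$ locally near one singular point at a time, where $g$ contracts a single irreducible curve, and Theorem~\ref{adjunction}(2) together with Proposition~\ref{(3)} applies directly; the other contractions do not alter the neighborhood of the curve under consideration, so no iteration of the torsion verification is needed.
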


\begin{proof}
This follows from the same argument as Theorem~\ref{lcgermsing}.
\end{proof}

\begin{rem}
By Theorem~\ref{p1vanishing}, 
we see that $Z$ has at worst rational singularities. 
But we do not use this fact in this paper. 
\end{rem}

\subsection{Minimal model theory for log canonical surfaces}

In this section, 
we consider the minimal model theory for log canonical surfaces. 
We have already proved the cone theorem in Section~4. 
Thus let us consider the contraction theorem.

\begin{thm}[Contraction theorem]\label{lccont}
Let $(X, \Delta)$ be a projective log canonical surface and 
let $R=\mathbb{R}_{\geq 0}[C]$ be a $(K_X+\Delta)$-negative extremal ray. 
Then there exists a morphism $\phi_R: X \to Y$ 
to a projective variety $Y$ with the following properties{\em{:}} {\em{(1)-(5)}}.
\begin{enumerate}
\item{Let $C'$ be a curve on $X$. 
Then $\phi_R(C')$ is one point iff $[C'] \in R$.}
\item{$\phi_*(\mathcal{O}_X)=\mathcal{O}_Y$
\item{If $L$ is a line bundle with $L\cdot C=0$, 
then $nL=(\phi_R)^*L_Y$ for some line bundle $L_Y$ on $Y$ and 
for some positive integer $n$.}
\item{$\rho(Y)=\rho(X)-1$.}
\item{$(Y, (\phi_R)_*(\Delta))$ is a log canonical surface if $\dim Y=2$.}}
\end{enumerate}
\end{thm}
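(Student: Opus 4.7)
The plan is to reduce the statement to Theorem~\ref{cont} by passing through the partial \Q-factorialization supplied by Theorem~\ref{lcsing}, then descending the resulting contraction back to $X$.

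First, apply Theorem~\ref{lcsing} to obtain a proper birational morphism $g\colon Z\to X$ with $Z$ normal and \Q-factorial, where each $g$-exceptional curve $E_j$ satisfies $(K_Z+E_j)\cdot E_j=0$ and $g(E_j)\cap\Supp\Delta=\emptyset$. Setting $\Delta_Z:=g_*^{-1}\Delta+\sum_j E_j$, one has $K_Z+\Delta_Z=g^*(K_X+\Delta)$ and $\Delta_Z$ is an $\mathbb{R}$-boundary, so $(Z,\Delta_Z)$ satisfies hypothesis (QF). If $g$ is an isomorphism the conclusion is immediate from Theorem~\ref{cont}; assume otherwise. Pick a curve $C\subset X$ with $[C]\in R$, let $\widetilde C:=g_*^{-1}C$, and observe $(K_Z+\Delta_Z)\cdot\widetilde C=(K_X+\Delta)\cdot C<0$. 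Apply Theorem~\ref{ct} to $(Z,\Delta_Z)$ to decompose $[\widetilde C]$ as a $(K_Z+\Delta_Z)$-nef class plus a non-negative combination of $(K_Z+\Delta_Z)$-negative extremal ray generators of $\overline{NE}(Z)$. Since each $[E_j]\in(K_Z+\Delta_Z)^\perp$ and $R$ is extremal on $X$, pushing forward by $g_*$ yields a $(K_Z+\Delta_Z)$-negative extremal ray $R'\subset\overline{NE}(Z)$ with $g_*(R')=R$. Apply Theorem~\ref{cont} to $(Z,\Delta_Z,R')$ to obtain $\phi_{R'}\colon Z\to W$ together with properties (1)--(5) on $Z$.

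Next, construct $\phi_R\colon X\to Y$ by descent. Because $[E_j]\notin R'$ for every $g$-exceptional $E_j$, the morphism $\phi_{R'}$ sends $E_j$ birationally to a curve $E_j':=\phi_{R'}(E_j)\subset W$ rather than to a point, so $\phi_{R'}$ does not factor through $g$ directly. The remedy is to construct a further birational contraction $\psi\colon W\to Y$ collapsing each $E_j'$ and to define $\phi_R$ by the equation $\phi_R\circ g=\psi\circ\phi_{R'}$. To build $\psi$, use the discrepancy identity $K_Z+\Delta_Z=\phi_{R'}^*(K_W+\Delta_W)+\epsilon\widetilde D$ with $\widetilde D=\Exc(\phi_{R'})$ and $\epsilon\ge 0$ (where $\Delta_W:=\phi_{R'*}\Delta_Z$). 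Intersecting with $E_j$ and exploiting $(K_Z+\Delta_Z)\cdot E_j=0$ gives $(K_W+\Delta_W)\cdot E_j'\le 0$, hence $(K_W+E_j')\cdot E_j'\le 0$. By Theorem~\ref{adjunction}, $E_j'$ is either a $\mathbb{P}^1$, contractable via Proposition~\ref{p1cont} and Keel's theorem (Theorem~\ref{keel}), or a curve of arithmetic genus one with $(K_W+E_j')\cdot E_j'=0$, contractable by the same mechanism that appears in case (b) of Theorem~\ref{lcsing}. Iterating yields $\psi$; then $\phi_R$ is well-defined as a morphism because on the open set $X\setminus g(\Exc(g))\cong Z\setminus\Exc(g)$ it agrees with $\psi\circ\phi_{R'}\circ g^{-1}$, and it sends each singular point $g(E_j)$ to the point $\psi(E_j')$. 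Properties (1)--(4) of $\phi_R$ are inherited from $\phi_{R'}$ via this compatibility, and property (5) follows by a discrepancy comparison on a common log resolution that dominates $Z$.

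The chief obstacle is the descent step: verifying that the $E_j'$ can be contracted on $W$ in a manner compatible with $g$, so that $\phi_R$ exists as a morphism and $\rho(Y)=\rho(X)-1$ holds. The decisive numerical input is the intersection $\widetilde D\cdot E_j$, which determines the self-intersection $(E_j')^2$ on $W$; when $\widetilde D$ meets some $E_j$ the contraction of $E_j'$ requires a careful iteration, and a separate treatment is needed when $\phi_{R'}$ is of fibering type rather than divisorial.
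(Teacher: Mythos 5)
Your overall plan---pass to the partial \Q-factorialization $g\colon Z\to X$ from Theorem~\ref{lcsing}, contract the lifted ray $R'$ on $Z$ via Theorem~\ref{cont}, then descend---is genuinely different from the paper's argument, but the descent step has a real gap that you have not closed. You need to construct $\psi\colon W\to Y$ collapsing each $E_j'=\phi_{R'}(E_j)$, and for the non-$\mathbb{P}^1$ case (arithmetic genus one, $(K_W+E_j')\cdot E_j'=0$, $(E_j')^2<0$) you invoke ``the same mechanism that appears in case (b) of Theorem~\ref{lcsing}.'' But Theorem~\ref{lcsing} is purely descriptive: it classifies the structure of a birational morphism $Z\to X$ that is \emph{already given}; it does not furnish a method for producing such a contraction. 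To contract $E_j'$ on $W$ you would, by Theorem~\ref{keel}, need a nef and big divisor $G'$ with $G'\cdot E_j'=0$ whose restriction $G'\vert_{E_j'}$ is torsion. On $Z$ the analogous divisor $g^*H_X$ restricts trivially to $E_j$ precisely because it is pulled back from $X$ --- but after passing to $W$ there is no such $X$ yet (that is what you are trying to build), and for $G'=H_W+qE_j'$ there is no a priori reason for $G'\vert_{E_j'}$ to be torsion when $k\neq\overline{\mathbb{F}}_p$; compare the Hironaka-type obstruction in Answer~\ref{Hironaka}. You also correctly flag but do not resolve the fibering case. So the argument is not circular, but it is incomplete exactly where it has to be most careful.

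The paper avoids this entirely by never leaving $X$. The key observation is that the proof of Theorem~\ref{cont} for $C^2\le 0$ only uses \Q-factoriality through the \Q-Cartier-ness of $K_X$, $C$, and the components of $\Delta$, all of which Theorem~\ref{lcsing} supplies except possibly $C$. The paper then establishes this missing piece directly: for $C^2=0$ it produces a Cartier curve $D$ with $\mathbb{R}_{\ge 0}[C]=\mathbb{R}_{\ge 0}[D]$ from a genus-one fibration on the minimal resolution (Proposition~\ref{genericintegral}); for $C^2<0$, Lemma~\ref{qcartierlemma} and Proposition~\ref{qcartier} show by a discrepancy computation on $Z$ that any non-\Q-factorial singular point lying on $C$ is, in fact, \Q-factorial (its exceptional curve is forced to be $\mathbb{P}^1$), so $C$ is \Q-Cartier. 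The case $C^2>0$ is handled with Bertini and Lemma~\ref{lcabundancelemma}. This direct route sidesteps the descent problem you ran into; if you want to pursue your approach, the missing ingredient is a proof that $E_j'$ remains contractible on $W$, which ultimately has to reduce to a torsion statement for a line bundle on $E_j'$, and that would need a fresh argument.
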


\begin{proof}[Proof of the case where $C^2>0$]
First, we prove that there exists a curve $D$ in $X$ 
such that 
$D$ is Cartier, $D$ is ample and $\mathbb{R}_{\geq 0}[C]=\mathbb{R}_{\geq 0}[D]$. 
Since $X$ is a projective normal surface, 
we can apply Bertini's theorem. 
Then the complete linear system of a very ample divisor has 
a smooth member $D$ 
such that $D \cap {\rm Sing}(X)=\emptyset$. 
Note that $D$ is a Cartier divisor. 
Let $f:X' \to X$ be the minimal resolution and 
let $D'$ be the proper transform of $D$. 
Since $f^*(C)$ is a nef and big divisor, 
we obtain 
$$nf^*(C)\sim D'+E$$
for some effective divisor $E$ and some positive integer $n$. 
By sending this equation by $f_*$, we obtain 
$$nC\sim D+f_*(E).$$
Since $\mathbb{R}_{\geq 0}[C]$ is extremal, 
we have $\mathbb{R}_{\geq 0}[C]=\mathbb{R}_{\geq 0}[D]$. 
Thus we obtain $\rho(X)=1$, 
because we can apply the same argument 
as the one in the proof of Theorem~\ref{cont}. 
Set $Y:={\rm Spec}\,k$. 
Then $\phi_R: X \to Y$ satisfies (1), (2) and (4).
We want to prove (3).
This follows from Lemma~\ref{lcabundancelemma} 
because $K_X+\Delta$ is anti-ample. 
\end{proof}

\begin{lem}\label{lcabundancelemma}
Let $(X, \Delta)$ be a projective log canonical surface. 
Let $L$ be a nef line bundle such that $L-(K_X+\Delta)$ is ample.
Then, $L$ is semi-ample.
\end{lem}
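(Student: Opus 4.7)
My plan is to pass to the minimal resolution and split into cases by the Kodaira dimension $\kappa(Y, L_Y)$. Let $f\colon Y \to X$ be the minimal resolution and write $K_Y + \Delta_Y = f^*(K_X+\Delta)$. Since $(X,\Delta)$ is log canonical and the discrepancies of the minimal resolution of a normal surface are non-positive, $\Delta_Y$ is an effective $\mathbb{R}$-boundary on the smooth (hence $\mathbb{Q}$-factorial) surface $Y$. Setting $L_Y := f^*L$, we have that $L_Y$ is nef Cartier and $L_Y - (K_Y+\Delta_Y) = f^*A$ is nef and big, where $A := L - (K_X+\Delta)$ is ample. Since $f_*\mathcal O_Y = \mathcal O_X$ and $L_Y = f^*L$, semi-ampleness of $L_Y$ on $Y$ will yield semi-ampleness of $L$ on $X$, so I work on $Y$.

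I then compute $\kappa(Y, L_Y)$ under the assumption $L_Y \not\equiv 0$. From the numerical equality $L_Y = K_Y + \Delta_Y + f^*A$, combined with $\Delta_Y \geq 0$ and the fact that $L_Y \cdot f^*A > 0$ for a nonzero nef class paired with a nef and big one (via Kodaira's lemma plus Hodge index), I obtain $-L_Y \cdot K_Y > 0$ in the case $L_Y^2 = 0$. Together with $h^2(Y, mL_Y) = h^0(Y, K_Y - mL_Y) = 0$ for $m \gg 0$ (since $L_Y \not\equiv 0$ forces $(K_Y - mL_Y) \cdot H < 0$ for any ample $H$ and large $m$), Riemann--Roch on $Y$ gives $\kappa(Y, L_Y) \geq 1$. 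If $\kappa = 1$, i.e.\ $L_Y^2 = 0$, semi-ampleness follows from Proposition~\ref{kappa1}. If $\kappa = 2$, i.e.\ $L_Y^2 > 0$, then $L_Y$ is nef and big and Keel's theorem (Theorem~\ref{keel}) reduces the problem to showing that $L_Y|_{E(L_Y)}$ is semi-ample.

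For the nef-and-big case I analyse $E(L_Y)$. Every $f$-exceptional curve $E_i$ lies in $E(L_Y)$ since $L_Y \cdot E_i = L \cdot f_*E_i = 0$, and each such $E_i$ is isomorphic to $\mathbb{P}^1$ by the minimality of $f$. For any non-exceptional component $C \subset E(L_Y)$ we have $f_*C \neq 0$ and hence $f^*A \cdot C > 0$, which forces $(K_Y+\Delta_Y)\cdot C < 0$. A case distinction on whether $C$ is a prime component of $\Delta_Y$, using $\Delta_Y \geq 0$ and $C^2 \leq 0$ (the latter from Hodge index applied to $L_Y \cdot C = 0$ with $L_Y^2 > 0$), then yields $(K_Y+C) \cdot C < 0$; since $K_Y+C$ is Cartier on the smooth surface $Y$, Theorem~\ref{adjunction} gives $C \cong \mathbb{P}^1$. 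Thus $L_Y|_C \cong \mathcal O_{\mathbb P^1}$ is trivial on each component, so $L_Y|_{E(L_Y)}$ is semi-ample, and Keel's theorem yields $L_Y$ semi-ample.

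The remaining degenerate case is $L \equiv 0$, where $-(K_X+\Delta)$ is ample and the task reduces to showing $L$ is torsion. When $k = \overline{\mathbb F}_p$ this is immediate from Corollary~\ref{pic}. For a general base field of positive characteristic I expect to combine an $H^1(Y, \mathcal O_Y) = 0$ vanishing for log Fano surfaces (obtainable on the smooth surface $Y$ via the classification of surfaces or via the relative Kawamata--Viehweg vanishing of \cite{KK} along $f$) with the Leray spectral sequence to conclude that $L_Y$, and hence $L$, is torsion. This log-Fano vanishing in arbitrary positive characteristic is the main obstacle I foresee; the rest of the argument runs along the pattern of results already established in Parts~2 and 3.
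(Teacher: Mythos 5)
The paper's proof is much shorter and sidesteps your entire case analysis. It applies Bertini to the ample divisor $A := L-(K_X+\Delta)$ to produce a smooth curve $C \sim n A$ with $C\cap {\rm Sing}(X)=\emptyset$ and $C\not\subset\Supp\Delta$, so that on the minimal resolution $f\colon X'\to X$
$$f^*L \sim_{\mathbb Q} K_{X'}+\Delta'+\tfrac1n C'$$
with $\Delta'+\tfrac1n C'$ an $\mathbb R$-boundary and $f^*L$ nef; the already-proved abundance theorems (Theorem~\ref{abundance}, Theorem~\ref{fpabundance}) then give semi-ampleness of $f^*L$, hence of $L$. The point you miss is that an \emph{ample} (not just nef and big) $L-(K_X+\Delta)$ is precisely what makes such a Bertini reduction available, converting the lemma into an instance of a statement already established in Part~2.

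Your route, while in the same spirit of passing to the minimal resolution, has two genuine gaps. First, in the nef-and-big case, knowing that $L_Y$ restricts to a degree-zero (hence trivial) line bundle on every irreducible component of $E(L_Y)$ does \emph{not} imply $L_Y|_{E(L_Y)}$ is semi-ample: if a connected component of $E(L_Y)$ contains a cycle of curves, its $\operatorname{Pic}^0$ is a positive-dimensional torus, and a line bundle trivial on each component can fail to be torsion on the union. Such cycles are not excluded by your hypotheses (exceptional divisors of log canonical singularities can be cycles of rational curves, and a non-exceptional $\mathbb{P}^1$ could close up another cycle). Relatedly, your parenthetical claim that each $f$-exceptional curve of the minimal resolution is $\mathbb P^1$ is false — simple elliptic and cusp singularities have elliptic or cycle-of-$\mathbb{P}^1$ exceptional loci; you do not actually need the claim there (exceptional curves have $L_Y$ trivial, not merely degree zero, by pullback), but it flags a slip. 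Second, the case $L\equiv 0$, i.e.\ $-(K_X+\Delta)$ ample, requires proving $L$ is torsion over general $k$; you correctly observe this is nontrivial in positive characteristic and leave it open. In the paper this is subsumed into the abundance theorem (whose $\kappa=0$ analysis via Theorem~\ref{qabundancekappazero} and the non-vanishing Theorem~\ref{nonv} is exactly what handles such degenerate cases), so reducing to abundance rather than reproving its pieces is both shorter and avoids both gaps.
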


\begin{proof}
By Bertini's theorem, there exists a smooth curve $C$ such that 
$$nL-n(K_X+\Delta) \sim C,$$ 
$C \cap {\rm Sing}(X)=\emptyset$ and $C$ is not a component of $\Delta$. 
Let $f:X' \to X$ be the minimal resolution and 
let $C'$ be the proper transform of $C$. 
Then we obtain 
$$nf^*(L)-nf^*(K_X+\Delta) \sim f^*(C).$$ 
Since $f^*(C)=C'$, we have 
$$f^*(L)\sim_{\mathbb{Q}} K_{X'}+\Delta'+\frac{1}{n}C'$$
where $\Delta'$ is defined by $K_{X'}+\Delta'=f^*(K_X+\Delta)$. 
Since $\Delta'+(1/n)C'$ is a boundary, 
$f^*(L)$ is semi-ample
by Theorem~\ref{abundance} and Theorem~\ref{fpabundance}. 
Therefore so is $L$.
\end{proof}

In the proof of the case where $C^2\leq 0$ in Theorem~\ref{cont}, 
we only use the assumption of \Q-factoriality in the form that 
$K_X$ and $C$ are \Q-Cartier and 
$K_X+\Delta$ is $\mathbb{R}$-Cartier. 
Since $K_X$ is \Q-Cartier and 
$K_X+\Delta$ are $\mathbb{R}$-Cartier by Theorem~\ref{lcsing}, 
it is sufficient to prove that $C$ is \Q-Cartier.

\begin{proof}[Proof of the case where $C^2=0$]
It is sufficient to prove that 
$\mathbb{R}_{\geq 0}[C]=\mathbb{R}_{\geq 0}[D]$ 
for some \Q-Cartier curve $D$. 
Let $f:X' \to X$ be the minimal resolution. 
Since $f^*(C)^2=0$ and $f^*(C)\cdot K_{X'}<0$, 
we obtain $\kappa(X', f^*(C))=1$. 
Therefore, $f^*(C)$ is semi-ample by Proposition~\ref{kappa1}. 
We consider the fibration $\pi:X' \to B$ obtained by 
the complete linear system $|nf^*(C)|$ for some $n\gg 0$. 
For an arbitrary $f$-exceptional curve $E$, we have $E\cdot f^*(C)=0$. 
This means that an arbitrary exceptional curve is in some fiber of $\pi$. 
Thus there exists an integral fiber $D'$ of $\pi$ 
with $D' \cap {\rm Ex}(f)=\emptyset$
by Proposition~\ref{genericintegral}. 
This means that $f(D')=D$ is Cartier and $nC \equiv D$. 
This is what we want to show.
\end{proof}

\begin{prop}\label{genericintegral}
Let $\pi: X \to S$ be a dominant morphism 
from a normal surface $X$ to a curve $S$ 
with $\pi_*\mathcal{O}_X=\mathcal{O}_S$. 
Then there exists a non-empty open subset $S'$ in $S$ such that 
all scheme-theoretic fibers of 
$\pi|_{\pi^{-1}(S')}:\pi^{-1}(S') \to S'$ are integral. 
\end{prop}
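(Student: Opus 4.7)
The plan is to first reduce to the case that $S$ is smooth by restricting to the smooth locus $S \setminus \operatorname{Sing}(S)$, which is a non-empty open subset since $S$ is one-dimensional. After this reduction, the dominant morphism $\pi$ from the integral $X$ to the regular one-dimensional $S$ is automatically flat, because $\mathcal{O}_X$ has no $\mathcal{O}_S$-torsion.

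Next, I would observe that the generic fiber $X_\eta = X \times_S \operatorname{Spec} K(S)$ is integral. Indeed, on any affine chart $\operatorname{Spec} B \subset X$ dominating $\operatorname{Spec} A \subset S$, the inclusion $A \hookrightarrow B$ realizes $B \otimes_A K(A)$ as a localization of the domain $B$ at $A \setminus \{0\}$, which sits inside $\operatorname{Frac}(B)$ and is therefore a domain. Moreover, since $X$ is normal, so is $X_\eta$, and the hypothesis $\pi_*\mathcal{O}_X = \mathcal{O}_S$ gives $H^0(X_\eta, \mathcal{O}_{X_\eta}) = K(S)$; because this subring is integrally closed in $K(X_\eta) = K(X)$, we deduce that $K(S)$ is algebraically closed in $K(X)$.

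The main step is then to spread out the integrality of $X_\eta$ to an open neighborhood of the generic point. I would invoke the standard constructibility results for fiber properties (as in EGA IV, Section 9) to see that the locus of $s \in S$ at which $X_s$ is integral is constructible, so, containing $\eta$, it contains a non-empty open subset. The irreducibility of almost all fibers follows from the irreducibility of $X_\eta$ together with Chevalley's theorem applied to the finitely many vertical closed subvarieties that could contribute extra components. For reducedness, note that $X$ is normal and two-dimensional, hence Cohen--Macaulay, so by flatness the closed fibers are Cohen--Macaulay curves; these are reduced as soon as they are generically reduced, which is inherited from $X_\eta$ through flatness.

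The main obstacle is the positive-characteristic phenomenon whereby a morphism with integral generic fiber can still have closed fibers that fail to be integral, as happens for Frobenius-like finite covers (where the generic fiber is an inseparable field extension rather than a genuine integral $k$-scheme after specialization). The hypothesis $\pi_*\mathcal{O}_X = \mathcal{O}_S$ is essential precisely because it forces $K(S)$ to be algebraically closed in $K(X)$, ruling out these finite-cover pathologies and ensuring that the spreading-out produces integral closed fibers over a dense open subset of $S$.
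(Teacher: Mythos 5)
The paper offers no argument of its own here: the proof is just a citation to \cite[Corollary~7.3]{Badescu}, so there is no method in the paper to compare against, and your self-contained argument is welcome. Your overall plan (flatness, integrality and normality of the generic fiber, spreading out via EGA~IV, reducedness via Cohen--Macaulayness) is the right one, and the observation that $K(S)$ is algebraically closed in $K(X)$ is exactly the crucial input.

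There is, however, a genuine gap at the spreading-out step. The constructibility and openness results of EGA~IV, Section~9 concern the locus where the fibers are \emph{geometrically} integral, not merely integral --- the set of $s$ with $X_s$ integral over its own residue field need not be constructible at all. To invoke these results and land in a dense open subset of $S$, you must first show that $X_\eta$ is \emph{geometrically} integral over $K(S)$. What you establish, that $K(S)$ is algebraically closed in $K(X)$, gives geometric irreducibility of $X_\eta$; but geometric \emph{reducedness} additionally requires the extension $K(X)/K(S)$ to be separable, and in your final paragraph you only assert, without proof, that algebraic closedness ``rules out'' the inseparability pathology. The implication does hold, but it uses a specific feature of the setup: since $k$ is perfect and $S$ is a curve, $[K(S):K(S)^p]=p$, so $K(S)^{1/p}=K(S)(a)$ with $a^p=b\in K(S)$; inseparability of $K(X)/K(S)$ is equivalent to $b\in K(X)^p$, which by uniqueness of $p$-th roots would force $a\in K(X)$, and since $a$ is algebraic of degree $p>1$ over $K(S)$ this contradicts the algebraic closedness you proved. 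Once you insert this argument, $X_\eta$ is geometrically integral and the EGA spreading-out (together with the fact that closed fibers live over $k=\bar k$, where integral and geometrically integral coincide) finishes the proof. As a minor point, the remark about ``Chevalley's theorem applied to the finitely many vertical closed subvarieties'' is not well formed --- every fiber is a vertical subvariety --- and is superfluous once the EGA openness statement is used.
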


\begin{proof}
See, for example, \cite[Cororally~7.3]{Badescu}.
\end{proof}

For the proof of the case where $C^2<0$, 
we consider the relation between the non-\Q-factorial 
log canonical singularities and 
extremal curves $C$ with $C^2<0$. 
Since we want to prove that $C$ is \Q-Cartier, 
it is necessary to consider the case where 
$C$ passes through the singular points of (b) in Theorem~\ref{lcgermsing}. 
The following lemma teaches us these singularities are actually \Q-factorial.

\begin{lem}\label{qcartierlemma}
Let $(X, \Delta=0)$ be a local situation of a log canonical surface and 
let $x$ be the singular point of $X$. 
Assume that this singularity is {\em (b)} in Theorem~\ref{lcgermsing}. 
If a proper curve $C$ in $X$ satisfies 
$C\cdot K_X<0$, $C^2<0$ and $x \in C$, then $X$ is \Q-factorial. 
\end{lem}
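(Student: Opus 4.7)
The strategy is to prove that the exceptional curve $E$ of the partial resolution $g\colon Z\to X$ furnished by Theorem~\ref{lcgermsing} is isomorphic to $\mathbb P^1$; once this is established, $\Pic^0(E)=0$, and Proposition~\ref{(3)} gives the \Q-factoriality of $X$ immediately. Indeed, given any Weil divisor $D$ on $X$ with strict transform $D_Z$ on $Z$, set $\tilde D=D_Z+\beta E$ with $\beta$ chosen so that $\tilde D\cdot E=0$; then a positive multiple $n\tilde D$ is Cartier on the \Q-factorial $Z$, and $n\tilde D|_E$ is a degree-zero line bundle on $\mathbb P^1$, hence trivial. Proposition~\ref{(3)} then produces a further multiple $mn\tilde D=g^*L_X$ with $L_X$ Cartier on $X$, and applying $g_*$ yields $mnD=L_X$. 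So the whole argument reduces to showing $E\simeq\mathbb P^1$.

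By Theorem~\ref{lcgermsing}, $Z$ is normal and \Q-factorial, $g^{-1}(x)=E$ is an irreducible curve with $(K_Z+E)\cdot E=0$, and $g^*K_X=K_Z+E$. Let $C_Z$ be the strict transform of $C$; since $x\in C$, $C_Z\neq E$ and $C_Z\cdot E>0$. The projection formula gives $(K_Z+E)\cdot C_Z=K_X\cdot C<0$, and subtracting $E\cdot C_Z>0$ then yields $K_Z\cdot C_Z<0$. Writing the Mumford pull-back $g^*C=C_Z+aE$ with $a=-(C_Z\cdot E)/E^2>0$, the identity $C^2=(g^*C)\cdot C_Z=C_Z^2+aC_Z\cdot E<0$ forces $C_Z^2<0$. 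Theorem~\ref{adjunction}(1), applied on the \Q-factorial $Z$ to $C_Z$ (using $(K_Z+C_Z)\cdot C_Z<0$), then gives $C_Z\simeq\mathbb P^1$.

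Next I would contract $C_Z$ and examine the image of $E$. Following the template used in the $C^2<0$ case of Theorem~\ref{cont} (Proposition~\ref{p1cont} together with Keel's theorem, Propositions~\ref{(3)} and \ref{(5)}, compactifying as in the proof of Theorem~\ref{lcgermsing} if necessary), one obtains a birational morphism $h\colon Z\to W$ to a \Q-factorial surface $W$ with ${\rm Ex}(h)=C_Z$. Let $E_W:=h_*E$; writing $h^*K_W=K_Z-\delta C_Z$ and $h^*E_W=E+\alpha C_Z$ with $\delta,\alpha$ determined by orthogonality to $C_Z$, one computes
\[\alpha-\delta=-\frac{(K_Z+E)\cdot C_Z}{C_Z^2}<0\]
(numerator and denominator are both negative), and the projection formula gives
\[(K_W+E_W)\cdot E_W=\bigl(K_Z+E+(\alpha-\delta)C_Z\bigr)\cdot E=(\alpha-\delta)(C_Z\cdot E)<0.\]
Theorem~\ref{adjunction}(1), this time applied on the \Q-factorial $W$, forces $E_W\simeq\mathbb P^1$. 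Finally, since $h|_E\colon E\to E_W$ is a surjective birational morphism of irreducible reduced curves, the injection $\mathcal O_{E_W}\hookrightarrow(h|_E)_*\mathcal O_E$ has skyscraper cokernel, so the long exact sequence yields $p_a(E)\le p_a(E_W)=0$. An irreducible reduced curve of arithmetic genus zero is $\mathbb P^1$, so $E\simeq\mathbb P^1$.

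The hard part will be the construction of $h$ and the Mumford pull-back identities on the (possibly singular, possibly non-projective) surfaces $Z$ and $W$: one must justify the existence of $h$ outside the projective setting and the formulas $h^*K_W=K_Z-\delta C_Z$, $h^*E_W=E+\alpha C_Z$. Once these technicalities are dispatched, the arithmetic-genus comparison and the descent via Proposition~\ref{(3)} are routine.
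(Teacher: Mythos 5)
Your proposal is correct and follows essentially the same route as the paper: reduce to showing $E\simeq\mathbb P^1$ (via Propositions~\ref{(3)}/\ref{(5)}), prove $C_Z\simeq\mathbb P^1$ from $C_Z^2<0$ and $(K_Z+C_Z)\cdot C_Z<0$, contract $C_Z$, and compute the discrepancy to show $(K_W+E_W)\cdot E_W<0$ so that the image of $E$ is $\mathbb P^1$. Your coefficient $\alpha-\delta$ is exactly $-d$ for the paper's discrepancy $d$, and the sign computations agree; the arithmetic-genus comparison at the end just makes explicit the paper's assertion that a birational image $\mathbb P^1$ forces $E\simeq\mathbb P^1$.
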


\begin{proof}
We use the notation in (b) of Theorem~\ref{lcsing}. 
It is sufficient to prove that $E\simeq \mathbb{P}^1$ 
by Proposition~\ref{(3)} and Proposition~\ref{(5)}. 
Let $C_Z$ be the proper transform of $C$. 
Then, we obtain
\begin{eqnarray*}
C_Z^2 &\leq& C_Z\cdot g^*(C)=C^2<0\\
C_Z\cdot K_Z &\leq& C_Z\cdot (K_Z+E)=C_Z\cdot g^*(K_X)=C\cdot K_X<0.
\end{eqnarray*}
Thus we obtain $C_Z \simeq \mathbb{P}^1$ and 
$C_Z$ is a curve generating a $K_Z$-negative extremal ray. 
Let $\phi:Z \to Z'$ be the contraction of $C_Z$. 
Since $\phi:E \to \phi(E)=:E'$ is a birational morphism, 
it is sufficient to prove that $E'\simeq \mathbb{P}^1$. 
We would like to prove 
$$(K_{Z'}+E')\cdot E'<0.$$ 
Let us consider the discrepancy $d$ defined by 
$$K_Z+E=\phi^*(K_{Z'}+E')+dC_Z.$$
Here, by taking the intersection with $E$, we obtain 
$$0=(K_{Z'}+E')\cdot E'+dC_Z\cdot E$$
by $(K_Z+E)\cdot E=0$. 
By $x \in C$, we see that $C_Z\cdot E$ is a positive number. 
Thus it is sufficient to prove that $d$ is a positive number. 
The following inequality 
$$0>K_X\cdot C=g^*(K_X)\cdot C_Z=(K_Z+E)\cdot C_Z=dC_Z^2$$
shows that $d$ is positive. 
\end{proof}

\begin{prop}\label{qcartier}
Let $(X, \Delta)$ be a log canonical surface. 
If a proper curve $C$ in $X$ satisfies 
$C\cdot (K_X+\Delta)<0$ and $C^2<0$, then $C$ is \Q-Cartier. 
\end{prop}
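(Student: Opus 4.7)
The plan is to reduce the assertion to Lemma~\ref{qcartierlemma}. First, if $C$ is itself a prime component of $\Delta$, then Theorem~\ref{lcsing} already asserts $C$ is $\mathbb{Q}$-Cartier and there is nothing to prove. So I may assume $C\not\subset\Supp\Delta$; then $C\cdot\Delta\geq 0$, and combining with the hypothesis gives $C\cdot K_X\leq C\cdot(K_X+\Delta)<0$, where $K_X$ is $\mathbb{Q}$-Cartier by Theorem~\ref{lcsing} so this intersection number is unambiguous.

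Since being $\mathbb{Q}$-Cartier is a local property, it suffices to work point by point on $C$. I would invoke Theorem~\ref{lcsing} to obtain the intermediate $\mathbb{Q}$-factorial model $g\colon Z\to X$; the morphism $g$ is an isomorphism over $X\setminus g(\Exc(g))$, and $g(\Exc(g))$ is a finite set of ``bad'' log canonical singular points, all of type (b) and disjoint from $\Supp\Delta$. Hence $X$ is already $\mathbb{Q}$-factorial at every point outside $g(\Exc(g))$, and in particular $C$ is $\mathbb{Q}$-Cartier there. It remains to show that $X$ is $\mathbb{Q}$-factorial at each of the finitely many points $P\in C\cap g(\Exc(g))$.

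Fix such a $P$. By the type-(b) condition, $\Delta\equiv 0$ on some open neighborhood $U$ of $P$ in which $P$ is the unique singularity. To apply Lemma~\ref{qcartierlemma} — which requires a \emph{proper} curve — I would compactify $U$ to a proper normal surface $\overline X$ via the patching construction used in the proof of Theorem~\ref{lcgermsing}, namely by glueing $U$ with a smooth proper normal surface along the punctured neighborhood $U\setminus\{P\}$. Taking $\overline C$ to be the closure of $C\cap U$ in $\overline X$, the pair $(\overline X,0)$ is a local situation of a log canonical surface whose only singularity at $P$ is of type (b), and $\overline C$ is a proper curve through $P$. Lemma~\ref{qcartierlemma} applied to $(\overline X,0)$ and $\overline C$ then yields $\mathbb{Q}$-factoriality of $\overline X$, hence of $X$ at $P$, and in particular $C$ is $\mathbb{Q}$-Cartier at $P$.

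The main obstacle is verifying that the negativity conditions $\overline C^{\,2}<0$ and $\overline C\cdot K_{\overline X}<0$, required by Lemma~\ref{qcartierlemma}, survive the compactification step. The strategy is to exploit that the compactifying piece is smooth, so that on the minimal resolution of $\overline X$ the exceptional divisors of Mumford's pullback are supported only over $P$, while the contributions of the newly added boundary are transverse and can be controlled by shrinking the glueing locus. The key observation is that the original negativity of $C^2$ and $C\cdot K_X$ is already concentrated at $P$ through the type-(b) exceptional structure, so that a careful choice of the patching preserves the required inequalities, after which Lemma~\ref{qcartierlemma} completes the argument.
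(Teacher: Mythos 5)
Your reduction is the right one and matches the spirit of the paper's proof: pass to the $\mathbb{Q}$-factorial model $g\colon Z\to X$ of Theorem~\ref{lcsing}, note that $X$ is already $\mathbb{Q}$-factorial away from $g(\Exc(g))$, observe that a component of $\Delta$ meets no type-(b) point so that one may assume $C\cdot K_X<0$, and reduce to showing $\mathbb{Q}$-factoriality at each type-(b) point $P\in C$. The problem is the localize-then-compactify step. You want to excise a neighbourhood $U$ of $P$, compactify to a proper $\overline X$ smooth away from $P$, and apply Lemma~\ref{qcartierlemma} to $\overline C$, the closure of $C\cap U$. But the hypotheses $\overline C^{\,2}<0$ and $\overline C\cdot K_{\overline X}<0$ are not local invariants of the germ of $C$ at $P$: writing $\overline f\colon\overline Y\to\overline X$ for the minimal resolution and $\overline C_{\overline Y}$ for the proper transform, one has $\overline C^{\,2}=\overline C_{\overline Y}^{\,2}+(\text{correction over }P)$ and $\overline C\cdot K_{\overline X}=\overline C_{\overline Y}\cdot K_{\overline Y}+(\text{correction over }P)$, where the two correction terms are indeed local but $\overline C_{\overline Y}^{\,2}$ and $\overline C_{\overline Y}\cdot K_{\overline Y}$ depend entirely on the unspecified compactification. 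You flag this as ``the main obstacle,'' but the resolution you offer---``a careful choice of the patching preserves the required inequalities''---is asserted, not proved, and it is genuinely nontrivial: any modification outside $U$ (for instance, blowing up on $\overline C$) pushes $\overline C_{\overline Y}^{\,2}$ and $\overline C_{\overline Y}\cdot K_{\overline Y}$ in opposite directions since adjunction pins their sum, so the two negativities cannot be arranged independently, and it is not clear an integer value of $\overline C_{\overline Y}^{\,2}$ lying in the required window always exists. As it stands, this is a gap.

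The fix is to avoid localizing altogether. If $X$ is not already proper, compactify the whole of $X$ by a normal proper surface smooth along the boundary; since $C$ is proper and lies in the original $X$, every Mumford intersection number involving $C$ is unchanged. Now run the computation from the proof of Lemma~\ref{qcartierlemma} once, globally, on $Z$: with $E=\sum_Q E_Q$ the $g$-exceptional locus and $C_Z$ the proper transform, one gets $C_Z^2\le C^2<0$ and $C_Z\cdot K_Z\le C_Z\cdot(K_Z+E)=C\cdot K_X<0$, so $C_Z\cong\mathbb{P}^1$ and it can be contracted, say $\phi\colon Z\to Z'$. For each type-(b) point $P$ on $C$, set $E_P'=\phi(E_P)$ and define $d$ by $K_Z+E_P=\phi^*(K_{Z'}+E_P')+dC_Z$; intersecting with $C_Z$ gives $dC_Z^2=(K_Z+E_P)\cdot C_Z\le(K_Z+E)\cdot C_Z<0$, hence $d>0$, and intersecting with $E_P$ gives $(K_{Z'}+E_P')\cdot E_P'=-d\,C_Z\cdot E_P<0$ because $C_Z$ meets $E_P$. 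Then $E_P'\cong\mathbb{P}^1$ by Theorem~\ref{adjunction}, hence $E_P\cong\mathbb{P}^1$, and $X$ is $\mathbb{Q}$-factorial at $P$ by Propositions~\ref{(3)} and~\ref{(5)} applied to $g$ near $E_P$. This is precisely the discrepancy computation inside Lemma~\ref{qcartierlemma}, run once on the global $Z$ rather than separately on ad hoc compactifications of neighbourhoods of each $P$.
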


\begin{proof}
By Theorem~\ref{lcgermsing} and Lemma~\ref{qcartierlemma}, 
$C$ passes through only \Q-factorial points. 
\end{proof}

Thus we complete the proof of the Theorem~\ref{lccont}.
Next, we consider the abundance theorem. 
But this immediately follows from the \Q-factorial case. 

\begin{thm}
Let $(X, \Delta)$ be a proper log canonical surface. 
If $K_X+\Delta$ is nef, then $K_X+\Delta$ is semi-ample. 
\end{thm}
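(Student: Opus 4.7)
The plan is to pass from $X$ to a $\mathbb Q$-factorial birational model via Theorem~\ref{lcsing}, apply the $\mathbb Q$-factorial abundance theorem there, and descend. By Theorem~\ref{lcsing}, there is a proper birational morphism $g : Z \to X$ from a normal projective $\mathbb Q$-factorial surface $Z$ such that either $g$ is an isomorphism, in which case $X$ is already $\mathbb Q$-factorial and Theorem~\ref{abundance} (resp.\ Theorem~\ref{fpabundance}) finishes directly, or each connected component of $\mathrm{Ex}(g)$ is a single irreducible proper curve $E$ with $(K_Z+E)\cdot E = 0$ and $g(\mathrm{Ex}(g))\cap\Supp\Delta = \emptyset$.

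Define $\Delta_Z$ on $Z$ by $K_Z + \Delta_Z = g^*(K_X+\Delta)$; I claim $\Delta_Z = g_*^{-1}\Delta + \sum_i E_i$ is an $\mathbb R$-boundary, where $\{E_i\}$ are the $g$-exceptional curves. Off $\mathrm{Ex}(g)$ this is immediate. Near each $E_i$, since $\Delta$ avoids $g(E_i)$, one has $\Delta_Z = a_i E_i$ locally; intersecting $K_Z + a_i E_i = g^*(K_X+\Delta)$ with $E_i$ yields $(K_Z + a_i E_i)\cdot E_i = (K_X+\Delta)\cdot g_* E_i = 0$, which together with $(K_Z+E_i)\cdot E_i = 0$ and $E_i^2 < 0$ forces $a_i = 1$. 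Since $\Delta \le 1$ by log canonicity, $\Delta_Z$ is an $\mathbb R$-boundary. As $K_Z + \Delta_Z = g^*(K_X+\Delta)$ is nef, applying Theorem~\ref{abundance} (if $k \neq \overline{\mathbb F}_p$) or Theorem~\ref{fpabundance} (if $k = \overline{\mathbb F}_p$) to the $\mathbb Q$-factorial pair $(Z,\Delta_Z)$ yields that $K_Z + \Delta_Z$ is semi-ample on $Z$.

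It remains to descend semi-ampleness from $Z$ to $X$. Write $K_Z + \Delta_Z = \sum_j b_j F_j$ with $b_j \ge 0$ and $F_j$ semi-ample Cartier divisors on $Z$. Each $F_j$ is nef, and $(K_Z+\Delta_Z)\cdot E_i = 0$ forces $F_j \cdot E_i = 0$ for all $i,j$. A sufficiently divisible multiple $n_j F_j$ is then base-point free and defines a morphism $\phi_j : Z \to W_j$ that contracts every $E_i$; since $g_*\mathcal O_Z = \mathcal O_X$, Stein factorization of $g$ shows that $\phi_j$ factors through $g$ as $\phi_j = \tilde\phi_j \circ g$ for some $\tilde\phi_j : X \to W_j$. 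Fixing a very ample $H_j$ on $W_j$, the Cartier divisor $G_j := \tilde\phi_j^* H_j$ is base-point free on $X$ and satisfies $g^* G_j = n_j F_j$. Pushing the decomposition down gives $K_X + \Delta = g_*(K_Z+\Delta_Z) = \sum_j (b_j/n_j)\, G_j$, a non-negative combination of semi-ample Cartier divisors on $X$, so $K_X + \Delta$ is semi-ample. The main technical obstacle is precisely this descent step; everything else is a direct consequence of Theorem~\ref{lcsing} and the $\mathbb Q$-factorial abundance results.
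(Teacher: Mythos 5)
Your proof is correct and follows essentially the same strategy as the paper's: pass to a $\mathbb Q$-factorial birational model, observe that the pulled-back boundary is still an $\mathbb R$-boundary, apply the $\mathbb Q$-factorial abundance theorems (\ref{abundance}, \ref{fpabundance}), and descend semi-ampleness. The paper's proof is a one-liner that takes the \emph{minimal resolution} $Y \to X$ (where $\Delta_Y \geq 0$ follows from minimality and $\Delta_Y \leq 1$ from log canonicity, and descent is left implicit), whereas you use the partial resolution $Z \to X$ from Theorem~\ref{lcsing}, explicitly compute the exceptional coefficients $a_i = 1$, and spell out the descent of $\mathbb R$-semi-ampleness through the exceptional locus in full; the extra detail is welcome but the underlying route is the same.
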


\begin{proof}
Take the minimal resolution and apply Theorem~\ref{abundance} and 
Theorem~\ref{fpabundance}. 
\end{proof}

\section{Relativization}

\subsection{Relative cone theorem}

In this section, we consider the relativization of the cone theorem. 
But this is not difficult by the following proposition. 

\begin{prop}\label{relctt}
Let $\pi:X \to S$ be a proper morphism 
from a normal surface $X$ 
to a variety $S$. 
If $\dim \pi(X) \geq 1$ 
where $\pi(X)$ is the scheme-theoretic image of $\pi$, 
then we have 
$$\overline{NE}(X/S)=NE(X/S)=\sum\limits_{\rm finite}\mathbb{R}_{\geq 0}[C_i].$$ 
Moreover, 
the Stein factorization $\pi:X \overset{\theta}\to T \to S$ satisfies 
one of the following assertions{\em{:}} 
\begin{enumerate}
\item[(1-irr)]{If $\dim \pi(X)=1$ and all fibers of $\theta$ are irreducible, 
then $NE(X/S)=\mathbb{R}_{\geq 0}[C]$ and $C^2=0$. 
In particular, $\rho(X/S)=1$.}
\item[(1-red)]{If $\dim \pi(X)=1$ and $\theta$ has at least one reducible fiber, 
then each $C_i$ has negative self-intersection number.}
\item[(2)]{If $\dim \pi(X)=2$, 
then each $C_i$ has negative self-intersection number.}
\end{enumerate}
\end{prop}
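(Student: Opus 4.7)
The plan is to reduce via the Stein factorization $\pi\colon X\overset{\theta}{\to} T\to S$, where $\theta_{*}\mathcal O_X=\mathcal O_T$ and $T\to S$ is finite. Since finite morphisms contract no curves, a curve is $\pi$-vertical iff it is $\theta$-vertical, so $\neb(X/S)=\neb(X/T)$ and $NE(X/S)=NE(X/T)$. The hypothesis $\dim\pi(X)\ge1$ forces $\dim T\in\{1,2\}$, and these two cases will be treated separately.

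Suppose first that $\dim T=2$ (case (2)). Then $\theta$ is proper surjective and generically finite, and the condition $\theta_{*}\mathcal O_X=\mathcal O_T$ forces $\theta$ to be birational. The exceptional locus is a finite union of proper irreducible curves $E_1,\dots,E_n$, and every $\theta$-vertical curve is one of them. Grouping the $E_i$ by their image points in $T$ and applying the negative definiteness of the intersection matrix of exceptional curves (\cite[Lemma~3.40]{KM} for the Mumford pairing), we obtain $E_i^2<0$ for every $i$; hence $NE(X/T)=\sum_{i=1}^n\mathbb R_{\ge0}[E_i]$ is finitely generated and therefore closed, giving $\neb(X/T)=NE(X/T)$.

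Now suppose $\dim T=1$, so that (after replacing $T$ by its normalization if necessary) $\theta$ is a fibration onto a proper smooth curve. The $\theta$-vertical curves are exactly the irreducible components of fibres of $\theta$. Passing to the minimal resolution $\mu\colon Y\to X$ and applying the classical Zariski lemma on $Y\to T$, for every fibre $F=\sum n_iC_i$ the matrix $(C_i\cdot C_j)$ computed on $X$ by Mumford's definition is negative semidefinite with one-dimensional radical spanned by $F$, so in particular $C_i^2\le0$ and $C_i^2<0$ whenever $F$ is reducible. All scheme-theoretic fibres are numerically proportional in $N_1(X/T)$ (they are pullbacks of point classes on the curve $T$), so only finitely many fibres are reducible, and $NE(X/T)$ is finitely generated by their components together with one general fibre class; this gives $\neb(X/T)=NE(X/T)$.

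It remains to separate (1-irr) and (1-red). In (1-irr) every $\theta$-vertical curve is the reduced structure of some fibre, all these classes are positive multiples of one class $[F_0]$, and $F_0^2=0$ (pullback of a point class on a curve), yielding $NE(X/T)=\mathbb R_{\ge0}[F_0]$ with $\rho(X/T)=1$. In (1-red), fix any reducible fibre $F=\sum n_iC_i$; then $C_i^2<0$ for each $i$ by Zariski's lemma, while the relation $[F_0]=\sum n_i[C_i]$ in $N_1(X/T)$ shows that $[F_0]$ lies in the positive span of classes of negative self-intersection, so $\mathbb R_{\ge0}[F_0]$ is never extremal and every extremal ray generator has negative self-intersection. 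The main obstacle I expect is a clean formulation of Zariski's lemma for a normal surface fibration with respect to Mumford's pairing; this is dealt with by pulling everything back to the minimal resolution, applying the classical smooth Zariski lemma there, and pushing the resulting inequalities and numerical relations back down to $X$.
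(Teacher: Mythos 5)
Your proposal is correct and follows essentially the same route as the paper: reduce to $NE(X/T)$ via Stein factorization, and use negative definiteness of exceptional intersection matrices (case $\dim T=2$) or Zariski's lemma on the fibration (case $\dim T=1$), with the Mumford pairing handled by pulling back to the minimal resolution. One small slip: you write that the fibres being numerically proportional implies only finitely many are reducible, which is a non-sequitur; the correct justification (used in the paper via Proposition~\ref{genericintegral}) is that $\theta_*\mathcal O_X=\mathcal O_T$ forces the generic fibre to be integral, and then properness bounds the exceptional set of fibres. The conclusion is of course right, and the rest of your argument (in particular the careful push-down of Zariski's lemma through the minimal resolution) is sound.
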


\begin{proof}
Note that $\dim \pi(X)=\dim T$ and $NE(X/S)=NE(X/T)$. 

(1-irr) 
All fibers are numerically equivalent. 
This is what we want to show. 

(1-red) 
By Proposition~\ref{genericintegral}, 
general fibers of $\theta$ are irreducible. 
Therefore, there are only finitely many reducible fibers. 
Since all fibers are numerically equivalent, 
$NE(X/S)$ is generated by the curves in the reducible fibers. 
Because all fibers of $\theta$ are connected, 
curves in reducible fibers have negative 
self-intersection number.  

(2) 
By $\theta_*\mathcal{O}_X=\mathcal{O}_T$, 
we see that $\theta$ is birational. 
Since the exceptional locus is a closed set, 
there are only finitely many curves contracted by $\theta$. 
Each contracted curve has negative self-intersection number. 
\end{proof}

Using this proposition, we obtain the following relative cone theorem. 

\begin{thm}\label{relct}
Let $\pi:X \to S$ be a projective morphism 
from a normal surface $X$ to a variety $S$.
Let $\Delta$ be an effective $\mathbb{R}$-divisor 
such that $K_X+\Delta$ is $\mathbb{R}$-Cartier. 
Let $\Delta=\sum b_iB_i$ be the prime decomposition. 
Let $H$ be an $\mathbb{R}$-Cartier 
$\pi$-ample $\mathbb{R}$-divisor on $X$. 
Then the following assertions hold{\em{:}}

\begin{enumerate}

\item{
$
\neb(X/S)=
\neb(X/S)_{K_X+\Delta \geq 0}+
\sum{\mathbb{R}_{\geq 0}[C_i]}
$.}
\item{
$
\neb(X/S)=
\neb(X/S)_{K_X+\Delta+H \geq 0}+
\sum\limits_{\rm finite}{\mathbb{R}_{\geq 0}[C_i]}
$.}
\item{Each $C_i$ in $(1)$ and $(2)$ is rational or 
$C_i=B_j$ for some $B_j$ with $B_j^2<0$.}
\item{There exists a positive integer $L(X, S, \Delta)$ 
such that each $C_i$ in $(1)$ and $(2)$ 
satisfies $0<-C_i\cdot (K_X+\Delta) \leq L(X, S, \Delta)$.}
\end{enumerate}
\end{thm}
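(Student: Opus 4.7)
I would split the proof according to $\dim\pi(X)$. If $\dim\pi(X)=0$, then $X$ itself is projective (the projective morphism $\pi$ has zero-dimensional image), the $\pi$-ample divisor $H$ is ample on $X$, and every curve is $\pi$-contracted, so $\overline{NE}(X/S)=\overline{NE}(X)$. In this case the statement reduces word-for-word to the absolute cone theorem (Theorem~\ref{ct}) together with Remark~\ref{ray3}, with $L(X,S,\Delta):=L(X,\Delta)$.

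If $\dim\pi(X)\geq 1$, the plan is to invoke Proposition~\ref{relctt} for the Stein factorization of $\pi$. This yields at once $\overline{NE}(X/S)=NE(X/S)=\sum_{i=1}^{r}\mathbb{R}_{\geq 0}[C_i]$, a finitely generated polyhedral cone, where each generator satisfies either $C_i^2<0$, or $C_i^2=0$ (the latter only in case (1-irr), with $r=1$ and $C_i$ an irreducible fiber of the Stein factorization). Assertions (1) and (2) follow immediately by partitioning the generators according to the sign of $(K_X+\Delta)\cdot C_i$, and (4) follows by setting $L(X,S,\Delta)$ equal to the maximum of $-(K_X+\Delta)\cdot C_i$ over the finitely many $(K_X+\Delta)$-negative generators.

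For (3), I would consider a $(K_X+\Delta)$-negative generator $C_i$ case by case. If $C_i^2<0$ and $C_i=B_j$ for some prime component of $\Delta$, there is nothing to prove. If $C_i^2<0$ but $C_i\not\subset\Supp\Delta$, then $C_i\cdot\Delta\geq 0$, so $C_i\cdot K_X\leq C_i\cdot(K_X+\Delta)<0$, whence $(K_X+C_i)\cdot C_i=C_i\cdot K_X+C_i^2<0$; then Lemma~\ref{non-qcar-rational} gives $C_i\simeq\mathbb{P}^1$. The only remaining case is $C_i^2=0$ from (1-irr): writing $\Delta=bC_i+\Delta'$ with $C_i\not\subset\Supp\Delta'$ gives $C_i\cdot\Delta=C_i\cdot\Delta'\geq 0$, hence $C_i\cdot K_X<0$; adjunction on a general (by Proposition~\ref{genericintegral}, integral) fiber of the Stein factorization, which is Cartier, yields $K_X\cdot C_i=2p_a(C_i)-2$ using $C_i^2=0$, so $p_a(C_i)=0$ and a general closed fiber is $\mathbb{P}^1$, providing the required rational generator of the ray.

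The principal obstacle is the last case $C_i^2=0$: the absolute Lemma~\ref{keylemma} does not apply directly to a square-zero extremal, so one must exhibit an actual rational representative of the fiber class, rather than merely prove that the numerical class is represented by a rational $1$-cycle. Handling the possibility that $C_i$ itself appears in $\Supp\Delta$ requires the decomposition $\Delta=bC_i+\Delta'$ trick and careful use of the adjunction formula on normal surfaces via Mumford's intersection theory.
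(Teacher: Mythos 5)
Your proposal is correct but takes a genuinely different route for part (3) than the paper. Both you and the paper dispatch (1), (2), (4) via Proposition~\ref{relctt}, and both handle $\dim\pi(X)=0$ by reducing to the absolute Theorem~\ref{ct}. The divergence is in (3) when $\dim\pi(X)\geq 1$: the paper compactifies $X\to T\to S$ and applies the Bend-and-Break condition (BBII) in the sense of Definition~\ref{deflogbb} on $\overline{X}$, writing $p^nC\equiv_{\rm Mum}\alpha C'+Z$ with $Z$ a sum of rational curves, then chooses the base point $c_0\in C$ to avoid all other curves in the relevant fiber (resp.\ exceptional locus) so as to force one rational component of $Z$ to be $C$ itself. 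You instead exploit the explicit polyhedral structure provided by Proposition~\ref{relctt}: for a generator with $C_i^2<0$ not lying in $\Supp\Delta$ you use $(K_X+C_i)\cdot C_i<0$ and Lemma~\ref{non-qcar-rational} directly, and in the (1-irr) case $C_i^2=0$ you replace $C_i$ by a general integral fiber (Proposition~\ref{genericintegral}), which is a Cartier curve missing $\mathrm{Sing}\,X$, and apply adjunction: $K_X\cdot F=2p_a(F)-2<0$ forces $p_a(F)=0$, hence $F\simeq\mathbb{P}^1$. This is legitimate because the $C_i$ in the theorem are generators one is free to choose within each ray, and all fibers are numerically equivalent. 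Your argument avoids the compactification and the Bend-and-Break machinery entirely for (3), relying only on the structure of the relative cone plus the same adjunction lemmas used in the absolute case (cf.\ Proposition~\ref{ext-rat-length1} and Lemma~\ref{negative-length2}); the paper's route is heavier but does establish that the specific curve $C$ is rational rather than just producing a rational representative of the ray. Either yields the stated result. One small remark: you should take $L(X,S,\Delta)$ to be an integer ceiling of the maximum of $-(K_X+\Delta)\cdot C_i$ over the finitely many negative generators, since the theorem asks for a positive integer.
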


\begin{proof}
If $\dim \pi(X)=0$, then the assertion follows from Theorem~\ref{ct}. 
If $\dim \pi(X) \geq 1$, then 
the assertions (1), (2) and (4) 
immediately follow from Proposition~\ref{relctt}. 
We prove (3). 
Let $C$ be a $(K_X+\Delta)$-negative proper curve which 
generates an extremal ray and $\pi(C)$ is one point. 
We may assume $C\neq B_j$ for all $B_j$ with $B_j^2<0$. 
Take the Stein factorization of $\pi$: 
$$\pi:X\overset{\theta}\to T\to S.$$ 

Let us take the Nagata compactification of $T$ and 
its normalization $\overline{T}$. 
Moreover, take the normalization $\overline{X}$ 
of a compactification of $X \to \overline{T}$. 
We obtain the following commutative diagram. 
$$
\begin{CD}
X @>{\rm open}>{\rm immersion}> \overline{X}\\
@V\theta VV @VV\overline{\theta}V\\
T @>{\rm open}>{\rm immersion}> \overline{T}\\
\end{CD}
$$
In $\overline{X}$, we can apply 
(BBII) in the sense of Definition~\ref{deflogbb} to $C$. 
Then we obtain 
$$p^nC\equiv_{\rm Mum} \alpha C'+Z$$
for a positive integer $n$, 
a non-negative integer $\alpha$, 
a curve $C'$ and a sum of rational curves $Z$. 
We consider the two cases: $\dim T=1$ and $\dim T=2$. 

Assume $\dim T=1$. 
Take an ample divisor $A$ on $\overline{T}$. 
Since $C\cdot \overline{\theta}^*A=0$, 
the prime components of $Z$ must be $\overline{\theta}$-vertical. 
In advance, let $c_0\in C$ be a point, 
in the notation of Definition~\ref{deflogbb}, such that 
$c_0$ is not contained in any curve $C''\neq C$
which is contained in the fiber containing $C$. 
Then, there exists a prime component $Z_j$ of $Z$ with $c_0\in Z_j$. 
Here $Z_j$ must be $C$. 
In particular, $C$ is rational and 
this is what we want to show. 

Assume $\dim T=2$. 
Then, $\overline{T}$ is a proper normal surface. 
Since $\overline{\theta}_*(\alpha C'+Z)\equiv_{\rm Mum}0$, 
each prime component of $Z$ is $\overline{\theta}$-exceptional. 
The remaining proof is the same as the case of $\dim T=1$.
\end{proof}

We give an upper bound $L(X, S, \Delta)$ 
in the case where $\Delta$ is an $\mathbb{R}$-boundary. 

\begin{prop}\label{ext-rat-length2}
Let $\pi:X \to S$ be a projective morphism 
from a normal surface $X$ to a variety $S$.
Let $\Delta$ be an $\mathbb{R}$-boundary such that $K_X+\Delta$ is $\mathbb{R}$-Cartier. 
If $R$ is a $(K_X+\Delta)$-negative extremal ray of $\overline{NE}(X/S)$, 
then $R=\mathbb R_{\geq 0}[C]$ where $C$ is a rational curve such that 
$-(K_X+\Delta)\cdot C\leq 3$.
\end{prop}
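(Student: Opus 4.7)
My plan is to split according to whether $\pi$ contracts $X$ to a point and then adapt the proof of Proposition~\ref{ext-rat-length1}. If $\dim\pi(X)=0$, then $X$ is projective over $k$ and $\overline{NE}(X/S)=\overline{NE}(X)$, so the statement reduces immediately to Proposition~\ref{ext-rat-length1}.

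Assume instead $\dim\pi(X)\geq 1$. By Proposition~\ref{relctt}, every extremal ray of $\overline{NE}(X/S)$ is spanned by an integral curve $C$ with $C^2\leq 0$: case (1-irr) forces $C^2=0$ for (a multiple of) a fiber of the Stein factorization, while cases (1-red) and (2) force $C^2<0$.

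Once $C^2\leq 0$ is in hand, the argument copies that of Proposition~\ref{ext-rat-length1} verbatim. Lemma~\ref{negative-length2} yields the length bound
\[
-(K_X+\Delta)\cdot C\leq 2\leq 3,
\]
and writing $\Delta=bC+\Delta'$ with $0\leq b\leq 1$ and $C\not\subset\Supp\Delta'$ gives
\[
(K_X+C)\cdot C-(K_X+\Delta)\cdot C=(1-b)C^2-\Delta'\cdot C\leq 0,
\]
so $(K_X+C)\cdot C<0$ and Lemma~\ref{non-qcar-rational} concludes that $C$ is a rational curve.

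The main (and essentially only) new input compared with Proposition~\ref{ext-rat-length1} is ensuring that the extremal generator satisfies $C^2\leq 0$. In the absolute case one must separately allow $C^2>0$ and invoke $L(X,\Delta)=3$ from Bend-and-Break, whereas in the genuinely relative situation $\dim\pi(X)\geq 1$ Proposition~\ref{relctt} excludes this possibility and even yields the sharper bound $2$ coming from Lemma~\ref{negative-length2}.
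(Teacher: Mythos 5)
Your proof is correct and follows the same path as the paper's: reduce to the absolute Proposition~\ref{ext-rat-length1} when $\dim\pi(X)=0$, and otherwise use Proposition~\ref{relctt} to get $C^2\leq 0$, then Lemma~\ref{negative-length2} for the length bound and Lemma~\ref{non-qcar-rational} for rationality. Your explicit verification that $(K_X+C)\cdot C\leq(K_X+\Delta)\cdot C$ (via $(1-b)C^2-\Delta'\cdot C\leq 0$) fills in a step the paper leaves implicit.
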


\begin{proof}
If $\dim \pi(X)=0,$ then the assertion follows from Proposition~\ref{ext-rat-length1}. 
Thus, we assume $\dim \pi(X)\geq 1.$ 
We can write  $R=\mathbb R_{\geq 0}[C]$ for some curve $C$. 
We show $C$ satisfies the desired properties. 
By $\dim \pi(X)\geq 1$ and Proposition~\ref{relctt}, we see $C^2\leq 0.$ 
Then, by Lemma~\ref{negative-length2}, we have 
$$-(K_X+\Delta)\cdot C\leq 2.$$ 
Since 
$$(K_X+C)\cdot C\leq (K_X+\Delta)\cdot C<0,$$
by Lemma~\ref{non-qcar-rational}, 
we see that $C$ is rational. 
\end{proof}

\subsection{Relative contraction theorem}

In this section, 
we consider the relativization of the contraction theorem. 

\begin{thm}\label{relcont}
Let $\pi:X \to S$ be a projective morphism 
from a normal surface $X$ to a variety $S$.
Let $\Delta$ be an $\mathbb{R}$ divisor. 
Moreover one of the following conditions holds{\em{:}}
\begin{enumerate}
\item[(QF)]{$X$ is \Q-factorial and $\Delta$ is an $\mathbb{R}$-boundary.}
\item[(FP)]{$k=\overline{\mathbb{F}}_p$ and $\Delta$ is an effective $\mathbb{R}$-divisor.}
\item[(LC)]{$(X, \Delta)$ is a log canonical surface.}
\end{enumerate}
Let $R=\mathbb{R}_{\geq 0}[C]$ be a $(K_X+\Delta)$-negative extremal ray 
in $\overline{NE}(X/S)$. 
Then there exists a surjective $S$-morphism $\phi_R: X \to Y$ 
to a variety $Y$ projective over $S$ with the following properties{\em{:}} 
{\em{(1)-(5)}}.
\begin{enumerate}
\item{Let $C'$ be a curve on $X$. 
Then $\phi_R(C')$ is one point iff $[C'] \in R$.}
\item{$(\phi_R)_*(\mathcal{O}_X)=\mathcal{O}_Y$.}
\item{If $L$ is an invertible sheaf with $L\cdot C=0$, 
then $nL=(\phi_R)^*L_Y$ for some invertible sheaf $L_Y$ on $Y$ and 
for some positive integer $n$.}
\item{$\rho(Y/S)=\rho(X/S)-1$.}
\item{If $\dim Y=2$ then 
$Y$ is \Q-factorial $($resp. $(Y, (\phi_R)_*(\Delta))$ 
is log canonical$)$ 
in the case of {\em (QF)} $($resp. {\em (LC)}$)$.}
\end{enumerate}
\end{thm}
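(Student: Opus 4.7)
The plan is to reduce to the absolute contraction theorems (Theorem~\ref{cont}, Theorem~\ref{fpcont}, Theorem~\ref{lccont}) via Stein factorization and compactification. First I replace $S$ by the Stein factorization target $T$ of $\pi$; since $T\to S$ is finite, $\overline{NE}(X/S)=\overline{NE}(X/T)$, contractions over $S$ coincide with contractions over $T$, and we may assume $\pi_{*}\mathcal O_{X}=\mathcal O_{S}$ from now on.

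If $\dim\pi(X)=0$ then $X$ is projective over $k$ and the appropriate absolute contraction theorem applies verbatim. If $\dim\pi(X)\geq 1$, I invoke Proposition~\ref{relctt}: in the sub-case where every fiber of $\pi$ is irreducible one has $\rho(X/S)=1$, so $\pi$ itself serves as $\phi_R$ and properties (1)--(5) are immediate; otherwise every curve spanning an extremal ray of $\overline{NE}(X/S)$ has $C^{2}<0$.

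For this remaining small case I argue locally over $S$. Choose an affine open $S_{0}\subset S$ containing $\pi(C)$, set $X_{0}=\pi^{-1}(S_{0})$ and $\Delta_{0}=\Delta|_{X_{0}}$, and construct a projective compactification $\overline{X_{0}}\supset X_{0}$ with an extension $\overline{\Delta_{0}}$ of $\Delta_{0}$ chosen so that $\overline{X_{0}}\setminus X_{0}$ is disjoint from $C$ and the hypothesis (QF), (FP), or (LC) is preserved on $(\overline{X_{0}},\overline{\Delta_{0}})$; this follows the compactification device used in Step~2 of Proposition~\ref{(3)}, combined with normalization and resolution outside $X_{0}$. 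The class $\mathbb R_{\geq 0}[C]$ remains a $(K_{\overline{X_{0}}}+\overline{\Delta_{0}})$-negative extremal ray, so the absolute contraction theorem produces a morphism $\overline{X_{0}}\to\overline{Y_{0}}$ contracting exactly $C$ and satisfying properties (1)--(5). Restricting yields $\phi_{R,0}:X_{0}\to Y_{0}$ over $S_{0}$, and the uniqueness of the contraction of $C$ (characterized by (1) and (2)) allows the local pieces to glue into the desired $\phi_{R}:X\to Y$ over $S$.

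The main obstacle is twofold. First, $C$ must be $\mathbb Q$-Cartier so that a nef and big divisor $G$ with $G\cdot C'=0\Leftrightarrow C'=C$ can be constructed and Proposition~\ref{p1cont} (via Keel, Theorem~\ref{keel}) can be applied: in (QF) this is by assumption, in (FP) it holds by Theorem~\ref{Q-factorial}, and in (LC) it follows from Proposition~\ref{qcartier} using $C^{2}<0$ and $(K_{X}+\Delta)\cdot C<0$. Second, the compactification must preserve (QF), (FP), or (LC); this is most delicate in (LC), and is handled by arranging $\overline{X_{0}}$ to be smooth along $\overline{X_{0}}\setminus X_{0}$ and taking $\overline{\Delta_{0}}$ to be the closure of $\Delta_{0}$, so that the added boundary is disjoint from $C$ and cannot create new non-lc centres interacting with the extremal ray.
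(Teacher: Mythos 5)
Your overall strategy — Stein factorization to reduce to $\pi_*\mathcal O_X=\mathcal O_S$, then Proposition~\ref{relctt} to split into the $\rho(X/S)=1$ case and the $C^2<0$ case, then compactify to invoke the absolute contraction theorems — is the same skeleton as the paper's proof. For the $C^2<0$ case your local compactification device (taking an affine $S_0\ni\pi(C)$ and compactifying $X_0$ directly) is close to what the paper does by globally compactifying $X\to\overline T$, and the reduction to $\mathbb Q$-Cartier-ness of $C$ via Theorem~\ref{Q-factorial} and Proposition~\ref{qcartier} is correct. However, the gluing step deserves more care than a parenthetical appeal to uniqueness: one should also verify that the glued $Y$ is \emph{projective} over $S$ rather than merely proper, which the paper sidesteps by compactifying globally so that $Y$ appears as an open subset of a projective surface over $\overline T$.

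The genuine gap is in the $\rho(X/S)=1$ case (Proposition~\ref{relctt}~(1-irr)), where you assert that ``$\pi$ itself serves as $\phi_R$ and properties~(1)--(5) are immediate.'' Properties~(1),~(2),~(4) are indeed immediate, and~(5) is vacuous, but property~(3) is not: if $L$ is a line bundle on $X$ with $L\cdot C=0$, you must show that some multiple $nL$ is pulled back from $T$. Relative numerical triviality of $L$ over $T$ does not, on its own, produce a pullback. The paper's proof spends the bulk of its effort exactly here: it compactifies $\theta:X\to T$ to $\overline\theta:\overline X\to\overline T$, contracts components of any reducible fibers of $\overline\theta$ (using that each contains a $(K_{\overline X}+\overline\Delta)$-negative component, by the fiber being $(K_{\overline X}+\overline\Delta)$-negative), shows that after these contractions a fiber $F$ spans an extremal ray of the absolute cone $\overline{NE}(\overline X)$ via the cone theorem, and then imports property~(3) from the absolute contraction theorem. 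You need either to reproduce this argument or supply an alternative (for instance, using that the generic fiber has arithmetic genus zero because $K_X\cdot C<0$ and $C^2=0$, so a degree-zero line bundle restricts trivially to the generic fiber, whence $L$ is linearly equivalent to a combination of fibers, and then clearing denominators using that scheme-theoretic fibers are $\mathbb Q$-Cartier). Without one of these, property~(3) is unjustified in the case you treat as trivial.
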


These three proofs of (QF), (FP) and (LC) are the same essentially. 
Thus we only prove the case when (QF). 

\begin{proof}
Let $\theta:X \to T$ be the Stein factorization of $\pi$. 
We see that $\dim T=0$, $\dim T=1$ or $\dim T=2$. 
But the case $\dim T=0$ follows from Theorem~\ref{cont}. 
Thus we may assume that $\dim T=1$ or $\dim T=2$. 

Now let us take the compactification.
First, take the Nagata compactification of $T$ and 
its normalization $\overline{T}$. 
Second, take the compactification $\overline{X}$ of $X \to \overline{T}$. 
Moreover, if necessary, replace it by its normalization 
and a resolution of the singular locus in $\overline{X}\setminus X$. 
We obtain the following commutative diagram. 
$$
\begin{CD}
X @>{\rm open}>{\rm immersion}> \overline{X}\\
@V\theta VV @VV\overline{\theta}V\\
T @>{\rm open}>{\rm immersion}> \overline{T}\\
\end{CD}
$$
Then, $\overline{X}$ is projective normal \Q-factorial and 
$\overline{T}$ is proper normal. 
Let $\overline{\Delta}$ be the $\mathbb{R}$-boundary such that 
its restriction to $X$ is $\Delta$ and $\overline{\Delta}$ 
has no prime components contained in $\overline{X}\setminus X$.

Assume $C^2<0$. 
This follows from Theorem~\ref{cont} because 
$C$ is a $(K_{\overline{X}}+\overline{\Delta})$-negative extremal curve 
in the cone of the absolute case $\overline{NE}(\overline{X})$. 

Assume $C^2\geq 0$. 
Then, by Proposition~\ref{relctt}, we see $\rho(X/T)=1$ and $\dim T=1$. 
Set $Y:=T$. 
The assertions (1), (2) and (4) are trivial. 
We want to prove (3). 
Note that all fibers of $\theta$ are irreducible but 
the compactification $\overline{\theta}$ may have 
reducible fiber $G=\sum G_i$. 
Then, by 
$$0>(K_{\overline{X}}+{\overline{\Delta}})\cdot G=
(K_{\overline{X}}+{\overline{\Delta}})\cdot \sum G_i,$$
we obtain 
$0>(K_{\overline{X}}+{\overline{\Delta}})\cdot G_i$ 
for some irreducible component $G_i$ of the fiber $G$. 
Thus, by Theorem~\ref{cont}, 
we may assume that all fibers of $\overline{\theta}$ are irreducible. 
Therefore each fiber $F$ of $\overline{\theta}$ is 
$(K_{\overline{X}}+{\overline{\Delta}})$-negative. 
It is sufficient to prove that $F$ generates 
an extremal ray of $\overline{NE}(\overline{X})$. 
By Theorem~\ref{ct}, 
we have 
$$F \equiv D +\sum r_iC_i,$$ 
where $D \in \neb(\overline{X})_{K_{\overline{X}}+{\overline{\Delta}} \geq 0}$, 
$r_i\in \mathbb{R}_{\geq 0}$ and 
each $C_i$ generates a 
$(K_{\overline{X}}+{\overline{\Delta}})$-negative extremal ray. 
Since $F$ is nef, we have $F\cdot D=F\cdot C_i=0$ for all $i$. 
Here recall that all fibers of $\overline{\theta}$ are irreducible. 
This means that $C_i$ is some fiber with the reduced structure. 
Thus we obtain $F \equiv qC_i$ for some positive number $q$ 
and $F$ generates an extremal ray. 
\end{proof}

Then, we obtain the minimal model program in full generality. 

\begin{thm}[Minimal model Program]\label{relmmp}
Let $\pi:X\to S$ be a projective morphism 
from a normal surface $X$ to a variety $S$. 
Let $\Delta$ be an $\mathbb{R}$-divisor on $X$. 
Assume that one of the following conditions holds{\em{:}}
\begin{enumerate}
\item[(QF)]{$X$ is \Q-factorial and $0\leq \Delta\leq 1$.}
\item[(FP)]{$k=\overline{\mathbb{F}}_p$ and $0\leq\Delta$.}
\item[(LC)]{$(X, \Delta)$ is a log canonical surface.}
\end{enumerate}
Then, there exists a sequence of proper birational morphisms 
\begin{eqnarray*}
&(X, \Delta)=:(X_0, \Delta_0) \overset{\phi_0}\to 
(X_1, \Delta_1) \overset{\phi_1}\to \cdots 
\overset{\phi_{s-1}}\to (X_s, \Delta_s)=:(X^{\dagger}, \Delta^{\dagger})\\
&\,\,\,where\,\,\,(\phi_{i-1})_*(\Delta_{i-1})=:\Delta_i
\end{eqnarray*}
with the following properties. 
\begin{enumerate}
\item{Each $X_i$ is a normal surface, which is projective over $S$.}
\item{Each $(X_i, \Delta_i)$ satisfies {\em (QF)}, {\em (FP)} 
or {\em (LC)} according as the above assumption.}
\item{For each $i$, ${\rm Ex}(\phi_i)=:C_i$ is a proper irreducible curve such that 
$$(K_{X_i}+\Delta_i)\cdot C_i<0$$
and that $C_i$ generates an extremal ray of $\overline{NE}(X/S)$.}
\item{Let $\pi^{\dagger}:X^{\dagger}\to S$ be the $S$-scheme structure morphism. 
$(X^{\dagger}, \Delta^{\dagger})$ satisfies one of the following conditions.
\begin{enumerate}
\item{$K_{X^{\dagger}}+\Delta^{\dagger}$ is $\pi^{\dagger}$-nef.}
\item{There is a projective surjective $S$-morphism $\mu:X^{\dagger} \to Z$ to 
a smooth curve $Z$ such that $Z$ is projective over $S$, 
$\mu_*\mathcal O_{X^{\dagger}}=\mathcal O_Z$, 
$-(K_X^{\dagger}+\Delta^{\dagger})$ is $\mu$-ample and $\rho(X^{\dagger}/Z)=1$.}
\item{$X^{\dagger}$ is a projective surface, 
$-(K_{X^{\dagger}}+\Delta^{\dagger})$ is ample and 
$\rho(X^{\dagger})=1$.}
\end{enumerate}}
\end{enumerate}
In case $({\rm a})$, we say $(X^{\dagger}, \Delta^{\dagger})$ is a minimal model of $(X, \Delta)$ over $S$.\\
In case $({\rm b})$ and $({\rm c})$, we say $(X^{\dagger}, \Delta^{\dagger})$ is a Mori fiber space over $S$. 
\end{thm}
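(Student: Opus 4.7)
The plan is to iterate the relative cone theorem (Theorem~\ref{relct}) and the relative contraction theorem (Theorem~\ref{relcont}) step by step, using the relative Picard number $\rho(\cdot/S)$ to guarantee termination. At each stage, either $K_{X_i}+\Delta_i$ is already $\pi_i$-nef --- in which case we stop with case (a) --- or there is a $(K_{X_i}+\Delta_i)$-negative proper curve $C_i$ generating an extremal ray in $\overline{NE}(X_i/S)$, and we contract it via Theorem~\ref{relcont}. The pushforward $\Delta_{i+1}:=(\phi_i)_*\Delta_i$ gives the candidate for the next step, and the equation in item~(3) of the theorem to be proved is automatic because $C_i\cdot (K_{X_i}+\Delta_i)<0$ by the extremal ray assumption.

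At each step I would analyze the dimension of the target $Y=X_{i+1}$ of $\phi_{R}$:
\begin{itemize}
\item If $\dim Y=2$, then $\phi_i$ is birational and contracts the irreducible curve $C_i$ with $C_i^2<0$ (by Proposition~\ref{relctt}), and I continue the program on $(X_{i+1},\Delta_{i+1})$. Here property~(5) of Theorem~\ref{relcont} ensures that the hypothesis (QF) or (LC) is preserved; for (FP) it is preserved since $X_{i+1}$ is still a normal surface over $\overline{\mathbb{F}}_p$, hence automatically \Q-factorial by Theorem~\ref{Q-factorial}, and the pushforward of an effective \R-divisor is effective.
\item If $\dim Y=1$, then Proposition~\ref{relctt} gives $\rho(X_i/Y)=1$ and $Y$ is (after replacing by the Stein factorization target in Theorem~\ref{relcont}) a smooth curve projective over $S$ with $\mu_*\mathcal O_{X_i}=\mathcal O_Y$, and $-(K_{X_i}+\Delta_i)$ is $\mu$-ample because it is strictly negative on the unique relative extremal ray; this gives case (b).
\item If $\dim Y=0$, then $X_i$ is proper over a point of $S$, hence a projective surface, and $\rho(X_i/Y)=\rho(X_i)=1$, $-(K_{X_i}+\Delta_i)$ ample; this gives case (c).
\end{itemize}

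For termination, item~(4) of Theorem~\ref{relcont} shows that each birational step strictly drops the relative Picard number $\rho(X_i/S)$ by one, so the procedure ends after at most $\rho(X/S)$ iterations, producing either a nef outcome or one of the fibration outcomes. The main obstacle that one must check carefully is the stability of each hypothesis (QF), (FP), (LC) under the contraction of $C_i$: for (QF) and (LC) this is exactly statement~(5) of Theorem~\ref{relcont}; for (FP) it relies on Theorem~\ref{Q-factorial} together with the fact that the contraction target is again normal and proper over $S$. A secondary delicate point, which I would address before invoking Theorem~\ref{relcont}, is that when $K_{X_i}+\Delta_i$ is not $\pi_i$-nef there genuinely exists an extremal ray with negative intersection; this follows from the first decomposition in Theorem~\ref{relct} applied to any curve witnessing non-nefness. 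Once these checks are in place, the recursive construction and the termination argument assemble the required sequence.
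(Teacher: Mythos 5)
Your proposal is correct and follows exactly the approach the paper intends: the paper does not supply an explicit proof of Theorem~\ref{relmmp}, leaving it as the standard iteration of the relative cone theorem (Theorem~\ref{relct}) and the relative contraction theorem (Theorem~\ref{relcont}), with termination governed by the strict drop in $\rho(\,\cdot/S)$ coming from item~(4) of Theorem~\ref{relcont}. Your case analysis by $\dim Y$, the preservation of (QF)/(LC) via item~(5) of Theorem~\ref{relcont} and of (FP) via Theorem~\ref{Q-factorial} and effectivity of pushforward, and the existence of a negative extremal ray from the decomposition in Theorem~\ref{relct}(1)--(2) are all the right ingredients. One small attribution slip: the equality $\rho(X_i/Y)=1$ in the fibration case does not come from Proposition~\ref{relctt} directly, but from property~(1) of Theorem~\ref{relcont} (every contracted curve lies in $R$) together with property~(4) and $\rho(X_i/S)=\rho(X_i/Y)+\rho(Y/S)$; also property~(2) of Theorem~\ref{relcont} already gives $(\phi_R)_*\mathcal O_{X_i}=\mathcal O_Y$, so no replacement by the Stein factorization is needed. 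Neither of these affects the correctness of the argument.
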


\subsection{Relative abundance theorem}

In this section, 
we consider the relativization of the abundance theorem. 
To descend the problem from the absolute case to the relative case, 
let us consider the following lemma.

\begin{lem}\label{relabundancelemma}
Let $\pi:X \to S$ be a morphism 
from a projective normal $\mathbb{Q}$-factorial surface $X$ 
to a projective variety $S$. 
Let $\Delta$ be an $\mathbb{R}$-boundary on $X$. 
If $K_X+\Delta$ is $\pi$-nef, 
then there exists an ample line bundle $F$ on $S$ such that 
$\Delta+\pi^*(F) \sim_{\mathbb{R}}\Delta'$ 
for some $\mathbb{R}$-boundary $\Delta'$
and $K_X+\Delta'$ is nef.
\end{lem}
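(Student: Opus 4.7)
The plan is to choose $F$ ample on $S$ so that $K_X+\Delta+\pi^*F$ is nef, and then represent its $\mathbb{R}$-linear equivalence class by a boundary $\Delta'$ via a Bertini-type moving trick inside $|nF|$ for $n\gg 0$. After replacing $S$ by the scheme-theoretic image $\pi(X)$ if necessary (and restricting $F$), we may assume $\pi$ is surjective. To establish nefness, fix any ample line bundle $F_0$ on $S$. By Proposition~\ref{ext-rat-length1} every $(K_X+\Delta)$-negative extremal ray of $\overline{NE}(X)$ is generated by a rational curve $C$ with $-(K_X+\Delta)\cdot C\le 3$; the $\pi$-nefness of $K_X+\Delta$ forces $\pi(C)$ not to be a point (otherwise $[C]\in\overline{NE}(X/S)$, contradicting $(K_X+\Delta)\cdot C<0$), so $\pi^*F_0\cdot C=F_0\cdot \pi_*[C]\ge 1$. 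Taking $F:=mF_0$ with $m\ge 3$ and large enough to make $F$ itself very ample, one gets $(K_X+\Delta+\pi^*F)\cdot C\ge -3+m\ge 0$ on every such extremal ray. Combined with the trivial estimate on $\overline{NE}(X)_{K_X+\Delta\ge 0}$ (since $\pi^*F$ is nef), Theorem~\ref{ct} yields that $K_X+\Delta+\pi^*F$ is nef on all of $\overline{NE}(X)$.

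To construct $\Delta'$, pick $n\in\mathbb{Z}_{>0}$ large enough that $nF$ is very ample and also $n\ge \deg\pi$ whenever $\dim S=2$ (any $n\ge 1$ is fine when $\dim S\le 1$). For a general $H\in|nF|$, the coefficient of $\pi^*H$ along each of its prime components is at most $n$: for $\dim S=1$, Proposition~\ref{genericintegral} applied to the Stein factorization of $\pi$ shows that the scheme-theoretic preimage of a general point is integral, so those coefficients equal $1$; for $\dim S=2$, the ramification index of the generically finite morphism $\pi$ along any prime divisor is bounded by $\deg\pi\le n$. By further generality we may also arrange that no prime component of $\pi^*H$ lies in the support of $\Delta$: any component $B$ of $\Delta$ has image $\pi(B)$ either equal to $S$ (hence not contained in any $H\subsetneq S$) or equal to a fixed proper subvariety of $S$ (which a general $H$ avoids). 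Setting $\Delta':=\Delta+\tfrac{1}{n}\pi^*H$, its coefficients along the original components of $\Delta$ lie in $[0,1]$, while the new coefficients are at most $\tfrac{n}{n}=1$; so $\Delta'$ is an $\mathbb{R}$-boundary. Since $\Delta'\sim_{\mathbb{Q}}\Delta+\pi^*F$, we conclude $K_X+\Delta'\sim_{\mathbb{Q}}K_X+\Delta+\pi^*F$ is nef.

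The main obstacle is controlling the multiplicities of $\pi^*H$ in positive characteristic, where inseparability or wild ramification of $\pi$ can produce unexpectedly non-reduced pullbacks and thereby threaten the boundary condition $\Delta'\le 1$. This is absorbed by taking $n$ large compared to $\deg\pi$, so that the rescaled coefficients in $\tfrac{1}{n}\pi^*H$ always fit inside $[0,1]$.
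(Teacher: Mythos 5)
Your overall strategy matches the paper's: bound the lengths of $(K_X+\Delta)$-negative extremal rays by $3$ via Proposition~\ref{ext-rat-length1}, observe that such rays are $\pi$-horizontal by $\pi$-nefness, choose a sufficiently positive multiple of an ample $F_0$ on $S$ to tip the balance, and then realize $\Delta+\pi^*F$ by a Bertini-generic member of a suitable linear system. The nefness half of your argument is sound. The gap is in the construction of $\Delta'$ when $\dim S=1$.

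You assert that in this case the scheme-theoretic preimage of a general point of $S$ under $\pi$ is integral, so the coefficients of $\pi^*H$ are $1$, and that therefore any $n\ge 1$ works. This is false when the finite part $\sigma:T\to S$ of the Stein factorization is inseparable: Proposition~\ref{genericintegral} applies to $\theta:X\to T$ (where $\theta_*\mathcal O_X=\mathcal O_T$), not to $\pi$ itself, and $\sigma^*(P)$ for a general point $P\in S$ carries multiplicity equal to the inseparability degree of $\sigma$. Concretely, take $X=\mathbb P^1\times\mathbb P^1$, $\pi=\mathrm{Frob}\circ\mathrm{pr}_1$, and $\Delta$ two disjoint horizontal sections; then $\pi^*(P)=p\cdot\theta^{-1}(Q)$ for every point $P$, so with $n<p$ the coefficients of $\frac1n\pi^*H$ are $p/n>1$ and $\Delta'$ fails to be a boundary. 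The fix is simply to require $n\ge\deg\sigma$ in the curve case as well (for any finite $\sigma$ one has $\sum e_if_i=\deg\sigma$, so ramification indices are $\le\deg\sigma$), exactly parallel to your treatment of $\dim S=2$. The paper sidesteps this entirely by taking $\sigma^*(4H)$ very ample on $T$ and choosing the general member $G$ there, so that $\theta^*(G)$ has coefficient $1$ outright and no rescaling is needed; your $\frac1n$-trick is more uniform but needs the degree bound in both dimensions, not just dimension two.
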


\begin{proof}
Take the Stein factorization of $\pi$ 
$$\pi:X \overset{\theta}\to T \overset{\sigma}\to S.$$ 
Take an arbitrary ample line bundle $H$ on $S$. 
Since $\sigma$ is a finite morphism, 
$\sigma^*(H)$ is also ample. 
We may assume that $\sigma^*(H)$ is very ample 
by replacing $H$ with its multiple. 
Note that $\sigma^*(4H)$ is very ample. 
We want to prove that 
$F:=4H$ satisfies the assertion. 
If $\dim T=0$, then the assertion is obvious. 
Thus we can consider the following two cases: 
(1)$\dim T=1$ and (2)$\dim T=2$. \\
(1)Assume $\dim T=1$. 
In this case, $T$ is a smooth projective curve and 
general fibers of $\theta$ are integral 
by Proposition~\ref{genericintegral}. 
Thus, we can take a hyperplane section 
$$P_1+\cdots+P_n=G\in |\sigma^*(4H)|$$ such that 
$P_i\neq P_j$ for all $i\neq j$, 
$\theta^{-1}(P_i)$ is integral for each $i$ and 
$\theta^{-1}(P_i)$ is not a component of $\Delta$ for each $i$. 
Therefore, for an $\mathbb{R}$-boundary $\Delta'$ defined by 
$$\Delta':=\Delta+\theta^*(G),$$ 
$K_X+\Delta'$ is nef by Theorem~\ref{ct} and Proposition~\ref{ext-rat-length1}. \\
(2)Assume $\dim T=2$. 
In this case, $T$ is a normal projective surface and 
$\theta$ is birational. 
By Bertini's theorem, 
we can take an irreducible smooth hyperplane section 
$G\in |\sigma^*(4H)|$ such that 
$\Supp G\cap \theta({\rm Ex}(\theta))=\emptyset$ and 
$G$ is not a component of $\theta_*(\Delta)$. 
Then, $\Delta':=\Delta+\theta^*(G)$ is an $\mathbb{R}$-boundary and 
$K_X+\Delta'$ is nef by Theorem~\ref{ct} and Proposition~\ref{ext-rat-length1}.
\end{proof}

We can prove the relative abundance theorem for 
\Q-factorial surfaces with $\mathbb{R}$-boundary. 

\begin{thm}\label{qfacrelabundance}
Let $\pi:X \to S$ be a projective morphism 
from a normal \Q-factorial surface $X$ 
to a variety $S$. 
Let $\Delta$ be an $\mathbb{R}$-boundary. 
If $K_X+\Delta$ is $\pi$-nef, then 
$K_X+\Delta$ is $\pi$-semi-ample. 
\end{thm}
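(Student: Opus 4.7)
The plan is to reduce this relative statement to the absolute abundance theorem (already proved in Parts~2 and~3) using the bridging Lemma~\ref{relabundancelemma}. Since being $\pi$-semi-ample is a local condition on $S$, I will first replace $S$ by an affine open subset and thereby assume $S$ is quasi-projective; fix a projective compactification $\overline{S} \supset S$.

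Next I construct a projective normal \Q-factorial compactification $\overline{X}$ of $X$ over $\overline{S}$ such that $K_{\overline{X}} + \overline{\Delta}$ remains $\overline{\pi}$-nef, where $\overline{\Delta}$ denotes the closure of $\Delta$. Starting from a Nagata compactification of $X \to \overline{S}$, I normalize and then resolve singularities of the boundary; the resulting proper normal surface is \Q-factorial on $X$ by hypothesis and on the boundary by smoothness, so by \cite[Lemma~2.2]{Fujino} it is automatically projective. I then run the relative MMP of Theorem~\ref{relmmp}. Every $(K+\overline{\Delta})$-negative extremal ray must be supported in the complement of $X$: if an extremal curve $C$ met $X$ then $\overline{\pi}(C)$ would lie in $S$, so $C \subset \pi^{-1}(s) \subset X$ for some $s \in S$, and $(K_X+\Delta)\cdot C < 0$ would contradict the $\pi$-nefness of $K_X+\Delta$. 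Hence the MMP modifies only the boundary, and the resulting $\overline{X}$ still contains $X$ as an open subset with $(K_{\overline{X}} + \overline{\Delta})|_X = K_X+\Delta$.

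With $\overline{X}$ and $\overline{S}$ both projective, Lemma~\ref{relabundancelemma} produces an ample line bundle $F$ on $\overline{S}$ and an \R-boundary $\overline{\Delta}'$ with $\overline{\Delta} + \overline{\pi}^* F \sim_{\mathbb{R}} \overline{\Delta}'$ such that $K_{\overline{X}} + \overline{\Delta}'$ is nef in the absolute sense. The absolute abundance theorem (Theorem~\ref{abundance} when $k \neq \overline{\mathbb{F}}_p$, or Theorem~\ref{fpabundance} when $k = \overline{\mathbb{F}}_p$) then shows that $K_{\overline{X}} + \overline{\Delta}'$ is semi-ample, hence in particular $\overline{\pi}$-semi-ample.

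To conclude, I observe that for any Cartier divisor $G$ on $\overline{S}$ the pullback $\overline{\pi}^*G$ is locally trivial over $\overline{S}$ and therefore $\overline{\pi}$-free; so both $\overline{\pi}^*F$ and $-\overline{\pi}^*F$ are $\overline{\pi}$-semi-ample Cartier divisors. Thus
\[
K_{\overline{X}} + \overline{\Delta} \sim_{\mathbb{R}} (K_{\overline{X}} + \overline{\Delta}') + (-\overline{\pi}^*F)
\]
expresses $K_{\overline{X}} + \overline{\Delta}$ as a non-negative \R-linear combination of $\overline{\pi}$-semi-ample Cartier divisors, so it is $\overline{\pi}$-semi-ample; restricting to the open subset $X \subset \overline{X}$ gives that $K_X+\Delta$ is $\pi$-semi-ample. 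The most delicate step is the compactification: I must ensure simultaneously that \Q-factoriality survives on the total space and that the $\pi$-nefness of $K_X+\Delta$ extends to $\overline{\pi}$-nefness, which is precisely the role played by the boundary resolution followed by a boundary-supported relative MMP.
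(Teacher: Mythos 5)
Your proposal is correct and follows essentially the same route as the paper: reduce to $S$ affine, take a Nagata compactification (normalizing and resolving the boundary to preserve \Q-factoriality), run the relative MMP over $\overline S$ to restore $\overline\pi$-nefness, apply Lemma~\ref{relabundancelemma} and the absolute abundance theorem, and subtract $\overline{\pi}^*F$. Where you go beyond the paper's terse "by running $(K_X+\Delta)$-minimal model program over $S$, we may assume this hypothesis" is in explaining \emph{why} this is legitimate — namely, that every $(K_{\overline X}+\overline\Delta)$-negative extremal curve over $\overline S$ must live entirely in $\overline X\setminus X$ (since $X=\overline\pi^{-1}(S)$ by properness of $\pi$ and irreducibility of $\overline X$), so the MMP is boundary-supported and the minimal model still contains $X$ as an open subscheme with the same $K_X+\Delta$; this observation is implicitly required by the paper's proof as well, so your version is a more careful rendering of the same argument rather than a different one.
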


\begin{proof}
We may assume that $S$ is affine. 
Moreover, by taking Nagata's compactification 
we may assume that $S$ is projective and 
$X$ is projective \Q-factorial. 
Note that the hypothesis of $\pi$-nefness 
may break up by taking the compactification. 
But, by running $(K_X+\Delta)$-minimal model program over $S$, 
we may assume this hypothesis. 
Thus we can apply Lemma~\ref{relabundancelemma}. 
$F$ and $\Delta'$ are the same notations as Lemma~\ref{relabundancelemma}. 
Since $K_X+\Delta'$ is nef, $K_X+\Delta'$ is semi-ample by 
the abundance theorem of the absolute case. 
By $K_X+\Delta' \sim_{\mathbb{R}}K_X+\Delta+\pi^*(F)$, 
$K_X+\Delta$ is $\pi$-semi-ample.
\end{proof}

We obtain the following theorem by applying the same argument. 

\begin{thm}
Let $\pi:X \to S$ be a projective morphism 
from a normal surface $X$ to a variety $S$, 
which are defined over $\overline{\mathbb{F}}_p$. 
Let $\Delta$ be an effective $\mathbb{R}$-divisor. 
If $K_X+\Delta$ is $\pi$-nef, then 
$K_X+\Delta$ is $\pi$-semi-ample. 
\end{thm}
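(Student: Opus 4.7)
The plan is to follow the proof of Theorem~\ref{qfacrelabundance} almost verbatim, substituting the FP-case inputs at each step. First I would reduce to the case where $S$ is affine (since $\pi$-semi-ampleness is local on $S$), and then compactify via Nagata so that $S$ becomes projective and $X$ becomes projective normal; note that $X$ is automatically \Q-factorial over $\overline{\mathbb{F}}_p$ by Theorem~\ref{Q-factorial}. The compactification may destroy the $\pi$-nefness hypothesis, so I would restore it by running a $(K_X+\Delta)$-MMP over $S$ using the FP case of Theorem~\ref{relmmp}; this preserves both the effectiveness of $\Delta$ and the fact that we are working over $\overline{\mathbb{F}}_p$.

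Next I would establish the FP analog of Lemma~\ref{relabundancelemma}: that there exists an ample line bundle $F$ on $S$ and an effective $\mathbb{R}$-divisor $\Delta'$ on $X$ with $\Delta' \sim_{\mathbb{R}} \Delta + \pi^*F$ such that $K_X+\Delta'$ is absolutely nef. The construction mirrors that of Lemma~\ref{relabundancelemma} exactly: factor $\pi = \sigma \circ \theta$ as its Stein factorization, pick an ample $H$ on $S$ with $\sigma^*(4H)$ very ample, and choose a hyperplane section $G \in |\sigma^*(4H)|$ on $T$ such that $\Supp G$ avoids $\theta(\Exc\theta)$ (when $\dim T = 2$) or consists of distinct points $P_i$ with each $\theta^{-1}(P_i)$ integral and disjoint from $\Supp\Delta$ (when $\dim T = 1$). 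Setting $\Delta' := \Delta + \theta^*G$ makes $\Delta'$ effective automatically, and nefness of $K_X+\Delta'$ follows because every $(K_X+\Delta')$-negative extremal ray of $\overline{NE}(X)$ would have to come from a curve $C$ with $\theta^*G \cdot C = 0$, i.e.\ a $\theta$-vertical curve, on which $(K_X+\Delta)\cdot C \ge 0$ by $\pi$-nefness.

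Finally, once we have $K_X+\Delta'$ nef and effective on a projective normal surface over $\overline{\mathbb{F}}_p$, Theorem~\ref{fpabundance} implies $K_X+\Delta'$ is semi-ample, and then $K_X+\Delta \sim_{\mathbb{R}} K_X+\Delta' - \pi^*F$ is $\pi$-semi-ample since $\pi^*F$ is $\pi$-trivial.

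The only substantive point of adaptation is the FP analog of Lemma~\ref{relabundancelemma}, and this will be the main obstacle. In the original lemma, the hypothesis that $\Delta$ be an $\mathbb{R}$-boundary feeds into Proposition~\ref{ext-rat-length1}, which ensures $(K_X+\Delta)$-negative extremal rays are generated by rational curves of length at most $3$; and it is also used to guarantee that $\Delta'$ remains an $\mathbb{R}$-boundary so that the absolute abundance input applies. Neither concern arises here: the absolute abundance input Theorem~\ref{fpabundance} places no boundary constraint on $\Delta'$, and nefness of $K_X+\Delta'$ can be deduced directly from Theorem~\ref{ct} by the elementary observation above about vertical curves, without any length bound on extremal rays. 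Once this adaptation is in place, the remainder of the proof is formal.
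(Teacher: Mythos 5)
Your overall strategy --- reduce to $S$ affine, compactify via Nagata, note $\mathbb{Q}$-factoriality is automatic over $\overline{\mathbb{F}}_p$, run the FP case of Theorem~\ref{relmmp} to restore $\pi$-nefness, construct an $\mathbb{R}$-linearly equivalent absolutely nef $K_X+\Delta'$, and invoke Theorem~\ref{fpabundance} --- is exactly the adaptation of the proof of Theorem~\ref{qfacrelabundance} that the paper intends, and most of your reduction steps are handled correctly. However, your justification of the nefness of $K_X+\Delta'$ contains a genuine gap. You assert that every $(K_X+\Delta')$-negative extremal ray of $\overline{NE}(X)$ must come from a curve $C$ with $\theta^*G\cdot C = 0$, and you explicitly state that no length bound on extremal rays is needed. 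That assertion is unjustified: a $\theta$-horizontal curve $C$ has $\theta^*G\cdot C > 0$, but nothing forces $(K_X+\Delta')\cdot C \ge 0$ a priori. If $\Delta$ has a component $B$ with $B^2<0$ and large coefficient, then $-(K_X+\Delta)\cdot B$ can exceed $\deg \theta^*G|_{B}$ with $G \in |\sigma^*(4H)|$, so the constant $4$ inherited verbatim from Lemma~\ref{relabundancelemma} can fail to compensate the negativity, and $K_X+\Delta'$ need not be nef.

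The point you are missing is that the length bound is precisely what makes the original argument close, and that it \emph{does} survive the passage from $\mathbb{R}$-boundaries to arbitrary effective $\Delta$, just with a different constant. By Theorem~\ref{ct} and Remark~\ref{ray3}, every $(K_X+\Delta)$-negative extremal ray is generated by a curve $C$ with $-(K_X+\Delta)\cdot C\le L(X,\Delta)$, where $L(X,\Delta)=\max(\{3\}\cup\{-(K_X+\Delta)\cdot B_\mu\})$ is a finite constant determined by the finitely many components $B_\mu$ of $\Delta$ with $B_\mu^2<0$. Replacing your choice $G\in|\sigma^*(4H)|$ by $G\in|\sigma^*(NH)|$ with $N>L(X,\Delta)$ repairs the argument: a $(K_X+\Delta')$-negative extremal curve $C$ is a fortiori $(K_X+\Delta)$-negative, hence $\theta$-horizontal by $\pi$-nefness, so $\theta^*G\cdot C\ge N > L(X,\Delta)\ge -(K_X+\Delta)\cdot C$, which contradicts $(K_X+\Delta')\cdot C<0$. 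With this correction, the rest of your proof goes through as written.
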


\begin{proof}
We can apply the same proof as Theorem~\ref{qfacrelabundance}. 
\end{proof}

The log canonical case immediately follows from the \Q-factorial case.

\begin{thm}
Let $\pi:X \to S$ be a projective morphism 
from a log canonical surface $(X, \Delta)$
to a variety $S$. 
If $K_X+\Delta$ is $\pi$-nef, then 
$K_X+\Delta$ is $\pi$-semi-ample. 
\end{thm}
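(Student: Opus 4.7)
The plan is to reduce the statement to the $\mathbb{Q}$-factorial relative abundance theorem (Theorem~\ref{qfacrelabundance}) by passing to the minimal resolution, and then to descend $\pi$-semi-ampleness back to $X$.

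Let $f\colon Y\to X$ be the minimal resolution and set $\pi_Y:=\pi\circ f$. Define $\Delta_Y$ by $K_Y+\Delta_Y=f^*(K_X+\Delta)$. The first step is to check that $\Delta_Y$ is an $\mathbb{R}$-boundary on $Y$. Minimality of $f$ gives $K_Y+\sum_i e_iE_i=f^*K_X$ with $e_i\ge 0$, and pulling back the effective divisor $\Delta$ in the sense of Mumford produces $f^*\Delta=f^{-1}_*\Delta+\sum_i d_iE_i$ with $d_i\ge 0$, since the inverse of the negative-definite matrix $(-E_i\cdot E_j)$ has non-negative entries (a standard M-matrix fact, applied to each connected component of $\mathrm{Ex}(f)$). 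Hence $\Delta_Y=f^{-1}_*\Delta+\sum_i(e_i+d_i)E_i$ is effective, and log canonicity of $(X,\Delta)$ forces $\Delta_Y\le 1$. Since $K_Y+\Delta_Y$ is $\pi_Y$-nef by pullback, Theorem~\ref{qfacrelabundance} applied to $(Y,\Delta_Y)\to S$ yields that $K_Y+\Delta_Y$ is $\pi_Y$-semi-ample.

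To descend to $X$, observe that every $f$-exceptional curve $E$ satisfies $(K_Y+\Delta_Y)\cdot E=f^*(K_X+\Delta)\cdot E=0$. Writing $K_Y+\Delta_Y=\sum a_iL_i$ with $a_i\ge 0$ and each $L_i$ a $\pi_Y$-semi-ample (hence $\pi_Y$-nef) Cartier divisor, the equation $\sum a_i(L_i\cdot E)=0$ combined with $a_i\ge 0$ and $L_i\cdot E\ge 0$ forces $L_i\cdot E=0$ for each $f$-exceptional $E$. By Theorem~\ref{lcsing} (applied locally at each singular point of $X$) the morphism $f$ factors as a sequence of contractions of single proper irreducible curves, so iterating Theorem~\ref{relcont}(3) shows that some positive multiple of each $L_i$ descends to a $\pi$-semi-ample Cartier divisor $M_i$ on $X$. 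Pushing forward the identity $K_Y+\Delta_Y=f^*(K_X+\Delta)$ then expresses $K_X+\Delta$ as a non-negative $\mathbb{R}$-linear combination of the $M_i$, which is the desired $\pi$-semi-ampleness.

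The main obstacle will be this final $\mathbb{R}$-descent: one has to descend the Cartier summands $L_i$ individually through the tower of one-curve contractions provided by Theorem~\ref{lcsing} and then match up the multiples in the $\mathbb{R}$-linear combination to recover $K_X+\Delta$ exactly. The two ingredients that make this go through are the factorization of $f$ into single-curve contractions (so that Theorem~\ref{relcont}(3) applies at each stage) and the non-negativity of the coefficients $a_i$ (which is what propagates the vanishing $(K_Y+\Delta_Y)\cdot E=0$ onto each individual component $L_i$).
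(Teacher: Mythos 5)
Your first two paragraphs follow the paper's route exactly: take the minimal resolution $f\colon Y\to X$, check that $\Delta_Y$ is an $\mathbb{R}$-boundary (using minimality and log canonicity of $(X,\Delta)$), and apply Theorem~\ref{qfacrelabundance}. Your observation that $L_i\cdot E=0$ for every $f$-exceptional curve $E$ whenever $a_i>0$ is also correct and is the right thing to notice. The paper's own proof is a one-liner and leaves the descent implicit, so spelling it out is worthwhile.

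However, the descent step as you set it up has a genuine gap. Theorem~\ref{lcsing} factors $f$ as $Y\to Y_1\to\cdots\to Y_m=:Z\to X$, and the last map $g\colon Z\to X$ is nontrivial precisely when $X$ has a non-$\mathbb{Q}$-factorial log canonical point, which does happen for log canonical surfaces (e.g.\ a cone over an elliptic curve: there $\Delta=0$ near the singular point and the exceptional curve $E$ of $g$ satisfies $(K_Z+E)\cdot E=0$). This $g$ is not a $(K_Z+\Delta')$-negative extremal contraction for any boundary $\Delta'$: with $\Delta_Z:=g^{-1}_*\Delta$ one has $(K_Z+\Delta_Z)\cdot E=g^*(K_X+\Delta)\cdot E=0$, and $(K_Z+E)\cdot E=0$ as well, so Theorem~\ref{relcont}(3) does not apply at this step. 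The underlying mechanism of Proposition~\ref{(3)} also fails here: the exceptional curve in case (b) is typically not $\mathbb{P}^1$, so one cannot conclude $L_i|_E$ is torsion just from $L_i\cdot E=0$. So ``iterating Theorem~\ref{relcont}(3)'' does not reach $X$.

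In fact the descent is simpler and avoids Theorem~\ref{lcsing} entirely. For each $i$ with $a_i>0$, pick $m$ so that $mL_i$ is $\pi_Y$-free and let $\phi_i\colon Y\to W_i$ be the induced $S$-morphism. Since $L_i\cdot E=0$ for every $f$-exceptional $E$ and $\phi_i$ contracts exactly the $\pi_Y$-vertical curves on which $L_i$ vanishes, $\phi_i$ collapses each fiber $f^{-1}(x)$ to a point. Because $f_*\mathcal{O}_Y=\mathcal{O}_X$ and $X$ is normal, $\phi_i$ therefore factors as $\phi_i=h_i\circ f$ for some $S$-morphism $h_i\colon X\to W_i$ (rigidity lemma / Zariski's main theorem). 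Hence $mL_i=\phi_i^*A_i=f^*(h_i^*A_i)$ with $h_i^*A_i$ a $\pi$-semi-ample Cartier divisor on $X$. Applying $f_*$ to $m(K_Y+\Delta_Y)=\sum a_i(mL_i)$ gives $m(K_X+\Delta)=\sum a_i\,h_i^*A_i$, which exhibits $K_X+\Delta$ as $\pi$-semi-ample. This is the descent the paper's one-line proof is implicitly using; your detour through Theorem~\ref{lcsing} and Theorem~\ref{relcont}(3) replaces it with something that does not close up at the $Z\to X$ step.
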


\begin{proof}
Take the minimal resolution and apply Theorem~\ref{qfacrelabundance}. 
\end{proof}

We summarize the results obtained in this section. 

\begin{cor}\label{relabundance}
Let $\pi:X\to S$ be a projective morphism 
from a normal surface $X$ to a variety $S$. 
Let $\Delta$ be an $\mathbb{R}$-divisor on $X$. 
Assume that one of the following conditions holds{\em{:}}
\begin{enumerate}
\item[(QF)]{$X$ is \Q-factorial and $0\leq \Delta\leq 1$.}
\item[(FP)]{$k=\overline{\mathbb{F}}_p$ and $0\leq\Delta$.}
\item[(LC)]{$(X, \Delta)$ is a log canonical surface.}
\end{enumerate}
If $K_X+\Delta$ is $\pi$-nef, then $K_X+\Delta$ is $\pi$-semi-ample. 
\end{cor}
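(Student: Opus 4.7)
The plan is to observe that this corollary is essentially just the consolidated statement of the three theorems in this section (Theorem~\ref{qfacrelabundance} for (QF), together with its analogues for (FP) and (LC)), so the proof reduces to invoking each of them according to which hypothesis (QF), (FP), or (LC) is in force. Thus there is nothing genuinely new to prove; I only need to check that the three cases cover all the assumptions and that the conclusions match.

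More concretely, I would proceed case by case. In case (QF), I would apply Theorem~\ref{qfacrelabundance} directly. In case (FP), I would apply the corresponding $\pi$-semi-ampleness theorem over $\overline{\mathbb F}_p$, whose proof runs along the same lines: reduce to $S$ projective via Nagata compactification and to $X$ projective normal by normalization and resolution of the locus added at infinity; restore the nefness lost by compactification by running a $(K_X+\Delta)$-minimal model program over $S$ (available thanks to Theorem~\ref{relmmp}); then apply Lemma~\ref{relabundancelemma} to produce an $\mathbb R$-boundary $\Delta'$ with $K_X+\Delta'\sim_{\mathbb R}K_X+\Delta+\pi^{\ast}F$ for some ample $F$ on $S$ such that $K_X+\Delta'$ is globally nef, and then invoke Theorem~\ref{fpabundance} to conclude semi-ampleness of $K_X+\Delta'$, hence $\pi$-semi-ampleness of $K_X+\Delta$. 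In case (LC), I would pass to the minimal resolution $f:Y\to X$, define $\Delta_Y$ by $K_Y+\Delta_Y=f^{\ast}(K_X+\Delta)$ (an $\mathbb R$-boundary since $(X,\Delta)$ is log canonical), note that $K_Y+\Delta_Y$ is $(\pi\circ f)$-nef, apply the (QF) case to conclude it is $(\pi\circ f)$-semi-ample, and push down.

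The only step requiring any thought is the reduction to the absolute case in (QF) and (FP), which is exactly the content of Lemma~\ref{relabundancelemma}: one needs that twisting by the pullback of a sufficiently ample divisor on $S$ preserves the boundary condition on $\Delta$ and the nefness of $K_X+\Delta$. This is handled by choosing a hyperplane section whose preimage consists of integral fibers avoiding $\Supp\Delta$ (when $\dim T=1$) or of an irreducible smooth curve avoiding the exceptional locus of the Stein factorization and $\Supp\Delta$ (when $\dim T=2$), using Proposition~\ref{genericintegral} and Bertini. Since all of this has already been carried out in the proofs above, the corollary is immediate.
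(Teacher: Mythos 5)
Your proposal is correct and follows exactly the paper's approach: the corollary is a consolidation of Theorem~\ref{qfacrelabundance}, its $\overline{\mathbb F}_p$-analogue (proved by the same compactification-plus-Lemma~\ref{relabundancelemma} argument), and the log canonical case, which reduces to (QF) via the minimal resolution and crepant pull-back. The paper's own treatment is just as brief, deferring entirely to these three theorems, so nothing more is required.
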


\appendix
\def\thesection{A}
\section{Basepoint free theorem}
In this section, we consider the basepoint free theorem. 
First, we prove the following non-vanishing theorem.

\begin{thm}\label{nonvani}
Let $X$ be a projective normal \Q-factorial surface 
and let $\Delta$ be a $\mathbb{Q}$-boundary. 
Let $D$ be a nef Cartier divisor. 
Assume that $D-(K_X+\Delta)$ is nef and big and 
that $(D-(K_X+\Delta))\cdot C>0$ 
for every curve $C \subset \Supp\,\llcorner\Delta\lrcorner$. 
Then $\kappa(X, D) \geq 0$.
\end{thm}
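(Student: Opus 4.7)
The plan is to pass to the minimal resolution $f\colon Y\to X$ and apply Riemann--Roch on the smooth projective surface $Y$. Since $f_*\mathcal O_Y=\mathcal O_X$ we have $h^0(X,mD)=h^0(Y,mf^*D)$, and since $f^*D$ is Cartier and orthogonal to every $f$-exceptional curve, $(f^*D)^2=D^2$ and $f^*D\cdot K_Y=D\cdot K_X$, so
\[
\chi(Y,mf^*D)=\chi(\mathcal O_Y)+\tfrac{m^2}{2}D^2-\tfrac{m}{2}D\cdot K_X.
\]
Setting $B:=D-(K_X+\Delta)$, the divisor $D-K_X=\Delta+B$ is big (effective plus nef-big), so $mD-K_X=(m-1)D+(D-K_X)$ is nef plus big, hence big, for every $m\ge 1$. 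Pulling back, $mf^*D-K_Y=f^*(mD-K_X)+\sum a_iE_i$ is big, where the $a_i\ge 0$ come from minimality of the resolution; by Serre duality this forces $h^2(Y,mf^*D)=h^0(Y,K_Y-mf^*D)=0$ for $m\gg 0$.

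I then split on the numerical class of $D$. If $D^2>0$, then $D$ is nef and big and $\chi(Y,mf^*D)\to\infty$, giving $h^0(X,mD)>0$. If $D^2=0$ but $D\not\equiv 0$, the Hodge index theorem applied to the nef class $D$ against the nef-big class $B$ (with $B^2>0$) forces $D\cdot B>0$, since $D\cdot B=0$ would force $D\equiv 0$; then using $D^2=0$,
\[
D\cdot K_X=D\cdot(D-\Delta-B)=-D\cdot\Delta-D\cdot B<0,
\]
so the linear term in Riemann--Roch grows and again $h^0>0$ for $m\gg 0$. If $D\equiv 0$, then $-K_X=B+\Delta$ is big, hence so is $-K_Y=-f^*K_X+\sum a_iE_i$. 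By the Bombieri--Mumford classification in positive characteristic, a smooth projective surface with $-K_Y$ big must be rational, because a minimal ruled surface over an irrational base $B$ satisfies $K^2=8(1-g(B))\le 0$ and further blow-ups only decrease $K^2$, contradicting $(-K_Y)^2>0$. Thus $\chi(\mathcal O_Y)=1$, and since $D^2=D\cdot K_X=0$ while $h^2=0$ (using the bigness of $-K_Y$ to kill $h^0(K_Y-mf^*D)$), Riemann--Roch yields $h^0(Y,mf^*D)\ge 1$.

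The main obstacle is the degenerate case $D\equiv 0$, where Riemann--Roch collapses to $\chi(\mathcal O_Y)$ and one must exploit the bigness of $-K_X$ together with the surface classification to force $Y$ rational. I note in passing that the hypothesis $(D-(K_X+\Delta))\cdot C>0$ for every $C\subset\Supp\llcorner\Delta\lrcorner$ does not appear to intervene in this non-vanishing step; presumably it plays its role in the subsequent semi-ampleness stage of the basepoint free theorem, where one needs to perturb $\Delta$ on its $\llcorner\Delta\lrcorner$-components while preserving the positivity of $D-(K_X+\Delta)$.
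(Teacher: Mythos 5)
Your treatment of the cases $D^2>0$ and $D^2=0$, $D\not\equiv 0$ is correct and arguably more direct than the paper's (the paper runs the whole argument by contradiction). The gap is in the case $D\equiv 0$, precisely where you yourself raise a red flag: the claim ``$-K_Y$ big implies $Y$ rational'' is false, and the hypothesis $(D-(K_X+\Delta))\cdot C>0$ for $C\subset\Supp\,\llcorner\Delta\lrcorner$, which your argument never uses, is exactly what is needed to close the hole. Take $C$ an elliptic curve, $L$ a degree $-2$ line bundle on $C$, and $Y=\mathbb P(\mathcal O_C\oplus L)$ with negative section $C_0$ (so $C_0^2=-2$) and fibre $F$. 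Then $-K_Y=2C_0+2F$ has Zariski decomposition $P+N$ with $P=C_0+2F$ nef, $P^2=2>0$, and $N=C_0$, so $-K_Y$ is big although $Y$ is irrational. Note $(-K_Y)^2=8(1-g)=0$, so the step ``$-K_Y$ big $\Rightarrow(-K_Y)^2>0$'' in your classification argument is the unjustified one: big divisors on surfaces need not have positive self-intersection.

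The same $Y$ shows the statement genuinely fails without the hypothesis you discard, so your Riemann--Roch argument cannot be repaired: over $k\ne\overline{\mathbb F}_p$, take $X=Y$, $\Delta=C_0$ (a $\mathbb Q$-boundary), and $D=\pi^*M$ for a non-torsion $M\in\Pic^0(C)$. Then $D$ is nef Cartier, $D\equiv 0$, and $D-(K_X+\Delta)\equiv C_0+2F$ is nef and big, yet $\kappa(X,D)=\kappa(C,M)=-\infty$. What fails is exactly $(D-(K_X+\Delta))\cdot C_0=(C_0+2F)\cdot C_0=0$, not $>0$. In the paper that hypothesis enters in Step~4, to justify contracting every curve $C$ with $D\cdot C=0$ and $C^2<0$: one needs $(K_X+C)\cdot C<0$, and when $C\subset\Supp\,\llcorner\Delta\lrcorner$ this comes precisely from the strict positivity. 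After those contractions the paper shows $\rho(X)=2$ with $-(K_X+\Delta)$ ample and two Mori fibre space structures over a curve, and derives a contradiction from the rationality of extremal rays (Proposition~\ref{ext-rat-length1}) against the irrationality of the base, the latter coming from Theorem~\ref{irratqfac}; the case $k=\overline{\mathbb F}_p$ is handled separately via Theorem~\ref{fpbpf}. Riemann--Roch by itself cannot exclude a non-torsion numerically trivial $D$, since it only sees the numerical class.
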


\begin{proof}
If $k=\overline{\mathbb{F}}_p$, 
then the assertion follows from Theorem~\ref{fpbpf}. 
Thus we may assume $k\neq\overline{\mathbb{F}}_p.$

Assume $\kappa(X, D)=-\infty$ and we derive a contradiction.
Let $f: X' \to X$ be the minimal resolution, 
$K_{X'}+\Delta'=f^*(K_X+\Delta)$ and $D'=f^*D$.

\setcounter{step}{0}
\begin{step}
We may assume that $\kappa(X', K_{X'})=-\infty$.

Indeed, we have $\kappa(X', K_{X'}) \leq \kappa(X', K_{X'}+\Delta')
=\kappa(X, K_X+\Delta)=-\infty. $
Note that, if $\kappa(X, K_X+\Delta) \geq 0$, then 
we have 
$\kappa(X, D)=\kappa(X, D-(K_X+\Delta)+(K_X+\Delta)) \geq 0$.
This is what we want to show. 
\end{step}

\begin{step}
In this step, we show $h^2(X', D')=0$.

By Serre duality, we have 
$$h^2(X', D')=h^0(X', K_{X'}-D')\,\,\,\,{\rm and}\,\,$$
$$\kappa(X', K_{X'}-D') \leq 
\kappa(X', K_{X'}+\Delta'-D') = 
\kappa(X, K_X+\Delta-D)=-\infty,$$
because $-(K_X+\Delta-D)$ is nef and big. 
\end{step}

\begin{step}
In this step, we prove that $X'$ is an irrational ruled surface. 

It is sufficient to prove $\chi(\mathcal{O}_{X'}) \leq 0$.
Since $h^0(X', D')=h^2(X', D')=0$, by the Riemann--Roch theorem, 
we obtain 
$$-h^1(X', D')=\chi(\mathcal{O}_{X'})+\frac{1}{2}D'\cdot (D'-K_{X'}). $$ 
Since 
$$D'\cdot (D'-K_{X'})=D\cdot (D-K_X)\,\,\,\,{\rm and}\,\,$$
$$\kappa(X, D-K_X) \geq \kappa(X, D-(K_X+\Delta)) =2, $$
we have $D'\cdot (D'-K_{X'}) \geq 0 $ by the nefness of $D$. 
Therefore we get $0 \geq -h^1(X', D') \geq \chi(\mathcal{O}_{X'})$. 
\end{step}

Let $\pi:X' \to Z$ be its ruling. 
By Theorem~\ref{irratqfac}, $\pi$ factors through $X$.

\begin{step}
We reduce the proof to the case where there is no curve $C$ in $X$ such that 
$D\cdot C=0$ and $C^2<0$. 

Let $C$ be such a curve. 
We have $(K_X+C)\cdot C<0$ by the assumption. 
(Indeed, if $C \subset \Supp\,\llcorner\Delta\lrcorner$, then 
$$-(K_X+C)\cdot C\geq -(K_X+\Delta)\cdot C=(D-(K_X+\Delta))\cdot C>0.$$
If $C \not\subset \Supp\,\llcorner\Delta\lrcorner$, then 
$$-(K_X+C)\cdot C> -(K_X+\Delta)\cdot C=(D-(K_X+\Delta))\cdot C\geq 0.)$$ 
This shows that $C=\mathbb{P}^1$ and $C$ is contractable. 
Moreover this induces a contraction map $g:X \to Y$ 
to a \Q-factorial surface $Y$ and 
the irrationality of $X$ shows that $\pi$ factors through $Y$. 
Let $g_*D=D_Y$ and $g_*(\Delta)=\Delta_Y$. 
Then, we have $K_X+\Delta=g^*(K_Y+\Delta_Y)+aC$ 
for some non-negative rational number $a$. 
Therefore, it is easy to see that 
$Y$ has all the assumptions of $X$. 
\end{step}

\begin{step}
We reduce the proof to the case where $K_X+\Delta$ is not nef. 
In particular, 
there is at least one $(K_X+\Delta)$-negative extremal ray. 

If $K_X+\Delta$ is nef, then 
$D=D-(K_X+\Delta)+(K_X+\Delta)$ is nef and big, 
and this is what we want to show. 
Thus, we may assume that $K_X+\Delta$ is not nef. 
\end{step}

\begin{step}
We reduce the proof to the case where $D \equiv 0$.

The nefness of $D$ and $\kappa(X, D)=-\infty$ show $D^2=0$. 
Since $D$ and $D-(K_X+\Delta)$ is nef, 
we have $(D-(K_X+\Delta))\cdot D=-(K_X+\Delta)\cdot D\geq 0$. 
We consider the two cases: 
$-(K_X+\Delta)\cdot D=0$ or $-(K_X+\Delta)\cdot D>0$. 
If $-(K_X+\Delta)\cdot D=0$, then 
we obtain $D \equiv 0$ 
by the bigness of $D-(K_X+\Delta)$. 
This is what we want to show. 
If $-(K_X+\Delta)\cdot D>0$, then we have $K_X\cdot D<0$. 
Two conditions $K_X\cdot D<0$ and $D^2=0$ mean 
$\kappa(X, D)=1$ by resolution and the Riemann--Roch theorem. 
This case is excluded. 
\end{step}

\begin{step}
By Step~4 and Step~6, there exists no curve $C$ with $C^2<0$. 
By Step~5 and the classification of extremal rays, 
we have $\rho(X) \leq 2$. 
Since there is a surjection $X\to Z$ 
to a curve $Z$, we have $\rho(X)\neq 1.$ 
Thus, we obtain $\rho(X)=2$. 
Here, $-(K_X+\Delta)$ is ample because 
$-(K_X+\Delta)$ is nef and big and Step~4. 
Moreover, by Step~4, 
there are two extremal rays inducing 
the structure of the Mori fiber space to a curve. 
By Proposition~\ref{ext-rat-length1}, 
every extremal ray is generated by a rational curve. 
This contradicts the irrationality of $Z$.
\end{step}
This completes the proof. 
\end{proof}

Using the non-vanishing theorem, we obtain the following basepoint free theorem. 

\begin{thm}\label{bpf}
Let $X$ be a projective normal \Q-factorial surface and 
let $\Delta$ be a $\mathbb{Q}$-boundary. 
Let $D$ be a nef Cartier divisor. 
Assume that $D-(K_X+\Delta)$ is nef and big and 
that $(D-(K_X+\Delta))\cdot C>0$ 
for every curve $C \subset \Supp\,\llcorner\Delta\lrcorner$. 
Then $D$ is semi-ample. 
\end{thm}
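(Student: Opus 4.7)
The plan is to combine the non-vanishing theorem (Theorem~\ref{nonvani}) with Keel's result (Theorem~\ref{keel}), splitting the argument on the Kodaira dimension $\kappa:=\kappa(X,D)$. The hypotheses of the theorem are precisely those of Theorem~\ref{nonvani}, so $\kappa\geq 0$; since $D$ is nef on a surface, $\kappa\in\{0,1,2\}$. The case $\kappa=1$ is handled immediately by Proposition~\ref{kappa1}.

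For $\kappa=0$, I would show $D\equiv 0$ and extract a torsion representative. Since $D^2=0$ and both $D$ and $D-(K_X+\Delta)$ are nef, $D\cdot(D-(K_X+\Delta))=-D\cdot(K_X+\Delta)\geq 0$, so $D\cdot(K_X+\Delta)\leq 0$; together with $D\cdot\Delta\geq 0$ this forces $D\cdot K_X\leq 0$. Strict inequality would yield $\kappa\geq 1$ by a Riemann--Roch computation on the minimal resolution $f\colon X'\to X$ (with $h^2(X',mf^*D)=0$ for $m\gg 0$, since $K_{X'}-mf^*D$ would have negative intersection with the pullback of any ample divisor), a contradiction. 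Hence $D\cdot K_X=D\cdot\Delta=0$ and $D\cdot(D-(K_X+\Delta))=0$, and the Hodge index theorem on $X'$ applied with the nef and big divisor $f^*(D-(K_X+\Delta))$ gives $f^*D\equiv 0$, so $D\equiv 0$. Any effective $E$ with $mD\sim E$ then vanishes upon intersecting with an ample divisor, so $mD\sim 0$ and $D$ is torsion.

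For $\kappa=2$, $D$ is nef and big, and Keel's theorem reduces the task to showing $D|_{E(D)}$ is semi-ample, where $E(D)$ is the reduced union of curves $C$ with $D\cdot C=0$. The first step is to show that every component $C\subset E(D)$ is isomorphic to $\mathbb{P}^1$. Hodge index gives $C^2<0$, and a direct computation yields $(K_X+C)\cdot C<0$: when $C\subset\Supp\llcorner\Delta\lrcorner$, the positivity hypothesis $(D-(K_X+\Delta))\cdot C>0$ combined with $D\cdot C=0$ gives $(K_X+\Delta)\cdot C<0$, whence $(K_X+C)\cdot C\leq(K_X+\Delta)\cdot C<0$; otherwise the coefficient $b_C$ of $C$ in $\Delta$ is strictly less than one, so $(1-b_C)C^2<0$ forces $(K_X+C)\cdot C<(K_X+\Delta)\cdot C\leq 0$. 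Theorem~\ref{adjunction}(1) then gives $C\cong\mathbb{P}^1$, so $D|_C\simeq\mathcal{O}_{\mathbb{P}^1}$.

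The main obstacle will be the last step: deducing from this that $D|_{E(D)}$ is itself semi-ample on the whole (possibly reducible) curve $E(D)$, since a line bundle of degree zero on every rational component need not be torsion in $\Pic^0(E(D))$ when the dual graph contains cycles (the Picard group then carries a $\mathbb{G}_m$-torus factor). If $k=\overline{\mathbb{F}}_p$ the obstruction vanishes by Corollary~\ref{pic} applied to $E(D)$. For general $k$ of positive characteristic, the plan is to contract $E(D)$ iteratively: the components with $(K_X+\Delta)\cdot C<0$ generate $(K_X+\Delta)$-negative extremal rays (since $C^2<0$) and are contractable by Theorem~\ref{cont}, and property~(3) of that theorem ensures $D$ descends along each contraction, after which one continues inductively; the remaining truly delicate case is that of components on which both $D$ and $D-(K_X+\Delta)$ have degree zero.
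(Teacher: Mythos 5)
Your proposal matches the paper's strategy at the level of the case split (non-vanishing by Theorem~\ref{nonvani}, then $\kappa=1$ by Proposition~\ref{kappa1}, then reductions for $\kappa\in\{0,2\}$), and your worry about $\Pic(E(D))$ when applying Keel directly is legitimate. But the ``truly delicate case'' you flag at the end is not actually there, and the tool to close it is something you have already computed: you showed that \emph{every} curve $C$ with $D\cdot C = 0$ and $C^2 < 0$ satisfies $(K_X+C)\cdot C < 0$, whether or not $(K_X+\Delta)\cdot C < 0$. That is enough: since $C^2<0$, $\mathbb{R}_{\geq 0}[C]$ is an extremal ray, and it is a $(K_X+\Delta')$-negative extremal ray for the boundary $\Delta' := C$, so Theorem~\ref{cont} applies directly (equivalently, Theorem~\ref{adjunction}(1) gives $C\cong\mathbb{P}^1$ and then Propositions~\ref{p1cont}, \ref{(3)}, \ref{(5)} give the $\mathbb{Q}$-factorial contraction). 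You never need $C$ to be $(K_X+\Delta)$-negative. This is exactly what Step~4 in the proof of Theorem~\ref{nonvani} does, and the paper's proof of Theorem~\ref{bpf} simply cites ``the same argument as Step~4.'' After contracting all such curves, Hodge index forbids any remaining curve $C$ with $D\cdot C=0$ once $D$ is nef and big, so $D$ is ample by Nakai--Moishezon, and no statement about $\Pic(E(D))$ is ever needed. One must also verify the hypotheses descend along each contraction $g\colon X\to Y$: writing $K_X+\Delta = g^*(K_Y+\Delta_Y)+aC$ with $a = (K_X+\Delta)\cdot C/C^2 \geq 0$, one checks that $D_Y-(K_Y+\Delta_Y)$ remains nef and big with the required strict positivity on curves in $\Supp\llcorner\Delta_Y\lrcorner$; the paper treats this as immediate.

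For $\kappa=0$ your argument differs slightly from the paper's: you prove $D\equiv 0$ by Hodge index against $f^*(D-(K_X+\Delta))$ and then conclude torsion, whereas the paper performs the same curve-contraction reduction first and then shows that any nonzero effective $\sum d_iD_i\sim nD$ would have a component $D_i$ with $D_i^2=0$ and $K_X\cdot D_i<0$, hence $\kappa(X,D_i)=1$, contradicting $\kappa(X,D)=0$. Both routes work and rely on the same inputs (nef-and-bigness of $D-(K_X+\Delta)$ plus Riemann--Roch on the minimal resolution); yours is a bit more direct, the paper's reuses the reduction already set up for the $\kappa=2$ case.
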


\begin{proof}
By Theorem \ref{nonvani}, 
we may assume $\kappa(X, D) \geq 0$. 
But by Proposition~\ref{kappa1}, 
we may assume $\kappa(X, D)=0\,\,or\,\,2$. 
By the same argument as Step~4 in the proof of Theorem \ref{nonvani}, 
we may assume that there is no curve $C$ in $X$ 
with $D\cdot C=0$ and $C^2<0$. 
Thus, if $\kappa(X, D)=2$, then $D$ is ample. 
This is what we want to show. 
Hence the remaining case is $\kappa(X, D)=0$. 
We have linear equivalence to effective divisor $nD \sim \sum d_iD_i$. 
Assume $\sum d_iD_i \neq 0$ and let us get a contradiction. 
Since $D^2=0$ and the nefness of $D$, 
we have $D\cdot D_i=0$ for all $i$. 
Moreover we get $D_i^2 \geq 0$ by the above reduction. 
Then, we obtain $D_i^2=D_i\cdot D=0$.
Since $D-(K_X+\Delta)$ is nef and big, we have 
$$(D-(K_X+\Delta))\cdot D_i=-(K_X+\Delta)\cdot D_i > 0. $$ 
This means $K_X\cdot D_i<0$. 
$D_i^2=0$ and $K_X\cdot D_i<0$ show $\kappa(X, D_i)=1$ 
by taking a resolution and applying the Riemann--Roch theorem. 
This contradicts $\kappa(X, D)=0$.
\end{proof}

The following example teaches us that 
the basepoint free theorem does not hold only under the boundary condition.

\begin{ex}
If $k\neq\fff$, then 
there exist a smooth projective surface $X$ over $k$, 
an elliptic curve $C$ in $X$ and a divisor $D$ 
such that $K_X+C=0$ and the divisor $D=D-(K_X+C)$ is nef and big but not semi-ample.
\end{ex}

\begin{proof}[Construction]
Let $X_0:=\mathbb{P}^2$ and 
let $C_0$ be an elliptic curve in $X_0$. 
Let $P_1,\cdots ,P_{10}$ be ten points which are linearly independent. 
Blowup these 10 points. 
We obtain the surface $X$ and 
let $C$ be the proper transform of $C_0$. 
Then $K_X+C=0$ and 
$C$ is not contractable by Answer~\ref{Hironaka} and its construction. 
On the other hand, take an ample divisor $H$ and 
let $D$ be the divisor $D:=H+qC$ with $(H+qC)\cdot C=0$. 
It is easy to check that $D$ is nef and big. 
Because $C$ is not contractable, $D$ is not semi-ample. 
\end{proof}

We can also prove a basepoint free theorem 
under the following assumption. 

\begin{thm}
Let $X$ be a projective normal \Q-factorial surface and 
let $\Delta$ be a $\mathbb{Q}$-boundary. 
Let $D$ be a nef Cartier divisor. 
Assume that $D-(K_X+\Delta)$ is semi-ample. 
Then $D$ is semi-ample. 
\end{thm}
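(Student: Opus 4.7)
Set $L:=D-(K_X+\Delta)$, which by hypothesis is semi-ample and hence nef. My aim is to reduce to Theorem~\ref{bpf} whenever possible, splitting the argument according to the ground field and the numerical dimension of $L$.

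First, if $k=\fff$, the argument is immediate: since $L$ is semi-ample, some multiple $mL$ is linearly equivalent to an effective Cartier divisor, so $D-K_X=L+\Delta$ is $\mathbb{Q}$-linearly equivalent to an effective $\mathbb{Q}$-divisor, and in particular $\kappa(X,D-K_X)\geq 0$. Theorem~\ref{fpbpf} then yields the semi-ampleness of $D$.

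Next, assume $k\neq\fff$ and split by $\nu:=\nu(L)\in\{0,1,2\}$. If $\nu=0$, then $L\equiv 0$; semi-ampleness forces $mL\sim 0$ for some $m>0$, so $mD\sim m(K_X+\Delta)$. The divisor $K_X+\Delta$ is then numerically equivalent to the nef divisor $D$ and hence nef, so by Theorem~\ref{abundance} it is semi-ample, and so is $D$. If $\nu=2$, then $L$ is nef and big. For $m\gg 0$ with $|mL|$ base-point free, take a general $M\in|mL|$ and set, for small $\epsilon\in\mathbb{Q}_{>0}$, the perturbation $\Delta':=(1-\epsilon)\Delta+\frac{\epsilon}{m}M$. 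Then $\Delta'$ is a $\mathbb{Q}$-boundary with $\llcorner\Delta'\lrcorner=0$, and $D-(K_X+\Delta')\sim_{\mathbb{Q}}(1-\epsilon)L+\epsilon\Delta$ is big; once nefness is also arranged, Theorem~\ref{bpf} applies (its third hypothesis being vacuous as $\llcorner\Delta'\lrcorner=0$), yielding the semi-ampleness of $D$. If $\nu=1$, let $\phi\colon X\to C'$ be the Iitaka fibration of $L$, with $m_0L=\phi^*A$ for some ample divisor $A$ on the smooth curve $C'$. Since $L$ is $\phi$-trivial and $D$ is nef, $K_X+\Delta=D-L$ is $\phi$-nef, hence $\phi$-semi-ample by Corollary~\ref{relabundance}. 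For $N$ sufficiently divisible, $N(K_X+\Delta)$ is Cartier and $\phi$-generated, and the projection formula gives
\[
\phi_*\mathcal{O}(ND)=\mathcal{O}_{C'}\!\bigl((N/m_0)A\bigr)\otimes\phi_*\mathcal{O}(N(K_X+\Delta)).
\]
For $N$ large this is globally generated on the curve $C'$ by Serre's theorem, and together with the $\phi$-generation of $ND$ it follows that $\mathcal{O}(ND)$ is globally generated on $X$, so $D$ is semi-ample.

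The main obstacle lies in the $\nu=2$ subcase when some component $C$ of $\Delta$ with $\Delta\cdot C<0$ also satisfies $L\cdot C=0$ (so that $C$ is contracted by the Iitaka morphism of $L$); for such $C$ the perturbation $\Delta'$ above fails to be nef and must be corrected, for example by first contracting the offending curves via a relative MMP over the Iitaka morphism of $L$ before appealing to Theorem~\ref{bpf}.
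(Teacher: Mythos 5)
Your $\nu=2$ subcase contains the genuine gap you point out, and the proposed remedy does not close it. The obstruction is a curve $C$ with $L\cdot C=0$ and $\Delta\cdot C<0$, so that $(1-\epsilon)L+\epsilon\Delta$ fails to be nef. But for any such $C$, since $D=K_X+\Delta+L$ is nef and $L\cdot C=0$, one has $(K_X+\Delta)\cdot C=D\cdot C\geq 0$; in particular $C$ generates no $(K_X+\Delta)$-negative extremal ray over the Iitaka image of $L$, so the relative MMP you propose to run contracts nothing, and the obstruction persists. (Nor is it a $(K_X+\Delta')$-negative curve, since $(K_X+\Delta')\cdot C=D\cdot C-((1-\epsilon)L+\epsilon\Delta)\cdot C>0$ for the offending $C$.) Your $\nu=1$ step also has a loose end: the ``Serre's theorem'' claim asserts global generation of $\mathcal{O}_{C'}((N/m_0)A)\otimes\phi_*\mathcal{O}(N(K_X+\Delta))$ for $N\gg 0$, but both factors vary with $N$, and you supply no uniform control on the negativity of $\phi_*\mathcal{O}(N(K_X+\Delta))$ as $N$ grows.

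The missing idea is to not target Theorem~\ref{bpf} at all, but rather to absorb $L:=D-(K_X+\Delta)$ into the boundary and invoke the abundance theorem directly. For $\kappa(L)=2$, the Iitaka morphism $f:X\to Y$ of $L$ satisfies $nL=f^*H_Y$ with $H_Y$ very ample on $Y$; pick $G\in|H_Y|$ general (irreducible, avoiding $f({\rm Ex}(f))$ and the images of the components of $\Delta$), so that $f^*G$ shares no component with $\Delta$ and carries coefficient $1/n\leq 1$ in $\Delta'':=\Delta+\frac{1}{n}f^*G$, making $\Delta''$ a $\mathbb{Q}$-boundary. Then $K_X+\Delta''\sim_{\mathbb{Q}}K_X+\Delta+L=D$ is nef, and Theorem~\ref{abundance} (or Theorem~\ref{fpabundance}) gives semi-ampleness of $K_X+\Delta''$, hence of $D$, with no nefness of any difference required and in particular no concern about negative components of $\Delta$. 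The same device (adding to $\Delta$ a fractional sum of general integral fibers, via Proposition~\ref{genericintegral}) handles $\kappa(L)=1$ with no relative abundance and no projection-formula argument, which is exactly what the paper does. Your $\nu=0$ case and your $k=\fff$ shortcut via Theorem~\ref{fpbpf} are both correct.
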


\begin{proof}
Set $\kappa:=\kappa(X, D-(K_X+\Delta))$. 
There are three cases: (0)$\kappa=0$, (1)$\kappa=1$ and (2)$\kappa=2$. \\
(0)Assume $\kappa=0$. 
By the semi-ampleness, 
we obtain $D-(K_X+\Delta)\sim_{\mathbb{Q}} 0$. 
Thus we can apply the abundance theorem to $D$. 
Then we obtain the desired result. \\
(1)Assume $\kappa=1$. 
By the semi-ampleness, the complete linear system $|n(D-(K_X+\Delta))|$ 
induces a morphism $\sigma:X\to B$ to a smooth projective curve. 
By Proposition~\ref{genericintegral}, 
we can find a boundary 
$$\Delta'\sim_{\mathbb{Q}} D-(K_X+\Delta)$$
such that $\Delta+\Delta'$ is a \Q-boundary. 
Thus we can apply the abundance theorem to $K_X+\Delta+\Delta'$. \\
(2)Assume $\kappa=2$. 
The complete linear system $|n(D-(K_X+\Delta))|$ induces 
a birational morphism $f:X \to Y$ to a normal projective surface. 
Since $n(D-(K_X+\Delta))=f^*(H_Y)$ 
where $H_Y$ is an very ample line bundle on $Y$. 
By Bertini's theorem, we can find a member $G\in|H_Y|$ such that 
$$\Delta+\frac{1}{n}f^*(G)$$
is a boundary. 
Thus we can apply the abundance theorem. 
\end{proof}

\appendix
\def\thesection{B}
\section{Rational singularities}
In this section, 
we consider the relation 
between the minimal model program and the rational singularities. 

\begin{dfn}
Let $X$ be a normal surface and 
let $f:Y \to X$ be a resolution of singularities. 
We say $X$ has at worst rational singularities 
if $R^1f_*\mathcal O_Y=0$. 
This property is independent of the choice of resolutions of singularities. 
\end{dfn}

If $X$ is a normal surface whose singularities are at worst rational, 
then $X$ is \Q-factorial by \cite[Proposition~17.1]{Lipman}. 
Let us give an alternative proof of this result. 

\begin{prop}\label{rational-qfac}
Let $X$ be a normal surface. 
If $X$ has at worst rational singularities, then 
$X$ is \Q-factorial. 
\end{prop}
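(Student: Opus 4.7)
The plan is to reduce $\mathbb{Q}$-factoriality to a local problem at each singular point and solve it using the rational-singularity hypothesis together with intersection-theoretic and formal-function arguments on the minimal resolution. Fix a singular point $x\in X$ and a prime Weil divisor $D$ through $x$; it will suffice to show that some $mD$ is Cartier in a Zariski neighborhood of $x$. After shrinking, I would let $f:Y\to X$ be the minimal resolution, let $E_1,\dots,E_n$ be the exceptional curves over $x$, set $E:=\bigcup_i E_i$, and let $D_Y$ be the proper transform of $D$. By negative-definiteness of $(E_i\cdot E_j)$, unique rationals $a_i$ satisfy $(D_Y+\sum a_iE_i)\cdot E_j=0$ for all $j$; clearing denominators by an integer $m>0$ produces a Cartier divisor $\tilde D:=mD_Y+\sum(ma_i)E_i$ on $Y$ with $\tilde D\cdot E_j=0$. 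The goal will be to prove $L:=\mathcal O_Y(\tilde D)\cong f^{*}M$ for some line bundle $M$ on $X$, since pushing forward then yields $\mathcal O_X(mD)\cong M$ near $x$.

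First I would check $L|_E$ is trivial. Rationality of the singularity at $x$ (equivalently, by Artin, $p_a(C)\le 0$ for every effective cycle $C$ supported on $E$) forces each $E_i\cong\mathbb P^1$, forces distinct components to meet transversally in at most one point, and prohibits cycles in the dual graph; hence $E$ is a simple normal-crossings tree of projective lines. A Mayer--Vietoris calculation for the normalization $\bigsqcup E_i\to E$ then identifies $\Pic(E)\cong\bigoplus_i\Pic(E_i)\cong\mathbb Z^n$ via degrees on each component, so $\tilde D\cdot E_i=0$ for all $i$ gives $L|_E\cong\mathcal O_E$.

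Next I would lift triviality through every infinitesimal thickening $nE$. For each $n\ge 1$ the multiplicative sequence
$$1\to\mathcal O_Y(-nE)|_E\to\mathcal O_{(n+1)E}^{*}\to\mathcal O_{nE}^{*}\to 1$$
(using the square-zero isomorphism $1+I_E^n/I_E^{n+1}\cong I_E^n/I_E^{n+1}$) shows that the obstruction to lifting a trivialization from $nE$ to $(n+1)E$ lies in $H^1(E,\mathcal O_Y(-nE)|_E)$. To kill this obstruction I would run cohomology on the additive analogue $0\to\mathcal O_Y(-nE)|_E\to\mathcal O_{(n+1)E}\to\mathcal O_{nE}\to 0$: the hypothesis $R^1f_{*}\mathcal O_Y=0$ together with the surjectivity $H^0(\mathcal O_{(n+1)E})\twoheadrightarrow H^0(\mathcal O_{nE})$ (from $f_{*}\mathcal O_Y=\mathcal O_X$ and the theorem of formal functions) will force the required vanishing at every level. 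Inductively, $L|_{nE}$ is then trivial for every $n$.

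Finally, the theorem of formal functions gives $(f_{*}L)_x^{\wedge}=\varprojlim_n H^0(nE,L|_{nE})=\varprojlim_n H^0(nE,\mathcal O_{nE})=\widehat{\mathcal O}_{X,x}$, so $f_{*}L$ is free of rank one near $x$; call this line bundle $M$. The natural map $f^{*}M\to L$ is an isomorphism off $E$, so their ratio has the form $\mathcal O_Y(\sum b_iE_i)$; trivial restriction to each $E_i$ combined with negative-definiteness forces all $b_i=0$, yielding $L\cong f^{*}M$ and hence $mD$ Cartier near $x$. The main obstacle I expect is the per-level $H^1$-vanishing: the rational-singularity hypothesis a priori only controls the inverse limit, and one has to bootstrap carefully from $f_{*}\mathcal O_Y=\mathcal O_X$ to extract vanishing at each finite $n$; once that is in hand, the descent step is essentially formal.
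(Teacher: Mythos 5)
Your proposal is correct in outline, but it follows a genuinely different route from the paper's: you are essentially reconstructing Lipman's original argument (which the paper itself cites, \cite[Proposition~17.1]{Lipman}, and then deliberately bypasses in favor of an ``alternative proof''). The paper instead compactifies $X$ to a projective surface, takes the minimal resolution $f\colon Y\to X$, observes that every $f$-exceptional curve is a $\mathbb P^1$, and contracts these $\mathbb P^1$'s one at a time via Keel's semi-ampleness theorem (Proposition~\ref{p1cont}); at each step, Propositions~\ref{(3)} and \ref{(5)} are used to transport $\mathbb Q$-factoriality from $Y$ down to the next stage, and after finitely many contractions one lands on $X$ itself. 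The advantage of the paper's route is that it reuses machinery already set up for the MMP in positive characteristic and needs no cohomological delicacy; the advantage of your route is that it is characteristic-free, purely local, and does not invoke Keel's theorem at all.

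There is, however, a gap in your plan precisely at the point you flag. Knowing $R^1f_*\mathcal O_Y=0$ plus formal functions gives $\varprojlim_n H^1(\mathcal O_{nE})=0$, and since the transition maps in that system are surjective ($H^2$ vanishes on a $1$-dimensional scheme) this does yield $H^1(\mathcal O_{nE})=0$ for every $n$. But the obstruction group you need to kill at stage $n$ is $H^1\bigl(E,\mathcal O_Y(-nE)|_E\bigr)$, and from the long exact sequence of $0\to\mathcal O_Y(-nE)|_E\to\mathcal O_{(n+1)E}\to\mathcal O_{nE}\to 0$ this group is the cokernel of $H^0(\mathcal O_{(n+1)E})\to H^0(\mathcal O_{nE})$, whose surjectivity is exactly equivalent to the vanishing you want; so the argument as stated is circular. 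Moreover $\mathcal O_Y(-nE)$ is in general not $f$-nef (already for a $D_4$ configuration the branch vertex $E_0$ has $-E\cdot E_0<0$), so you cannot appeal to nef relative vanishing either. What is actually needed is Artin's finer information about rational singularities: $H^1(Z,\mathcal O_Z)=0$ for \emph{every} effective exceptional cycle $Z$, and a computation-sequence induction to control line bundles of negative degree on the exceptional configuration — equivalently, the vanishing $R^1 f_*\mathcal O_Y(-Z)=0$ for all effective exceptional $Z$. With that input your descent-and-formal-functions scheme does close up, but that input is not a formal consequence of $f_*\mathcal O_Y=\mathcal O_X$ and $R^1f_*\mathcal O_Y=0$ alone; you would need to make that argument (or cite Lipman/Artin) explicitly.
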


\begin{proof}
Note that, if $g:Z\to X$ is a proper birational morphism and 
$E$ is a $g$-exceptional curve, then $E\simeq \mathbb P^1.$ 

We may assume $X$ is affine. 
Thus, we may assume $X$ is projective. 
Let $f:Y\to X$ be the minimal resolution. 
Let $E$ be an $f$-exceptional curve. 
By Proposition~\ref{p1cont}, we can contract $E$ and 
we obtain 
$$f:Y\to Y'\overset{f'}\to X.$$
By Proposition~\ref{(3)} and Proposition~\ref{(5)}, 
$X'$ is \Q-factorial. 
Assume $f'$ is not an isomorphism. 
Then, we can take an $f'$-exceptional curve $E'$. 
By the same argument, we can contract $E'$ to a \Q-factorial surface. 
Repeat the same procedure. 
Then, we see $X$ is \Q-factorial. 
\end{proof}

The Kodaira vanishing theorem do not hold 
in positive characteristic. 
But we obtain the following relative vanishing theorem.

\begin{thm}\label{relvanishing}
Let $f:X \to Y$ be a proper birational morphism 
from a smooth surface $X$ to a normal surface $Y$. 
Let $L$ be a line bundle on $X$ such that 
$$L \equiv_f K_X+E+N$$
where $E$ is an effective $f$-exceptional $\mathbb{R}$-boundary and 
$N$ is an $f$-nef $\mathbb{R}$-divisor. 
If $E_i\cdot N>0$ for every curve $E_i$ 
with $E_i\subset \llcorner E\lrcorner$, then $R^1f_*(L)=0$.
\end{thm}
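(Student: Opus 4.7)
The plan is to reduce to an affine base and then induct on the number $n := \#\{\text{prime components of } \llcorner E \lrcorner\}$, peeling off the round-down components one at a time via a standard adjunction sequence; the residual fractional case is the main obstacle and must be handled by the relative Kawamata--Viehweg vanishing of \cite{KK} for birational morphisms of surfaces.

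First I would observe that the statement is local on $Y$, so I may assume $Y$ is affine. The induction proceeds as follows: for the inductive step ($n \geq 1$), pick any prime component $E_1 \subset \llcorner E \lrcorner$, which by hypothesis satisfies $E_1 \cdot N > 0$, and consider
\[
0 \to L(-E_1) \to L \to L|_{E_1} \to 0.
\]
Since $f$ contracts $E_1$ to a point, $R^1 f_*(L|_{E_1})$ is a skyscraper sheaf with stalk $H^1(E_1, L|_{E_1})$. On the smooth surface $X$, adjunction gives $\omega_{E_1} = (\omega_X(E_1))|_{E_1}$, so
\[
\deg(L|_{E_1}) - \deg \omega_{E_1} = (E - E_1) \cdot E_1 + N \cdot E_1.
\]
Because $E_1$ has coefficient exactly $1$ in $E$, the divisor $E - E_1$ is effective with $E_1 \not\subset \Supp(E - E_1)$, so $(E - E_1) \cdot E_1 \geq 0$; combined with $N \cdot E_1 > 0$ this delivers strict positivity, hence $H^1(E_1, L|_{E_1}) = 0$ by Serre duality on the integral proper curve $E_1$. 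Meanwhile $L(-E_1) \equiv_f K_X + (E - E_1) + N$ satisfies all the hypotheses with $n$ replaced by $n - 1$ (the positivity of $N$ against the components of $\llcorner E - E_1 \lrcorner \subset \llcorner E \lrcorner$ is inherited), so the inductive hypothesis gives $R^1 f_*(L(-E_1)) = 0$; the long exact sequence then closes the induction.

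The hard step is the base case $n = 0$: here $E$ is purely fractional, the positivity condition on $N$ against $\llcorner E \lrcorner$ is vacuous, and the remaining data is precisely a klt pair $(X, E)$ together with an $f$-nef $\mathbb{R}$-divisor $N$ in a relative adjoint relation. In characteristic zero one would close this via Kawamata--Viehweg, but in positive characteristic there is no such luxury and one must quote the relative Kawamata--Viehweg vanishing for birational morphisms of surfaces from \cite{KK}, which is the tool advertised in the introduction. The rest of the argument is the familiar adjunction-and-peeling method, whose only subtlety is that the hypothesis $N \cdot E_i > 0$ against each round-down component is precisely what produces the strict inequality in the adjunction estimate.
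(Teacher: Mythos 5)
The paper's own ``proof'' consists of the single line ``See [KK, Section~2.2],'' so there is no argument in the text to compare against; the theorem is taken wholesale from Koll\'ar--Kov\'acs. Your proposal, by contrast, supplies an honest reduction: an induction that peels off the round-down components of $E$ one at a time via the adjunction sequence $0 \to L(-E_1) \to L \to L|_{E_1} \to 0$, lands on the purely fractional (klt) case, and only then invokes \cite{KK}. This peeling step is correctly executed: $\deg(L|_{E_1}) - \deg\omega_{E_1} = (E-E_1)\cdot E_1 + N\cdot E_1$ is a positive integer (positive by your two terms, an integer because both $L|_{E_1}$ and $\omega_{E_1} = \omega_X(E_1)|_{E_1}$ are genuine line bundles on the Gorenstein curve $E_1$), so $H^1(E_1, L|_{E_1}) = 0$ by Serre duality on the integral Gorenstein curve, and the descent of the hypothesis to $L(-E_1) \equiv_f K_X + (E-E_1) + N$ is verified. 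The argument is sound; it is the standard ``plt from klt'' reduction. The only caveat worth flagging is that, since the paper's unqualified citation to \cite{KK} strongly suggests that reference already proves the statement with $\llcorner E\lrcorner \ne 0$, your induction is a genuine derivation of the more general-looking statement from the klt core but may duplicate work already present in \cite{KK}; this makes your proof more self-contained, not incorrect. It is also worth noting explicitly that $E_1$, being a prime divisor on a smooth surface, is a Cartier divisor and hence Gorenstein but not necessarily smooth, so the Serre duality you use must be the Gorenstein-curve version, which of course holds.
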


\begin{proof}
See \cite[Section~2.2]{KK}.
\end{proof}

In this paper, 
we often use the contraction of $\mathbb{P}^1$. 
For example, 
the minimal model program of Theorem~\ref{qfacmmp} 
is the composition of the contractions of 
$C \simeq \mathbb{P}^1$ with $(K_X+C)\cdot C< 0$. 
The following theorem shows that 
$R^1$ of such contractions vanish.

\begin{thm}\label{p1vanishing}
Let $g:Y \to Z$ be a proper birational morphism 
between normal surfaces 
such that $C:={\rm Ex}(g)$ is an irreducible curve. 
If $(K_Y+C)\cdot C< 0$, 
then $R^1g_*(\mathcal{O}_Y)=0$.
\end{thm}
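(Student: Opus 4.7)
The plan is to apply the relative Kawamata--Viehweg vanishing theorem (Theorem~\ref{relvanishing}) after reducing to the composition with a resolution. Let $f\colon Y'\to Y$ be the minimal resolution of $Y$, and set $h := g\circ f\colon Y'\to Z$. Since $Y$ is normal, $f_*\mathcal O_{Y'}=\mathcal O_Y$, and since every fibre of $g$ has dimension at most one, $R^q g_*\mathcal F = 0$ for $q\ge 2$ and any coherent $\mathcal F$. The Leray spectral sequence for $h=g\circ f$ therefore yields
\[
0 \longrightarrow R^1 g_* \mathcal O_Y \longrightarrow R^1 h_* \mathcal O_{Y'} \longrightarrow g_* R^1 f_* \mathcal O_{Y'} \longrightarrow 0,
\]
so it suffices to prove that $R^1 h_* \mathcal O_{Y'}=0$ on a neighbourhood of $g(C)\in Z$.

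Write $C'$ for the proper transform of $C$ and $E_1,\dots,E_n$ for the $f$-exceptional curves. Mumford's pullback gives
\[
f^*(K_Y+C) = K_{Y'} + C' + \sum_i a_i E_i,
\]
where the $a_i$ are uniquely determined by $(K_{Y'}+C'+\sum a_iE_i)\cdot E_j = 0$ for every $j$. Two facts will be central. First, because the negative-definite intersection matrix $(E_i\cdot E_j)$ has non-positive inverse and both $K_{Y'}\cdot E_j \ge 0$ (minimality of $f$) and $C'\cdot E_j \ge 0$ hold, every $a_i$ is non-negative. Second, the defining property of Mumford's pullback forces $-f^*(K_Y+C)$ to be $h$-nef, with $-f^*(K_Y+C)\cdot C' = -(K_Y+C)\cdot C > 0$ by hypothesis.

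To apply Theorem~\ref{relvanishing} to $h$ with $L=\mathcal O_{Y'}$ I plan to take
\[
E := (1-\epsilon)\,C' + \sum_i a_i'\, E_i, \qquad N := -K_{Y'} - E,
\]
for a small $\epsilon > 0$ and a rational perturbation $0 \le a_i' < 1$ of the $a_i$. The perturbation is chosen so that $E$ is an effective $h$-exceptional $\mathbb R$-boundary whose round-down is empty, while
\[
N = -f^*(K_Y+C) + \epsilon\, C' + \sum_i (a_i - a_i')\, E_i
\]
remains $h$-nef. Then $N\cdot C' = -(K_Y+C)\cdot C + O(\epsilon) > 0$ for small $\epsilon$, and $N\cdot E_j = \epsilon\, C'\cdot E_j + \sum_i (a_i - a_i')\, E_i\cdot E_j \ge 0$ for a suitable choice of the $a_i'$; the hypotheses of Theorem~\ref{relvanishing} are then satisfied and the vanishing follows.

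The main technical obstacle is constructing the perturbation $a_i'$ when some $a_i \ge 1$, that is, when $(Y, C)$ fails to be purely log terminal at a point of $C$. In that regime one cannot simply take $a_i' = a_i$, and the tension between the boundary condition $a_i' < 1$ and the $h$-nefness of $N$ on each $E_j$ becomes delicate. The key point is that the strict inequality $(K_Y+C)\cdot C < 0$ provides precisely the slack needed; the compatibility is verified via the sign properties of the inverse of the negative-definite intersection matrix $(E_i\cdot E_j)$. Once this combinatorial construction is carried out, Theorem~\ref{relvanishing} gives $R^1 h_*\mathcal O_{Y'}=0$, and the Leray sequence above yields $R^1 g_*\mathcal O_Y = 0$.
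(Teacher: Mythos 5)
Your plan is to apply Theorem~\ref{relvanishing} directly to $L=\mathcal O_{Y'}$ and conclude $R^1h_*\mathcal O_{Y'}=0$, then descend via Leray. The Leray sequence you write is correct, but the intermediate goal $R^1h_*\mathcal O_{Y'}=0$ is \emph{strictly stronger} than the statement to be proved, and is in fact false under the stated hypotheses. The theorem does not assume $Y$ has rational singularities along $C$; when it does not, the sequence
\[
0\longrightarrow R^1g_*\mathcal O_Y\longrightarrow R^1h_*\mathcal O_{Y'}\longrightarrow g_*R^1f_*\mathcal O_{Y'}\longrightarrow 0
\]
has a nonzero right-hand term over $g(C)$, so $R^1h_*\mathcal O_{Y'}\neq 0$ even though $R^1g_*\mathcal O_Y$ may vanish. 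A concrete instance: start from a $\mathbb P^1$-bundle over an elliptic curve with negative section $E_1$, $E_1^2=-2$; blow up one point on a fibre away from $E_1$ to get $Y'$, with $(-1)$-curve $C'$ meeting $E_1$ transversally once; contract $E_1$ to get $Y$ (a simple elliptic singularity on $C$) and then $C$ to get $Z$. One checks $C^2=-1/2$ and $(K_Y+C)\cdot C=-1/2<0$, yet $g_*R^1f_*\mathcal O_{Y'}\neq0$, hence $R^1h_*\mathcal O_{Y'}\neq0$. This is exactly the regime you flag as ``delicate'' (some $a_i\geq 1$); the tension you observe between $a_i'<1$ and $h$-nefness of $N$ is not a mere technicality to be cleverly resolved --- no choice of perturbation can make Theorem~\ref{relvanishing} apply to $L=\mathcal O_{Y'}$, because the conclusion it would yield is false.

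The paper's proof sidesteps this by twisting. Writing $f^*(K_Y+C)=K_{Y'}+C'+\Delta_{Y'}$, one applies Theorem~\ref{relvanishing} to $L=\mathcal O_{Y'}(-\llcorner\Delta_{Y'}\lrcorner)$ with $E=\{\Delta_{Y'}\}+C'$ and $N=-f^*(K_Y+C)$: now $E$ is automatically a boundary with $\llcorner E\lrcorner=C'$ (no perturbation needed), $N$ is $h$-nef with $N\cdot C'=-(K_Y+C)\cdot C>0$, and one concludes $R^1h_*\mathcal O_{Y'}(-\llcorner\Delta_{Y'}\lrcorner)=0$. From there the short exact sequence $0\to\mathcal O_{Y'}(-\llcorner\Delta_{Y'}\lrcorner)\to\mathcal O_{Y'}\to\mathcal O_{\llcorner\Delta_{Y'}\lrcorner}\to0$ is pushed forward by $f$; the cokernel of $f_*\mathcal O_{Y'}(-\llcorner\Delta_{Y'}\lrcorner)\to\mathcal O_Y$ is a skyscraper, so applying $R^1g_*$ and Leray gives $R^1g_*\mathcal O_Y=0$ without ever claiming the false vanishing $R^1h_*\mathcal O_{Y'}=0$. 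The missing idea in your argument is precisely this twist by $-\llcorner\Delta_{Y'}\lrcorner$.
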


\begin{proof}
Let $f:X \to Y$ be the minimal resolution of $Y$ and 
let $C_X$ be the proper transform of $C$. 
Set $K_X+C_X+\Delta_X=f^*(K_Y+C)$. 
Then we have 
\begin{eqnarray*}
-\llcorner \Delta_X\lrcorner&=&K_X+(\{\Delta_X\}+C_X)-f^*(K_Y+C).
\end{eqnarray*}
We apply Theorem~\ref{relvanishing} and we obtain 
\begin{eqnarray*}
R^1(g\circ f)_*\mathcal O_X(-\llcorner \Delta_X\lrcorner) = 0
\end{eqnarray*}
by $-f^*(K_Y+C)\cdot C_X>0.$ \\
If $\llcorner \Delta_X\lrcorner=0$, then we obtain 
$$R^1g_*(\mathcal{O}_Y)=R^1g_*(f_*\mathcal{O}_X) \subset
R^1(g\circ f)_*(\mathcal{O}_X)=0 $$
by the Grothendieck--Leray spectral sequence. 
Thus we may assume $\llcorner \Delta_X\lrcorner\neq 0$. 
Since 
$$0 \to 
\mathcal{O}_X(-\llcorner \Delta_X\lrcorner) \to 
\mathcal{O}_X \to 
\mathcal{O}_{\llcorner \Delta_X\lrcorner} \to 0, $$
we obtain 
\begin{eqnarray*}
&0 \to 
f_*\mathcal{O}_X(-\llcorner \Delta_X\lrcorner) \to 
\mathcal{O}_Y \to 
\mathcal{C} \to 0\\
&\mathcal{C} \subset f_*\mathcal{O}_{\llcorner \Delta_X\lrcorner}
\end{eqnarray*}
where $\mathcal{C}$ is the cokernel of 
$f_*\mathcal{O}_X(-\llcorner \Delta_X\lrcorner) \to 
\mathcal{O}_Y$. 
Since $f_*\mathcal{O}_{\llcorner \Delta_X\lrcorner}$ is a skyscraper sheaf, 
so is $\mathcal{C}$. 
Thus we obtain 
$$
R^1g_*(f_*\mathcal{O}_X(-\llcorner \Delta_X\lrcorner)) \to 
R^1g_*(\mathcal{O}_Y) \to R^1g_*(\mathcal{C})=0.
$$
By the Grothendieck--Leray spectral sequence, 
we obtain 
$$R^1g_*(f_*\mathcal{O}_X(-\llcorner \Delta_X\lrcorner))
\subset R^1(g\circ f)_*\mathcal{O}_X(-\llcorner \Delta_X\lrcorner)=0.$$ 
Therefore, we have $R^1g_*(\mathcal{O}_Y)=0.$ 
\end{proof}

As corollaries, 
we obtain the results on minimal models and canonical models 
for surfaces with rational singularities.

\begin{cor}\label{mmratsing}
Let $\pi:X \to S$ be a projective morphism from 
a normal surface $X$ to a variety $S$. 
Let $\Delta$ be an $\mathbb{R}$-boundary. 
Assume $X$ has at worst rational singularities. 
Then, the following assertions holds. 
\begin{enumerate}
\item{$X$ is \Q-factorial. In particular, by Theorem~\ref{relmmp}, 
we can run a $(K_X+\Delta)$-minimal model program over $S$ 
\begin{eqnarray*}
&(X, \Delta)=:(X_0, \Delta_0) \overset{\phi_0}\to 
(X_1, \Delta_1) \overset{\phi_1}\to \cdots 
\overset{\phi_{s-1}}\to (X_s, \Delta_s)\\
&\,\,\,where\,\,\,(\phi_{i-1})_*(\Delta_{i-1})=:\Delta_i.
\end{eqnarray*}}
\item{Each $X_i$ has at worst rational singularities. }
\end{enumerate}
\end{cor}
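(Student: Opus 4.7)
Part~(1) is immediate: the rational singularity assumption forces $X$ to be \Q-factorial by Proposition~\ref{rational-qfac}, so the (QF) case of Theorem~\ref{relmmp} produces the desired $(K_X+\Delta)$-MMP. The substance of the corollary is (2), which I plan to prove by induction on $i$. Rationality of surface singularities is a local property on the target, and each $\phi_i:X_i\to X_{i+1}$ is a proper birational morphism contracting a single irreducible proper curve $C_i$, so it is enough to show that if $X_i$ has at worst rational singularities then $R^1(\phi_i)_*\mathcal O_{X_i}=0$; the induction hypothesis combined with a Leray argument will then promote this to vanishing of $R^1$ on any resolution of $X_{i+1}$.

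The main obstacle is applying Theorem~\ref{p1vanishing}, which requires the sharper inequality $(K_{X_i}+C_i)\cdot C_i<0$ rather than merely $(K_{X_i}+\Delta_i)\cdot C_i<0$ coming from the MMP. Since $\phi_i$ is a birational contraction of a single irreducible curve, the contraction theorem (Theorem~\ref{relcont} together with Proposition~\ref{relctt}) forces $C_i^2<0$, and $X_i$ is \Q-factorial by the construction of the MMP. I would split into two cases. If $C_i\not\subset\Supp\Delta_i$, then \Q-factoriality of $X_i$ gives $\Delta_i\cdot C_i\geq 0$, hence $K_{X_i}\cdot C_i\leq (K_{X_i}+\Delta_i)\cdot C_i<0$, and adding $C_i^2<0$ yields $(K_{X_i}+C_i)\cdot C_i<0$. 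If $C_i\subset\Supp\Delta_i$ with coefficient $c_i\in(0,1]$, write $\Delta_i=c_iC_i+\Delta_i'$ with $\Delta_i'\cdot C_i\geq 0$, so $(K_{X_i}+c_iC_i)\cdot C_i<0$; since $c_i\leq 1$ and $C_i^2<0$ we have $c_iC_i^2\geq C_i^2$, giving $(K_{X_i}+C_i)\cdot C_i\leq (K_{X_i}+c_iC_i)\cdot C_i<0$.

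With the inequality secured, Theorem~\ref{p1vanishing} yields $R^1(\phi_i)_*\mathcal O_{X_i}=0$. To finish the induction step, take any resolution $h:Y\to X_i$; by normality of $X_i$ and the induction hypothesis, $h_*\mathcal O_Y=\mathcal O_{X_i}$ and $R^1h_*\mathcal O_Y=0$. The Leray spectral sequence
\[
E_2^{p,q}=R^p(\phi_i)_*R^qh_*\mathcal O_Y\Longrightarrow R^{p+q}(\phi_i\circ h)_*\mathcal O_Y
\]
has its $q\geq 1$ rows identically zero, so $R^1(\phi_i\circ h)_*\mathcal O_Y\cong R^1(\phi_i)_*\mathcal O_{X_i}=0$. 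Since $\phi_i\circ h:Y\to X_{i+1}$ is a resolution (normality of $X_{i+1}$ gives $(\phi_i\circ h)_*\mathcal O_Y=\mathcal O_{X_{i+1}}$), this proves that $X_{i+1}$ has at worst rational singularities and closes the induction.
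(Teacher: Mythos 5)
Your proposal is correct and follows essentially the same route as the paper: part~(1) is Proposition~\ref{rational-qfac}, and part~(2) is Theorem~\ref{p1vanishing} applied along the MMP. The paper's own proof is a two-line assertion that "each extremal contraction satisfies the condition of Theorem~\ref{p1vanishing}"; what you supply is the verification of that condition and the Leray bookkeeping, both of which the paper leaves to the reader. Your case analysis for the inequality $(K_{X_i}+C_i)\cdot C_i<0$ is sound; it can be compressed by writing $\Delta_i=c_iC_i+\Delta_i'$ with $c_i\in[0,1]$, $C_i\not\subset\Supp\Delta_i'$, so that
\[
(K_{X_i}+C_i)\cdot C_i
=(K_{X_i}+\Delta_i)\cdot C_i+(1-c_i)C_i^2-\Delta_i'\cdot C_i<0
\]
using $(K_{X_i}+\Delta_i)\cdot C_i<0$, $(1-c_i)C_i^2\le 0$ (since $C_i^2<0$ for a birational contraction of one curve), and $\Delta_i'\cdot C_i\ge 0$. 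One small remark: in your Case~1 the inequality $\Delta_i\cdot C_i\ge 0$ does not really use \Q-factoriality of $X_i$ — it is just the non-negativity of Mumford's intersection pairing between $C_i$ and an effective divisor with no component equal to $C_i$; \Q-factoriality is used elsewhere (to run the MMP and keep everything $\mathbb R$-Cartier). Your Leray argument deducing rationality of $X_{i+1}$ from $R^1(\phi_i)_*\mathcal O_{X_i}=0$ together with rationality of $X_i$ is exactly the standard argument and is correct.
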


\begin{proof}
(1) follows from Proposition~\ref{rational-qfac}. 
Each extremal contraction in a minimal model program of $(X, \Delta)$ 
satisfies the condition of Theorem~\ref{p1vanishing}. 
This implies (2). 
\end{proof}

\begin{cor}\label{cmratsing}
Let $\pi:X \to S$ be a projective morphism from 
a normal surface $X$ to a variety $S$. 
Let $\Delta$ be an $\mathbb{R}$-divisor such that 
$0\leq \Delta<1$. 
If $X$ has at worst rational singularities and $K_X+\Delta$ is $\pi$-big, 
then the canonical model of $(X, \Delta)$ over $S$ 
has at worst rational singularities.
\end{cor}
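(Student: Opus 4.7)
The plan is to run the $(K_X+\Delta)$-MMP over $S$ to reach a $\pi$-nef and $\pi$-big minimal model, apply relative abundance to realize the canonical model as an honest morphism, and then factor that morphism as a chain of one-curve contractions of $\mathbb{P}^1$'s, applying Theorem~\ref{p1vanishing} at each stage.

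First, since $X$ has rational singularities it is $\mathbb{Q}$-factorial by Proposition~\ref{rational-qfac}, so I would invoke Corollary~\ref{mmratsing} to run a $(K_X+\Delta)$-MMP over $S$, yielding $(X^\dagger,\Delta^\dagger)$ with $X^\dagger$ $\mathbb{Q}$-factorial and still of at worst rational singularities, and $\Delta^\dagger$ still satisfying $0\le\Delta^\dagger<1$ (birational pushforward preserves the coefficient of every surviving prime component). Birational pushforward also preserves bigness of $K+\Delta$, so $K_{X^\dagger}+\Delta^\dagger$ remains $\pi$-big; this excludes the Mori fiber space outputs (b), (c) of Theorem~\ref{relmmp}, since in (c) $-(K+\Delta)$ is ample and in (b) a general fiber $F$ satisfies $(K^\dagger+\Delta^\dagger)\cdot F<0$, both incompatible with $K^\dagger+\Delta^\dagger$ being big (in case (b), writing the big divisor as ample plus effective would force $F$ to be a component of the effective part for every general $F$, contradicting that the effective part is a finite $\mathbb R$-divisor). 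So we land in case (a), and Corollary~\ref{relabundance} gives $\pi$-semi-ampleness of $K_{X^\dagger}+\Delta^\dagger$; a sufficiently divisible multiple defines the canonical model morphism $g:X^\dagger\to X^{\mathrm{can}}$ over $S$.

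Next I would analyze any irreducible curve $C$ contracted by $g$. Since $g$ is birational between normal surfaces and $C$ sits in a positive-dimensional fiber, the intersection matrix of that fiber is negative definite, so $C^2<0$; and $(K_{X^\dagger}+\Delta^\dagger)\cdot C=0$ by the definition of $g$. Writing $\Delta^\dagger=\delta C+\Delta''$ with $C\not\subset\Supp\Delta''$ and $0\le\delta<1$,
\[
(K_{X^\dagger}+C)\cdot C
=(K_{X^\dagger}+\Delta^\dagger)\cdot C+(1-\delta)\,C^2-\Delta''\cdot C
\le (1-\delta)\,C^2<0,
\]
so Theorem~\ref{adjunction}(1) yields $C\simeq\mathbb{P}^1$. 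I would then factor $g$ as $X^\dagger=Z_0\overset{h_0}\to Z_1\overset{h_1}\to\cdots\overset{h_{N-1}}\to Z_N=X^{\mathrm{can}}$ where each $h_j$ contracts a single curve: on $Z_j$ the pushed-forward boundary still has coefficients $<1$ and $K+\Delta$ remains $0$ on any curve contracted by $Z_j\to X^{\mathrm{can}}$, so the previous inequality supplies a $\mathbb{P}^1$ to contract via Proposition~\ref{p1cont} (with $G=H+qC$ as in the $C^2<0$ case of Theorem~\ref{cont}); Propositions~\ref{(3)} and \ref{(5)} preserve $\mathbb{Q}$-factoriality and relative projectivity. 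Theorem~\ref{p1vanishing} gives $R^1(h_j)_*\mathcal O_{Z_j}=0$ at every step, so iterated Leray yields $R^1 g_*\mathcal O_{X^\dagger}=0$. Combining with $R^1 p_*\mathcal O_Y=0$ for a resolution $p:Y\to X^\dagger$ (which holds since $X^\dagger$ has rational singularities) and one further Leray, $R^1(g\circ p)_*\mathcal O_Y=0$, so $X^{\mathrm{can}}$ has at worst rational singularities.

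The principal obstacle will be the factorization step: verifying that on every intermediate $Z_j$ the hypotheses really persist (coefficients of the pushed-forward boundary stay strictly less than $1$, and a $\mathbb{P}^1$ contractible toward $X^{\mathrm{can}}$ can actually be extracted and contracted inside the category of $\mathbb{Q}$-factorial surfaces projective over $S$). These are essentially bookkeeping: they follow from the invariance of coefficients under birational pushforward, from the relative semi-ampleness of $K_{Z_j}+\Delta_{Z_j}$ over $X^{\mathrm{can}}$, and from the one-curve contraction technology of Section~7; but they need to be spelled out carefully to make the induction airtight.
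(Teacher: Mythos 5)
Your proof is correct and takes essentially the same route as the paper: run the $(K_X+\Delta)$-MMP via Corollary~\ref{mmratsing} to reach the $\pi$-nef, $\pi$-big stage, then observe that any $\pi$-vertical curve $C$ with $(K_X+\Delta)\cdot C=0$ satisfies $(K_X+C)\cdot C<0$ (using $0\le\Delta<1$ and $C^2<0$), so it can be contracted as a $\mathbb{P}^1$ with Theorem~\ref{p1vanishing} preserving the rationality of the singularities. The only cosmetic difference is that you first invoke relative abundance to produce the canonical model morphism and then factor it, whereas the paper simply contracts such curves iteratively, arriving at the canonical model as the terminus; the underlying argument and the use of Theorem~\ref{p1vanishing} plus Leray are identical.
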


\begin{proof}
By Corollary~\ref{mmratsing} we may assume that 
$K_X+\Delta$ is $\pi$-nef and $\pi$-big. 
If $(K_X+\Delta)\cdot C=0$ for some curve $C$ such that $\pi(C)$ is ${\rm one\,\,point}$, 
then $(K_X+C)\cdot C<0$ because $0\leq \Delta<1$. 
Therefore we can contract this curve $C$ and 
$C$ satisfies the condition of Theorem~\ref{p1vanishing}. 
Repeat this procedure and 
we obtain the required assertion. 
\end{proof}


\end{document}